\numberwithin{equation}{section}
\theoremstyle{plain}
\newtheorem{thm}{Theorem}[section]
\newtheorem{Cor}{Corollary}[section]
\newtheorem{Lemma}{Lemma}[section]
\newtheorem{proposition}{Proposition}[section]
\newtheorem{rem}{Remark}[section]
\newcommand{\1}{1\!{\rm l}}
\renewcommand{\t}{\tau}
\newcommand{\eps}{\varepsilon}
\newcommand{\RR}{\mathbb{R}}
\newcommand{\NN}{\mathbb{N}}
\renewcommand{\a}{\alpha}
\renewcommand{\phi}{\varphi}
\renewcommand{\epsilon}{\varepsilon}
\newcommand{\sign}{\text{sign}}
\begin{document}

\begin{frontmatter}
\title{Asymptotic behaviour of the empirical Bayes posteriors associated to maximum marginal likelihood estimator  }
\runtitle{empirical Bayes MMLE}

\begin{aug}
\author{\fnms{Judith} \snm{Rousseau}\thanksref{t2,m1,m2}\ead[label=e1]{rousseau@ceremade.dauphine.fr}},
\author{\fnms{Botond} \snm{Szabo}\thanksref{t1,m3,m4}\ead[label=e2]{b.t.szabo@math.leidenuniv.nl}}

\thankstext{t2}{The project was partially supported by the ANR IPANEMA, the labex ECODEC}
\thankstext{t1}{The project was partially supported by the labex ECODEC, the European
  Research Council under ERC Grant Agreement 320637, Netherlands Organization for Scientific Research}

\runauthor{Rousseau and Szabo}

\affiliation{University Paris Dauphine\thanksmark{m1}, and CREST-ENSAE\thanksmark{m2}, and Budapest University of Technology\thanksmark{m3}, and Leiden University\thanksmark{m4}}

\address{CEREMADE, University Paris Dauphine\\
Place du Mar\'echal deLattre de Tassigny \\
\printead{e1}
}
\address{Leiden University,\\
Mathematical Institute,\\
Niels Bohrweg 1,
Leiden, 2333 CA,\\
The Netherlands\\
\printead{e2}
}
\end{aug}

\begin{abstract}
We consider the asymptotic behaviour of the marginal maximum likelihood empirical Bayes posterior distribution in general setting. First we characterize the set where the maximum marginal likelihood estimator is located with high probability. Then we provide oracle type of upper and lower bounds for the contraction rates of the empirical Bayes posterior. We also show that the hierarchical Bayes posterior achieves the same contraction rate as the maximum marginal likelihood empirical Bayes posterior. We demonstrate the applicability of our general results for various models and prior distributions by deriving upper and lower bounds for the contraction rates of the corresponding empirical and hierarchical Bayes posterior distributions.
\end{abstract}

\begin{keyword}[class=AMS]
\kwd[Primary ]{	62G20, 62G05}
\kwd{60K35}
\kwd[; secondary ]{62G08,62G07}
\end{keyword}

\begin{keyword}
\kwd{posterior contraction rates, adaptation, empirical Bayes, hierarchical Bayes, nonparametric regression, density estimation, Gaussian prior, truncation prior}
\end{keyword}

\end{frontmatter}

\section{Introduction} \label{Intro}

In the Bayesian approach, the whole inference is based on the posterior distribution, which is proportional to the likelihood times the prior (in case of dominated models).  The task of designing a prior distribution $\Pi$ on the parameter $\theta \in \Theta$ is difficult and in large dimensional models cannot be performed in a fully subjective way. It is therefore common practice to consider a family of prior distributions  $\Pi(\cdot  | \lambda)$ indexed by a hyper-parameter $\lambda \in \Lambda$ and to either put a hyper-prior on $\lambda $  (hierarchical approach) or to choose $\lambda $ depending on the data, so that $\lambda = \hat\lambda(\mathbf x_n)$ where $\mathbf x_n$ denotes the collection of observations. The latter is refered to as an empirical Bayes (hereafter EB) approach, see for instance \cite{lehmann:casella:1998}. There are many ways to select the hyper-parameter $\lambda$ based on the data, in particular depending on the nature of the hyper-parameter.  

Recently \cite{petrone:rousseau:scricciolo:14} have studied the asymptotic behaviour of the posterior distribution for general empirical Bayes approaches; they provide conditions to obtain consistency of the EB posterior and in the case of parametric models characterized the behaviour of the maximum marginal likelihood estimator $\hat \lambda_n\equiv\hat \lambda (\mathbf x_n)$ (hereafter MMLE), together with the corresponding posterior distribution $\Pi(\cdot|\hat\lambda_n;\mathbf x_n)$ on $\theta$. They show that  asymptotically the MMLE converges to some oracle value $\lambda_0 $ which maximizes, in $\lambda$, the prior density calculated at the true value $\theta_0$ of the parameter, $\pi(\theta_0|\lambda_0)   = \sup \{\pi(\theta_0|\lambda), \lambda \in \Lambda\}$, where the density is with respect to Lebesgue measure. This cannot be directly extended to the nonparametric setup, since in this case, typically the prior distributions $\Pi(\cdot |\lambda)$, $\lambda \in \Lambda$ are not absolutely continuous with respect to a fixed measure. In the nonparametric setup the asymptotic behaviour of the MMLE and its associated EB  posterior distribution has been studied in the (inverse) white noise model under various families of Gaussian prior processes by \cite{ BelitserEnikeeva,knapikSVZ2012,SzVZ,florens:simoni,szabo:vdv:vzanten:13}, in the nonparametric regression problem with smoothing spline priors \cite{Paulo} and rescaled Brownian motion prior \cite{sniekers:15}, and in a sparse setting by \cite{JS04}. In all these papers, the results have been obtained via explicit expression of the marginal likelihood. Interesting phenomena have been observed in these specific cases. In \cite{SzVZ} an infinite dimensional Gaussian prior was considered with fixed regularity parameter $\a$ and a scaling hyper-parameter $\tau$. Then it was shown that the scaling parameter can compensate for possible mismatch of the base regularity $\a$ of the prior distribution and the regularity $\beta$ of the true parameter of interest up to a certain limit. However, too smooth truth can only be recovered sub-optimally by MMLE empirical Bayes method with rescaled Gaussian priors. In contrast to this in \cite{knapikSVZ2012} it was shown that by substituting the MMLE of the regularity hyper-parameter into the posterior, then one can get optimal contraction rate (up to a $\log n$ factor) for every Sobolev regularity class, simultaneously.

In this paper we are interested in generalizing the specific results of \cite{knapikSVZ2012} (in the direct case), \cite{SzVZ} to more general models, shading light on what is driving the asymptotic behaviour of the MMLE in nonparametric or large dimensional models. We also provide sufficient conditions to derive posterior concentration rates for EB procedures based on the MMLE. Finally we investigate the relationship between the MMLE empirical Bayes and hierarchical Bayes approaches. We show that the hierarchical Bayes posterior distribution (under mild conditions on the hyper-prior distribution) achieves the same contraction rate as the MMLE empirical Bayes posterior distribution. Note that our results do not answer the question whether empirical Bayes and hierarchical Bayes posterior distributions are strongly merging, which is certainly of interest, but would require typically a much more precise analysis of the posterior distributions.

More precisely, set  $\mathbf x_n$ the vector of observations and assume that conditionally on some parameter $\theta \in \Theta$, $\mathbf x_n$ is distributed according to $P_\theta^n$ with density $p_\theta^n$ with respect to some given measure $\mu$.  Let $\Pi(\cdot |\lambda), \, \lambda \in \Lambda $ be a  family of prior distributions on $\Theta$. Then the associated posterior distributions are equal to
$$ \Pi( B | \mathbf x_n; \lambda )  = \frac{ \int_B p_\theta^n(\mathbf x_n) d\Pi(\theta|  \lambda) }{ \bar m(\mathbf x_n |\lambda) }, \quad \bar m(\mathbf x_n |\lambda) = \int_\Theta p_\theta^n(\mathbf x_n) d\Pi(\theta|  \lambda)$$
for all $\lambda \in \Lambda$ and any borelian subset $B $ of $\Theta$. The MMLE is defined as 
\begin{equation} \label{MMLE}
\hat \lambda_n \in \mbox{argmax}_{\lambda \in \Lambda_n} \bar m(\mathbf x_n | \lambda) 
\end{equation}
for some $\Lambda_n\subseteq\Lambda$, and the associated EB posterior distribution by $\Pi( \cdot | \mathbf x_n , \hat \lambda_n)$. We note that in case there are multiple maximizers one can take an arbitrary one. Furthermore from practical consideration (both computational and technical) we allow the maximizer to be taken on the subset $\Lambda_n\subseteq\Lambda$.

Our aim is two fold, first to characterize the asymptotic behaviour of $\hat \lambda_n$ and second to derive posterior concentration rates in such models, i.e. to determine sequences $\epsilon_n$ going to 0 such that 
\begin{equation}\label{posterior-rate}
\Pi\left(\left.\theta\,:\,  d( \theta, \theta_0 ) \leq \epsilon_n \right| \mathbf x_n ,\hat \lambda_n\right) \rightarrow 1
\end{equation}
in probability under $P_{\theta_0}^n$, with $\theta_0 \in \Theta$ and $d(. , . ) $ some appropriate positive loss function on $\Theta$ (typically a metric or semi-metric, see condition (A2) later for more precise description).
There is now a substantial  literature on posterior concentration rates in large or infinite dimensional models initiated by the seminal paper of \cite{ghosal:ghosh:vdv:00}. Most results, however, deal with fully Bayesian posterior distributions, i.e. associated to priors that are not data dependent. The literature on EB posterior concentration rates deals mainly with specific models and specific priors. 

Recently, in  \cite{DRRS:arxiv},  sufficient conditions are provided for deriving  general EB posterior concentration rates when it is known that $\hat \lambda_n $ belongs to a well chosen subset $\Lambda_0$ of $\Lambda$. In essence, their result boils down to controlling 
 $\sup_{\lambda \in \Lambda_0} \Pi\left(\left.  d( \theta, \theta_0 ) > \epsilon_n \right| \mathbf x_n , \lambda\right)$. Hence either $\lambda $ has very little influence on the posterior concentration rate and it is not so important to characterize precisely $\Lambda_0$ or $\lambda$ is influential and it becomes crucial to determine properly $\Lambda_0$. In \cite{DRRS:arxiv}, the authors focus on the former. In this paper we are mainly concerned with the latter, with $\hat \lambda_n $ the MMLE. Since the MMLE is an implicit estimator (as opposed to the moment estimates considered in \cite{DRRS:arxiv}) the main difficulty here is to understand what the set $\Lambda_0$ is. 

We show in this paper that $\Lambda_0$ can be characterized  roughly as 
 $$ \Lambda_0 = \{ \lambda:\, \epsilon_{n}(\lambda) \leq M_n \epsilon_{n,0}\}$$
 for any sequence $M_n$ going to infinity and with 
  $ \epsilon_{n,0} = \inf \{ \epsilon_{n}(\lambda) ; \, \lambda \in \Lambda_n\}$ and $\epsilon_n(\lambda)$  satisfying 
   \begin{equation}\label{rate}
    \Pi\left( \left.\|\theta - \theta_0\| \leq K\epsilon_{n}(\lambda) \right| \lambda\right) = e^{-n \epsilon_{n}^2(\lambda)},
    \end{equation}
  with $(\Theta , \| \cdot \|)$ a Banach space and for some large enough constant $K$ (in the notation we omitted the dependence of $\eps_n(\lambda)$ on $K$ and $\theta_0$).
We then prove that the concentration rate of the MMLE empirical Bayes posterior distribution is of order  $O(M_n \epsilon_{n,0})$. We also show that the preceding rates are sharp, i.e. the posterior contraction rate is bounded from below by $\delta_n\eps_{n,0}$ (for arbitrary $\delta_n=o(1)$). Hence our results reveal the exact posterior contraction rates for every individual $\theta_0\in\Theta$.  Furthermore, we also show that the hierarchical Bayes method behaves similarly, i.e. the hierarchical posterior has the same upper ($M_n\eps_{n,0}$) and lower ($\delta_n\eps_{n,0}$) bounds on the contraction rate for every $\theta_0\in\Theta$ as the MMLE empirical Bayes posterior.

Our aim is not so much to advocate the use of the MMLE empirical Bayes approach, but rather to understand its behaviour. Interestingly, our results show that it is driven by the choice of the prior family $\{\Pi(\cdot | \lambda), \lambda \in \Lambda)\}$ in the neighbourhood of the true parameter $\theta_0$. This allows to determine a priori which family of prior distributions will lead to well behaved MMLE empirical Bayes posteriors and which won't. 
In certain cases, however, the computation of the MMLE is very challenging. Therefore it would be interesting to investigate other type of estimators for the hyper-parameters like the cross validation estimator. At the moment there is only a limited number of papers on this topic and only for specific models and priors, see for instance \cite{svz3,sniekers:15}.

These results are summarized in Theorem \ref{thm:main}, in Corollary \ref{thm: contraction}, and in Theorem \ref{thm: hierarchical}, in Section \ref{mainresult}. Then three different types of priors on $\Theta = \ell_2 = \{(\theta_j)_{j \in \mathbb N};\, \sum_j \theta_j^2 < +\infty \}$ are studied, for which upper bounds on $\epsilon_n(\lambda)$ are given in Section \ref{sec:seq}. We apply these results to three different sampling models: the Gaussian white noise, the regression and the estimation of the density based on iid data models in Sections \ref{sec:regression} and \ref{sec:density}.  Proofs are postponed to Section \ref{sec:proof}, to the appendix for those concerned with the determination of $\epsilon_n(\lambda)$ and to the Supplementary material \cite{rousseau:szabo:15:supp}

\subsection{Notations and setup} \label{sec:notation}
We assume that the observations $\mathbf x_n \in \mathcal X_n$ (where $\mathcal X_n$ denotes the sample space) are distributed according to a distribution $P_\theta^n$ (they are not necessarily i.i.d.), with $\theta \in \Theta$, where  $(\Theta, \| \cdot \|)$ is a Banach space. We denote by $\mu$  a dominating measure and by $p_\theta^n$ and $E_{\theta}^n$ the corresponding  density and expected value of $P_\theta^n$, respectively. We consider the family of prior distributions $\{\Pi(\cdot | \lambda), \, \lambda \in \Lambda\}$ on $\Theta$ with 
$\Lambda \subset \RR^d$ for some $d\geq 1$ and we denote by $ \Pi( \cdot | \mathbf x_n; \lambda )$ the associated posterior distributions. 

Throughout the paper $K( \theta_0, \theta)$ denotes the Kullback-Leibler divergence between $P_{\theta_0}^n $ and $P_\theta^n$ for all $\theta, \theta_0\in \Theta$ while $V_2(\theta_0,\theta)$ denotes the centered second moment of the log-likelihood:
\begin{equation*}
\begin{split}
K( \theta_0, \theta) &= \int_{\mathcal X_n} p_{\theta_0}^n (\mathbf x_n) \log \left( \frac{ p_{\theta_0}^n}{ p_{\theta}^n}(\mathbf x_n) \right)d\mu(\mathbf x_n) , \\
V_2(\theta_0, \theta)  &= E_{\theta_0}^n \left( \left| \ell_n(\theta_0)-\ell_n(\theta) - K(\theta_0, \theta)\right|^2 \right) 
\end{split}
\end{equation*} 
with 
$\ell_n(\theta) = \log p_{\theta}^n (\mathbf x_n ) $.
As in \cite{ghosal:vdv:07}, we define the Kullback-Leibler neighbourhoods of $\theta_0$ as 
\begin{equation*}
B( \theta_0, \epsilon , 2) = \{ \theta; K(\theta_0, \theta) \leq n\epsilon^2, \, V_2(\theta_0, \theta) \leq n\epsilon^2\}
\end{equation*}
and note that in the above definition $V_2(\theta_0, \theta) \leq n\epsilon^2$ can be replaced by $V_2(\theta_0, \theta) \leq Cn\epsilon^2$ for any positive constant $C$ without changing the results.

For any subset $A \subset \Theta$ and $\epsilon>0$, we denote $\log N( \epsilon, A, d(\cdot,\cdot))$ the $\epsilon $ - entropy of $A$ with respect to the (pseudo) metric $d(\cdot,\cdot)$, i.e. the logarithm of the covering number of $A$ by $d(\cdot,\cdot) $ balls of radius $\epsilon$. 

We also write 
 $$m(\mathbf x_n|\lambda) = \frac{ \bar m(\mathbf x_n|\lambda) }{ p_{\theta_0}^n ( \mathbf x_n) } =  \frac{ \int_\Theta p_\theta^n ( \mathbf x_n ) d\Pi(\theta| \lambda)  }{ p_{\theta_0}^n ( \mathbf x_n) }. $$

For any bounded function $f$, $\|f\|_\infty = \sup_{x}|f(x)|$ and if $\phi$ denotes a countable collection of functions $(\phi_i, i\in \NN) $, then $\|\phi\|_\infty = \max_i \|\phi_i\|_\infty$. If the function is integrable then $\|f\|_1$ denotes its $L_1$ norm while $\|f\|_2$ its $L_2$ norm and if $\theta \in \ell_r = \{ \theta = (\theta_i)_{i\in \NN} , \sum_i |\theta_i|^r<+\infty\}$, with $r\geq 1$, $\|\theta\|_r = (\sum_i |\theta_i|^r)^{1/r}$. 

Throughout the paper $x_n \lesssim y_n$ means that there exists a constant $C$ such that for $n$ large enough $x_n \leq Cy_n$, similarly with $x_n \gtrsim y_n$ and $x_n \asymp y_n$ is equivalent to $y_n \lesssim x_n \lesssim y_n$. For equivalent (abbreviated) notation we use the symbol $\equiv$.

\section{ Asymptotic behaviour of the MMLE, its associated posterior distribution and the hierarchical Bayes method} \label{mainresult}

Although the problem can be formulated as a classical parametric maximum likelihood estimation problem, since $\lambda $ is finite dimensional, its study is more involved than the usual regular models due to the complicated nature of the  marginal likelihood. Indeed $m(\mathbf x_n|\lambda)$ is an integral over an infinite (or large) dimensional space.

 For $\theta_0 \in \Theta$ denoting the true parameter, define the sequence  $\epsilon_{n}(\lambda)\equiv\eps_{n}(\lambda, \theta_0,K)$ as
\begin{equation}\label{def: eps}
\Pi(\,\theta\,:\, \| \theta - \theta_0\| \leq K\epsilon_{n}(\lambda) | \lambda ) = e^{-n\epsilon_{n}(\lambda)^2 }, 
\end{equation}
for some positive parameter $K>0$. 
If the cumulative distribution function of $\|\theta- \theta_0\|$ under $\Pi( \cdot |\lambda)$ is not continuous, then the definition of $\epsilon_n(\lambda)$ can be replaced by
\begin{equation}\label{def:eps:2}
\tilde{c}_0^{-1}n\epsilon_{n}(\lambda)^2\leq -\log \Pi(\,\theta\,:\, \| \theta - \theta_0\| \leq K\epsilon_{n}(\lambda) | \lambda ) \leq \tilde{c}_0 n\epsilon_{n}(\lambda)^2,
\end{equation}
for some $\tilde{c}_0\geq 1$ under the assumption that such a sequence $\eps_n(\lambda)$ exists.

Roughly speaking, under the assumptions stated below, $\log m(\mathbf x_n|\lambda) \asymp n \epsilon_n^2(\lambda) $ and  {$\epsilon_{n}(\lambda)$ is the posterior concentration rate associated to the prior} $\Pi(\cdot |\lambda) $ and the best possible  (oracle) posterior concentration rate over $\lambda \in \Lambda_{n}$ is denoted 
\begin{align*}
\eps_{n,0}^2=\inf_{\lambda\in\Lambda_{n}} \{\epsilon_{n}(\lambda)^2:\, \eps_{n}(\lambda)^2\geq m_n (\log n)/n\}\vee m_n (\log n)/n, 
\end{align*}
with any sequence $m_n$ tending to infinity.

With the help of the oracle value $\eps_{n,0}$ we define a set of hyper-parameters with similar properties, as:
\begin{align}
\Lambda_0\equiv\Lambda_0(M_n)\equiv\Lambda_{0,n}(K,\theta_0,M_n)=\{\lambda\in\Lambda_n:\, \epsilon_{n}(\lambda)\leq M_n \eps_{n,0}\},\label{def: Lambda0}
\end{align}
with any sequence $M_n$ going to infinity. We show that under general (and natural) assumptions the marginal maximum likelihood estimator $\hat{\lambda}_n$ belongs to the set $\Lambda_0$ with probability tending to one, for some constant $K>0$ large enough.  The parameter $K$ provides extra flexibility to the approach and simplifies the proofs of the upcoming conditions in certain examples.
In practice at least in the examples we have studied) the constant $K$ essentially modifies $\epsilon_{n}(\lambda)$ by a multiplicative constant and thus does not modify the final posterior concentration rate, nor the set $\Lambda_0$ since $M_n$ is any sequence going to infinity.  Note that our results are only meaningful in cases where $\epsilon_n(\lambda)$ defined by \eqref{def:eps:2} vary with $\lambda$.

 We now give general conditions under which the MMLE is inside of the set $\Lambda_0$ with probability going to 1 under $P_{\theta_0}^n$. Using \cite{DRRS:arxiv}, we will then deduce  that the concentration rate of the  associated MMLE empirical Bayes posterior distribution is bounded by $M_n \epsilon_{n,0}$. 

Following \cite{petrone:rousseau:scricciolo:14} and \cite{DRRS:arxiv} we construct
for all $\lambda,\lambda'\in\Lambda_n$ a transformation $\psi_{\lambda,\lambda'}:\,\Theta\mapsto\Theta$ such that if $\theta\sim\Pi(\cdot|\lambda)$ then $\psi_{\lambda,\lambda'}(\theta)\sim\Pi(\cdot|\lambda')$ and for a given sequence $u_n\rightarrow0$ we introduce the notation
\begin{align}
 q_{\lambda,n}^{\theta}(\mathbf x_n)= \sup_{\rho(\lambda,\lambda')\leq u_n} p^n_{\psi_{\lambda,\lambda'}(\theta)}(\mathbf x_n)\label{def: QLambda}
\end{align}
where $\rho:\,\Lambda_n\times\Lambda_n\rightarrow \mathbb{R}^+$ is some loss function
and $Q_{\lambda,n}^{\theta}$ the associated measure. Denote by $N_n(\Lambda_0),N_n(\Lambda_n\setminus\Lambda_0)$, and $N_n(\Lambda_n)$ the covering number of $\Lambda_0,\Lambda_n\setminus\Lambda_0$ and $\Lambda_n$ by balls of radius $u_n$, respectively, with respect of the loss function $\rho$. 

We consider the following set of assumptions to bound $ \sup_{\lambda \in \Lambda_n\setminus\Lambda_0} m(\mathbf x_n|\lambda) $ from above. 
\begin{itemize}
\item (A1) There exists $N>0$  such that for all $\lambda \in  \Lambda_n\setminus\Lambda_0$ and $n\geq N$, there exists  $\Theta_n(\lambda)\subset \Theta $ 
\begin{equation}\label{cond: Q}
  \sup_{\{\|\theta - \theta_0\|\leq K\epsilon_n(\lambda)\}\cap \Theta_n(\lambda)  }\frac{\log Q_{\lambda,n}^{\theta}(\mathcal X_n)}{n\epsilon_{n}(\lambda)^2}  =o(1), 
\end{equation}
and such that
 \begin{equation}\label{cond: complement}
  \int_{\Theta_n(\lambda)^c }  Q_{\lambda,n}^{\theta}(\mathcal X_n) d\Pi(\theta|\lambda) \leq e^{- w_n^2 n \epsilon_{n,0}^2},
   \end{equation}
for some positive sequence $w_n$ going to infinity. 
\item (A2) [tests]  There exists  $0< \zeta,c_1 < 1$ such that for all $\lambda \in \Lambda_n\setminus\Lambda_0$ and all $\theta \in \Theta_n(\lambda)$, there exist tests $\phi_n(\theta)$ such that 
\begin{equation}\label{cond: test}
 E_{\theta_0}^n\phi_{n}(\theta) \leq e^{ -c_1 n d^2(\theta, \theta_0) } , \quad \sup_{d(\theta, \theta') \leq \zeta d(\theta, \theta_0)} Q_{\lambda,n}^{\theta'}(1 - \phi_{n}(\theta) ) \leq e^{-c_1 n d^2(\theta, \theta_0) }, 
\end{equation}
where $d(\cdot,\cdot)$ is a semi-metric satisfying 
\begin{equation} \label{change:distance} 
\Theta_n(\lambda) \cap \{ \|\theta -\theta_0\| >K\epsilon_n(\lambda) \} \subset \Theta_n(\lambda) \cap \{ d( \theta, \theta_0) > c(\lambda) \epsilon_n(\lambda) \}
\end{equation}
for some $c(\lambda) \geq w_n \epsilon_{n,0} /\epsilon_{n}(\lambda)$ and
\begin{equation} \label{cond: entropy} 
\log N( \zeta u , \{ u \leq d(\theta, \theta_0 )\leq 2 u \}\cap \Theta_n(\lambda) , d(\cdot , \cdot) ) \leq c_1 n u^2/2 
\end{equation}
 for all $u \geq c(\lambda)\epsilon_{n}(\lambda)$.
\end{itemize}

\begin{rem}
We note that we can weaken $\eqref{cond: Q}$ to
\begin{equation*}
  \sup_{\{\|\theta - \theta_0\|\leq \epsilon_n(\lambda)\}\cap \Theta_n(\lambda)  } Q_{\lambda,n}^{\theta}(\mathcal X_n) \leq e^{cn\epsilon_n^2(\lambda)}, 
\end{equation*}
for some positive constant $c<1$ in case the cumulative distribution of $\|\cdot - \theta_0\|$ under $\Pi(\cdot|\lambda)$ is continuous and hence the definition $\eqref{def: eps}$ is meaningful.
\end{rem}

Conditions \eqref{cond: Q} and \eqref{cond: complement} imply that we can control the small perturbations  of the likelihood $p_{\psi_{\lambda, \lambda'}(\theta)}^n(\mathbf x_n) $ due to the change of measures $\psi_{\lambda,\lambda'}$ and are similar to those used in \cite{DRRS:arxiv}. They allow us  to control $m(\mathbf x_n|\lambda)$ uniformly over $\Lambda_n\setminus\Lambda_0$. They are rather weak conditions since $u_n$ can be chosen very small. In \cite{DRRS:arxiv}, the authors show that they hold even with complex priors such as nonparametric mixture models. Assumption (A2) \eqref{cond: test}, together with \eqref{cond: entropy} have been verified in many contexts, with the difference that here the tests need to be performed with respect to the perturbed likelihoods $q_{\lambda,n}^{\theta}$. Since the $u_n$ - mesh of $\Lambda_n\setminus\Lambda_0$ can be very fine, these perturbations can be well controlled over the sets $\Theta_n(\lambda)$, see for instance \cite{DRRS:arxiv} in the context of density estimation or intensity estimation of Aalen point processes. The interest of the above conditions is that they are very similar to standard conditions considered in the posterior concentration rates literature, starting with \cite{ghosal:ghosh:vdv:00} and \cite{ghosal:vdv:07}, so that there is a large literature on such types of conditions which can be applied in the present setting. Therefore, the usual variations on these conditions can be considered. For instance an alternative condition to (A2) is:

(A2 bis)  There exists  $0< \zeta < 1$ such that for all $\lambda \in \Lambda_n\setminus\Lambda_0$ and all $\theta \in \Theta_n(\lambda)$, there exist tests $\phi_n(\theta)$ such that \eqref{cond: test} is verified and for all $j \geq K$, writing 
$$B_{n,j}(\lambda) = \Theta_n(\lambda) \cap \{ j  \epsilon_n(\lambda) \leq \|\theta -\theta_0\|  < (j+1)\epsilon_n(\lambda) \},$$
then
 \begin{equation*}
B_{n,j}(\lambda)  \subset \Theta_n(\lambda) \cap \{ d( \theta, \theta_0) > c(\lambda,j) \epsilon_n(\lambda) \}
\end{equation*}
with $$\sum_{j\geq K} \exp\left( -\frac{ c_1}{2} n c(\lambda,j)^2  \epsilon_n(\lambda)^2 \right)  \lesssim e^{ - n w_n^2 \epsilon_{n,0}^2 } $$
 and 
\begin{equation*}
\log N( \zeta c(\lambda,j) \epsilon_n(\lambda), B_{n,j}(\lambda) , d(\cdot,\cdot)) \leq \frac{c_1  n c(\lambda,j)^2 \epsilon_{n}(\lambda)^2}{2}. 
\end{equation*}

\vspace{0.3 cm}
 Here the difficulty lies in the comparison between the metric $\| \cdot \|$ of the Banach space and the testing distance $d(\cdot,\cdot)$, in condition \eqref{change:distance}. Outside the white noise model, where the Kullback and other moments of the likelihood ratio are directly linked to the $L_2$ norm on $\theta- \theta_0$, such comparison may be non trivial.  In \citet{vvvz08}, the prior had some natural Banach structure and norm, which was possibly different to the Kullback-Leibler and the testing distance $d(\cdot,\cdot)$, but comparable in some sense. Our approach is similar in spirit.  We illustrate this here  in the special cases of regression function and density estimation under different families of priors, see Sections \ref{sec:regression} and \ref{dens:loglinear}.
In Section \ref{sec:hist} we use a prior which is not so much driven by a Banach structure and the norm $\| \cdot \|$ is replaced by the Hellinger distance. Hence in full generality $\| \cdot \|$ could be replaced by any metric, for instance the testing metric $d(\cdot,\cdot)$, as long as the rates $\epsilon_n(\lambda)$ can be computed.

The following assumption is used to bound from below $\sup_{\lambda \in \Lambda_0} m(\mathbf x_n|\lambda)$ 
\begin{itemize}
\item (B1) There exist $\tilde \Lambda_0\subset \Lambda_0$ and  $M_2\geq 1$ such that for every $\lambda \in \tilde \Lambda_0$ 
 $$ \{ \|\theta - \theta_0\| \leq K \epsilon_{n}(\lambda)\} \subset B( \theta_0, M_{2}\epsilon_{n}(\lambda) , 2) ,$$
 and such that there exists $\lambda_0 \in \tilde \Lambda_0$ for which $\epsilon_n(\lambda_0) \leq M_1 \epsilon_{n,0}$ for some positive $M_1$.
\end{itemize}

\begin{rem}\label{rem:B1}

A variation of (B1) can be considered where $ \{ \|\theta - \theta_0\| \leq K \epsilon_{n}(\lambda)\}$ is replaced by 
$ \{ \|\theta - \theta_0\| \leq K \epsilon_{n}(\lambda)\} \cap \tilde \Theta_n(\lambda)$ where $\tilde \Theta_n(\lambda) \subset \Theta$ verifies 
$$ \Pi\left(\left. \{ \|\theta - \theta_0\| \leq K \epsilon_{n}(\lambda)\} \cap \tilde \Theta_n(\lambda) \right| \lambda\right) \gtrsim e^{-K_2 n \epsilon_{n}^2(\lambda) },$$
for some $K_2\geq 1$. This is used in Section \ref{sec:density}.
\end{rem}

\subsection{ Asymptotic behaviour of the MMLE and empirical Bayes posterior concentration rate} \label{sec:mmle}
We now present the two main results of this Section, namely : asymptotic behaviour of the MMLE and concentration rate of the resulting empirical Bayes posterior. We first describe the asymptotic behaviour of $\hat \lambda_n$.
\begin{thm}\label{thm:main}
Assume that there exists $K>0$ such that conditions (A1),(A2), and (B1) hold with $w_n =o(M_n)$, then if $\log N_n (\Lambda_n\setminus\Lambda_0)= o(n w_n^2 \epsilon_{n,0}^2 )$,
\begin{align*}
\lim_{n\rightarrow\infty}P_{\theta_0}^n\left( \hat{\lambda}_n\in \Lambda_0\right)= 1.
\end{align*}
\end{thm}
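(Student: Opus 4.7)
My plan is to exploit the fact that $\hat\lambda_n$ is the argmax of $\lambda\mapsto m(\mathbf x_n|\lambda)$ over $\Lambda_n$, and show that with $P_{\theta_0}^n$-probability tending to one,
\[
\sup_{\lambda\in\Lambda_n\setminus\Lambda_0} m(\mathbf x_n|\lambda)\;<\;m(\mathbf x_n|\lambda_0)
\]
for the oracle $\lambda_0\in\tilde\Lambda_0\subset\Lambda_0$ provided by (B1); this immediately forces $\hat\lambda_n\in\Lambda_0$. The lower bound at $\lambda_0$ will come from the standard Ghosal--van der Vaart evidence lemma, while the upper bound will require a three-piece decomposition of the marginal likelihood combined with a union bound over a $u_n$-net of $\Lambda_n\setminus\Lambda_0$.

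For the lower bound, (B1) gives $\{\|\theta-\theta_0\|\le K\epsilon_n(\lambda_0)\}\subset B(\theta_0,M_2\epsilon_n(\lambda_0),2)$, so by \eqref{def: eps}
\[
\Pi\bigl(B(\theta_0,M_2\epsilon_n(\lambda_0),2)\,\big|\,\lambda_0\bigr)\;\geq\;e^{-n\epsilon_n^2(\lambda_0)}.
\]
The evidence lemma then yields, on an event of $P_{\theta_0}^n$-probability at least $1-(Cn\epsilon_{n,0}^2)^{-1}\to 1$, the bound $\log m(\mathbf x_n|\lambda_0)\ge -D\,n\epsilon_n^2(\lambda_0)\ge -DM_1^2\,n\epsilon_{n,0}^2$ for an explicit constant $D=1+(1+c)M_2^2$.

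For the upper bound, I discretise $\Lambda_n\setminus\Lambda_0$ at $\rho$-scale $u_n$ by a net $\Lambda_n^*$ of cardinality $N_n(\Lambda_n\setminus\Lambda_0)$. For any $\lambda'$ within $\rho$-distance $u_n$ of $\lambda\in\Lambda_n^*$, the change-of-variable $\theta\leadsto\psi_{\lambda,\lambda'}(\theta)$ and \eqref{def: QLambda} give
\[
m(\mathbf x_n|\lambda')\;\leq\;\int_{\Theta}\frac{q_{\lambda,n}^{\theta}(\mathbf x_n)}{p_{\theta_0}^n(\mathbf x_n)}\,d\Pi(\theta|\lambda),
\]
and I split this integral into $S_1(\lambda)$ over $\Theta_n(\lambda)^c$, $S_2(\lambda)$ over $\Theta_n(\lambda)\cap\{\|\theta-\theta_0\|\le K\epsilon_n(\lambda)\}$, and $S_3(\lambda)$ over $\Theta_n(\lambda)\cap\{\|\theta-\theta_0\|>K\epsilon_n(\lambda)\}$. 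Taking $E_{\theta_0}^n$ and Fubini, \eqref{cond: complement} controls $E_{\theta_0}^n S_1(\lambda)\le e^{-w_n^2 n\epsilon_{n,0}^2}$, and \eqref{cond: Q} combined with $\Pi(\|\theta-\theta_0\|\le K\epsilon_n(\lambda)|\lambda)=e^{-n\epsilon_n^2(\lambda)}$ controls $E_{\theta_0}^n S_2(\lambda)\le e^{-n\epsilon_n^2(\lambda)/2}\le e^{-M_n^2 n\epsilon_{n,0}^2/2}$. The piece $S_3(\lambda)$ is handled by constructing a test $\hat\phi_\lambda$ through dyadic peeling of the annuli $\{2^{j}c(\lambda)\epsilon_n(\lambda)\le d(\theta,\theta_0)<2^{j+1}c(\lambda)\epsilon_n(\lambda)\}\cap\Theta_n(\lambda)$ using the entropy bound \eqref{cond: entropy} and the local tests \eqref{cond: test}. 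Summing the geometric series gives $E_{\theta_0}^n\hat\phi_\lambda\lesssim e^{-c_1 n c(\lambda)^2\epsilon_n^2(\lambda)/2}\le e^{-c_1 n w_n^2\epsilon_{n,0}^2/2}$ and $Q_{\lambda,n}^{\theta}(1-\hat\phi_\lambda)\lesssim e^{-c_1' n w_n^2\epsilon_{n,0}^2}$ uniformly on the alternative, using \eqref{change:distance} and $c(\lambda)\epsilon_n(\lambda)\ge w_n\epsilon_{n,0}$; on $\{\hat\phi_\lambda=0\}$, Markov then controls $S_3(\lambda)$ by $e^{-c_1' n w_n^2\epsilon_{n,0}^2/2}$ with high probability.

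A union bound over $\Lambda_n^*$ finishes: thanks to $\log N_n(\Lambda_n\setminus\Lambda_0)=o(nw_n^2\epsilon_{n,0}^2)$, each of the three tail contributions survives as $\exp(-cnw_n^2\epsilon_{n,0}^2)$ uniformly over $\Lambda_n\setminus\Lambda_0$; since $w_n\to\infty$ while $D,M_1$ are fixed, this is strictly smaller than the lower bound $\exp(-DM_1^2 n\epsilon_{n,0}^2)$ for $n$ large, giving $\hat\lambda_n\in\Lambda_0$ with probability tending to one. The main obstacle is the $S_3$-piece: the $u_n$-net perturbations force the tests to be performed against the perturbed likelihoods $Q_{\lambda,n}^{\theta'}$ rather than $P_{\theta'}^n$, and \eqref{change:distance} has to be used delicately to pass from the $\|\cdot\|$-split of $\Theta$ (dictated by the definition of $\epsilon_n(\lambda)$) to the $d$-annuli where the entropy bound \eqref{cond: entropy} and the test bound \eqref{cond: test} live, all while arranging that the separation scale $c(\lambda)\epsilon_n(\lambda)\ge w_n\epsilon_{n,0}$ lines up with the rate $w_n^2 n\epsilon_{n,0}^2$ needed to absorb $\log N_n(\Lambda_n\setminus\Lambda_0)$.
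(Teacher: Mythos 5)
Your proposal is correct and follows essentially the same route as the paper's proof: a lower bound for $m(\mathbf x_n|\lambda_0)$ via (B1) and the standard KL prior-mass evidence lemma, plus a uniform upper bound over a $u_n$-net of $\Lambda_n\setminus\Lambda_0$ obtained by splitting the perturbed marginal into the $\Theta_n(\lambda)^c$ piece (condition \eqref{cond: complement}), the $\|\cdot\|$-ball piece (condition \eqref{cond: Q} plus the small prior mass $e^{-n\epsilon_n^2(\lambda)}$ with $\epsilon_n(\lambda)>M_n\epsilon_{n,0}$), and a tested alternative region handled by peeling the $d$-annuli above scale $c(\lambda)\epsilon_n(\lambda)\ge w_n\epsilon_{n,0}$ with \eqref{cond: test}--\eqref{cond: entropy}, then a union bound absorbed by $\log N_n(\Lambda_n\setminus\Lambda_0)=o(nw_n^2\epsilon_{n,0}^2)$. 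The only differences are cosmetic (dyadic rather than unit-step slicing, and a direct threshold $e^{-cnw_n^2\epsilon_{n,0}^2}$ in place of the paper's intermediate sequence $w_n'=o(M_n^2\wedge w_n^2)$), which are equivalent under $w_n=o(M_n)$.
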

The proof of Theorem \ref{thm:main} is given in Section \ref{pr:thmain}.

Note that in the definition of $\Lambda_0(M_n)$, $M_n$ can be any sequence going to infinity. In the examples we have considered in Section \ref{sec:seq}, $M_n$ can be chosen to increase to infinity arbitrarily slowly. If $\epsilon_n(\lambda)$ is (rate) constant \eqref{thm:main} presents no interest since $\Lambda_0= \Lambda_n$, but if for some $\lambda \neq \lambda'$ the fraction $\epsilon_n( \lambda) /\epsilon_n(\lambda')$ either goes to infinity or to 0, then choosing $M_n $ increasing slowly enough to infinity, Theorem \ref{thm:main} implies that the MMLE converges to a meaningful subset of $\Lambda_n$.  In particular our results are too crude to be informative in the parametric case.  Indeed from \cite{petrone:rousseau:scricciolo:14}, in the parametric non degenerative case $\epsilon_n(\lambda) \asymp \sqrt{(\log n)/n}$ in definition \eqref{def:eps:2} for all $\lambda$ and $\Lambda_0 = \Lambda$. In the parametric degenerative case, where the $\lambda_0$ belongs to the boundary of the set $\Lambda$ then one would have at the limit  $\pi( \cdot | \lambda_0 ) = \delta_{\theta_0}$ corresponding to $\epsilon_n(\lambda_0)= 0 $. So we do recover the oracle parametric value of  \cite{petrone:rousseau:scricciolo:14}.
However for the condition $\log N_n (\Lambda_n\setminus\Lambda_0)= o(n w_n^2 \epsilon_{n,0}^2 )$ to be valid one would require $w_n^2 n\epsilon_{n,0}^2 \asymp \log n$, corresponding essentially to $\Lambda_0 $ being the whole set.

Using the above theorem, together with  \cite{DRRS:arxiv}, we obtain the associated posterior concentration rate, controlling uniformly 
$\Pi( d( \theta_0, \theta)\leq \epsilon_n | \mathbf x_n, \lambda ) $ over $\lambda \in \Lambda_0$, with $\epsilon_n = M_n \epsilon_{n,0}$.
To do so we consider the following additional assumptions:
\begin{itemize}
\item (C1)
For every $c_2>0$ there exists constant $N>0$ such that for all $\lambda \in  \Lambda_0$ and $n\geq N$,  there exists $\Theta_n(\lambda) $ satisfying
 \begin{equation}\label{cond:comp:bis}
  \sup_{\lambda \in \Lambda_0} \int_{\Theta_n(\lambda)^c }  Q_{\lambda,n}^{\theta}(\mathcal X_n) d\Pi(\theta|\lambda) \leq e^{-c_2 n \epsilon_{n,0}^2}
   \end{equation}
  \item (C2)  
  There exists  $0< c_1,\zeta < 1$ such that for all $\lambda \in \Lambda_0$ and all $\theta \in \Theta_n(\lambda)$, there exist tests $\phi_n(\theta)$ satisfying \eqref{cond: test} and  \eqref{cond: entropy}, where \eqref{cond: entropy} is supposed to hold for any $u\geq M M_n \epsilon_{n,0}$ for some $M>0$. 
  \item (C3) There exists $C_0>0 $ such that for all $\lambda \in \Lambda_0$, for all $\theta \in \{d(\theta_0, \theta)\leq M_n \epsilon_{n,0}\}\cap \Theta_n(\lambda)$, 
  $$\sup_{\rho(\lambda,\lambda')\leq u_n} d( \theta, \psi_{\lambda, \lambda'}(\theta) )\leq C_0M_n \epsilon_{n,0}.$$
\end{itemize}

\begin{Cor}\label{thm: contraction}
Assume that $\hat \lambda_n \in \Lambda_0$ with probability going to 1 under $P_{\theta_0}^n$ and that assumptions (C1)-(C3) and (B1) are satisfied, then if $\log N_n(\Lambda_0) \leq O(n\epsilon_{n,0}^2)$, there exists $M>0$ such that
\begin{equation} \label{post:contr}
E_{\theta_0}^n\Pi\Big( \left. \theta:\,  d(\theta,\theta_0) \geq M M_n\eps_{n,0} \right| \mathbf x_n; \hat{\lambda}_n\Big)=o(1).
\end{equation}
\end{Cor}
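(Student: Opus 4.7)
The plan is to reduce the claim to the general empirical Bayes posterior concentration framework of \cite{DRRS:arxiv}. On the $P_{\theta_0}^n$-probability one event $\{\hat\lambda_n\in\Lambda_0\}$ one has
$$\Pi\bigl(d(\theta,\theta_0)\ge MM_n\epsilon_{n,0}\,|\,\mathbf x_n,\hat\lambda_n\bigr)\le \sup_{\lambda\in\Lambda_0}\Pi\bigl(d(\theta,\theta_0)\ge MM_n\epsilon_{n,0}\,|\,\mathbf x_n,\lambda\bigr),$$
so it suffices to bound this supremum in $P_{\theta_0}^n$-expectation. I would use the standard ratio decomposition $\Pi(A|\mathbf x_n,\lambda)=N_\lambda(A)/\bar m(\mathbf x_n|\lambda)$ with $N_\lambda(A)=\int_A p_\theta^n\,d\Pi(\theta|\lambda)$, controlling denominator and numerator separately.

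For the denominator, the key observation is that $\hat\lambda_n$ maximises $\bar m(\mathbf x_n|\cdot)$ on $\Lambda_n$, so $\bar m(\mathbf x_n|\hat\lambda_n)\ge \bar m(\mathbf x_n|\lambda_0)$ for the distinguished $\lambda_0\in\tilde\Lambda_0$ provided by (B1). Since (B1) combined with \eqref{def: eps} yields $\Pi(B(\theta_0,M_2\epsilon_n(\lambda_0),2)|\lambda_0)\ge e^{-M_1^2 n\epsilon_{n,0}^2}$, the usual evidence lower bound (Lemma 10 of Ghosal--Ghosh--van der Vaart) produces $\bar m(\mathbf x_n|\hat\lambda_n)\ge p_{\theta_0}^n(\mathbf x_n)\,e^{-C_1 n\epsilon_{n,0}^2}$ with $P_{\theta_0}^n$-probability tending to one, for some constant $C_1$. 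For the numerator I would cover $\Lambda_0$ with $N_n(\Lambda_0)$ $\rho$-balls of radius $u_n$ centred at $\lambda_1,\dots,\lambda_{N_n(\Lambda_0)}$. For $\lambda$ in the ball around $\lambda_j$, the fact that $\psi_{\lambda_j,\lambda}$ pushes $\Pi(\cdot|\lambda_j)$ forward to $\Pi(\cdot|\lambda)$, together with (C3) and the triangle inequality for $d$, yields
$$N_\lambda\bigl(\{d(\theta,\theta_0)\ge MM_n\epsilon_{n,0}\}\bigr)\le \int_{\{d(\theta,\theta_0)\ge (M-C_0)M_n\epsilon_{n,0}\}\cap \Theta_n(\lambda_j)}\! q_{\lambda_j,n}^\theta(\mathbf x_n)\,d\Pi(\theta|\lambda_j)+\int_{\Theta_n(\lambda_j)^c}\! q_{\lambda_j,n}^\theta(\mathbf x_n)\,d\Pi(\theta|\lambda_j),$$
whose second summand has $P_{\theta_0}^n$-expectation at most $e^{-c_2 n\epsilon_{n,0}^2}$ by (C1). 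The first is handled by the Ghosal--Ghosh--van der Vaart dyadic shell construction on the annuli $\{2^k(M-C_0)M_n\epsilon_{n,0}\le d(\theta,\theta_0)<2^{k+1}(M-C_0)M_n\epsilon_{n,0}\}\cap\Theta_n(\lambda_j)$, using \eqref{cond: test} and \eqref{cond: entropy} to build aggregated tests $\phi_j$ with $E_{\theta_0}^n\phi_j$ and $\int(1-\phi_j)q_{\lambda_j,n}^\theta\,d\Pi(\theta|\lambda_j)$ both of order $e^{-c n M^2 M_n^2\epsilon_{n,0}^2}$.

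Taking $\phi=\max_j\phi_j$ and combining with the denominator bound, I would obtain
$$E_{\theta_0}^n\sup_{\lambda\in\Lambda_0}\Pi\bigl(d(\theta,\theta_0)\ge MM_n\epsilon_{n,0}\,|\,\mathbf x_n,\lambda\bigr)\lesssim N_n(\Lambda_0)\,e^{(C_1-cM^2M_n^2)n\epsilon_{n,0}^2}+e^{(C_1-c_2)n\epsilon_{n,0}^2}+o(1),$$
which tends to zero once $M$ is chosen large enough and $c_2>C_1$ (permitted by (C1)), since $\log N_n(\Lambda_0)=O(n\epsilon_{n,0}^2)$ and $M_n\to\infty$. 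The main obstacle will be the numerator shell argument: the tests in \eqref{cond: test} apply only to perturbed laws $Q_{\lambda,n}^{\theta'}$ with $d(\theta,\theta')\le \zeta d(\theta,\theta_0)$, so one has to verify that, when integrating against $q_{\lambda_j,n}^\theta\,d\Pi(\theta|\lambda_j)$ on each dyadic annulus, the $\psi_{\lambda_j,\lambda}$-displacement controlled by (C3) stays inside the testing cap $\zeta d(\theta,\theta_0)$ — this is precisely the place where the threshold $u\ge MM_n\epsilon_{n,0}$ in \eqref{cond: entropy} of (C2) is used.
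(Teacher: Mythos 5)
Your overall architecture matches the paper's proof: lower-bound the denominator through the maximizing property $m(\mathbf x_n|\hat\lambda_n)\geq m(\mathbf x_n|\lambda_0)$ together with the evidence bound at the single point $\lambda_0$ supplied by (B1), and control the numerator uniformly over a $u_n$-net of $\Lambda_0$ via the change of measure $\psi_{\lambda_i,\lambda}$, condition (C1) for the sets $\Theta_n(\lambda_i)^c$, and aggregated tests built from (C2)--(C3) on shells, finally choosing $M$ large and exploiting $\log N_n(\Lambda_0)=O(n\epsilon_{n,0}^2)$ and $M_n\to\infty$. This is essentially the argument in Section \ref{pr:thm-contraction} (which in turn defers the last term to the proof of Theorem 1 of \cite{DRRS:arxiv}).

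There is, however, a genuine logical slip in your opening reduction. You claim that on $\{\hat\lambda_n\in\Lambda_0\}$ it "suffices to bound $\sup_{\lambda\in\Lambda_0}\Pi\bigl(d(\theta,\theta_0)\geq MM_n\epsilon_{n,0}\mid\mathbf x_n,\lambda\bigr)$ in expectation", and you then control the denominator only at $\hat\lambda_n$. But the supremum of the posterior probabilities involves the denominator $\bar m(\mathbf x_n|\lambda)$ at \emph{every} $\lambda\in\Lambda_0$, and the inequality $\bar m(\mathbf x_n|\hat\lambda_n)\geq\bar m(\mathbf x_n|\lambda_0)$ says nothing about these. A uniform lower bound $\inf_{\lambda\in\Lambda_0}\bar m(\mathbf x_n|\lambda)$ is not available under the stated hypotheses: (B1) gives the Kullback--Leibler inclusion only on $\tilde\Lambda_0$ and the existence of one good $\lambda_0$, and the paper explicitly points out that the MMLE structure is used precisely so that "no uniform lower bound in the form $\inf_{\lambda\in\Lambda_0}m(\mathbf x_n|\lambda)$ is required". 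So your reduction passes to a statement that is (at best) unprovable from the assumptions, and your subsequent denominator step does not bound it. The repair is immediate and is what your later steps implicitly do: keep $\Pi(\cdot\mid\mathbf x_n,\hat\lambda_n)=H_n(\hat\lambda_n)/m(\mathbf x_n|\hat\lambda_n)$, take the supremum over $\Lambda_0$ of the \emph{numerators} $H_n(\lambda)$ only (legitimate on $\{\hat\lambda_n\in\Lambda_0\}$), and handle the denominator once and for all via $m(\mathbf x_n|\hat\lambda_n)\geq m(\mathbf x_n|\lambda_0)\geq e^{-c_2 n\epsilon_{n,0}^2}$ with probability tending to one, as in \eqref{eq: LB_like}; note also that (C1) is stated for every $c_2>0$, which is what allows the constant in the denominator bound to be absorbed. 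With that correction your argument coincides with the paper's.
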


A consequence of Corollary \ref{thm: contraction} is in terms of frequentist risks of Bayesian estimators. Following \cite{belitser:ghosal:03} one can construct an estimator based on the posterior which  converges  at the posterior concentration rate: $E_{\theta_0}\left[d(\hat \theta, \theta_0) \right] = O( M_n\eps_{n,0})$. Similar results can also be derived for the posterior mean  in case $d(\cdot,\cdot) $ is convex and bounded, and \eqref{post:contr} is of order $O( M_n\eps_{n,0})$, see for instance \cite{ghosal:ghosh:vdv:00}. 

Corollary \ref{thm: contraction} is proved in a similar way to Theorem 1 of \cite{DRRS:arxiv}, apart from the lower bound on the marginal likelihood since here we use the nature of the MMLE which simplifies the computations. The details are presented in Section \ref{pr:thm-contraction}. We can refine the condition on tests (C3) by considering slices as in \cite{DRRS:arxiv}.

Next we provide a lower bound on the contraction rate of the MMLE empirical Bayes posterior distribution. For this we have to introduce some further assumptions. First of all we extend assumption $\eqref{cond: Q}$ to the set $\Lambda_0$.  Let $e : \Theta \times \Theta \rightarrow \mathbb R^+$ be a pseudo-metric and assume that for all $\lambda\in\Lambda_0$  and some $\delta_n$ tending to zero we have
\begin{equation} \label{cond: Q2}
\begin{split}
& \sup_{\{\|\theta - \theta_0\|\leq \epsilon_n(\lambda)\}\cap \Theta_n(\lambda)  } \frac{\log Q_{\lambda,n}^{\theta}(\mathcal X_n)}{n\epsilon_n^2(\lambda)}  =o(1)\\
& \sup_{\lambda \in \Lambda_0} \frac{n\epsilon_{n,0}^2}{-\log \Pi(\,\theta\,:\, e(\theta,\theta_0)\leq 2\delta_n \epsilon_{n,0}|\lambda ) }  = o(1) 
\end{split}
\end{equation}
and consider the modified version of (C3): (C3bis) 
 There exists $C_0>0 $ such that for all $\lambda \in \Lambda_0$, for all $\theta \in \{e(\theta_0, \theta)\leq \delta_n \epsilon_{n,0}\}\cap \Theta_n(\lambda)$, 
  $$\sup_{\rho(\lambda,\lambda')\leq u_n} d( \theta, \psi_{\lambda, \lambda'}(\theta) )\leq C_0\delta_n \epsilon_{n,0}.$$ 



\begin{thm}\label{thm: LB}
Assume that conditions (A1)-(C2)  and (C3bis) together with assumption $\eqref{cond: Q2}$  
 hold.
In case $\log N_n(\Lambda_0)=o(n\eps_{n,0}^2)$  and $\eps_{n,0}^{2}>m_n(\log n)/n$ we get that
\begin{align*}
E_{\theta_0}^n \Pi(\,\theta\,:\, e(\theta,\theta_0)\leq \delta_n\eps_{n,0}|\hat{\lambda}_n,\mathbf x_n )=o(1).
\end{align*}
\end{thm}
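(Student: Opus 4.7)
The plan is to combine the localization of the MMLE provided by Theorem \ref{thm:main} with a ratio argument. Since conditions (A1), (A2) and (B1) are part of the assumptions and $\log N_n(\Lambda_0)=o(n\epsilon_{n,0}^2)$ is dominated by $nw_n^2\epsilon_{n,0}^2$ for any $w_n\to\infty$, Theorem \ref{thm:main} gives $P_{\theta_0}^n(\hat\lambda_n\in\Lambda_0)\to1$. On this event the posterior mass of the small $e$-ball equals $N_n(\hat\lambda_n)/m(\mathbf x_n|\hat\lambda_n)$ with
\begin{equation*}
N_n(\lambda)\,:=\,\int_{\{e(\theta,\theta_0)\leq\delta_n\epsilon_{n,0}\}}\frac{p_\theta^n(\mathbf x_n)}{p_{\theta_0}^n(\mathbf x_n)}\,d\Pi(\theta|\lambda),
\end{equation*}
and the MMLE property gives $m(\mathbf x_n|\hat\lambda_n)\geq m(\mathbf x_n|\lambda_0)$ for the oracle $\lambda_0\in\tilde\Lambda_0$ provided by (B1). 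It is therefore enough to lower-bound $m(\mathbf x_n|\lambda_0)$ and uniformly upper-bound $N_n(\lambda)$ on $\Lambda_0$.

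For the denominator, (B1) guarantees that $\{\|\theta-\theta_0\|\leq K\epsilon_n(\lambda_0)\}\subset B(\theta_0,M_2\epsilon_n(\lambda_0),2)$, so this Kullback--Leibler neighbourhood has $\Pi(\cdot|\lambda_0)$-mass at least $e^{-n\epsilon_n(\lambda_0)^2}\geq e^{-M_1^2 n\epsilon_{n,0}^2}$. Applying the standard Ghosal--van der Vaart lower bound (Lemma 8.1 of \cite{ghosal:ghosh:vdv:00}) to the conditional prior on this neighbourhood produces a finite constant $C_1$ such that $m(\mathbf x_n|\lambda_0)\geq e^{-C_1 n\epsilon_{n,0}^2}$ with $P_{\theta_0}^n$-probability tending to $1$; the condition $\epsilon_{n,0}^2>m_n(\log n)/n$ ensures $n\epsilon_{n,0}^2\to\infty$, which is what makes the lemma applicable.

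For the numerator I cover $\Lambda_0$ by $N_n(\Lambda_0)$ $\rho$-balls of radius $u_n$ with centres $\lambda_1,\dots,\lambda_{N_n(\Lambda_0)}$. Using the change of variable $\theta=\psi_{\lambda_k,\lambda}(\theta')$ with $\theta'\sim\Pi(\cdot|\lambda_k)$ and $\rho(\lambda,\lambda_k)\leq u_n$, the integrand in $N_n(\lambda)$ is dominated by $q_{\lambda_k,n}^{\theta'}/p_{\theta_0}^n$, and the domain is contained in the fixed set $\tilde A_k:=\bigcup_{\rho(\lambda,\lambda_k)\leq u_n}\{\theta':e(\psi_{\lambda_k,\lambda}(\theta'),\theta_0)\leq\delta_n\epsilon_{n,0}\}$. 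Condition (C3bis) together with the triangle inequality then contains $\tilde A_k\cap\Theta_n(\lambda_k)$ in $\{e(\theta',\theta_0)\leq(C_0{+}1)\delta_n\epsilon_{n,0}\}$ (the factor $C_0+1$ is harmlessly absorbed into the $2$ appearing in \eqref{cond: Q2}). Taking $E_{\theta_0}^n$, Fubini turns $E_{\theta_0}^n[q_{\lambda_k,n}^{\theta'}/p_{\theta_0}^n]$ into $Q_{\lambda_k,n}^{\theta'}(\mathcal X_n)$; splitting according to $\Theta_n(\lambda_k)$ and its complement, (C1) handles the complement by $e^{-c_2 n\epsilon_{n,0}^2}$ for any preselected $c_2$, the first line of \eqref{cond: Q2} bounds $Q_{\lambda_k,n}^{\theta'}(\mathcal X_n)\leq e^{o(n\epsilon_n(\lambda_k)^2)}$ on $\Theta_n(\lambda_k)$, and the second line of \eqref{cond: Q2} bounds $\Pi(\{e(\theta',\theta_0)\leq 2\delta_n\epsilon_{n,0}\}|\lambda_k)\leq e^{-L_n n\epsilon_{n,0}^2}$ with $L_n\to\infty$. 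Combining and multiplying by $N_n(\Lambda_0)=e^{o(n\epsilon_{n,0}^2)}$ yields $E_{\theta_0}^n[\sup_{\lambda\in\Lambda_0}N_n(\lambda)]\leq e^{-L_n'n\epsilon_{n,0}^2}$ for some $L_n'\to\infty$, after which Markov's inequality gives $\sup_{\lambda\in\Lambda_0}N_n(\lambda)\leq e^{-L_n'n\epsilon_{n,0}^2/2}$ with $P_{\theta_0}^n$-probability tending to $1$.

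Assembling the three high-probability events shows that on their intersection
\begin{equation*}
\Pi(\{e(\theta,\theta_0)\leq\delta_n\epsilon_{n,0}\}|\mathbf x_n,\hat\lambda_n)\,\leq\,e^{(C_1-L_n'/2)n\epsilon_{n,0}^2}\,\longrightarrow\,0,
\end{equation*}
and since the posterior mass is uniformly bounded by $1$, dominated convergence yields the claim in $L^1(P_{\theta_0}^n)$. The main obstacle I expect is the localization argument that contains $\tilde A_k\cap\Theta_n(\lambda_k)$ in a fixed $e$-ball: (C3bis) only controls $d(\theta',\psi_{\lambda_k,\lambda}(\theta'))$ and not $e(\theta',\psi_{\lambda_k,\lambda}(\theta'))$, so one must either invoke the implicit comparability between $d$ and $e$ available in the applied settings of the paper, or strengthen the change-of-variable step by tracking $e$-continuity of $\psi_{\lambda_k,\lambda}$ directly. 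A secondary delicate point is the balancing of exponents: the $o(n\epsilon_n(\lambda_k)^2)$ factor coming from the first line of \eqref{cond: Q2} may be as large as $o(M_n^2 n\epsilon_{n,0}^2)$ for $\lambda_k\in\Lambda_0$, so $M_n$ in the definition of $\Lambda_0$ must be taken to tend to infinity slowly enough that $L_n$ dominates it, a choice available since Theorem \ref{thm:main} leaves $M_n$ free as long as $w_n=o(M_n)$.
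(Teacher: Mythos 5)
Your argument is correct and follows essentially the same route as the paper's own proof (Section B.7 of the supplement): lower-bound $m(\mathbf x_n|\hat\lambda_n)$ through the MMLE property together with (B1), cover $\Lambda_0$ by a $u_n$-net, pass to the perturbed measures $Q_{\lambda_i,n}^{\theta}$, enlarge the small ball via (C3bis) and the triangle inequality, and control the resulting pieces with (C1), \eqref{cond: Q2} and the entropy bound $\log N_n(\Lambda_0)=o(n\epsilon_{n,0}^2)$. The two delicate points you flag --- the comparability of $e$ and $d$ in the (C3bis) step, and the balance between the $o(n\epsilon_n(\lambda)^2)$ factor and the small-ball exponent for slowly growing $M_n$ --- are handled equally implicitly in the paper, whose proof simply takes $e=d$, so they are not gaps specific to your argument.
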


Typically $e(.,.)$ will be either $d(\cdot ,\cdot )$ or $\| \cdot \|$. 
The lower bound is proved using the same argument as the one used to bound 
$E_{\theta_0}^n\Big( \Pi(\Theta_n^c |\hat{\lambda}_n,\mathbf x_n ) \Big)$, see Section \ref{pr:thmain} and \ref{pr:thm-contraction}, where $\{d(\theta,\theta_0) \leq\delta_n\eps_{n,0}\}$ plays the same role as $\Theta_n^c$.  We postpone the details of the proof to Section B.7 of the supplementary material \cite{rousseau:szabo:15:supp}.


Theorem \ref{thm:main} describes the asymptotic behaviour of the MMLE $\hat \lambda_n$, via the oracle set $\Lambda_0$, in other words it minimizes $\epsilon_{n}(\lambda)$. The use of the Banach norm is particularly adapted to the case of priors on parameters $\theta = (\theta_i)_{i \in \NN} \in \ell_2$, where the $\theta_i's  $ are assumed independent. This type of priors is studied in  Section \ref{sec:seq}.

\subsection{Contraction rate of the hierarchical Bayes posterior}
In this section we investigate the relation between the MMLE empirical Bayes method and the hierarchical Bayes method. We show that under the preceding assumptions complemented with not too restrictive conditions on the hyper-prior distribution the hierarchical posterior distribution achieves the same convergence rate as the MMLE empirical Bayes posterior. Let us denote by $\tilde\pi(\cdot)$ the density function of the hyper-prior, then the hierarchical prior takes the form
$$\Pi(\cdot)=\int_{\Lambda} \Pi(\cdot|\lambda)\tilde\pi(\lambda)d\lambda.$$
Note that we integrate here over the whole hyper-parameter space $\Lambda$, not over the subset $\Lambda_n\subseteq\Lambda$ used in the MMLE empirical Bayes approach.

Intuitively to have the same contraction rate one would need that the set of probable hyper-parameter values $\Lambda_0$ accumulates enough hyper-prior mass. Let us introduce a sequence $\tilde{w}_n$ satisfying $\tilde{w}_n=o(M_n\wedge w_n)$ and denote by $\Lambda_0(\tilde{w}_n)$ the set defined in $\eqref{def: Lambda0}$ with $\tilde{w}_n$.
\begin{itemize} 
\item (H1) Assume that $\tilde \Lambda_0 \subset \Lambda_0(\tilde{w}_n)$ and for some sufficiently large $\bar{c}_0>0$ there exists $N>0$ such that for all $n\geq N$ the hyper-prior satisfies
$$\int_{\tilde \Lambda_0 }\tilde\pi(\lambda)d\lambda\gtrsim e^{- n\eps_{n,0}^2}.$$
and 
$$\int_{\Lambda_n^c}\tilde\pi(\lambda)d\lambda\leq e^{-\bar{c}_0 n\eps_{n,0}^2}.$$
\item (H2) Uniformly over $\lambda \in \tilde \Lambda_0$ and $\{\theta:\,\|\theta-\theta_0\|\leq K\eps_n(\lambda)\}$ there exists $c_3>0$ such that
$$P_{\theta_0}^n
\Big\{\inf_{\lambda':\,\rho(\lambda,\lambda')\leq u_n}\ell_n\big(\psi_{\lambda,\lambda'}(\theta)\big)-\ell_n(\theta_0)\leq -c_3 n\eps_n(\lambda)^2 \Big\}=O\big(e^{-n\eps_{n,0}^2}\big). $$
\end{itemize}
We can then show that the preceding condition is sufficient for giving upper and lower bounds for the contraction rate of the hierarchical posterior distribution.

\begin{thm}\label{thm: hierarchical}
Assume that the conditions of Theorem \ref{thm:main} and Corollary \ref{thm: contraction} hold alongside with conditions (H1) and (H2). Then the hierarchical posterior achieves the oracle contraction rate (up to a slowly varying term) 
$$E_{\theta_0}^n\Pi(\theta:\,d(\theta,\theta_0)\geq MM_n \eps_{n,0}|\mathbf x_n)=o(1).$$
Furthermore if condition $\eqref{cond: Q2}$ also holds we have that
$$E_{\theta_0}^n\Pi(\theta:\,d(\theta,\theta_0)\leq \delta_n \eps_{n,0}|\mathbf x_n)=o(1). $$
\end{thm}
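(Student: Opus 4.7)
The plan is to split the hierarchical posterior along the $\lambda$-coordinate. Defining the hyper-posterior $\tilde\Pi(\cdot|\mathbf x_n)$ with density proportional to $\bar m(\mathbf x_n|\lambda)\tilde\pi(\lambda)$, one has
\[ \Pi(A|\mathbf x_n)=\int_\Lambda \Pi(A|\lambda,\mathbf x_n)\,\tilde\Pi(d\lambda|\mathbf x_n)\leq \tilde\Pi(\Lambda_n^c|\mathbf x_n)+\tilde\Pi(\Lambda_n\setminus\Lambda_0|\mathbf x_n)+\sup_{\lambda\in\Lambda_0}\Pi(A|\lambda,\mathbf x_n), \]
and I will show each of the three terms tends to zero in $P_{\theta_0}^n$-probability, first for $A=\{d(\theta,\theta_0)\geq MM_n\eps_{n,0}\}$ (upper bound) and then for $A=\{d(\theta,\theta_0)\leq \delta_n\eps_{n,0}\}$ (lower bound). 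The third term is handled directly by the uniform version of Corollary~\ref{thm: contraction}, which follows from (C1)--(C3) and the entropy bound $\log N_n(\Lambda_0)\leq O(n\eps_{n,0}^2)$. The first two require controlling the ratio of their numerator to the common denominator $D/p_{\theta_0}^n=\int_\Lambda m(\mathbf x_n|\lambda)\tilde\pi(\lambda)d\lambda$.

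The denominator is lower-bounded by restricting the integration to $\tilde\Lambda_0$. For each $\lambda\in\tilde\Lambda_0$, (B1) gives prior mass $\Pi(\|\theta-\theta_0\|\leq K\eps_n(\lambda)|\lambda)=e^{-n\eps_n(\lambda)^2}$, and (H2) applied with $\lambda'=\lambda$ gives $p_\theta^n/p_{\theta_0}^n\geq e^{-c_3 n\eps_n(\lambda)^2}$ on this neighbourhood, valid on an event $E_\lambda$ of $P_{\theta_0}^n$-probability $1-O(e^{-n\eps_{n,0}^2})$ that is uniform in $\lambda\in\tilde\Lambda_0$. Fubini applied to $R\equiv\int_{\tilde\Lambda_0}\mathbf{1}_{E_\lambda^c}\tilde\pi(\lambda)d\lambda$, together with Markov's inequality, shows that with probability tending to one the hyper-prior mass of $\{\lambda\in\tilde\Lambda_0:\,E_\lambda \text{ fails}\}$ is negligible compared to $\int_{\tilde\Lambda_0}\tilde\pi(\lambda)d\lambda$. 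Using $\eps_n(\lambda)\leq\tilde w_n\eps_{n,0}$ on $\tilde\Lambda_0\subset\Lambda_0(\tilde w_n)$ and the lower bound from (H1), this yields $D/p_{\theta_0}^n\gtrsim e^{-(c_3+1)n\tilde w_n^2\eps_{n,0}^2-n\eps_{n,0}^2}$ with probability tending to one.

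For the numerator of $\tilde\Pi(\Lambda_n^c|\mathbf x_n)$, Fubini and $E_{\theta_0}^n[m(\mathbf x_n|\lambda)]=1$ combined with (H1) give $E_{\theta_0}^n\int_{\Lambda_n^c}m(\mathbf x_n|\lambda)\tilde\pi(\lambda)d\lambda\leq e^{-\bar c_0 n\eps_{n,0}^2}$, so Markov produces a bound of order $e^{-\bar c_0 n\eps_{n,0}^2/2}$ with high probability, and its ratio to $D/p_{\theta_0}^n$ is $o(1)$ provided $\bar c_0$ is large relative to $\tilde w_n^2$, which is the meaning of ``$\bar c_0$ sufficiently large'' in (H1). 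The numerator of $\tilde\Pi(\Lambda_n\setminus\Lambda_0|\mathbf x_n)$ is at most $\sup_{\lambda\in\Lambda_n\setminus\Lambda_0}m(\mathbf x_n|\lambda)$, which by the same test/entropy analysis as in the proof of Theorem~\ref{thm:main}, under (A1)--(A2) and $\log N_n(\Lambda_n\setminus\Lambda_0)=o(nw_n^2\eps_{n,0}^2)$, is at most $e^{-Cnw_n^2\eps_{n,0}^2}$ with high probability; this dominates $D$ since $\tilde w_n^2=o(w_n^2)$. For the second assertion I apply the same splitting to $A=\{d(\theta,\theta_0)\leq\delta_n\eps_{n,0}\}$: the first two terms are controlled exactly as above, and $\sup_{\lambda\in\Lambda_0}\Pi(A|\lambda,\mathbf x_n)$ is handled by the argument of Theorem~\ref{thm: LB} via the auxiliary condition~\eqref{cond: Q2} and (C3bis).

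The main obstacle is the Fubini/Markov step in the denominator bound, where a pointwise-in-$\lambda$ guarantee from (H2) has to be converted into a uniform-in-hyper-prior-mass statement over $\tilde\Lambda_0$, together with the accompanying constant bookkeeping: the exponents $\tilde w_n^2$ (denominator), $\bar c_0$ (first numerator), $w_n^2$ (second numerator) and $M_n^2$ (third numerator) must all align so that every ratio is $o(1)$, which is precisely what the conditions $\tilde w_n=o(M_n\wedge w_n)$ and ``$\bar c_0$ sufficiently large'' are designed to enforce.
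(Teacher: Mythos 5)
Your decomposition and bookkeeping are essentially those of the paper: the same three-way split into $\Lambda\setminus\Lambda_n$, $\Lambda_n\setminus\Lambda_0$ and $\Lambda_0$, the same use of (H1) and of the upper/lower bounds on $m(\mathbf x_n|\lambda)$ extracted from the proof of Theorem \ref{thm:main}. The genuine gap is in your third term. You claim that $\sup_{\lambda\in\Lambda_0}\Pi(A|\lambda,\mathbf x_n)\to 0$ follows from a ``uniform version of Corollary \ref{thm: contraction}'', i.e.\ from (C1)--(C3) and $\log N_n(\Lambda_0)=O(n\eps_{n,0}^2)$ alone. It does not: the proof of Corollary \ref{thm: contraction} controls the denominator only through the MMLE inequality $m(\mathbf x_n|\hat\lambda_n)\geq m(\mathbf x_n|\lambda_0)$, which is unavailable for an arbitrary $\lambda\in\Lambda_0$; a statement uniform in $\lambda$ requires a uniform lower bound on $m(\mathbf x_n|\lambda)$ over $\Lambda_0$, and this is exactly where the paper invokes (H2) (together with (B1) and the $u_n$-net/change of measure), remarking explicitly that (H2) is needed to deal with the denominator, unlike in Corollary \ref{thm: contraction}. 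You do use (H2), but only to lower bound the marginal denominator $\int_\Lambda m(\mathbf x_n|\lambda)\tilde\pi(\lambda)d\lambda$; as written, the per-$\lambda$ denominators in your third term are never controlled, so that step fails. The fix is either to run the (H2)-based lower bound uniformly over $\Lambda_0$ (as the paper does, via the argument of \cite{DRRS:arxiv} and Theorem \ref{thm: LB}), or--cleaner with what you already have--not to normalize per $\lambda$ at all: bound the contribution of $\Lambda_0$ by $\sup_{\lambda\in\Lambda_0}\int_{A}e^{\ell_n(\theta)-\ell_n(\theta_0)}d\Pi(\theta|\lambda)$ divided by your marginal lower bound $e^{-(c_3+1)\tilde{w}_n^2 n\eps_{n,0}^2-n\eps_{n,0}^2}$, control the numerator with the tests and entropy of (C1)--(C3) exactly as in the proof of Corollary \ref{thm: contraction} (and with \eqref{cond: Q2}, (C3bis) for the lower-bound assertion), and use $\tilde{w}_n=o(M_n)$ to conclude the ratio is $o(1)$.

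Two smaller points. First, (H2) is a pointwise (in $\theta$ and $\lambda$) probability bound; it does not directly furnish an event $E_\lambda$ on which $\inf_{\rho(\lambda,\lambda')\leq u_n}\ell_n(\psi_{\lambda,\lambda'}(\theta))-\ell_n(\theta_0)\geq -c_3 n\eps_n(\lambda)^2$ holds for \emph{all} $\theta$ in the ball, so your Fubini/Markov step must be taken jointly over $(\theta,\lambda)$, not only over $\lambda$ (standard, but it should be stated). Second, the bound available from the proof of Theorem \ref{thm:main} on $\Lambda_n\setminus\Lambda_0$ is $\sup_{\lambda}m(\mathbf x_n|\lambda)\leq e^{-n w_n'\eps_{n,0}^2}$ for any $w_n'=o(M_n^2\wedge w_n^2)$, not $e^{-Cnw_n^2\eps_{n,0}^2}$; this is harmless because $\tilde{w}_n^2=o(M_n^2\wedge w_n^2)$ allows a choice of $w_n'$ dominating $\tilde{w}_n^2$, which is precisely how the paper argues.
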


The proof of the theorem is given in Section \ref{sec: HBproof}.

\section{Application to sequence parameters and histograms} \label{sec:seqhist}

\subsection{Sequence parameters}\label{sec:seq}
In this section we apply Theorem \ref{thm:main} and Corollary \ref{thm: contraction} to the case of priors on $(\Theta,\|\cdot\|) = (\ell_2,\|\cdot\|_2)$. We endow the sequence parameter $\theta=(\theta_1,\theta_2,...)$ with independent product priors of the following three types: 
\begin{enumerate}
\item [\textbf{ (T1)}] Sieve prior : The hyper-parameter of interest is $\lambda= k$ the truncation: For $2 \leq k$, 
$$ \theta_j \stackrel{ind}{\sim } g( \cdot ) , \quad  \mbox{if } j \leq k,\quad\text{and} \quad \theta_j = 0 \quad \mbox{ if } j >k.$$
We assume that $\int e^{s_0|x|^{p^*}}g(x) dx = a<+\infty $ for some $s_0>0$  and $p^*\geq 1$. 

\item [\textbf{ (T2)}] Scale parameter  of a Gaussian process prior: let $\tau_j  = \tau j^{- \alpha - 1/2} $ and $\lambda = \tau$ with 
$$ \theta_j \stackrel{ind}{\sim }  \mathcal N( \cdot ,  \tau_j^2), \quad  1\leq j \leq n,\quad\text{and} \quad \theta_j = 0 \quad \mbox{ if } j >n.$$
\item [\textbf{ (T3)}] Rate parameter : same prior as above but this time $\lambda = \alpha $. 
\end{enumerate}
\begin{rem}
Alternatively one could consider the priors (T2) and (T3) without truncation at level $n$. The theoretical behaviour of the truncated and non-truncated versions of the priors are very similar, however from a practical point of view the truncated priors are arguably more natural. 

\end{rem}

In the hierarchical setup with a prior on $k$, Type (T1) prior has been studied by \cite{arbel, shen:ghoshal:15} for generic models, by \cite{rivoirard:rousseau:12} for density estimation, by \cite{babenko:belitser:10} for Gaussian white noise model and by \cite{ray:2013} for inverse problems. Type (T2) and (T3) priors have been studied with fixed hyper-parameters by \cite{cox:1993,zhao:00,vvvz08,castillo08,knapik} or using a prior on $\lambda= \tau$ and $\lambda=\a$ in \cite{belitser:ghosal:03,Lian,SzVZ,knapikSVZ2012}. In the white noise model, using the explicit expressions of the marginal likelihoods and the posterior distributions,  \cite{knapikSVZ2012, SzVZ} have derived posterior concentration rates and described quite precisely the behaviours of the MMLE using type (T3) and (T2) priors, respectively. 

In the following, $\Pi( \cdot | k ) $ denotes a prior in the form (T1), while $\Pi( \cdot | \tau, \alpha)$ denotes either (T2) or (T3). 

\subsection{Deriving $\epsilon_{n}(\lambda)$ for priors (T1) - (T3)} \label{sec:eps:T123}

It appears from Theorem \ref{thm:main} that  a key quantity to describe the behaviour of the MMLE is $\epsilon_{n}(\lambda)$ defined by \eqref{def: eps}. In the following Lemmas we describe $\eps_n(\lambda)\equiv\epsilon_{n}(\lambda, K) $ for any $K>0$ under the three types of priors above and for true parameters $\theta_0$ belonging to either hyper-rectangles 
$$ \mathcal H_\infty ( \beta,L) = \{ \theta_0 = (\theta_{0,i})_i : \, \max_i i^{2\beta+1} \theta_{0,i}^2 \leq L\} $$ or Sobolev balls
 $$ \mathcal S_\beta ( L) = \{\theta_0 = (\theta_{0,i})_i : \, \sum_{i=1}^\infty i^{2\beta} \theta_{0,i}^2 \leq L\}.$$

\begin{Lemma} \label{lem:eps:T1}
Consider priors of type (T1),  with $g$ positive and continuous on $\RR$  and let $\theta_0 \in \ell_2$, then for all $K >0$ fixed and if $k \in \{ 2, \cdots, \epsilon n/ \log n\}$, with $\epsilon >0$ a small enough constant 
$$  \epsilon_n(k)^2 \asymp    \sum_{i=k+1}^\infty \theta_{0,i}^2  + \frac{ k\log n}{ n }.$$
Moreover  if $\theta_0 \in \mathcal H_\infty ( \beta,L)  \cup \mathcal S_\beta ( L)$ with $\beta >0$ and $L$ any positive constant,
\begin{equation} \label{eps0:k}
\epsilon_{n,0} \lesssim (n/\log n)^{-\beta/(2\beta+1)},
\end{equation}
and there exists $\theta_0 \in \mathcal H_\infty ( \beta,L)  \cup \mathcal S_\beta ( L)$ for which \eqref{eps0:k} is also a lower bound. 
\end{Lemma}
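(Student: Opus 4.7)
The plan is to exploit the very simple structure of the sieve prior (T1): only the first $k$ coordinates are random, so the computation of $\epsilon_n(k)$ reduces to a $k$-dimensional small-ball probability for a product density. First I would write $T_k = \sum_{i>k}\theta_{0,i}^2$ and note that under $\Pi(\cdot\,|\,k)$
$$
\|\theta - \theta_0\|_2^2 = \|\theta^{(k)} - \theta_0^{(k)}\|_2^2 + T_k,
$$
so the event $\{\|\theta-\theta_0\|_2 \leq K\epsilon\}$ is empty when $K^2\epsilon^2 < T_k$ and otherwise is the Euclidean ball of radius $r := \sqrt{K^2\epsilon^2 - T_k}$ around $\theta_0^{(k)}$ in $\mathbb R^k$. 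Since $\theta_0\in\ell_2$ we have $M_0 := \sup_i|\theta_{0,i}| < \infty$, and continuity and positivity of $g$ give constants $0<c_-\leq c_+<\infty$ with $c_-\leq g(x)\leq c_+$ for $|x|\leq M_0+1$. Consequently, for $r\leq 1$,
$$
c_-^{k} V_k r^{k}\;\leq\;\Pi(\|\theta^{(k)}-\theta_0^{(k)}\|_2\leq r\,|\,k)\;\leq\;c_+^{k} V_k r^{k},
$$
with $V_k = \pi^{k/2}/\Gamma(k/2+1)$ the volume of the unit $\ell_2$ ball in $\mathbb R^k$. Stirling then yields $-\log\Pi(\cdots) = (k/2)\log(k/r^2) + O(k)$.

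Next I would insert this into the defining equation $-\log\Pi(\|\theta-\theta_0\|\leq K\epsilon\,|\,k) = n\epsilon^2$ and solve in two regimes. In regime (b), when $T_k\lesssim k\log n / n$, I would try $\epsilon^2\asymp k\log n/n$: this gives $r^2\asymp k\log n/n$ (which is $\leq 1$ because $k\leq \epsilon n/\log n$), hence $(k/2)\log(k/r^2)\asymp (k/2)\log(n/\log n)\asymp (k/2)\log n$, matching $n\epsilon^2\asymp k\log n$ up to constants. In regime (a), when $T_k\gg k\log n/n$, I would try $\epsilon^2 = (1+\delta_n) T_k/K^2$ with $\delta_n\to 0$: then $r^2 = \delta_n T_k$, and the equation $n T_k/K^2 \asymp (k/2)\log\bigl(k/(\delta_n T_k)\bigr)$ can be solved with $\delta_n$ exponentially small in $nT_k/k$, giving $\epsilon_n(k)^2\asymp T_k$. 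Combining the two regimes gives $\epsilon_n(k)^2\asymp T_k + k\log n/n$. The main technical obstacle here is keeping the argument uniform in $k\in\{2,\dots,\epsilon n/\log n\}$, and in particular making sure that the exponentially small radius $r$ needed in regime (a) does not spoil the continuity-based two-sided bound on $g$ (it does not, since $r\leq\sqrt{T_k}\leq\|\theta_0\|_2$, so we stay in a fixed bounded region of $\mathbb R^k$).

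For the oracle step, I would note that on both $\mathcal H_\infty(\beta,L)$ and $\mathcal S_\beta(L)$ one has $T_k\lesssim k^{-2\beta}$ (from $\theta_{0,i}^2\leq L i^{-2\beta-1}$ in the first case, and directly from $\sum_i i^{2\beta}\theta_{0,i}^2\leq L$ in the second). Minimising $k^{-2\beta} + k\log n/n$ over $k$ yields $k^\ast\asymp(n/\log n)^{1/(2\beta+1)}$, which lies in the admissible range, and the minimum value is $(n/\log n)^{-2\beta/(2\beta+1)}$, proving \eqref{eps0:k}. For the matching lower bound it suffices to exhibit one worst-case sequence; I would take $\theta_{0,i} = c\, i^{-\beta-1/2}$ for a sufficiently small $c>0$, which lies in $\mathcal H_\infty(\beta,L)$. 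For this $\theta_0$, $T_k\asymp k^{-2\beta}$, so $\epsilon_n(k)^2\asymp k^{-2\beta}+k\log n/n$ and its minimum over $k$ is of the claimed order, thereby giving a matching lower bound on $\epsilon_{n,0}$.

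I expect the delicate step to be the two-sided small-ball estimate in regime (a): one has to verify that the implicit equation for $\epsilon$ has a unique solution of the announced order even though $r$ becomes exponentially small, and that the constants hidden in $O(k)$ arising from Stirling and from the density bounds on $g$ do not interfere with the leading $T_k$ term. The rest of the argument, including the Sobolev/hyper-rectangle computation and the construction of the extremal $\theta_0$, is routine once the formula $\epsilon_n(k)^2\asymp T_k + k\log n/n$ has been established.
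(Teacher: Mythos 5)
Your proposal is correct and follows essentially the same route as the paper: reduce the event to a $k$-dimensional Euclidean ball of radius $\sqrt{K^2\epsilon^2-\sum_{i>k}\theta_{0,i}^2}$, sandwich the prior mass using two-sided bounds on $g$ over a fixed compact set times the ball volume, apply Stirling, solve the resulting implicit equation for $\epsilon_n(k)$, and then minimize $k^{-2\beta}+k\log n/n$ and exhibit $\theta_{0,i}\asymp i^{-\beta-1/2}$ for the matching lower bound. Your explicit two-regime analysis (bias-dominated versus $k\log n/n$-dominated) is just a slightly more structured presentation of the same computation the paper carries out via its equation for $s_n$.
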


The proof of Lemma \ref{lem:eps:T1} is postponed to Appendix \ref{app:T1}.
We note that it is enough in the above Lemma to assume that $g$ is positive and continuous over the set $\{ |x| \leq M\}$ with $M> 2 \|\theta_0\|_\infty$. 
\begin{rem}
One can get rid of the $\log n$ factor in the rate by allowing the density $y$ to depend on $n$, see for instance \cite{babenko:belitser:10}, \cite{Gao:Zhou:13}. These results can be recovered (and adapted to the MMLE empirical Bayes case) by a slight modification of the proof of Lemma \ref{lem:eps:T1}.
\end{rem}

Priors of type (T2) and (T3) are Gaussian process priors, thus following \cite{vvvz08}, let us introduce the so called concentration function
 \begin{align}
\phi_{\theta_0}(\epsilon; \alpha , \tau )=\inf_{h\in\mathbb{H}^{\alpha , \tau}:\, \|h-\theta_0\|_2\leq\epsilon} \|h\|_{\mathbb{H}^{\alpha,\tau}}^2-\log \Pi(\|\theta\|_2\leq \epsilon | \alpha, \tau ),\label{def: ConcFunc}
\end{align}
where $\mathbb{H}^{\alpha , \tau}$ denotes the Reproducing Kernel Hilbert Space (RKHS) associated to the Gaussian prior $\Pi(\cdot|\a,\t)$ 
$$\mathbb{H}^{\alpha , \tau} = \{ \theta = (\theta_i)_{i\in \NN}; \,  \sum_{i=1}^n i^{2\alpha+1} \theta_i^2 < +\infty,\quad \theta_i=0\,\,\text{for $i>n$}\}=\mathbb{R}^n,$$ with for all $\theta \in \mathbb{H}^{\alpha , \tau}$
$$\|\theta\|_{\mathbb{H}^{\alpha , \tau}}^2 = \tau^{-2} \sum_{i=1}^{n} i^{2\alpha+1} \theta_i^2.$$
Then from Lemma 5.3 of \cite{vaart:zanten:2008a}
\begin{equation}\label{Equiv: ConcFunc and SmallBall}
\phi_{\theta_0}(K\epsilon;\alpha, \tau) \leq  - \log \Pi ( \|\theta -\theta_0\|_2 \leq  K\epsilon | \alpha, \tau ) \leq \phi_{\theta_0}(K\epsilon/2;\alpha, \tau)
\end{equation}
We also have that
\begin{equation} \label{centeredGP}
\tilde{c}_1^{-1} \left( K\epsilon/\tau \right)^{-1/\alpha}\leq-\log \Pi(\|\theta\|_2\leq K \epsilon | \alpha , \tau) \leq \tilde{c}_1 \left( K\epsilon/\tau \right)^{-1/\alpha},
\end{equation}
for some $\tilde{c}_1\geq 1$, see for instance Theorem 4 of \cite{KuelbsLi2}. This leads to the following two lemmas. 


\begin{Lemma} \label{lem:eps:T2T3}
In the case of Type (T2) and (T3) priors, with $\theta_0 \in  \mathcal S_\beta ( L)\cup \mathcal H_\infty ( \beta,L)$:

$\bullet $ If $\beta \neq  \alpha +1/2$ 
\begin{equation} \label{eps:lambda:beta+-}
\frac{\|\theta_0\|_2}{\sqrt{n\tau^2}}\1_{n \tau^2 > 1} + n^{-\frac{\alpha}{2 \alpha+1}} \tau^{\frac{1}{2\alpha+1}} \lesssim  \epsilon_n(\lambda) \lesssim  n^{-\frac{\alpha}{2 \alpha+1}} \tau^{\frac{1}{2\alpha+1}} + \left( \frac{ a(\alpha, \beta) }{ n\tau^2 } \right)^{ \frac{\beta }{2\alpha+1} \wedge  \frac{1}{ 2 } },
\end{equation}
where $a(\alpha, \beta) =  L^{\frac{\alpha+1/2}{\beta}}/|2 \alpha- 2 \beta +1|$ if  $\theta_0 \in \mathcal H_\infty ( \beta,L) $ while $a(\alpha,\beta)=L^{\frac{\alpha+1/2}{\beta}}$ if  $\theta_0 \in  \mathcal S_\beta ( L)$. 
The constants depend possibly on $K$ but neither on $n, \tau$ or $\alpha$. 

$\bullet $ If $\beta=\alpha +1/2$ then 
\begin{equation} \label{eps:lambda:beta=}
\frac{\|\theta_0\|_2}{\sqrt{n\tau^2}}\1_{n \tau^2 > 1} +n^{-\frac{\alpha}{2 \alpha+1}} \tau^{\frac{1}{2\alpha+1}} \lesssim  \epsilon_n(\lambda) \lesssim n^{-\frac{\alpha}{2 \alpha+1}} \tau^{\frac{1}{2\alpha+1}} + \left(\frac{ \log( n \tau^2)  }{ n\tau^2 }\right)^{ \frac{1}{ 2 } }\1_{n \tau^2 > 1}, 
\end{equation}
where the term $\log (n\tau^2)$ can be eliminated in the case where $\theta_0 \in \mathcal S_\beta (L) $. 

\end{Lemma}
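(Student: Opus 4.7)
The strategy is to use the concentration function machinery of \cite{vaart:zanten:2008a}: by \eqref{Equiv: ConcFunc and SmallBall} it suffices to analyse $\phi_{\theta_0}(K\eps/2;\a,\t)$ from above and $\phi_{\theta_0}(K\eps;\a,\t)$ from below, and then solve $\phi_{\theta_0}(K\eps;\a,\t)\asymp n\eps^2$ in $\eps$. The concentration function splits into a centered small-ball term, controlled by \eqref{centeredGP} as $(\eps/\t)^{-1/\a}$, and a decentering term $\inf\{\|h\|_{\HH^{\a,\t}}^2:\|h-\theta_0\|_2\le K\eps\}$. Equating the small-ball term alone with $n\eps^2$ produces the universal contribution $n^{-\a/(2\a+1)}\t^{1/(2\a+1)}$ appearing in both bounds; the rest of the work is to control the decentering piece.

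For the upper bound, I would choose the candidate $h=(\theta_{0,1},\dots,\theta_{0,N},0,\dots,0)\in\RR^n=\HH^{\a,\t}$ for a threshold $N\le n$ to be optimised. The approximation error is $\|h-\theta_0\|_2^2\le\sum_{i>N}\theta_{0,i}^2\lesssim L\,N^{-2\b}$ in both the hyper-rectangle and the Sobolev case (the tail $\sum_{i>n}\theta_{0,i}^2\lesssim n^{-2\b}$ is of smaller order than the other rates). The RKHS norm is
\[
\|h\|_{\HH^{\a,\t}}^2=\t^{-2}\sum_{i=1}^N i^{2\a+1}\theta_{0,i}^2.
\]
Under $\mathcal H_\infty(\b,L)$ one bounds this by $\t^{-2}L\sum_{i\le N}i^{2\a-2\b}$, which is $\lesssim\t^{-2}L\,N^{2\a+1-2\b}/|2\a+1-2\b|$ when $\b<\a+1/2$ and $\lesssim\t^{-2}L$ when $\b>\a+1/2$; under $\mathcal S_\b(L)$ one uses instead $i^{2\a+1}\le N^{2\a+1-2\b}i^{2\b}$ in the former range to get the sharper $a(\a,\b)=L^{(\a+1/2)/\b}$ without the factor $|2\a+1-2\b|^{-1}$, while in the latter range $\|h\|_{\HH^{\a,\t}}^2\le\t^{-2}\sum_i i^{2\b}\theta_{0,i}^2\le\t^{-2}L$ trivially. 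Picking $N\asymp K\eps^{-1/\b}$ so that the approximation error is $\le K\eps$ and setting $\t^{-2}a(\a,\b)N^{2\a+1-2\b}\vee\t^{-2}L=n\eps^2$ yields $\eps\asymp(a(\a,\b)/(n\t^2))^{\b/(2\a+1)\wedge 1/2}$, which is the second term of \eqref{eps:lambda:beta+-}.

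For the lower bound, two trivial bounds on $\phi_{\theta_0}$ suffice. First, $\phi_{\theta_0}(K\eps;\a,\t)\ge-\log\Pi(\|\theta\|_2\le K\eps|\a,\t)\gtrsim(\eps/\t)^{-1/\a}$ by \eqref{centeredGP}, giving the $n^{-\a/(2\a+1)}\t^{1/(2\a+1)}$ piece. Second, whenever $K\eps\le\|\theta_0\|_2/2$, any admissible $h$ satisfies $\|h\|_2\ge\|\theta_0\|_2/2$ and hence $\|h\|_{\HH^{\a,\t}}^2\ge\t^{-2}\|h\|_2^2\ge\|\theta_0\|_2^2/(4\t^2)$, so setting this $=n\eps^2$ yields $\eps\gtrsim\|\theta_0\|_2/\sqrt{n\t^2}$; the indicator $\1_{n\t^2>1}$ is precisely the requirement that this bound be compatible with $\eps\le\|\theta_0\|_2/(2K)$.

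The boundary case $\b=\a+1/2$ is handled identically, except that $\sum_{i\le N}i^{2\a-2\b}=\sum_{i\le N}i^{-1}\asymp\log N$ replaces the polynomial behaviour, producing the $\log(n\t^2)$ factor in \eqref{eps:lambda:beta=} under $\mathcal H_\infty$; under $\mathcal S_\b$ the bound $\sum_{i\le N}i^{2\b}\theta_{0,i}^2\le L$ remains logarithm-free, which is why the log disappears in that case. The main delicate point I expect is tracking that the constants in the upper bound depend on $(\a,\b)$ only through $a(\a,\b)$ (and not on $K$, $n$ or $\t$), which is needed to preserve uniformity later when the MMLE is allowed to range over $\a$ or $\t$; this requires the careful split between the $\mathcal H_\infty$ and $\mathcal S_\b$ bounds on $\sum i^{2\a+1}\theta_{0,i}^2$ described above, and matching the optimisation of $N$ to the correct regime of $\b$ relative to $\a+1/2$.
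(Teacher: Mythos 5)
Your proposal follows essentially the same route as the paper's proof: you bound the decentering part of the concentration function by truncating $\theta_0$ at a level $N\asymp (L/\epsilon^2)^{1/(2\beta)}$, split according to whether $\beta$ is smaller than, larger than, or equal to $\alpha+1/2$ (with the logarithmic boundary case), and obtain the lower bound from the centered small-ball estimate \eqref{centeredGP} together with $\|h\|_{\mathbb{H}^{\alpha,\tau}}\geq \tau^{-1}\|h\|_2\gtrsim \tau^{-1}\|\theta_0\|_2$ whenever $K\epsilon\leq\|\theta_0\|_2/2$, exactly as in the paper. The only cosmetic point is that in the regime $\beta>\alpha+1/2$ under $\mathcal H_\infty(\beta,L)$ you should retain the factor $|2\alpha-2\beta+1|^{-1}$ (as the paper does via $a(\alpha,\beta)$), since the implicit constants are not allowed to depend on $\alpha$.
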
 


\begin{Lemma}
\label{lem:eps:2:T2T3}
 In the case of prior type (T2)  (with $\lambda= \tau$):
\begin{itemize}
\item If $\alpha +1/2 < \beta$ then  for all $\theta_0 \in \mathcal H_\infty ( \beta,L)  \cup \mathcal S_\beta ( L)$
\begin{equation}\label{eps0:tau:1}\epsilon_{n,0} \lesssim n^{-(2\alpha+1)/(4\alpha+4)},\end{equation}
and for all $\theta_0 \in\ell_2(L)$ satisfying $\|\theta_0\|_2 \geq c$ for some fixed $c>0$,    \eqref{eps0:tau:1} is also a lower bound.
\item If $\alpha +1/2 > \beta $ then 
\begin{equation}\label{eps0:tau:2}
\epsilon_{n,0} \lesssim n^{-\beta/(2\beta+1)}.
\end{equation}
\item If $\alpha +1/2 = \beta $ then
\begin{equation}\label{eps0:tau:3}
\begin{split}
\epsilon_{n,0} &\lesssim n^{-\beta/(2\beta+1)} \log n^{1/(2\beta+1)}, \quad \mbox{if } \quad \theta_0 \in \mathcal H_\infty ( \beta,L),  \\
\epsilon_{n,0} &\lesssim n^{-\beta/(2\beta+1)} , \quad \mbox{if } \quad \theta_0 \in \mathcal S_\beta (L),  
\end{split}
\end{equation}
and there exists $\theta_0 \in \mathcal H_\infty ( \beta,L) $ for which the upper bound \eqref{eps0:tau:3} is also a lower bound.
\end{itemize}
In the case of prior type (T3) (with $\lambda= \alpha$),
\begin{equation}\label{eps0:alpha:HS}
\epsilon_{n,0} \lesssim n^{-\beta/(2\beta+1)}, \quad \mbox{if } \quad \theta_0 \in \mathcal S_\beta( L)\cup  \mathcal H_\infty ( \beta,L).
\end{equation}
\end{Lemma}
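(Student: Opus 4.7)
My plan is to take the upper and lower bounds on $\epsilon_n(\lambda)$ established in Lemma \ref{lem:eps:T2T3} and to perform a balancing argument over the hyper-parameter to extract $\epsilon_{n,0} = \inf_{\lambda \in \Lambda_n} \epsilon_n(\lambda)$. In each case the upper bound is a sum of a bias-like term increasing in $\tau$ and a variance/tail-like term decreasing in $\tau$, so the minimizer is determined by equating the two contributions.

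For the (T2) prior (case $\lambda = \tau$, $\alpha$ fixed), when $\alpha + 1/2 < \beta$ the exponent minimum in \eqref{eps:lambda:beta+-} is $1/2$; balancing $n^{-\alpha/(2\alpha+1)}\tau^{1/(2\alpha+1)} \asymp (n\tau^2)^{-1/2}$ yields $\tau^* \asymp n^{-1/(4\alpha+4)}$ and $\epsilon_n(\tau^*) \asymp n^{-(2\alpha+1)/(4\alpha+4)}$, proving \eqref{eps0:tau:1}. When $\alpha + 1/2 > \beta$ the exponent minimum is $\beta/(2\alpha+1)$; balancing $n^{-\alpha/(2\alpha+1)}\tau^{1/(2\alpha+1)} \asymp (a/(n\tau^2))^{\beta/(2\alpha+1)}$ gives $\tau^* \asymp a^{\beta/(2\beta+1)} n^{(\alpha-\beta)/(2\beta+1)}$, and collecting the exponents produces $\epsilon_n(\tau^*) \asymp n^{-\beta/(2\beta+1)}$ up to the constant depending on $a(\alpha, \beta)$, proving \eqref{eps0:tau:2}. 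At the boundary $\alpha + 1/2 = \beta$, the same balancing applied to \eqref{eps:lambda:beta=} delivers $\tau^* \asymp n^{-1/(4\alpha+4)}$ and $\epsilon_n(\tau^*) \asymp n^{-\beta/(2\beta+1)}$, with the $\log(n\tau^2)$ factor from \eqref{eps:lambda:beta=} surviving in the $\mathcal H_\infty$ case but dropping for $\mathcal S_\beta$, yielding \eqref{eps0:tau:3}.

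For the lower bound in (T2) under $\|\theta_0\|_2 \geq c$, I would start from the lower bound in \eqref{eps:lambda:beta+-} and handle the regime $n\tau^2 \leq 1$ separately. A direct Markov-type estimate $\Pi(\|\theta\|_2 \geq c/2 \mid \tau) \lesssim \tau^2/c^2$ shows that for $\tau$ this small the only way to match $\Pi(\|\theta - \theta_0\|_2 \leq K\epsilon \mid \tau) = e^{-n\epsilon^2}$ is to have $\epsilon \gtrsim \|\theta_0\|_2/(2K) \geq c/(2K)$, which strictly dominates the target rate $n^{-(2\alpha+1)/(4\alpha+4)}$ for large $n$. On the remaining range $n\tau^2 > 1$ the full lower bound $\epsilon_n(\tau) \gtrsim \|\theta_0\|_2 (n\tau^2)^{-1/2} + n^{-\alpha/(2\alpha+1)}\tau^{1/(2\alpha+1)}$ is active and the same balancing performed for the upper bound applies in reverse, giving a matching lower bound. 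The boundary case $\alpha + 1/2 = \beta$ of \eqref{eps0:tau:3} proceeds analogously by exhibiting a specific $\theta_0 \in \mathcal H_\infty(\beta, L)$ for which the lower side of \eqref{eps:lambda:beta=} is saturated with the $\log$ factor, using the small-ball asymptotics \eqref{centeredGP}.

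For the (T3) prior ($\lambda = \alpha$, $\tau$ fixed), it suffices to exhibit a single $\alpha^* \in \Lambda_n$ realizing the upper bound. Choosing $\alpha^* = \beta$ puts us in the regime $\alpha^* + 1/2 > \beta$, and \eqref{eps:lambda:beta+-} specializes to $\epsilon_n(\alpha^*) \lesssim n^{-\beta/(2\beta+1)} + (a(\beta,\beta)/n)^{\beta/(2\beta+1)} \asymp n^{-\beta/(2\beta+1)}$, proving \eqref{eps0:alpha:HS}. The main obstacle throughout will be tracking the $\alpha,\beta$-dependent constants $a(\alpha,\beta)$ (which blow up along the diagonal $\alpha = \beta - 1/2$) along the optimal $\tau^*$ or $\alpha^*$, and verifying that the asserted minimizer lies inside $\Lambda_n$ so that the infimum is actually attained rather than merely approached; these subtleties are what separate the three sub-cases and explain the different orders appearing in \eqref{eps0:tau:3}.
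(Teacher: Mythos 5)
Your overall strategy is the same as the paper's: the upper bounds are obtained by balancing the two terms of Lemma \ref{lem:eps:T2T3} in $\tau$ (respectively by picking $\alpha^*=\beta$ for (T3)), and your computations of $\tau^*\asymp n^{-1/(4\alpha+4)}$, $\tau^*\asymp n^{(\alpha-\beta)/(2\beta+1)}$ and the resulting rates are correct, including the remark that $a(\alpha,\beta)$ only degenerates on $\alpha=\beta-1/2$ and is harmless at $\alpha^*=\beta$.

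The genuine gap is in your lower bound for (T2) when $\|\theta_0\|_2\geq c$, in the regime $n\tau^2\leq 1$. The Markov-type estimate $\Pi(\|\theta\|_2\geq c/2\mid\tau)\lesssim \tau^2/c^2$ does \emph{not} force $\epsilon\gtrsim c/(2K)$: combined with the defining relation $\Pi(\|\theta-\theta_0\|_2\leq K\epsilon\mid\tau)=e^{-n\epsilon^2}$ it only yields $n\epsilon_n(\tau)^2\gtrsim \log(c^2/\tau^2)$, i.e.\ $\epsilon_n(\tau)\gtrsim\sqrt{(\log n)/n}$ for $\tau\leq n^{-1/2}$, which is strictly smaller than the target $n^{-(2\alpha+1)/(4\alpha+4)}$ (the exponent $(2\alpha+1)/(4\alpha+4)<1/2$). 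So your argument leaves open the possibility that some very small $\tau$ produces $\epsilon_n(\tau)$ far below the claimed rate. To close this you need the \emph{exponential} Gaussian quantification rather than a second-moment bound: by \eqref{Equiv: ConcFunc and SmallBall}, if $K\epsilon\leq\|\theta_0\|_2/2$ then every $h$ with $\|h-\theta_0\|_2\leq K\epsilon$ has $\|h\|_2\geq\|\theta_0\|_2/2$, hence $-\log\Pi(\|\theta-\theta_0\|_2\leq K\epsilon\mid\tau)\geq \inf\|h\|^2_{\mathbb H^{\alpha,\tau}}\geq \tau^{-2}\|\theta_0\|_2^2/4$, giving $\epsilon_n(\tau)\gtrsim \|\theta_0\|_2(n\tau^2)^{-1/2}\wedge \|\theta_0\|_2/(2K)$ for \emph{all} $\tau$; this is exactly how the paper derives the first term of the lower bound in \eqref{eps:lambda:beta+-} and it is what makes small $\tau$ harmless (Borell's inequality, $\Pi(\|\theta\|_2\geq c/2\mid\tau)\leq e^{-c'c^2/\tau^2}$, would serve the same purpose). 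A secondary, smaller issue: for the boundary case $\beta=\alpha+1/2$ you cannot read the log-factor lower bound off the lower side of \eqref{eps:lambda:beta=}, which contains no logarithm; the paper instead takes the explicit $\theta_{0,i}=\sqrt{L}\,i^{-\beta-1/2}$ (truncated at some $K_n$) and lower-bounds the RKHS-approximation part of the concentration function directly, which is where the extra $\log$ arises. With these two repairs your argument matches the paper's proof.
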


We note that for the scaling prior (T2) in the case $\a+1/2<\beta$ Lemma \ref{lem:eps:2:T2T3} provides us the sub-optimal rate $\eps_{n,0}\asymp n^{-(2\alpha+1)/(4\alpha+4)}$. Therefore under condition \eqref{cond: Q2} (verified in the supplementary material for prior (T2))  in all three types of examples studied in this paper (white noise, regression and estimation of density models), we get that for all $\theta_0 \neq 0$ with $\a+1/2<\beta$, the type (T2) prior leads to sub-optimal posterior concentration rates (and in case $\theta_0\in\mathcal{H}_{\infty}(\beta,L)$, $\beta=\a+1/2$ as well). 

An important tool to derive posterior concentration rates in the case of empirical Bayes procedures is the construction of the change of measure $\psi_{\lambda, \lambda'} $.  We present in the following section how these changes of measures can be constructed in the context of priors (T1)-(T3). 

\subsection{Change of measure} \label{sec:eps:T123}

In the case of prior (T1), there is no need to construct $\psi_{\lambda,\lambda'}$ due to the discrete nature of the hyper-parameter $\lambda = k$ the truncation threshold.

In the case of prior (T2) if $\tau, \tau' > 0$ then define  for all $i \in \mathbb N$ 
 \begin{equation}\label{change:T2}
 \psi_{\tau, \tau'}(\theta_i) = \frac{\tau'}{\tau}\theta_i 
 \end{equation}
so that $\psi_{\tau, \tau'}(\theta) = (\psi_{\tau, \tau'}(\theta_i), i\in \NN) = \theta \tau'/\tau$ and if $\theta \sim \Pi( \cdot | \tau, \alpha)$, then  $\psi_{\tau, \tau'}(\theta)  \sim  \Pi( \cdot | \tau', \alpha)$. 

Similarly, in the case of Type (T3) prior,
 \begin{equation}\label{change:T3}
 \psi_{\alpha, \alpha'}(\theta_i) =i^{\alpha - \alpha'} \theta_i 
 \end{equation}
so that $\psi_{\alpha, \alpha'}(\theta) = (\psi_{\alpha, \alpha'}(\theta_i), i\in \NN) $ and if $\theta \sim \Pi( \cdot | \tau, \alpha)$, then  $\psi_{\alpha, \alpha'}(\theta)  \sim  \Pi( \cdot | \tau, \alpha')$. 
Note in particular that if $\alpha'\geq \alpha$ and $\sum_i \theta_i^2 < +\infty$ hold then $\sum_i \psi_{\alpha, \alpha'}(\theta_i)^2 < \infty$. This will turn out to be usefull in the sequel. 

\subsection{Choice of the hyper-prior}

In this section we give sufficient conditions on the hyper-priors in the case of the prior distribution (T1)-(T3), such that condition (H1) is satisfied. The proofs are deferred to Section D of the supplementary material \cite{rousseau:szabo:15:supp}. 

\begin{Lemma}\label{Lem: hyperT1}
In case of prior (T1) we choose $\Lambda_n=\{2,3,...,c_0 n/\log n\}$ for some small enough constant $c_0>0$ and assume that $\theta_0\in S_{\beta}(L)\cup \mathcal{H}_{\infty}(\beta,L)$ for some $\beta\geq \beta_1>\beta_0\geq 0$. Then for any hyper-prior satisfying 
\begin{equation}\label{H1fork}
k^{-c_2 k}\lesssim\tilde\pi(k)\lesssim e^{-c_1 k^{1/(1+2\beta_0)}},
\end{equation}
for some $c_1,c_2>0$, assumption (H1) holds. In case the prior has support on $\Lambda_n$ the upper bound condition is not needed on $\tilde\pi$.
 \end{Lemma}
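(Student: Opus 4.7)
The plan is to verify the three components of condition (H1) in turn: that $\tilde\Lambda_0 \subset \Lambda_0(\tilde w_n)$, that $\tilde\pi(\tilde\Lambda_0)\gtrsim e^{-n\eps_{n,0}^2}$, and that $\tilde\pi(\Lambda_n^c)\leq e^{-\bar c_0 n\eps_{n,0}^2}$. The starting point is to use Lemma \ref{lem:eps:T1}: for $\theta_0\in\mathcal S_\beta(L)\cup\mathcal H_\infty(\beta,L)$ we have $\eps_n(k)^2\asymp\sum_{i>k}\theta_{0,i}^2+k(\log n)/n$ and the upper bound $\eps_{n,0}^2\lesssim (n/\log n)^{-2\beta/(2\beta+1)}$, which is achieved (up to constants) at the oracle truncation level $k^*:=\lceil (n/\log n)^{1/(2\beta+1)}\rceil$. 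Since $k^*\leq c_0 n/\log n$ for $n$ sufficiently large, $k^*\in\Lambda_n$.

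The natural choice is $\tilde\Lambda_0=\{k^*\}$ (or a small band of $k$ around $k^*$). Because $\eps_n(k^*)\asymp \eps_{n,0}$ and $\tilde w_n\to\infty$, we trivially have $\tilde\Lambda_0\subset\Lambda_0(\tilde w_n)$. For the lower bound on prior mass, the assumption $\tilde\pi(k)\gtrsim k^{-c_2 k}$ yields
\[
\tilde\pi(\tilde\Lambda_0)\;\geq\;\tilde\pi(k^*)\;\gtrsim\;\exp(-c_2 k^*\log k^*).
\]
Since $\log k^*\asymp\log n$, a direct computation gives $k^*\log k^*\asymp n^{1/(2\beta+1)}(\log n)^{2\beta/(2\beta+1)}\asymp n\eps_{n,0}^2$, with a constant ratio that depends only on $\beta$ (and on the constants implicit in Lemma \ref{lem:eps:T1}). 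Thus $\tilde\pi(\tilde\Lambda_0)\gtrsim e^{-n\eps_{n,0}^2}$ in the $\gtrsim$-sense used in (H1); if needed, one enlarges $\tilde\Lambda_0$ to a band $\{k:\eps_n(k)\leq\tilde w_n\eps_{n,0}\}$ around $k^*$ to absorb the multiplicative factor.

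For the tail bound, write $\Lambda_n^c=\{k>c_0 n/\log n\}$. The upper bound $\tilde\pi(k)\lesssim e^{-c_1 k^{1/(1+2\beta_0)}}$ gives, by summation of the geometric-type tail,
\[
\tilde\pi(\Lambda_n^c)\;\lesssim\;\sum_{k>c_0 n/\log n}e^{-c_1 k^{1/(1+2\beta_0)}}\;\lesssim\;\exp\bigl(-(c_1/2)(c_0 n/\log n)^{1/(1+2\beta_0)}\bigr).
\]
Since $\beta_0<\beta_1\leq\beta$ we have $1/(1+2\beta_0)>1/(2\beta+1)$, hence $(n/\log n)^{1/(1+2\beta_0)}$ dominates $n\eps_{n,0}^2\asymp n^{1/(2\beta+1)}(\log n)^{2\beta/(2\beta+1)}$ polynomially in $n$. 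Consequently for any fixed $\bar c_0$ the inequality $\tilde\pi(\Lambda_n^c)\leq e^{-\bar c_0 n\eps_{n,0}^2}$ holds for $n$ large enough. If $\tilde\pi$ is supported on $\Lambda_n$ the bound is vacuous, which explains the parenthetical remark in the statement.

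The main obstacle is the careful matching of constants in the lower bound step, since $k^*\log k^*$ and $n\eps_{n,0}^2$ differ only by a fixed multiplicative factor that depends on $c_2$ and $\beta$; a crude application of the prior lower bound would yield $e^{-Cn\eps_{n,0}^2}$ rather than $e^{-n\eps_{n,0}^2}$. This is handled either by appealing to the $\gtrsim$-convention in (H1) (an unspecified multiplicative constant in the exponent is absorbed in the constants hidden in the definitions of $\eps_{n,0}$) or, more carefully, by enlarging $\tilde\Lambda_0$ to a window of size $\tilde w_n k^*$ around $k^*$; every $k$ in that window still lies in $\Lambda_0(\tilde w_n)$ by Lemma \ref{lem:eps:T1}, but the aggregated prior mass is inflated by a polynomial factor that does not affect the exponential order.
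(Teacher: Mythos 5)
Your verification of the second half of (H1) (the tail bound on $\Lambda_n^c$) is fine and is essentially the paper's argument: the exponent $(n/\log n)^{1/(1+2\beta_0)}$ dominates $n\eps_{n,0}^2$ polynomially because $\beta_0<\beta_1\leq\beta$, so any fixed $\bar c_0$ works for large $n$. The problem is in the first half. You anchor $\tilde\Lambda_0$ at $k^*\asymp(n/\log n)^{1/(2\beta+1)}$ and use the identity $k^*\log k^*\asymp n\eps_{n,0}^2$, which silently assumes $\eps_{n,0}\asymp(n/\log n)^{-\beta/(2\beta+1)}$. But $\eps_{n,0}$ is defined pointwise in $\theta_0$, and Lemma \ref{lem:eps:T1} only gives the \emph{upper} bound $\eps_{n,0}\lesssim(n/\log n)^{-\beta/(2\beta+1)}$ (the matching lower bound holds only for some least-favourable $\theta_0$). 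For a smoother $\theta_0$ in the class --- e.g.\ $\theta_0$ with finitely many nonzero coordinates, which lies in $\mathcal S_\beta(L)$ --- one has $\eps_{n,0}^2\asymp m_n(\log n)/n$, while $\eps_n(k^*)^2\asymp(n/\log n)^{-2\beta/(2\beta+1)}\gg\tilde w_n^2\eps_{n,0}^2$. Then $\{k^*\}\not\subset\Lambda_0(\tilde w_n)$, so your first requirement of (H1) fails, and moreover $k^*\log k^*\gg n\eps_{n,0}^2$, so $\tilde\pi(k^*)\gtrsim e^{-c_2k^*\log k^*}$ is far too small to give the needed $e^{-Cn\eps_{n,0}^2}$ lower bound. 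Your proposed repair of ``enlarging to a band around $k^*$'' does not help, since in this regime $\Lambda_0(\tilde w_n)$ consists of \emph{small} $k$ (of order $\tilde w_n^2m_n$) and does not contain $k^*$ at all.

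The paper's proof avoids this by working with the $\theta_0$-specific near-minimizer: take any $k_0$ with $\eps_n(k_0)\leq 2\eps_{n,0}$ (so $k_0\in\Lambda_0(\tilde w_n)$ automatically) and use the relation $n\eps_n(k)^2\geq k\log\sqrt{k}\,(1+o(1))$, extracted from the proof of Lemma \ref{lem:eps:T1}, to convert the hyper-prior lower bound into $\tilde\pi(k_0)\gtrsim k_0^{-c_2k_0}\geq e^{-Cn\eps_n(k_0)^2}\geq e^{-C'n\eps_{n,0}^2}$, which suffices in the form of (H1) actually used in the hierarchical theorem (constants, and even $\tilde w_n^2$ factors, in the exponent are harmless there). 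Your argument can be salvaged by replacing $k^*$ with such a $k_0$ and invoking this inequality instead of the class-worst-case calibration; as written, however, the set inclusion and the prior-mass lower bound both fail for smooth truths, which is a genuine gap.
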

Note that the Hypergeometric and the Poisson distribution satisfies the above conditions.

\begin{Lemma}\label{Lem: hyperT2}
Consider the prior (T2) and take $\Lambda_n=[e^{-c_0\bar{c}_0\tilde{w}_n^2n\eps_{n,0}^2},e^{c_0\bar{c}_0\tilde{w}_n^2n\eps_{n,0}^2}]$ for some positive $c_0>0$ (and $\bar{c}_0$ given in condition (H1)). Then for any hyper-prior satisfying 
\begin{align*}
e^{-c_1\t^{\frac{2}{1+2\a}}}\lesssim \tilde\pi(\t)\lesssim \t^{-c_2}\quad\text{for $\t\geq 1$ with some $c_1>0$ and $c_2>1+1/c_0$},\\
e^{-c_3\t^{-2}}\lesssim \tilde\pi(\t)\lesssim \t^{c_4}\quad\text{for $\t\leq 1$ with some $c_2>0$ and $c_4>1/c_0-1$}
\end{align*}
assumption (H1) holds. Furthermore the upper bound condition can be removed if the prior has support on $\Lambda_n$.
 \end{Lemma}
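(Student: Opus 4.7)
The plan is to verify separately the two components of assumption (H1): the mass lower bound $\int_{\tilde\Lambda_0}\tilde\pi \gtrsim e^{-n\eps_{n,0}^2}$ with some $\tilde\Lambda_0\subset\Lambda_0(\tilde{w}_n)$, and the tail upper bound $\int_{\Lambda_n^c}\tilde\pi \leq e^{-\bar{c}_0 n\eps_{n,0}^2}$. Both verifications lean on the explicit upper bound for $\eps_n(\t)$ given in Lemma~\ref{lem:eps:T2T3} and the oracle rate $\eps_{n,0}$ from Lemma~\ref{lem:eps:2:T2T3}.

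\textbf{Tail bound.} Writing $T_n^{\pm}=\exp(\pm c_0\bar{c}_0\tilde{w}_n^2 n\eps_{n,0}^2)$ so that $\Lambda_n=[T_n^-,T_n^+]$, the assumed polynomial tails of $\tilde\pi$ yield
$$\int_{T_n^+}^{\infty}\tilde\pi(\t)d\t \lesssim (T_n^+)^{-(c_2-1)}=\exp(-(c_2-1)c_0\bar{c}_0\tilde{w}_n^2 n\eps_{n,0}^2),$$
and symmetrically $\int_0^{T_n^-}\tilde\pi(\t)d\t \lesssim \exp(-(c_4+1)c_0\bar{c}_0\tilde{w}_n^2 n\eps_{n,0}^2)$. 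Since $\tilde{w}_n\to\infty$, the assumptions $c_2>1+1/c_0$ and $c_4>1/c_0-1$ (which keep the exponent strictly positive even for bounded $\tilde w_n$) ensure that both tail integrals are dominated by $e^{-\bar{c}_0 n\eps_{n,0}^2}$ for $n$ large. If the hyper-prior is already supported on $\Lambda_n$, this step is vacuous.

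\textbf{Mass lower bound.} I first locate an oracle scaling $\t_n^*$ minimizing the upper bound of $\eps_n(\t)$ in \eqref{eps:lambda:beta+-}--\eqref{eps:lambda:beta=}; balancing the two summands gives
$$\t_n^* \asymp \begin{cases} n^{(\alpha-\beta)/(2\beta+1)} & \text{if } \beta<\alpha+1/2,\\ n^{-1/(4\alpha+4)} & \text{if } \beta>\alpha+1/2,\\ \text{constant (up to logs)} & \text{if } \beta=\alpha+1/2,\end{cases}$$
and in each case $\eps_n(\t_n^*)\lesssim \eps_{n,0}$. Since each summand of the upper bound is a monotone power of $\t$, the interval $\tilde\Lambda_0:=[\t_n^*/2,2\t_n^*]$ inherits $\eps_n(\t)\leq C\eps_{n,0}\leq \tilde{w}_n\eps_{n,0}$ throughout for $n$ large; as $\t_n^*$ is polynomial in $n$ while $\Lambda_n$ has super-polynomial width, one has $\tilde\Lambda_0\subset\Lambda_n\cap\Lambda_0(\tilde{w}_n)$. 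The hyper-prior mass is then bounded below by $\t_n^*\cdot\min_{\tilde\Lambda_0}\tilde\pi$; plugging in the relevant branch of the lower bound on $\tilde\pi$ (the $e^{-c_1\t^{2/(2\alpha+1)}}$ branch when $\t_n^*\geq 1$, the $e^{-c_3\t^{-2}}$ branch when $\t_n^*\leq 1$) reduces the problem to verifying either $(\t_n^*)^{2/(2\alpha+1)}\lesssim n\eps_{n,0}^2$ or $(\t_n^*)^{-2}\lesssim n\eps_{n,0}^2$. A case-by-case check confirms this: for instance, when $\beta<\alpha+1/2$ with $\alpha>\beta$ one gets $(\t_n^*)^{2/(2\alpha+1)}\asymp n^{2(\alpha-\beta)/((2\beta+1)(2\alpha+1))}$ versus $n\eps_{n,0}^2\asymp n^{1/(2\beta+1)}$, so the inequality collapses to the trivial $-2\b\leq 1$; analogous sharp-but-true inequalities hold in the other regimes. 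The polynomial length $\t_n^*$ contributes only a $\log n$ factor in the exponent, which is absorbed via the $m_n$-slack in the definition of $\eps_{n,0}$, giving $\int_{\tilde\Lambda_0}\tilde\pi \gtrsim e^{-n\eps_{n,0}^2}$.

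\textbf{Main obstacle.} The main technical point is aligning the three regimes of $\t_n^*$ with the appropriate branch of the lower bound on $\tilde\pi$ and checking that the exponent arithmetic is tight in each one (equality essentially holds in the regime $\beta>\alpha+1/2$, so there is no margin to waste). A secondary nuisance is carrying the logarithmic factors present in the $\beta=\alpha+1/2$ case and in the $\mathcal{H}_\infty(\beta,L)$ version of Lemma~\ref{lem:eps:2:T2T3}, but these are absorbed by the sequence $m_n$ in the definition of $\eps_{n,0}$.
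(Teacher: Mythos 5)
Your tail-bound step is fine and is exactly the paper's computation: the polynomial upper bounds on $\tilde\pi$ integrated beyond the endpoints of $\Lambda_n$ give $e^{-(c_2-1)c_0\bar c_0\tilde w_n^2 n\eps_{n,0}^2}$ and $e^{-(c_4+1)c_0\bar c_0\tilde w_n^2 n\eps_{n,0}^2}$, and the conditions $c_2>1+1/c_0$, $c_4>1/c_0-1$ make these $\leq e^{-\bar c_0 n\eps_{n,0}^2}$.

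The mass lower bound, however, has a genuine gap. You set $\tilde\Lambda_0=[\tau_n^*/2,2\tau_n^*]$ with $\tau_n^*$ obtained by balancing the two terms of the upper bound of Lemma \ref{lem:eps:T2T3} for the class $\mathcal S_\beta(L)\cup\mathcal H_\infty(\beta,L)$, and you claim $\eps_n(\tau_n^*)\lesssim\eps_{n,0}$. But $\eps_{n,0}=\inf_{\tau\in\Lambda_n}\eps_n(\tau)$ is a $\theta_0$-specific oracle quantity (and the lemma makes no regularity assumption on $\theta_0$ at all), whereas your bound on $\eps_n(\tau_n^*)$ is only the worst-case rate over the class; by definition of the infimum the inequality you actually know is the reverse one, $\eps_{n,0}\leq\eps_n(\tau_n^*)$. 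For a favourable $\theta_0$ (say $\theta_0$ with finitely many nonzero coefficients, or $\theta_0=0$), $\eps_{n,0}$ can be of order $\sqrt{m_n\log n/n}$ while $\eps_n(\tau_n^*)\asymp n^{-\beta/(2\beta+1)}$, so neither the required inclusion $\tilde\Lambda_0\subset\Lambda_0(\tilde w_n)=\{\tau:\eps_n(\tau)\leq\tilde w_n\eps_{n,0}\}$ nor your exponent check $(\tau_n^*)^{2/(2\alpha+1)}\lesssim n\eps_{n,0}^2$ survives; (H1) is then not verified for such $\theta_0$. The paper circumvents this by never identifying the optimal $\tau$: it picks an arbitrary $\tau_0\in\Lambda_0(\tilde w_n)$ with $\eps_n(\tau_0)\leq 2\eps_{n,0}$ (it exists by definition of the infimum), uses the fact that all the RKHSs $\mathbb H^\tau$ coincide together with the concentration-function rescaling $\eps_n(\tau)^2\lesssim\max\{(\tau_0/\tau)^2,(\tau_0/\tau)^{-\alpha}\}\eps_n(\tau_0)^2$ to show that the whole interval $[\tau_0/2,2\tau_0]$ (indeed $[(2/\tilde w_n)\tau_0,(\tilde w_n/2)^{2/\alpha}\tau_0]\cap\Lambda_n$) lies in $\Lambda_0(\tilde w_n)$, and then invokes the universal lower bound $n\eps_n(\tau)^2\gtrsim\tau^{2/(1+2\alpha)}\vee\tau^{-2}$ so that the assumed hyper-prior lower bounds at $\tau_0$ are automatically at least $e^{-c\,n\eps_n(\tau_0)^2}\geq e^{-c'\tilde w_n^2 n\eps_{n,0}^2}$ — that is precisely why the lower-bound shapes $e^{-c_1\tau^{2/(1+2\alpha)}}$ and $e^{-c_3\tau^{-2}}$ appear in the statement. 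To repair your argument you would need either this generic choice of $\tau_0$ and a comparison argument replacing monotonicity of the worst-case upper bound, or a matching per-$\theta_0$ lower bound on $\eps_{n,0}$, which is not available outside the special regimes of Lemma \ref{lem:eps:2:T2T3}.
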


Note that for instance the inverse gamma and Weibull distributions satisfy this assumption.

\begin{rem}\label{Rem: hyperT2}
To obtain the polynomial upper bound of the hyper-prior densities $\tilde\pi(\t)$ in Lemma \ref{Lem: hyperT2} the set $\Lambda_n$ is taken to be larger than it is necessary in the empirical Bayes method to achieve adaptive posterior contraction rates, see for instance Propositions \ref{prop:reg:T2} and \ref{prop:dens:T2}. Nevertheless the  conditions on the hyper-entropy are still satisfied, i.e. by taking $u_n=e^{-2c_0\bar{c}_0\tilde{w}_n^2n\eps_{n,0}^2}$ on $\Lambda\setminus\Lambda_0$ and $u_n=n^{-d}$ (for any $d>0$) on $\Lambda_0$ we get that $\log N_n(\Lambda_n)=o(w_n^2n\eps_{n,0}^2)$ and $\log N_n(\Lambda_0)=o(n\eps_{n,0}^2)$.
\end{rem}

\begin{Lemma}\label{Lem: hyperT3}
Consider the prior (T3), take $\Lambda_n=[0,c_0n^{c_1}]$ for some positive constants $c_0,c_1$ and assume that $\theta_0\in S_{\beta}(L)\cup \mathcal{H}_{\infty}(\beta,L)$ for some $\beta>\beta_0>0$. Then for any hyper-prior satisfying 
$$ e^{-c_2\a}\lesssim \tilde\pi(\a)\lesssim e^{-c_0\a^{1/c_1}},\quad \text{for $\a>0$} $$ 
and for some $c_0,c_1,c_2> 0$,  assumption (H1) holds. The upper bound on $\tilde \pi$ can be removed by taking the support of the prior to be $\Lambda_n$.
 \end{Lemma}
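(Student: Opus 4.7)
The plan is to pick a short interval $\tilde\Lambda_0$ sitting immediately to the right of $\beta$, verify it lies inside $\Lambda_0(\tilde w_n)$, and then check the two inequalities in (H1) on $\tilde\Lambda_0$ and on $\Lambda_n^c$ separately.

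First I would locate $\tilde\Lambda_0$. Lemma \ref{lem:eps:2:T2T3} gives $\eps_{n,0}^2 \asymp n^{-2\beta/(2\beta+1)}$. Lemma \ref{lem:eps:T2T3}, applied with $\tau$ treated as a fixed constant, yields for every $\alpha\geq\beta$ (so that $\beta/(2\alpha+1)<1/2$ and $a(\alpha,\beta)$ stays bounded) the bound
\begin{equation*}
\eps_n(\alpha)^2 \lesssim n^{-2\alpha/(2\alpha+1)} + n^{-2\beta/(2\alpha+1)} \lesssim n^{-2\beta/(2\alpha+1)}.
\end{equation*}
Dividing by $\eps_{n,0}^2$ produces $\eps_n(\alpha)^2/\eps_{n,0}^2 \lesssim n^{4\beta(\alpha-\beta)/[(2\beta+1)(2\alpha+1)]}$, which stays below $\tilde w_n^2$ as long as $\alpha-\beta \leq \kappa \log\tilde w_n /\log n$ for a small enough $\kappa=\kappa(\beta)>0$. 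Setting $\delta_n = \kappa\log\tilde w_n/\log n$ and $\tilde\Lambda_0 = [\beta,\beta+\delta_n]$ therefore guarantees $\tilde\Lambda_0 \subset \Lambda_0(\tilde w_n) \subset \Lambda_n$ for $n$ large.

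The two integral bounds are then routine. Since $\tilde\pi(\alpha)\gtrsim e^{-c_2\alpha}$ and $\alpha$ is bounded on $\tilde\Lambda_0$,
\begin{equation*}
\int_{\tilde\Lambda_0}\tilde\pi(\alpha)\,d\alpha \gtrsim \delta_n \asymp \frac{\log\tilde w_n}{\log n},
\end{equation*}
which dwarfs $e^{-n\eps_{n,0}^2} = e^{-c\,n^{1/(2\beta+1)}}$. For the tail, the substitution $u=\alpha^{1/c_1}$ combined with $\tilde\pi(\alpha)\lesssim e^{-c_0\alpha^{1/c_1}}$ gives
\begin{equation*}
\int_{c_0 n^{c_1}}^\infty \tilde\pi(\alpha)\,d\alpha \lesssim \int_{c_0^{1/c_1}n}^\infty c_1 u^{c_1-1}e^{-c_0 u}\,du \lesssim e^{-c' n}
\end{equation*}
for some $c'>0$. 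Since $n\eps_{n,0}^2 \asymp n^{1/(2\beta+1)} = o(n)$, this beats $e^{-\bar c_0 n\eps_{n,0}^2}$ for large $n$. If the prior is compactly supported in $\Lambda_n$, the tail step is immediate.

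The main obstacle lies in the first step: extracting the correct shape of $\Lambda_0(\tilde w_n)$ from the one-sided bound on $\eps_n(\alpha)$ in Lemma \ref{lem:eps:T2T3}. It is essential to anchor $\tilde\Lambda_0$ to the right of $\beta$: on the complementary regime $\alpha<\beta$ the upper bound is governed by $n^{-\alpha/(2\alpha+1)}$, which is \emph{strictly worse} than $\eps_{n,0}$, so verifying $\alpha\in\Lambda_0(\tilde w_n)$ from the \emph{upper} bound alone is impossible and one would need a matching lower bound on $\eps_n(\alpha)$ that is not provided. Taking $\tilde\Lambda_0 \subset [\beta,\beta+\delta_n]$ sidesteps this obstruction entirely.
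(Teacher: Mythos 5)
There is a genuine gap, and it sits exactly where you placed your confidence: the claim that $\varepsilon_{n,0}^2\asymp n^{-2\beta/(2\beta+1)}$. For prior (T3), Lemma \ref{lem:eps:2:T2T3} only gives the one-sided bound $\varepsilon_{n,0}\lesssim n^{-\beta/(2\beta+1)}$; no matching lower bound is available, and none can hold uniformly over $\theta_0\in S_\beta(L)\cup\mathcal H_\infty(\beta,L)$. Indeed, the oracle quantity $\varepsilon_{n,0}$ adapts to the individual $\theta_0$, not to the nominal class: if $\theta_0$ happens to be smoother (say $\theta_0\in S_{\beta'}$ with $\beta'>\beta$, or $\theta_0$ with finitely many nonzero coordinates, both of which lie in $S_\beta(L)$), then $\varepsilon_{n,0}$ is of strictly smaller order than $n^{-\beta/(2\beta+1)}$ (possibly as small as the floor $\sqrt{m_n\log n/n}$). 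In that situation your set $\tilde\Lambda_0=[\beta,\beta+\delta_n]$ is \emph{not} contained in $\Lambda_0(\tilde w_n)$: for $\alpha$ in a bounded neighbourhood of $\beta$ the lower bound of Lemma \ref{lem:eps:T2T3} gives $\varepsilon_n(\alpha)\gtrsim n^{-\alpha/(2\alpha+1)}\asymp n^{-\beta/(2\beta+1)}\gg \tilde w_n\,\varepsilon_{n,0}$ when $\tilde w_n$ grows slowly, so the first inequality of (H1) is being verified on a set that need not meet $\Lambda_0(\tilde w_n)$ at all. Your closing remark identifies the obstruction for $\alpha<\beta$ but misses that anchoring at $\beta$ itself is the problem; the ratio computation $\varepsilon_n(\alpha)^2/\varepsilon_{n,0}^2\lesssim n^{4\beta(\alpha-\beta)/[(2\beta+1)(2\alpha+1)]}$ silently divides by a lower bound on $\varepsilon_{n,0}$ that you do not have.

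The paper avoids this by anchoring at the (unknown) near-minimizer rather than at $\beta$: pick any $\alpha_0\in\Lambda_0(\tilde w_n)$ with $\varepsilon_n(\alpha_0)\leq C\varepsilon_{n,0}$ (one checks $\alpha_0\geq\beta+o(1)$), and show that the whole interval $[\alpha_0/(1+2\log\tilde w_n/\log n),\,\alpha_0]$ lies in $\Lambda_0$ by comparing concentration functions across the nested RKHSs: for $\alpha\leq\alpha_0$ the RKHS approximation term is no larger and the small-ball term obeys $\{K\varepsilon_n(\alpha_0)\}^{-1/\alpha}\leq\{n\varepsilon_n(\alpha_0)^2\}^{\alpha_0/\alpha}$, so $-\log\Pi(\|\theta-\theta_0\|_2\leq K\varepsilon_n(\alpha_0)\,|\,\alpha)\lesssim\{n\varepsilon_n(\alpha_0)^2\}^{\alpha_0/\alpha}\leq\tilde w_n^2\,n\varepsilon_n(\alpha_0)^2$ on that interval. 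The hyper-prior mass of this interval is then $\gtrsim(\log\tilde w_n/\log n)\,e^{-c_2\alpha_0}$, which is $\geq e^{-n\varepsilon_{n,0}^2}$ when $\alpha_0\leq\log n$; when $\alpha_0>\log n$ the paper instead shows $[\log n/2,\log n]\subset\Lambda_0$ (the small-ball exponent $\varepsilon_n(\alpha_0)^{-1/\alpha}\lesssim n^{1/(2\alpha)}$ is $O(1)$ there) and uses that window, so the density factor $e^{-c_2\alpha_0}$ never overwhelms $e^{-n\varepsilon_{n,0}^2}$. Your tail bound over $\Lambda_n^c$ and the magnitude comparison $\int_{\tilde\Lambda_0}\tilde\pi\gg e^{-n\varepsilon_{n,0}^2}$ are fine as such (they only need the upper bound on $\varepsilon_{n,0}$ and the floor $n\varepsilon_{n,0}^2\geq m_n\log n$), but the containment step needs to be redone along these lines.
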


In the following sections, we prove that in the Gaussian white noise, regression and density estimation models the MMLE empirical Bayes posterior concentration rate is bounded from above by  $M_n \epsilon_{n,0}$ and from below by $\delta_n\eps_{n,0}$, where $\epsilon_{n,0}$ is given in Lemma \ref{lem:eps:2:T2T3} under priors (T1)-(T3) and $M_n$, respectively $\delta_n$, tends to infinity, respectively 0, arbitrary slowly.

\subsection{Application to the nonparametric regression model}\label{sec:regression}



In this section we show that our results apply to the nonparametric regression model. We consider the fixed design regression problem, where we assume that the observations $\mathbf x_n=(x_1,x_2,...,x_n)$ satisfy
\begin{align}
x_i=f_0(t_i)+Z_i, \quad i=1,2,...,n, \label{model: GRfixed}
\end{align}
where $Z_i\stackrel{iid}{\sim}N(0,\sigma^2)$ random variables (with known $\sigma^2$ for simplicity) and $t_i=i/n$.

Let us denote by $\theta_0=(\theta_{0,1},\theta_{0,2},..)$ the Fourier coefficients of the regression function $f_0\in L_2(M)$:
$f_0(t)=\sum_{j=1}^{\infty}\theta_{0,j} e_j(t),$
so that $(e_j(.))_j $ is the Fourier basis.
We note that following from Lemma 1.7 in \cite{tsybakov} and Parseval's inequality we have that
\begin{align*}
\|f_0\|_2=\|\theta_0\|_2=\|f_0\|_n,
\end{align*}
where $\|f_0\|_n$ denotes the $L_2$-metric associated to the empirical norm. 

First we deal with the random truncation prior (T1) where applying Theorem \ref{thm:main}, Corollary \ref{thm: contraction} and Theorem \ref{thm: hierarchical} combined with Lemma \ref{lem:eps:T1} we get that both the MMLE empirical Bayes and hierarchical Bayes posteriors are rate adaptive (up to a $\log n$ factor). The following proposition is proved in Section B.1 of  the supplementary material \cite{rousseau:szabo:15:supp}.
\begin{proposition} \label{prop:reg:T1prior}
 Assume that $f_0\in \mathcal{H}_{\infty}(\beta,L)\cup S_\beta(L)$ and consider a type (T1) prior. Let $\Lambda_n = \{2, \cdots, k_n\}$ with $k_n = \epsilon n/ \log n$ for some small enough constant $\eps>0$. 
 Then, for any $M_n$ tending to infinity and $K>0$ the MMLE estimator $\hat k_n \in \Lambda_0 =  \{ k: \epsilon_n(k) \leq M_n  \epsilon_{n,0}\} $ with probability going to 1 under $P_{\theta_0}^n$, where $\epsilon_n(k)$ and $\epsilon_{n,0}$ are given in Lemma \ref{lem:eps:T1}.
 
Furthermore we also have the following contraction rates:
for all $0< \beta_1 \leq \beta_2 < +\infty $, uniformly over $\beta \in (\beta_1, \beta_2)$
  \begin{align*}
\sup_{f_0\in \mathcal{H}_{\infty}(\beta,L)\cup S_\beta(L)} E^n_{f_0}  \Pi\left( \left.f:\, \|f_{0}-f\|_2 \geq M_n (n/\log n)^{-\frac{\beta}{2\beta+1}} \right| \mathbf x_n ; \hat k_n\right) = o(1),\\
\sup_{f_0\in \mathcal{H}_{\infty}(\beta,L)\cup S_\beta(L)} E^n_{f_0}  \Pi\left( \left.f:\, \|f_{0}-f\|_2 \geq M_n (n/\log n)^{-\frac{\beta}{2\beta+1}} \right| \mathbf x_n \right) = o(1),
	\end{align*}
	where the latter is satisfied if the hyper prior on $k$ satisfies  \eqref{H1fork}. 
	
Finally we note that the above bounds are sharp in the sense that  both the MMLE empirical and the hierarchical Bayes posterior contraction rates are bounded from below by $\delta_n (n/\log n)^{-\beta/(2\beta+1)}$ with $P_{\theta_0}^n$-probability tending to one, for any $\delta_n=o(1)$ and some $\theta_0 \in \mathcal H_\infty(\beta,L)\cup S_\beta(L)$. 
 \end{proposition}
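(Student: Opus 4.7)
The strategy is to invoke Theorems \ref{thm:main}, \ref{thm: LB}, Corollary \ref{thm: contraction}, and Theorem \ref{thm: hierarchical}, extracting the values of $\epsilon_n(k)$ and $\epsilon_{n,0}$ from Lemma \ref{lem:eps:T1}. Because $k$ takes integer values I fix $u_n<1$, so that each $u_n$-neighbourhood in $\Lambda_n$ is a singleton: the change of measure $\psi_{k,k'}$ can then be taken to be the identity, whence $Q_{k,n}^{\theta}=P_\theta^n$ and $\log N_n(\Lambda_n)\leq \log k_n=O(\log n)=o(n w_n^2\epsilon_{n,0}^2)$. This trivializes the perturbation parts of (A1), (C1), (C3), (C3bis), (H2), and the first half of \eqref{cond: Q2}. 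Throughout I take $\Theta_n(k)=\operatorname{supp}\Pi(\cdot\mid k)=\{\theta:\theta_i=0\text{ for }i>k\}$, so the prior mass of its complement is zero and the tail bounds in (A1) and (C1) are automatic. The Parseval identity $\|f-f_0\|_n=\|\theta-\theta_0\|_2$ recalled just before the proposition allows me to use $d(\theta,\theta_0)=\|\theta-\theta_0\|_2$ as testing distance, identical to the Banach norm of the prior; in particular \eqref{change:distance} holds with $c(k)=K$.

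For the upper bound, the Gaussian likelihood yields $K(\theta_0,\theta)\asymp V_2(\theta_0,\theta)\asymp n\|\theta-\theta_0\|_2^2$, so (B1) holds with $M_2\asymp K$, and the existence of some $k_0\in\tilde\Lambda_0$ with $\epsilon_n(k_0)\leq M_1\epsilon_{n,0}$ is immediate from the infimum definition of $\epsilon_{n,0}$. Standard log-likelihood-ratio tests for fixed-design Gaussian regression verify \eqref{cond: test}, for (A2) as well as (C2); the sieve entropy estimate $\log N(\zeta u,\Theta_n(k),\|\cdot\|_2)\leq k\log(C/\zeta)$ combined with the lower bound $\epsilon_n(k)^2\gtrsim k\log n/n$ from Lemma \ref{lem:eps:T1} gives \eqref{cond: entropy} for all $u\geq K\epsilon_n(k)$. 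Theorem \ref{thm:main} then yields $\hat k_n\in\Lambda_0$ with $P_{\theta_0}^n$-probability tending to $1$, and Corollary \ref{thm: contraction} delivers the empirical Bayes upper bound. For the hierarchical posterior, (H1) is supplied by Lemma \ref{Lem: hyperT1} under \eqref{H1fork}, while (H2) follows by observing that on $\{\|\theta-\theta_0\|_2\leq K\epsilon_n(k)\}$ the variable $\ell_n(\theta)-\ell_n(\theta_0)+n\|\theta-\theta_0\|_2^2/(2\sigma^2)$ is centred Gaussian with variance $n\|\theta-\theta_0\|_2^2/\sigma^2$, so a Gaussian tail bound produces the required exponential control with any sufficiently small $c_3$.

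The lower bound is the most delicate point. The surviving clause of \eqref{cond: Q2} asks that $-\log\Pi(\|\theta-\theta_0\|_2\leq 2\delta_n\epsilon_{n,0}\mid k)\gg n\epsilon_{n,0}^2$ uniformly in $k\in\Lambda_0$. If the bias satisfies $\sum_{i>k}\theta_{0,i}^2>(2\delta_n\epsilon_{n,0})^2$ the ball has prior mass zero and the inequality is vacuous; otherwise continuity and positivity of $g$ give $-\log\Pi(\cdot\mid k)\gtrsim k\log\bigl(1/(\delta_n\epsilon_{n,0})\bigr)\gtrsim k\log n$, since $\log(1/\epsilon_{n,0})\asymp \log n$. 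In this second regime the bias constraint combined with $\theta_0\in\mathcal H_\infty(\beta,L)\cup S_\beta(L)$ forces $k\gtrsim \delta_n^{-1/\beta}(n/\log n)^{1/(2\beta+1)}=\delta_n^{-1/\beta}k^{*}$, so $k\log n\gtrsim \delta_n^{-1/\beta}n\epsilon_{n,0}^2\gg n\epsilon_{n,0}^2$. The main obstacle is to check that this dichotomy is uniform in $k\in\Lambda_0$ and in $\beta\in(\beta_1,\beta_2)$, and that the constants appearing in (H1)-(H2) can be synchronized so that Theorem \ref{thm: hierarchical} applies as well; once settled, Theorem \ref{thm: LB} and the final statement of Theorem \ref{thm: hierarchical} produce the matching lower bounds $\delta_n(n/\log n)^{-\beta/(2\beta+1)}$.
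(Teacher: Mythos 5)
Your treatment of the upper bounds follows the same route as the paper: no change of measure for the discrete hyper-parameter $k$, the Parseval identification of the testing distance with $\|\theta-\theta_0\|_2$, the exact Gaussian Kullback--Leibler computation for (B1), standard tests and annulus entropies for (A2)/(C2), Lemma \ref{Lem: hyperT1} for (H1), and a Gaussian tail bound for (H2); Theorem \ref{thm:main}, Corollary \ref{thm: contraction} and Theorem \ref{thm: hierarchical} then give the first three claims. Two small differences from the paper: the paper truncates the sieve, $\Theta_n(k)=\{\theta\in\RR^k:\max_j|\theta_j|\leq(M_n^2n\epsilon_{n,0}^2)^{1/p^*}\}$, using the exponential moment of $g$, while you take the full support $\RR^k$ — this is admissible because \eqref{cond: entropy} only concerns the annuli $\{u\leq d(\theta,\theta_0)\leq 2u\}$, but then your displayed bound $\log N(\zeta u,\Theta_n(k),\|\cdot\|_2)\leq k\log(C/\zeta)$ is false as written and must be stated for the annuli. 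Also, in (H2) the constant must be taken sufficiently \emph{large} (the supplement uses $c_3\geq 2+3\sigma^{-2}K^2/2$), not ``sufficiently small'': for $c_3<K^2/(2\sigma^2)$ the deterministic drift $-n\|f_\theta-f_{\theta_0}\|_n^2/(2\sigma^2)$ already violates the threshold for $\theta$ near the boundary of the ball; since (H2) only requires existence of some $c_3>0$, this is a harmless slip once corrected.

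The genuine gap is in the lower bound. Your dichotomy tries to verify \eqref{cond: Q2} uniformly over $\theta_0\in\mathcal H_\infty(\beta,L)\cup\mathcal S_\beta(L)$, and its second branch rests on the claim that ``$\sum_{i>k}\theta_{0,i}^2\leq(2\delta_n\epsilon_{n,0})^2$ together with class membership forces $k\gtrsim\delta_n^{-1/\beta}(n/\log n)^{1/(2\beta+1)}$''. Class membership only bounds the bias from \emph{above} ($\lesssim k^{-2\beta}$); it gives no lower bound, so the implication fails, e.g., for finitely supported $\theta_0$, where the ball has positive prior mass with $-\log\Pi(\cdot\mid k)\asymp k\log n\asymp n\epsilon_{n,0}^2$ and \eqref{cond: Q2} simply does not hold — which is exactly why the proposition asserts the lower bound only ``for some $\theta_0$''. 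You flag this uniformity problem as ``the main obstacle'' and leave it unresolved, so the argument is incomplete. The paper's proof closes it by fixing a worst-case truth, $\theta_{0,i}^2=(1+i)^{-2\beta-1}$ (end of the proof of Lemma \ref{lem:eps:T1}): for this $\theta_0$ one has $\sum_{i>k}\theta_{0,i}^2\asymp k^{-2\beta}$ and $\epsilon_{n,0}\asymp(n/\log n)^{-\beta/(2\beta+1)}$ also as a lower bound, and every $k\in\Lambda_0$ satisfies $k\lesssim M_n^2(n/\log n)^{1/(2\beta+1)}$, so for $\delta_n$ tending to zero (suitably slowly relative to $M_n$, which is at one's disposal) the ball of radius of order $\delta_n\epsilon_{n,0}$ has prior mass \emph{zero} for all $k\in\Lambda_0$; \eqref{cond: Q2} is then vacuously satisfied and Theorem \ref{thm: LB} together with the last part of Theorem \ref{thm: hierarchical} yields the stated lower bounds. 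You should therefore abandon the positive-mass branch and restrict the lower-bound argument to such a specific $\theta_0$.
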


Next we consider the priors (T2) and (T3). As a consequence of Theorem \ref{thm:main}, Corollary \ref{thm: contraction}, Theorem \ref{thm: hierarchical}, and Lemma \ref{lem:eps:2:T2T3} we can show that both the hierarchical Bayes and the MMLE empirical Bayes method for the rescaled Gaussian prior (T2) is optimal only in a limited range of regularity classes $\mathcal{S}_\beta(L)\cup\mathcal H_{\infty}(\beta,L)$ satisfying $\beta<\alpha+1/2$, else the posterior achieves a sub-optimal contraction rate $n^{-(2\alpha+1)/( 4 \alpha + 4)}$. However, by taking the MMLE of the regularity hyper-parameter $\a$ or endowing it with a hyper-prior distribution in the Gaussian prior (T3), the posterior achieves the minimax contraction rate $n^{-\beta/(1+2\beta)}$. Similar results were derived in \cite{SzVZ} and \cite{knapikSVZ2012} in the context of the (inverse) Gaussian white noise model using semi-explicit computations. We note that our implicit (and general) approach not just reproduces the previous findings in the direct (non inverse problem) case, but also improves on the posterior contraction rate in case of the prior (T3), where in \cite{knapikSVZ2012} an extra $\log n$ factor was present.

\begin{proposition} \label{prop:reg:T2}
 Assume that $f_0\in \mathcal S_\beta(L)\cup \mathcal{H}_{\infty}(\beta,L)$ for some $\beta>0$ and consider  type (T2) and (T3) priors with $\alpha >0$. Furthermore take $\Lambda_n(\tau)=[n^{-1/(4\a)},n^{\a/2}]$ and  $\Lambda_n(\a)=(0,c_0n^{c_1}]$, respectively, for some $c_0, c_1>0$. Then $\hat \lambda_n \in \Lambda_0$ with $P_{f_0}^n$-probability tending to 1. Furthermore, both in the case of the MMLE empirical Bayes and hierarchical Bayes approach we have for any $M_n $ going to infinity with hyper-priors satisfying (H1) (see for instance Lemma \ref{Lem: hyperT2} and Lemma \ref{Lem: hyperT3}) that
\begin{itemize}
\item For the multiplicative scaling prior (T2)
 \begin{itemize}
 \item If $\beta > \alpha + 1/2$, the posterior concentration rate is bounded from above by 
  $$ M_n \epsilon_{n,0} \asymp M_n n^{-(2\alpha+1)/( 4 \alpha + 4)},$$
	and for $\delta_n=o(1)$ and  $\|f_0\|_2\geq c$ (for some positive constant $c$) it is bounded from below by
	$$\delta_n\eps_{n,0}\asymp \delta_n n^{-(2\alpha+1)/( 4 \alpha + 4)}.$$
  \item If $\beta < \alpha +1/2 $, the posterior concentration rate is bounded by 
  $$M_n \epsilon_{n,0} \lesssim M_n n^{-\beta/(2\beta +1)},$$
  with an extra $\log n$ term if $\beta = \alpha +1/2$ and $f_0\in \mathcal{H}_{\infty}(\beta,L)$.
 \end{itemize}
\item For the regularity prior (T3) the posterior contraction rate is also
$$M_n \epsilon_{n,0} \lesssim M_n n^{-\beta/(2\beta +1)}.$$
\end{itemize}
\end{proposition}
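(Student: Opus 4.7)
The plan is to apply Theorem \ref{thm:main}, Corollary \ref{thm: contraction}, Theorem \ref{thm: LB} and Theorem \ref{thm: hierarchical} by verifying assumptions (A1)-(A2), (B1), (C1)-(C3) (with (C3bis)), \eqref{cond: Q2}, (H1)-(H2) for the Gaussian regression model under each of the priors (T2) and (T3), and then to read off the rates by plugging the oracle values of $\eps_{n,0}$ computed in Lemma \ref{lem:eps:2:T2T3}. The whole analysis simplifies considerably thanks to the Parseval identity $\|f\|_2=\|\theta\|_2=\|f\|_n$, which lets us work in sequence space and allows the Banach norm $\|\cdot\|_2$, the Kullback-Leibler semi-norm and the testing distance to coincide up to a factor of $\sigma$, namely $K(f_0,f) = \tfrac{n}{2\sigma^2}\|\theta-\theta_0\|_2^2$ and $V_2(f_0,f)= \tfrac{n}{\sigma^2}\|\theta-\theta_0\|_2^2$. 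Assumption (B1) is then immediate with $M_2=M_2(\sigma)$, and the test condition (A2)/\eqref{cond: test} follows from the classical Ghosal-Ghosh-van der Vaart Gaussian-noise likelihood-ratio tests in the $\|\cdot\|_2$ semimetric, with $d = \|\cdot\|_2$ and $c(\lambda) = K$, once the entropy bound \eqref{cond: entropy} is checked on a suitable sieve.

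For the sieves I would take $\Theta_n(\tau) = \{\theta : |\theta_i|\le B_n \tau i^{-\alpha-1/2}\sqrt{\log i},\ i\le n\}$ under (T2) and an analogous Sobolev-type ellipsoid $\{\sum_{i\le n} i^{2\alpha+1}\theta_i^2 \le B_n n\tau^2\}$ under (T3); Borell's inequality combined with \eqref{centeredGP} gives $\Pi(\Theta_n(\lambda)^c|\lambda)\le e^{-c n\eps_n(\lambda)^2}$ for $B_n$ large enough, and since $\eps_n(\lambda)^2 \ge \eps_{n,0}^2 \ge w_n^2\eps_{n,0}^2/w_n^2$ with $w_n$ a small power of $\log n$, \eqref{cond: complement} and (C1) follow. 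The entropy of $\Theta_n(\lambda)\cap\{u\le \|\theta-\theta_0\|_2\le 2u\}$ is of standard Gaussian type and is $\lesssim n\eps_n(\lambda)^2\ll n u^2$ for $u\ge c(\lambda)\eps_n(\lambda)$, giving \eqref{cond: entropy}. The change-of-measure bound \eqref{cond: Q}, respectively (C3) and (C3bis), is where the two priors differ: for (T2), $\psi_{\tau,\tau'}(\theta)=(\tau'/\tau)\theta$ gives $\|\psi_{\tau,\tau'}(\theta)-\theta\|_n = |\tau'/\tau - 1|\|\theta\|_n$, which is controlled by a polynomially small mesh $u_n$ in $\rho(\tau,\tau')=|\log\tau-\log\tau'|$; for (T3), $\|\psi_{\alpha,\alpha'}(\theta)-\theta\|_n^2 \le (n^{|\alpha-\alpha'|}-1)^2\|\theta\|_n^2$, which requires a finer mesh $u_n\asymp 1/\log n$, but $\Lambda_n(\alpha)=(0,c_0 n^{c_1}]$ still satisfies $\log N_n(\Lambda_n)\lesssim \log^2 n=o(w_n^2 n\eps_{n,0}^2)$ and $\log N_n(\Lambda_0)=o(n\eps_{n,0}^2)$.

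For the lower-bound condition \eqref{cond: Q2}, the Gaussian small-ball estimate $-\log\Pi(\|\theta-\theta_0\|_2\le 2\delta_n\eps_{n,0}\mid \lambda)\ge \tilde c_1^{-1}(\delta_n\eps_{n,0}/\tau)^{-1/\alpha}$ from \eqref{centeredGP}--\eqref{Equiv: ConcFunc and SmallBall} dominates $n\eps_{n,0}^2$ as soon as $\delta_n\to 0$ slowly enough, since on $\Lambda_0$ the hyper-parameter $\tau$ stays bounded by a power of $n$. Theorem \ref{thm: LB} then yields the lower bound $\delta_n\eps_{n,0}$. For the hierarchical version, (H1) is provided by Lemmas \ref{Lem: hyperT2}-\ref{Lem: hyperT3} (observing that $\Lambda_n$ in the statement is contained in the larger $\Lambda_n$ required for those lemmas after at most reparametrising the cutoffs), and (H2) is a standard Chernoff bound on the Gaussian log-likelihood increment $\ell_n(\psi_{\lambda,\lambda'}(\theta))-\ell_n(\theta_0)$, whose mean is $-\tfrac{n}{2\sigma^2}\|\psi_{\lambda,\lambda'}(\theta)-\theta_0\|_n^2\ge -c\, n\eps_n(\lambda)^2$ by the preceding change-of-measure estimate and whose fluctuations are sub-Gaussian of variance of order $n\eps_n(\lambda)^2$. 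Combining these inputs with the explicit oracle rates of Lemma \ref{lem:eps:2:T2T3} gives $\eps_{n,0}\asymp n^{-(2\alpha+1)/(4\alpha+4)}$ under (T2) when $\beta>\alpha+1/2$ and $\eps_{n,0}\asymp n^{-\beta/(2\beta+1)}$ (up to the stated $\log n$ factor at the boundary) in the remaining cases, yielding the stated upper and lower bounds.

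The main obstacle is the simultaneous choice of $\Theta_n(\lambda)$ and of the mesh $u_n$ across the full set $\Lambda_n\setminus\Lambda_0$ so that \eqref{cond: Q}-\eqref{cond: complement} hold with the exponent $w_n^2 n\eps_{n,0}^2$ (rather than the larger $n\eps_n(\lambda)^2$) in \eqref{cond: complement}. Under (T2) with $\beta>\alpha+1/2$ this is precisely the trade-off that produces the sub-optimal rate, because the $\lambda$-dependent term $\|\theta_0\|_2^2/(n\tau^2)$ in \eqref{eps:lambda:beta+-} forces $\tau$ away from the minimax scale; the bookkeeping for this, together with the detailed verification of (H2), is routine but long and would be placed in the supplementary material, with only the key change-of-measure and entropy estimates sketched in the main text.
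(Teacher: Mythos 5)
Your overall plan is the same as the paper's: work in the sequence representation where $\|f\|_n=\|\theta\|_2$ so that KL, $V_2$ and the testing distance are all multiples of $\|\theta-\theta_0\|_2^2$, verify (A1)--(A2), (B1), (C1)--(C3), \eqref{cond: Q2}, (H1)--(H2), and read the rates off Lemma \ref{lem:eps:2:T2T3}; your treatment of (B1), of (H1) via Lemmas \ref{Lem: hyperT2}--\ref{Lem: hyperT3}, and of (H2) by a Chernoff bound matches the supplement. However, two of your concrete verification steps would fail as written. The first concerns the sieves. For (T3) you propose the RKHS ellipsoid $\{\sum_{i\le n}i^{2\alpha+1}\theta_i^2\le B_n n\tau^2\}$; since $\tau^{-2}\sum_{i\le n}i^{2\alpha+1}\theta_i^2$ is a $\chi^2_n$ variable under the prior, any such set with non-negligible prior mass has RKHS radius of order $\sqrt{n}$, and the $\ell_2$-entropy of that ellipsoid at scale $u\asymp\epsilon_n(\lambda)$ is of order $(\sqrt{n}\tau/\epsilon_n(\lambda))^{2/(2\alpha+1)}$, which exceeds $n\epsilon_n(\lambda)^2\asymp(n\tau^2)^{1/(2\alpha+1)}$ by the diverging factor $\epsilon_n(\lambda)^{-2/(2\alpha+1)}$; so \eqref{cond: entropy} fails. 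The hyperrectangle you use for (T2) is also not covered by Borell's inequality as you claim, and to get remaining mass $e^{-w_n^2n\epsilon_{n,0}^2}$ its width constant must grow like $w_n\sqrt{n}\epsilon_{n,0}$, after which the entropy bound is at best delicate. What is needed is an additive Banach-ball plus RKHS-ball construction, $\Theta_n(\alpha,\tau)=(\zeta c/3)\epsilon_n\mathbb{B}_1+R_n\mathbb{H}_1^{\alpha,\tau}$ with $R_n\asymp\sqrt{n}\epsilon_n$, exactly as in Lemma \ref{Lem: Entropy}, for which Borell's inequality gives the mass bound and the entropy is $\le 5\eta n\epsilon_n^2$.

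The second failure is the mesh for (T3). With $u_n\asymp 1/\log n$ one has $n^{u_n}\asymp 1$, so $\sup_{|\alpha-\alpha'|\le u_n}\|\psi_{\alpha,\alpha'}(\theta)-\theta\|_2$ can be of the order of $\|\theta\|_2$, which is bounded away from zero; hence (C3)/(C3bis) fail, and since the supremum sits inside the likelihood of $n$ observations, $\log Q^{\theta}_{\lambda,n}(\mathcal X_n)$ picks up a term of order $n^{3/2}u_n\log n\,\|\theta\|_2$, so \eqref{cond: Q} and the perturbed testing bound fail as well. One needs $u_n$ polynomially small (the paper takes $u_n\lesssim n^{-3}/\log n$ for $\lambda=\alpha$ and $u_n\lesssim n^{-(5/2+2\alpha)}$ for $\lambda=\tau$), which is harmless because $\log N_n(\Lambda_n)$ remains of order $\log n$. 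Relatedly, the tests in (A2)/(C2) must have exponentially small type-II error under the perturbed measures $Q_{\lambda,n}^{\theta'}$, not under $P^n_{\theta'}$; the likelihood-ratio test does work, but only after controlling the perturbation of the Gaussian quadratic form on $\{\|\mathbf x_n\|_2\le\tau_n n\}$ and handling the complement separately, as in Lemma \ref{Lem: Test_reg}. Finally, your justification of \eqref{cond: Q2} is too quick: when $\beta>\alpha+1/2$ the small-ball exponent $(\delta_n\epsilon_n(\tau)/\tau)^{-1/\alpha}$ and $n\epsilon_{n,0}^2$ are of the same polynomial order in $n$, and the required divergence comes solely from the factor $(\delta_nM_n)^{-1/\alpha}$ (and from $\|\theta_0\|_2\ge c$ forcing $\tau\gtrsim n^{-1/(4\alpha+4)}$ on $\Lambda_0$), so $\delta_n$ must be taken small relative to the chosen $M_n$, as in Lemma \ref{Lem: Cond_LB}, rather than merely tending to zero slowly.
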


Proposition \ref{prop:reg:T2} is proved in Section B.2 of  the supplementary material \cite{rousseau:szabo:15:supp}.

\begin{rem}
In fact our results are stronger than the minimax results presented in Propositions \ref{prop:reg:T1prior} and \ref{prop:reg:T2}. From Theorem \ref{thm:main} and Corollary \ref{thm: contraction} it follows that for both the MMLE empirical Bayes and the hierarchical Bayes methods the posterior contracts around the truth for every $\theta_0\in\Theta$ with rate $M_n\eps_{n,0}(\theta_0)$, which is more informative than a statement on the worst case scenario over some regularity class, i.e. the minimax result. 
\end{rem}

\begin{rem}
We note that in the case of the Gaussian white noise model the same posterior contraction rate results (both for the empirical Bayes and hierarchical Bayes approaches) hold for the priors (T1)-(T3) as in the nonparametric regression model. The proof of this statement can be easily derived as a special case of the results on the nonparametric regression, see the end of the proofs of Propositions \ref{prop:reg:T1prior} and \ref{prop:reg:T2}.
\end{rem}
 
\subsection{Application to density estimation}  \label{sec:density}
In this Section we consider the density estimation problem on $[0,1]$, i.e. the observations $\mathbf x_n= ( x_1, \cdots, x_n)$ are independent and identically distributed from a distribution with density $f$ with respect to Lebesgue measure. We consider two families of priors on  the set of densities $\mathcal F = \{ f : [0,1] \rightarrow \RR^+ ; \int_0^1 f(x)dx =1 \}$.  In the first case   we parameterize the densities as 
\begin{equation}\label{dens:model}
f (x) = f_\theta(x) = \exp\left( \sum_{j=1}^\infty \theta_j \phi_j(x) - c(\theta) \right), \,\, e^{c(\theta)} = \int_0^1 \exp\left( \sum_{j=1}^\infty\theta_j \phi_j(x) \right)dx
\end{equation}
where $(\phi_j)_{j\in \NN}$ forms an orthonormal basis with $\phi_0=1$ and $\theta = (\theta_j)_{j\in \NN} \in \ell_2$. Hence \eqref{dens:model} can be seen either as a log - linear model or as an infinite dimensional exponential family, see for instance \cite{verdinelli:wasserman:1998}, \cite{vvvz08},  \cite{rivoirard:rousseau:09},  \cite{rivoirard:rousseau:12} and \cite{arbel}. 

In the second we consider random histograms to parameterize $\mathcal F$. 

\subsubsection{Log-linear model} \label{dens:loglinear}
We study priors based on the parameterization \eqref{dens:model} and  we assume that the true density has the form
 $ f_0  = f_{\theta_0}$ for some $\theta_0 \in \ell_2$ and throughout the Section we will assume that $f_0 $ verifies $\|\log f_0 \|_\infty <+\infty $ and that $\theta_0 \in \mathcal S_\beta (L )$ for some $L>0$. We study the MMLE empirical Bayes and hierarchical Bayes methods based on priors of type (T1), (T2) and (T3) in this model. We consider the usual metric in the context of density estimation, namely the Hellinger metric $h(f_1, f_2)^2  = \int_0^1 (\sqrt{f_1}(x) - \sqrt{f_2}(x) )^2 dx $.
 
 First we  consider the type (T1) prior where $\lambda = k$. We show that Theorems \ref{thm:main}, \ref{thm: hierarchical}, and Corollary \ref{thm: contraction} can be applied so that the MMLE empirical Bayes and hierarchical posterior rates are  minimax adaptive over a collection of Sobolev classes.

 \begin{proposition}\label{prop:dens:T1}
 Assume that $\theta_0\in \mathcal S_\beta(L)$ with $\beta >1/2$, consider a type (T1) prior, and let $\Lambda_n = \{2, \cdots, k_n\}$ with $k_n = k_0 \sqrt{ n}/\log n^3$.
 Then, for any $M_n$ going to infinity and $K>0$,  if $\hat k_n$ is the MMLE over $\Lambda_n$, with probability going to 1 under $P_{\theta_0}^n$, 
 $\hat k_n \in \Lambda_0 =  \{ k; \epsilon_n(k) \leq M_n  \epsilon_{n,0}\} $, where $\epsilon_n(k)$ and $\epsilon_{n,0}$ are given in Lemma \ref{lem:eps:T1} and for all $1/2 < \beta_1 \leq \beta_2 < +\infty $
  $$ \sup_{ \beta \in (\beta_1, \beta_2)} \sup_{\theta_0\in \mathcal S_\beta(L)} E_{\theta_0}^n \left\{ \Pi\left(\left. h(f_{\theta_0}, f_\theta) \geq M_n (n/\log n)^{-\frac{\beta}{2\beta+1}} \right| \mathbf x_n ; \hat k_n\right)\right\} = o(1). $$
	Similarly in the hierarchical posterior distribution with hyper-prior satisfying the conditions of Lemma \ref{Lem: hyperT1}  also achieves the (nearly) minimax contraction rate
	  $$ \sup_{ \beta \in (\beta_1, \beta_2)} \sup_{\theta_0\in \mathcal S_\beta(L)} E_{\theta_0}^n \left\{ \Pi\left(\left. h(f_{\theta_0}, f_\theta) \geq M_n (n/\log n)^{-\frac{\beta}{2\beta+1}} \right| \mathbf x_n\right)\right\} = o(1). $$
	
    Moreover there exists $\theta_0 \in \mathcal S_\beta(L)$ for which $\delta_n (n/\log n)^{-\beta/(2\beta+1)}$ is a lower bound on the posterior concentration rate for both adaptive Bayesian methods.  
 \end{proposition}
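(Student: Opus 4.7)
The plan is to apply Theorem \ref{thm:main}, Corollary \ref{thm: contraction}, and Theorem \ref{thm: hierarchical} with $\epsilon_n(k)$ and $\epsilon_{n,0}$ as identified in Lemma \ref{lem:eps:T1}, which in particular give $\epsilon_{n,0}\asymp (n/\log n)^{-\beta/(2\beta+1)}$ uniformly over $\theta_0\in \mathcal{S}_\beta(L)$ with $\beta\in(\beta_1,\beta_2)$. Because the hyper-parameter $\lambda=k$ is discrete, the natural choice is to let $\psi_{k,k'}(\theta)$ embed or truncate $\theta$ by zeros between the two truncation levels, with $\rho(k,k')=|k-k'|$ and mesh $u_n<1$, so that $q_{k,n}^\theta=p_\theta^n$ and the conditions involving $Q_{\lambda,n}^{\theta}$, namely (A1), (C1), (C3), (C3bis) and \eqref{cond: Q2}, reduce to their counterparts on $P_\theta^n$, which are automatic. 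The covering numbers satisfy $N_n(\Lambda_n)\le k_n\lesssim \sqrt n/(\log n)^3$, which is $o(n\epsilon_{n,0}^2)$ since $n\epsilon_{n,0}^2\gtrsim n^{1/(2\beta+1)}\log n$, so the entropy conditions on $\Lambda_n$ are met.

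Next I would verify (A2), (B1) and (C2). On the sieve $\Theta_n(k)=\{\theta\in\mathbb R^k:\,\|\theta\|_\infty\le L_n\}$ for a slowly growing $L_n$, the standard comparisons for log-linear densities with $\|\log f_0\|_\infty<\infty$ yield $h(f_\theta,f_{\theta_0})\asymp \|\theta-\theta_0\|_2$ and $K(f_{\theta_0},f_\theta)+V_2(f_{\theta_0},f_\theta)\lesssim \|\theta-\theta_0\|_2^2$ (cf.\ \cite{rivoirard:rousseau:12}), so that \eqref{change:distance} is satisfied with $d=h$ and $c(k)\asymp 1$, and (B1) in the form given by Remark \ref{rem:B1} holds provided the remaining prior mass $\Pi(\Theta_n(k)^c|k)$ is negligible relative to $e^{-K_2 n\epsilon_n^2(k)}$; this follows from $\Pi(\|\theta\|_\infty>L_n|k)\lesssim k e^{-s_0 L_n^{p^*}}$ under the tail assumption on the base density $g$. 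The testing conditions (A2) and (C2) then follow from the standard Hellinger tests of \cite{ghosal:vdv:07} combined with the dimension-$k$ entropy bound $\log N(\zeta u, \{u\le h(f_\theta,f_{\theta_0})\le 2u\}\cap\Theta_n(k), h)\lesssim k\log(L_n/u)\lesssim n\epsilon_n(k)^2$.

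For the hierarchical Bayes conclusion via Theorem \ref{thm: hierarchical}, (H1) is delivered by Lemma \ref{Lem: hyperT1}, while (H2) reduces to a Bernstein-type deviation inequality for $\ell_n(\theta)-\ell_n(\theta_0)$, using the boundedness of $\log(f_\theta/f_{\theta_0})$ on $\Theta_n(k)$ and the control of its centered second moment by $V_2\lesssim n\epsilon_n(k)^2$. The matching lower bound is obtained from Theorem \ref{thm: LB} with $e=h$: the second half of \eqref{cond: Q2} reduces to a small-ball estimate of the form $-\log\Pi(\|\theta-\theta_0\|_2\le 2\delta_n\epsilon_{n,0}|k)\lesssim n\epsilon_{n,0}^2$ uniformly over $k\in\Lambda_0$, which again follows from the argument of Lemma \ref{lem:eps:T1} together with the Hellinger--$\ell_2$ comparison.

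The main obstacle I anticipate is the clean verification of (B1) in its Remark \ref{rem:B1} form: transferring the small-ball condition \eqref{def: eps} (formulated in the $\ell_2$ norm) into a prior-mass lower bound on a Kullback--Leibler neighborhood forces the intersection with the sup-norm sieve $\Theta_n(k)$, and one must check that this intersection loses only a constant factor in the prior mass. This is exactly where the sub-exponential tail of $g$ enters. Once this is in hand, the rest of the proof is essentially a repackaging of the classical density-estimation arguments of \cite{ghosal:vdv:07} and \cite{rivoirard:rousseau:12} into the present framework, the discreteness of $\lambda=k$ making the empirical-Bayes extension routine.
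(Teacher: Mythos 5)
Your overall route (apply Theorem \ref{thm:main}, Corollary \ref{thm: contraction}, Theorem \ref{thm: hierarchical} with the rates of Lemma \ref{lem:eps:T1}, no change of measure since $\lambda=k$ is discrete, Hellinger tests, Lemma \ref{Lem: hyperT1} for (H1)) is the same as the paper's, but the central technical step is handled incorrectly. You claim that on a sup-norm sieve $\Theta_n(k)=\{\|\theta\|_\infty\le L_n\}$ one has $h(f_\theta,f_{\theta_0})\asymp\|\theta-\theta_0\|_2$, so that \eqref{change:distance} holds with $c(k)\asymp 1$. This is false: by Lemma \ref{lem:dist} the comparison degrades as $d^2(f_0,f_\theta)\gtrsim e^{-c_1\|\theta-\theta_0\|_1}\|\theta-\theta_0\|_2^2$, and on any sieve large enough to satisfy \eqref{cond: complement} (which forces $L_n\gtrsim (n\epsilon_{n,0}^2)^{1/p^*}$, or $\|\theta\|_2\le R_n(k)$ as in the paper) the $\ell_1$ norm is of order $kL_n$, so the equivalence constant is exponentially bad in $k$; indeed $h\le\sqrt2$ while $\|\theta-\theta_0\|_2$ can be of order $R_n(k)$, so no uniform constant-factor comparison can hold. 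The paper has to use the slicing version (A2bis): on slices $B_{n,j}(k)$ it gets $c(k,j)\asymp 1$ only for $j\lesssim(\sqrt k\,\epsilon_n(k))^{-1}$ (bounded $\ell_1$ norm), then invokes the sharper bound $d(f_0,f_\theta)\gtrsim\|\theta-\theta_0\|_2\,(\sqrt k\,\epsilon_n(k)j+|\log(j\epsilon_n(k))|)^{-1}$ from \cite{rivoirard:rousseau:12} to get $c(k,j)\gtrsim j/\log n$ or $c(k,j)\asymp k^{-1/2}\epsilon_n(k)^{-1}$ on the remaining slices, and checks the summability $\sum_j e^{-c_1nc(k,j)^2\epsilon_n(k)^2/2}=o(e^{-nw_n^2\epsilon_{n,0}^2})$. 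This is precisely where the restriction $k_n\asymp\sqrt n/(\log n)^3$ (rather than $n/\log n$ as in regression) enters, a point your argument never explains. Relatedly, you identify (B1) as the main obstacle, but in the paper (B1) is easy: for the near-optimal $k_0$ one has $\sqrt{k_0}\,\epsilon_n(k_0)=o(1)$, so the whole ball $\{\|\theta-\theta_0\|_2\le K\epsilon_n(k_0)\}$ already has bounded $\ell_1$ norm and no prior mass is lost; the real difficulty is the change-of-distance/entropy control for the large $k\in\Lambda_n\setminus\Lambda_0$.

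There is also an error in your lower-bound step: condition \eqref{cond: Q2} requires $\sup_{\lambda\in\Lambda_0} n\epsilon_{n,0}^2/\bigl(-\log\Pi(e(\theta,\theta_0)\le 2\delta_n\epsilon_{n,0}\mid\lambda)\bigr)=o(1)$, i.e.\ the prior mass of the $\delta_n\epsilon_{n,0}$-ball must be \emph{much smaller} than $e^{-n\epsilon_{n,0}^2}$, whereas you state the requirement as $-\log\Pi(\cdot)\lesssim n\epsilon_{n,0}^2$, which is the reverse inequality. The paper verifies it by exhibiting $\theta_0$ with $\theta_{0,i}^2=i^{-2\beta-1}$ for which any $\theta\in\mathbb R^k$ with $\|\theta-\theta_0\|_2\le\delta_n\epsilon_{n,0}$ forces $k\gtrsim\delta_n^{-1/\beta}(n/\log n)^{1/(2\beta+1)}$, so the prior mass of the ball is actually zero for the relevant $k$, combined with $d(f_0,f_\theta)\gtrsim\|\theta-\theta_0\|_2$ on $\{d(f_0,f_\theta)\le\delta_n\epsilon_{n,0}\}$ for such $k$. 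As written, your proposal would not yield the lower bound, and its upper-bound part fails at the verification of (A2)/(C2) for large truncation levels.
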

 
 The proof of Proposition \ref{prop:dens:T1} is presented in Section B.3 of  the supplementary material \cite{rousseau:szabo:15:supp}.
 
We now apply Theorems \ref{thm:main}, \ref{thm: hierarchical}, and Corollary \ref{thm: contraction} to priors (T2) and (T3) and derive similar concentration rates as in the case of the regression model.
Let 
\begin{equation*}
  \bar \tau_n =  n^{\alpha/2-1/4}, \quad 
  \underline \tau_n = n^{-1/4+ 1/(8 \a)}.  
  \end{equation*}
 \begin{proposition} \label{prop:dens:T2}
 Assume that $\theta_0\in \mathcal S_\beta(L)$ with $\beta >1/2$  and consider a type (T2) prior with $\alpha >1/\sqrt{2}$  and $\Lambda_n = ( \underline \tau_n, \bar \tau_n)$. Then $\hat \lambda_n \in \Lambda_0$ with probability going to 1 under $P_{\theta_0}^n$ and the same conclusions as in Proposition \ref{prop:reg:T2}  hold.
\end{proposition}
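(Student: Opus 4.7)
The plan is to apply Theorem \ref{thm:main}, Corollary \ref{thm: contraction}, Theorem \ref{thm: hierarchical}, and Theorem \ref{thm: LB} in this setting, and to combine them with the oracle rates of Lemma \ref{lem:eps:2:T2T3}. Throughout I take the Banach norm to be $\|\cdot\|_2$ on $\ell_2$ and the testing pseudo-metric $d$ to be the Hellinger distance $h$. Since $\hat\lambda_n = \hat\tau_n$, verifying $\hat\tau_n\in\Lambda_0$ reduces to checking (A1)--(A2) and (B1) on $\Lambda_n = (\underline\tau_n,\bar\tau_n)$; the upper contraction rate then follows from (C1)--(C3), the hierarchical version from (H1)--(H2), and the lower bound from \eqref{cond: Q2} and (C3bis). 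The structure of the proof parallels Proposition \ref{prop:reg:T2}, the difference being that the testing distance is now Hellinger and the Kullback/second moment functionals are those of an exponential family rather than a Gaussian shift.

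The preliminary step is to translate between the Banach norm $\|\cdot\|_2$ and the density functionals $h$, $K$, $V_2$. I would define the sieve
\[
\Theta_n(\tau) = \{\theta\in\mathbb{R}^n : \|\theta\|_2 \leq R_n,\ \theta_j=0 \text{ for } j > J_n\},
\]
with $J_n$ slightly larger than $(n\tau^2)^{1/(2\alpha+1)}$ and $R_n$ growing logarithmically. Cauchy--Schwarz against $\sum_j j^{-(2\alpha+1)}$ — convergent precisely when $\alpha > 1/2$, with an extra factor that forces the stronger restriction $\alpha>1/\sqrt{2}$ once all constants are balanced below — shows that $\|\theta\|_\infty$ is uniformly bounded on $\Theta_n(\tau)$. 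On this sieve the standard exponential-family estimates give $K(f_{\theta_0},f_\theta)\asymp V_2(f_{\theta_0},f_\theta)\asymp h^2(f_{\theta_0},f_\theta)\asymp \|\theta-\theta_0\|_2^2$, which delivers (B1) in the form of Remark \ref{rem:B1}, and validates \eqref{change:distance} with $c(\lambda)\asymp 1$, so $\epsilon_n(\tau)$ defined by \eqref{def: eps} agrees up to constants with the expression in Lemma \ref{lem:eps:T2T3}. The complement bounds \eqref{cond: complement} and \eqref{cond:comp:bis} follow from Borell--TIS/direct Gaussian tail estimates for $\Pi(\cdot|\tau)$, with the exponent freely tunable by $R_n$ and $J_n$; the tests and entropy conditions of (A2) and (C2) are the usual Le Cam/Birg\'e constructions in Hellinger, with entropy $\log N(\epsilon,\Theta_n(\tau),h)\lesssim J_n\log(R_n/\epsilon)$.

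For the change of measure I take $\psi_{\tau,\tau'}(\theta) = (\tau'/\tau)\theta$ as in \eqref{change:T2} and $\rho(\tau,\tau')=|\log\tau-\log\tau'|$. Then $\|\psi_{\tau,\tau'}(\theta)-\theta\|_2 = |\tau'/\tau-1|\,\|\theta\|_2$, so choosing $u_n$ a sufficiently small polynomial power of $1/n$ makes the perturbation negligible compared with $\epsilon_{n,0}$ in $L_2$ and, by the equivalences above, in Hellinger and KL. This yields (C3) and (C3bis); bounding $\log(p^n_{\psi(\theta)}/p^n_\theta)$ pointwise in $\mathbf{x}_n$ and taking supremum then gives $\log Q_{\lambda,n}^\theta(\mathcal X_n) = o(n\epsilon_n(\lambda)^2)$ uniformly on the sieve, which is \eqref{cond: Q} and the first half of \eqref{cond: Q2}; the second half of \eqref{cond: Q2} is a small-ball computation using \eqref{centeredGP}. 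The hyper-prior condition (H1) is supplied by Lemma \ref{Lem: hyperT2} since $\Lambda_n = (\underline\tau_n,\bar\tau_n)$ lies in the polynomial-in-$n$ range required there, and (H2) follows from a subgaussian concentration inequality for the log-likelihood on $\Theta_n(\tau)$, uniformly over $\{\lambda':\rho(\lambda,\lambda')\leq u_n\}$.

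The main obstacle is the \emph{multiplicative} nature of $\psi_{\tau,\tau'}$: it scales the entire $\theta$-vector rather than only the deviation $\theta-\theta_0$, so every perturbation estimate must be uniform over the $L_2$-ball of radius $R_n$, not merely over a small neighbourhood of $\theta_0$ of radius $\epsilon_n(\lambda)$. This forces a tight interplay between $R_n$, $J_n$, $u_n$, and the range $(\underline\tau_n,\bar\tau_n)$, which must simultaneously be (i) wide enough in $\log\tau$ to contain the oracle scaling for every $\beta$ in the range considered, (ii) narrow enough that $\log N_n(\Lambda_n)=o(n\epsilon_{n,0}^2)$, and (iii) compatible with the $L_\infty$-control on the sieve needed to equate Hellinger and $\|\cdot\|_2$; the constraint $\alpha > 1/\sqrt 2$ is precisely the quantitative price paid to make all three demands feasible at once. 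Once these constants are balanced, the three conclusions of Proposition \ref{prop:reg:T2} transfer verbatim, reading $h(f_{\theta_0},f_\theta)$ for the regression $L_2$-metric.
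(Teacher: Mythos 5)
Your high-level plan (apply Theorem \ref{thm:main}, Corollary \ref{thm: contraction}, Theorems \ref{thm: hierarchical} and \ref{thm: LB}, with Hellinger as the testing metric and the rescaling map \eqref{change:T2}) matches the paper, but the technical core is wrong at exactly the point the paper identifies as the main difficulty. First, your sieve $\Theta_n(\tau)=\{\|\theta\|_2\le R_n,\ \theta_j=0 \text{ for } j>J_n\}$ is not charged by the prior: under (T2) the coordinates $\theta_j$, $J_n<j\le n$, are nondegenerate Gaussians, so $\Pi(\Theta_n(\tau)\,|\,\tau)=0$ and the complement conditions \eqref{cond: complement} and \eqref{cond:comp:bis} cannot hold (and the prior-mass variant of (B1) in Remark \ref{rem:B1} is vacuous on this set). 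Second, even repairing the sieve, your claimed uniform bound on $\|\sum_j\theta_j\phi_j\|_\infty$ via Cauchy--Schwarz against $\sum_j j^{-(2\alpha+1)}$ needs an RKHS-norm bound $\sum_j j^{2\alpha+1}\theta_j^2=O(1)$, which you do not have; with only $\|\theta\|_2\le R_n$ and truncation at $J_n$ one gets $\|\theta\|_1\lesssim \sqrt{J_n}R_n\to\infty$, and on any sieve capturing prior mass $1-e^{-cn\epsilon_n^2(\tau)}$ the best available control is $\|\theta\|_1\lesssim\sqrt n\,\epsilon_n(\tau)$. Consequently the asserted uniform equivalences $K\asymp V_2\asymp h^2\asymp\|\theta-\theta_0\|_2^2$, on which your verification of (B1), \eqref{change:distance} with $c(\lambda)\asymp1$, (C3)/(C3bis) and (H2) all rest, are false: Lemma \ref{lem:dist} only gives such comparisons up to factors $e^{C\|\theta-\theta_0\|_1}$. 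The paper's proof is organized around precisely this obstruction: it verifies (B1) through the $\ell_1$ Borell inequality of Lemma \ref{L1norm} (showing $\sqrt{K_n}\epsilon_n(\tau_0)=o(1)$), restricts the tests to a set where $\|\theta-\theta_0\|_1\le\sqrt{K_n}\|\theta-\theta_0\|_2+M$ with an $L_1$-type sieve, and for $\|\theta-\theta_0\|_2>MK_n^{-1/2}$ invokes Wong--Shen to compare $V_2$ and Hellinger with only a logarithmic loss.

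Third, you never verify that the oracle rate over the restricted window $\Lambda_n=(\underline\tau_n,\bar\tau_n)$ is still the one of Lemma \ref{lem:eps:2:T2T3}, i.e.\ that the minimizer of $\tau\mapsto\epsilon_n(\tau)$ lies inside $(\underline\tau_n,\bar\tau_n)$; without this, "the same conclusions as in Proposition \ref{prop:reg:T2}" do not follow, since $\epsilon_{n,0}$ is an infimum over $\Lambda_n$ only. In the paper the endpoints $\underline\tau_n=n^{-1/4+1/(8\alpha)}$ and $\bar\tau_n=n^{\alpha/2-1/4}$ are forced by the $L_1$/entropy constraints (see \eqref{born:tau} and \eqref{cond:tau}), and the hypothesis $\alpha>1/\sqrt2$ is what guarantees the minimizing $\tau$ falls inside this window; your attribution of $\alpha>1/\sqrt2$ to an unspecified "extra factor" in a Cauchy--Schwarz step is not a proof and is not where the constraint comes from. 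Until the sieve, the norm-comparison step, and the location of the oracle $\tau$ inside $(\underline\tau_n,\bar\tau_n)$ are handled correctly, the argument does not go through.
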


The constraint $\alpha > 1/\sqrt{2}$ is to ensure that for all $\beta \leq \alpha +1/2 $, 
 $n^{-(\beta - \alpha)/(2\beta +1)}$ which corresponds to the minimizer of $\epsilon_n(\tau)$ (up to a multiplicative constant) belongs to the set $(\underline \tau_n, \bar \tau_n)$.

\begin{proposition} \label{prop:dens:T3}
 Assume that $\theta_0\in \mathcal S_\beta(L)$ with $\beta >1/2$  and consider a type (T3) prior with $\alpha >1/2$  and $\Lambda_n = [1/2+1/n^{1/4}, \bar \lambda_n]$, with $\bar \lambda_n = \log n/( 16 \log \log n)$. Then for any $M_n $ going to infinity the MMLE empirical Bayes posterior achieves the minimax contraction rate
  $$M_n \epsilon_{n,0} \lesssim M_n n^{-\beta/(2\beta +1)}.$$
	Furthermore the hierarchical posterior also achieves the minimax contraction rate for hyper-priors satisfying (H1).
\end{proposition}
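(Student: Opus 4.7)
The plan is to invoke Theorem~\ref{thm:main}, Corollary~\ref{thm: contraction} and Theorem~\ref{thm: hierarchical} with $d(\cdot,\cdot) = h(\cdot,\cdot)$ the Hellinger metric. Lemma~\ref{lem:eps:2:T2T3} already yields $\epsilon_{n,0} \lesssim n^{-\beta/(2\beta+1)}$ for the type (T3) prior when $\theta_0 \in \mathcal S_\beta(L)$, so the work reduces to verifying assumptions (A1)--(C3) (and (H1)--(H2) for the hierarchical part) in this density model. Condition (H1) follows from Lemma~\ref{Lem: hyperT3} since $\Lambda_n$ is of polylogarithmic length and bounded away from $1/2$.

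The first step is to compare the $\ell_2$ geometry on $\theta$ with the natural distances on $f_\theta$. Since $\|\log f_0\|_\infty < \infty$ and the basis is uniformly bounded, standard exponential-family computations give, for every $\theta$ with $\|\sum_j(\theta_j - \theta_{0,j})\phi_j\|_\infty$ bounded by a fixed constant $M$, the equivalences $h(f_\theta, f_{\theta_0})^2 \asymp \|\theta - \theta_0\|_2^2$ and $K(\theta_0, \theta) \vee V_2(\theta_0, \theta) \lesssim \|\theta - \theta_0\|_2^2$. I would therefore take the sieve $\Theta_n(\alpha)$ to be the intersection of $\{\|\theta\|_2 \leq R_n\}$ with $\{\|\sum_j \theta_j \phi_j\|_\infty \leq M\}$, whose complement has prior mass smaller than $e^{-c_2 n \epsilon_{n,0}^2}$ uniformly in $\alpha\in \Lambda_n$ by standard Gaussian concentration (this is the setting of Remark~\ref{rem:B1}). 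The equivalences above then deliver both (B1) and the change of metric \eqref{change:distance} with $c(\lambda)\asymp 1$.

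The second step is to control the change of measure $\psi_{\alpha,\alpha'}(\theta_i) = i^{\alpha-\alpha'}\theta_i$ from \eqref{change:T3}. For $|\alpha - \alpha'| \leq u_n$ and $\theta$ supported on $\{1,\ldots,n\}$, $|i^{\alpha-\alpha'} - 1| \lesssim u_n (\log i)\, e^{u_n \log n}$, so choosing $u_n$ a small inverse power of $\log n$ gives both $\|\psi_{\alpha,\alpha'}(\theta) - \theta\|_2 = o(\epsilon_n(\alpha))$ and the analogous sup-norm bound. Combined with step one this yields $|\ell_n(\psi_{\alpha,\alpha'}(\theta)) - \ell_n(\theta)| = o(n\epsilon_n(\alpha)^2)$ uniformly in $\theta \in \Theta_n(\alpha) \cap \{\|\theta - \theta_0\|_2 \leq K\epsilon_n(\alpha)\}$, which gives \eqref{cond: Q}, \eqref{cond: Q2} and both (C3) and (C3bis). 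Condition (H2) follows from the same deterministic bound on $\ell_n(\psi_{\alpha,\alpha'}(\theta)) - \ell_n(\theta)$ combined with a Bernstein-type deviation inequality for $\ell_n(\theta) - \ell_n(\theta_0) + K(\theta_0,\theta)$.

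Finally, the tests for (A2) and (C2): since $\Theta_n(\alpha)$ is essentially $n$-dimensional, the Birg\'e--Le Cam construction of Hellinger tests applies, and the entropy \eqref{cond: entropy} reduces via step one to the entropy of a Euclidean ball in $\mathbb R^n$, which is of order $n\log(1/u)$ and is dominated by $c_1 n u^2/2$ as soon as $u \gtrsim \sqrt{\log n/n}$, in particular for $u \gtrsim \epsilon_n(\alpha)$. The main obstacle is the wide range of $\alpha$: as $\alpha$ approaches $\bar\lambda_n = \log n/(16\log\log n)$, the factor $i^{\alpha-\alpha'}$ in $\psi_{\alpha,\alpha'}$ is at worst of size $e^{u_n \log n}$, so the mesh $u_n$ must be chosen small enough to make the likelihood perturbations negligible compared to $n\epsilon_n(\alpha)^2$, while simultaneously keeping the hyper-entropy $\log(|\Lambda_n|/u_n) = O(\log\log n)$ negligible compared with $n\epsilon_{n,0}^2$. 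The precise value of $\bar\lambda_n$ in the statement is calibrated so that these two constraints remain compatible.
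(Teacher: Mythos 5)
Your top-level skeleton (reduce to Theorem \ref{thm:main}, Corollary \ref{thm: contraction}, Theorem \ref{thm: hierarchical} plus Lemma \ref{lem:eps:2:T2T3}, with $d=h$) matches the paper, but several of the verification steps as you describe them would fail. The central problem is your sieve $\Theta_n(\alpha)=\{\|\theta\|_2\le R_n\}\cap\{\|\sum_j\theta_j\phi_j\|_\infty\le M\}$. First, its complement does \emph{not} have prior mass $e^{-c_2n\epsilon_{n,0}^2}$: under the prior (T3) the sup-norm of the Gaussian process $\sum_j\theta_j\phi_j$ exceeds any fixed $M$ with probability bounded below by a constant (Borell only gives a fixed Gaussian tail in $M$, not a rate $e^{-cn\epsilon_{n,0}^2}$), so \eqref{cond: complement} and \eqref{cond:comp:bis} are not satisfied. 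Second, the entropy bound you invoke is wrong for this sieve: $\log N(\zeta u,\{\|\theta\|_2\le R_n\},\|\cdot\|_2)\asymp n\log(R_n/u)$, which is of order $n\log n$ and is \emph{not} dominated by $c_1nu^2/2$ when $u\asymp\epsilon_n(\alpha)\to0$; this is exactly why the paper works with the concentration-function sieve $\Theta_n(\alpha)=A\epsilon_n(\alpha)\mathbb B_1+R_n(\alpha)\mathbb H_1^\alpha$ of Lemma \ref{Lem: Entropy}, whose entropy is $\lesssim n\epsilon_n^2(\alpha)$. Third, because the prior does not sit on a fixed sup-norm (or $L_1$) ball, the equivalence $h(f_\theta,f_{\theta_0})\asymp\|\theta-\theta_0\|_2$ cannot be used globally; on the paper's sieve one only has $\|\theta\|_1\lesssim\sqrt n\,\epsilon_n(\alpha)$, and the change-of-distance condition \eqref{change:distance} is obtained by splitting according to whether $\|\theta-\theta_0\|_2$ exceeds $MK_n^{-1/2}$ with $K_n^{2\alpha}\asymp nw_n^2\epsilon_{n,0}^2$, using Lemma \ref{lem:dist} in one regime and Wong--Shen moment bounds (giving $d(f_0,f_\theta)\gtrsim\|\theta-\theta_0\|_2/(\log n+\sqrt{K_n}\|\theta-\theta_0\|_2)$, hence $c(\lambda)$ only up to $\log$ factors) in the other. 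It is this step, via the analogue of \eqref{born:tau}, that forces $\alpha\le\bar\lambda_n=\log n/(16\log\log n)$ so that $n\epsilon_n(\alpha)^2$ stays polylogarithmically large; your explanation of the calibration of $\bar\lambda_n$ through an entropy/mesh trade-off is not the operative constraint.

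The second concrete gap is the mesh $u_n$. With $\psi_{\alpha,\alpha'}(\theta_i)=i^{\alpha-\alpha'}\theta_i$ the per-observation log-likelihood perturbation is of order $|\alpha-\alpha'|\,\|\theta\|_1\|\phi\|_\infty$, and on the relevant sieve $\|\theta\|_1$ can be as large as $C\sqrt n\,\epsilon_n(\alpha)$; summing over $n$ observations, the requirement $\log Q^\theta_{\alpha,n}(\mathcal X_n)=o(n\epsilon_n(\alpha)^2)$ forces $u_n\lesssim n^{-3/2}\epsilon_n(\alpha)^{-1}$, i.e.\ polynomially small, whereas your choice "$u_n$ a small inverse power of $\log n$" makes $nu_n\|\theta\|_1$ blow up and \eqref{cond: Q}, (C3), (C3bis) and (H2) all fail. (Your stated counter-constraint, that $\log(|\Lambda_n|/u_n)$ must be $O(\log\log n)$, is a red herring: since $n\epsilon_{n,0}^2\ge m_n\log n$, any $u_n$ polynomial in $1/n$ gives $\log N_n(\Lambda_n)=O(\log n)=o(nw_n^2\epsilon_{n,0}^2)$, so there is no tension.) Finally, for (B1) the paper does not use a sup-norm sieve either, but the variant of Remark \ref{rem:B1} together with Lemma \ref{L1norm}, checking that $\sqrt{K_n}\,\epsilon_n(\alpha_0)=o(1)$ so that the $\ell_2$-ball intersected with a fixed $L_1$-ball retains prior mass $e^{-O(n\epsilon_{n,0}^2)}$ and is contained in $B(\theta_0,M_2\epsilon_n(\alpha_0),2)$; this $L_1$-based route should replace your sup-norm-ball argument.
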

The proofs of Propositions \ref{prop:dens:T2} and \ref{prop:dens:T3} are presented in Sections B.4 and B.5 of  the supplementary material \cite{rousseau:szabo:15:supp}.

We now consider the second family of priors.

\subsubsection{Random histograms} \label{sec:hist}

In this section we parameterize $\mathcal F$ using piecewise constant functions, as in  \cite{castillo:rousseau:main} for instance. In other words we define 
\begin{equation}\label{hist} 
f_\theta (x) = k\sum_{j=1}^k \theta_j \1_{I_j}, \quad I_j=((j-1)/k, j/k], \quad \sum_{j=1}^k \theta_j=1, \quad \theta_j \geq 0,
\end{equation}
and we consider a Dirichlet prior on $\theta = (\theta_1, \cdots, \theta_k)$ with parameter $(\alpha, \cdots, \alpha)$. The hyper-parameter on which maximization is performed is $\lambda = k$, as in the case of the truncation prior (T1). We define the sequence $\epsilon_n(k)$ in terms of the Hellinger distance, i.e. it satisfies \eqref{def: eps} with $h(f_0, f_\theta) $ replacing $\|\theta - \theta_0\|$. 

We then have the following result,
\begin{proposition}\label{prop:hist}
Assume that $f_0 $ is continuous and bounded from above and below by $C_0 $ and $c_0 $ respectively. 
If $\Lambda = \{1, \cdots, k_n\}$, with $k_n = O((n/\log n))$ and if $\alpha  \leq A$  for some constant $A$ independent on $k$, then 
for all $k \in \Lambda$
\begin{equation}\label{epsilonHist}
 b(k)^2 + \frac{ k \log(n/k) }{ n }  \lesssim \epsilon_n(k)^2 \lesssim  b(k)^2 + \frac{k \log n}{ n}, \end{equation}
with $$b(k)^2 =  \sum_{j=1}^k \int_{I_j} \left( \sqrt{f_0} - \tilde \eta_j k \right)^2 dx, \quad \tilde \eta_j = \int_{I_j}\sqrt{f_0}(x) dx.$$

Now suppose that $f_0 \in \mathcal H_{\infty}(\beta, L ) $, with $L>0$ and $\beta \in (0,1]$. 
The MMLE empirical Bayes posterior achieves the minimax contraction rate (up to a $\log n$ term), i.e. for all   $M_n \rightarrow +\infty$
$$M_n \epsilon_{n,0} \lesssim M_n (n/\log n)^{-\beta/(2\beta +1)}$$
and
$$\Pi\left( h(f_0, f_\theta)\leq M_n \epsilon_{n,0}| \mathbf x_n^n , \hat k\right) = 1 + o_p(1).$$
\end{proposition}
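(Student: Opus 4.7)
The plan is to first establish the two-sided bound \eqref{epsilonHist} on $\epsilon_n(k)$ by analyzing the prior mass of Hellinger balls. Since for two histograms $h(f_\theta, f_{\theta^*})^2 = \sum_{j=1}^k (\sqrt{\theta_j} - \sqrt{\theta^*_j})^2$, I would take $\theta^*_j = k \tilde \eta_j^2 / Z$ with $Z = k\sum_j \tilde \eta_j^2 \in [c_0/C_0,1]$ (by Cauchy--Schwarz and the bounds on $f_0$), so $\theta^*$ lies in the simplex and $h(f_0, f_{\theta^*}) \lesssim b(k)$. The upper bound on $\epsilon_n(k)$ then follows from the Dirichlet small-ball estimate
\[
\Pi_{\mathrm{Dir}(\alpha,\ldots,\alpha)}\bigl(\lVert \sqrt{\theta}-\sqrt{\theta^*}\rVert_2 \leq \varepsilon \,\big|\, k\bigr) \,\gtrsim\, e^{-C k \log(k/\varepsilon)},
\]
valid for $\theta^*$ bounded below (which follows from $f_0 \geq c_0$ giving $\theta^*_j \gtrsim 1/k$), and which can be derived from the same computations as in \cite{castillo:rousseau:main}; taking $\varepsilon^2 = b(k)^2 + k\log n/n$ yields prior mass $\geq e^{-Cn\varepsilon^2}$. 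The lower bound on $\epsilon_n(k)$ comes from a volume / anti-concentration argument on the simplex.

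Next, for $f_0 \in \mathcal H_\infty(\beta,L)$ with $\beta \in (0,1]$, standard piecewise-constant approximation gives $b(k)^2 \lesssim k^{-2\beta}$. Balancing $k^{-2\beta}$ with $k\log n/n$ selects $k^* \asymp (n/\log n)^{1/(2\beta+1)}$, and hence $\epsilon_{n,0} \asymp (n/\log n)^{-\beta/(2\beta+1)}$. This identifies both the oracle rate and, up to the choice of constant, the set $\Lambda_0 = \{k : \epsilon_n(k) \leq M_n \epsilon_{n,0}\}$.

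To conclude via Theorem \ref{thm:main} and Corollary \ref{thm: contraction}, I verify the assumptions. Since $\lambda=k$ is discrete and $|\Lambda_n| \leq k_n$, I pick $u_n$ strictly less than $1$ so that each covering ball in $\Lambda_n$ contains a single $k$; the change of measure is trivially $\psi_{k,k}=\mathrm{id}$, hence $Q_{k,n}^\theta = P_{f_\theta}^n$, and conditions \eqref{cond: Q}, \eqref{cond:comp:bis} and (C3) hold trivially (with $\Theta_n(k)$ essentially the full simplex). The covering numbers satisfy $\log N_n(\Lambda_n) \leq \log k_n = O(\log n) = o(w_n^2 n\epsilon_{n,0}^2)$ for any $w_n\to \infty$. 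For the testing condition (A2)/(C2) with $d = h$ the standard Le Cam/Birg\'e Hellinger tests apply, and the entropy bound
\[
\log N\bigl(\varepsilon, \{f_\theta : \theta \in \Delta_k\}, h\bigr) \lesssim k \log(1/\varepsilon)
\]
yields \eqref{cond: entropy} at $u \gtrsim \epsilon_n(k)$. Since here the Banach norm $\lVert \cdot \rVert$ is replaced by Hellinger, \eqref{change:distance} reduces to choosing $c(k) = w_n \epsilon_{n,0}/\epsilon_n(k)$, which is less than $K$ on $\Lambda_n \setminus \Lambda_0$ because $w_n = o(M_n)$.

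The main obstacle is assumption (B1), which requires controlling $K(\theta_0,\theta)$ and $V_2(\theta_0,\theta)$ by $h(f_0,f_\theta)^2$. The Dirichlet prior places substantial mass near the faces of the simplex, so $\log(f_0/f_\theta)$ can be unbounded in general. I would handle this through the variant of (B1) in Remark \ref{rem:B1}, restricting to $\tilde \Theta_n(k) = \{\theta \in \Delta_k : \min_j \theta_j \geq c/(kn)\}$: on this set $\lVert \log(f_0/f_\theta)\rVert_\infty \lesssim \log n$ and the classical comparison of Vaart--van Zanten / Wong--Shen gives $K(\theta_0,\theta) \vee V_2(\theta_0,\theta) \lesssim h(f_0,f_\theta)^2 \log n$, while Dirichlet lower-tail estimates yield $\Pi(\tilde\Theta_n(k)^c \mid k) \lesssim e^{-c' n\epsilon_n(k)^2}$ so that the restricted ball still carries enough prior mass. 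The extra $\log n$ factor is absorbed into the definition of $\epsilon_n(k)$ (which is consistent with the $\log n$ appearing in the upper bound of \eqref{epsilonHist}), and Corollary \ref{thm: contraction} then delivers the announced concentration rate $M_n \epsilon_{n,0} \lesssim M_n (n/\log n)^{-\beta/(2\beta+1)}$.
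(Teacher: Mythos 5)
Your overall skeleton matches the paper's: the recentered histogram $\bar\theta_{j,k}\propto k\tilde\eta_j^2$ as the prior's center, the identity $h^2(f_0,f_\theta)=b(k)^2+\sum_j(\sqrt{\theta_j}-\tilde\eta_j\sqrt k)^2$, trivial change of measure, Hellinger tests, and then Theorem \ref{thm:main} plus Corollary \ref{thm: contraction}. But two steps you treat as routine are exactly where the work lies. First, the left-hand inequality in \eqref{epsilonHist} is part of the statement and you only gesture at "a volume / anti-concentration argument on the simplex". This is not a volume argument: the Dirichlet$(\alpha,\dots,\alpha)$ density is unbounded near the faces when $\alpha<1$ (and $\alpha\le A$ allows $\alpha<1/2$), and the normalizing constant $\Gamma(k\alpha)/\Gamma(\alpha)^k$ is enormous, so one must bound $\Pi\bigl(\sum_j(\sqrt{\theta_j}-\sqrt{\bar\theta_{j,k}})^2\le s_n^2\bigr)$ by splitting coordinates according to their size (the paper does this separately for $\alpha\ge1/2$ and $\alpha<1/2$) to get the bound $e^{k(1+o(1))\log s_n}$ and hence $s_n^2\gtrsim (k/n)\log(n/k)$. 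Moreover this lower bound is used downstream: it is what gives $\Lambda_0\subset\{k\lesssim (n/\log n)^{1/(2\beta+1)}\}$ (needed for (C1)--(C3)) and $\epsilon_n(k)\gtrsim\sqrt{k/n}$, which is what makes the entropy condition work. On that point, the global bound $\log N(\varepsilon,\Delta_k,h)\lesssim k\log(1/\varepsilon)$ you invoke is too weak for $k$ of order $n/\log n$, where $n\epsilon_n(k)^2\asymp k\log\log n$ but $k\log(1/\epsilon_n(k))\asymp k\log n$; you need the local entropy of the shell $\{u\le h\le 2u\}$, which is $\lesssim k\log(C/\zeta)$ because Hellinger distance between histograms is the Euclidean distance between $(\sqrt{\theta_j})_j$, so that \eqref{cond: entropy} holds for all $u\ge A\sqrt{k/n}$ with a fixed constant $A$.

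Second, your (B1) repair is in the right spirit (the KL-versus-Hellinger issue is real, and the paper is terse about it), but as written it fails on two counts. (a) The claim $\Pi(\tilde\Theta_n(k)^c\mid k)\lesssim e^{-c'n\epsilon_n(k)^2}$ with $\tilde\Theta_n(k)=\{\min_j\theta_j\ge c/(kn)\}$ is false: the Dirichlet probability that some coordinate falls below $c/(kn)$ is of order $k\,n^{-\alpha}$, i.e.\ only polynomially small, which is far larger than $e^{-c'n\epsilon_n(k)^2}\le e^{-c'k\log(n/k)}$ at the oracle $k$. What Remark \ref{rem:B1} actually requires is a direct lower bound on the prior mass of the restricted ball, and this comes for free from the hyper-rectangle $\{|\theta_j-\bar\theta_{j,k}|\le\bar\theta_{j,k}v_n\}$ already used for the upper bound in \eqref{epsilonHist}: on it $\theta_j\ge c_0/(2k)$, so one should restrict at level $\asymp 1/k$, not $1/(kn)$, making $f_\theta$ bounded below by a constant and giving $K,V_2\lesssim h^2$ with a genuine constant. (b) Restricting only at level $1/(kn)$ buys $K\lesssim h^2\log n$, and "absorbing the $\log n$ into $\epsilon_n(k)$" is not innocuous: it degrades the marginal-likelihood lower bound in the proofs of Theorem \ref{thm:main} and Corollary \ref{thm: contraction} to $e^{-Cn\epsilon_{n,0}^2\log n}$, which forces $M_n^2\gtrsim\log n$ and so does not yield the stated conclusion for arbitrary $M_n\to\infty$. (Note also that for $\beta>1/2$ no restriction is needed at the near-oracle $k$, since $k\epsilon_n(k)^2\to0$ forces $k\theta_j\ge c_0/4$ on the Hellinger ball; the restricted-set variant at level $\asymp1/k$ is what covers $\beta\le1/2$.)
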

Equation \eqref{epsilonHist} of Proposition \ref{prop:hist} is proved in Appendix \ref{sec:pr:hist}, while the rest of the proof is given in Section B.6 of the supplementary material \cite{rousseau:szabo:15:supp}.

\section{Proofs} \label{sec:proof}

\subsection{Proof of Theorem \ref{thm:main}}\label{pr:thmain}

Following from the definition of $\hat{\lambda}_n$ given in \eqref{MMLE}
 we have that $m(\mathbf x_n|\lambda)\leq m(\mathbf x_n|\hat{\lambda}_n)$ for all $\lambda\in\Lambda_n$.
Therefore to prove our statement it is sufficient to show that with $P_{\theta_0}^n$-probability tending to one we have
$$\sup_{\lambda\in\Lambda_n\setminus\Lambda_0} m(\mathbf x_n|\lambda)< m(\mathbf x_n|\lambda_0)\leq \sup_{\lambda\in\Lambda_0}m(\mathbf x_n|\lambda),$$
where $\lambda_0$ is some hyper-parameter belonging to $\Lambda_0$ (possibly dependent on $n$).

We proceed in two steps. First we show that there exists a constant $C>0$ such that with $P_{\theta_0}^n$-probability tending to one we have
\begin{align}
m(\mathbf x_n|\lambda_0)\geq e^{-Cn\eps_{n,0}^2}\label{eq: LB_like}.
\end{align}
Then we finish the proof by showing that for any sequence $w_n'=o(M_n^2\wedge w_n^2)$ going to infinity
\begin{align}
P_{\theta_0}^n\left( \sup_{\lambda\in\Lambda_n\setminus\Lambda_0} m(\mathbf x_n|\lambda)   >e^{- nw_n' \epsilon_{n,0}^2}\right)=o(1).\label{eq: UB_like}
\end{align}

We prove the first inequality $\eqref{eq: LB_like}$ using the standard technique for lower bounds of the likelihood ratio (e.g. Lemma 10 of \cite{ghosal:vdv:07}). Without loss of generality we can assume that there exists $\lambda\in\Lambda_n$ such that $\eps_n(\lambda)\geq\eps_{n,0}$. Then take an arbitrary $\lambda_0\in\tilde\Lambda_0$ such that $\eps_n(\lambda_0)\leq M_1 \eps_{n,0}$ for an arbitrary $M_1>1$.
Then we have from the assumption (B1) and the definition of $\eps_n(\lambda)$ given in $\eqref{def:eps:2}$ that with $P_{\theta_0}^n$-probability tending to one  the following inequality holds
\begin{equation}\label{eq: Split01}
\begin{split}
m(\mathbf x_n|\lambda_0) &\geq \int_{\theta\in B_n( \theta_0, M_2 \epsilon_n(\lambda_0) , 2) }e^{\ell_n(\theta)-\ell_n(\theta_0)}d\Pi(\theta | \lambda_0)\\
&\geq
 \Pi\left( \left. B_n( \theta_0, M_2 \epsilon_n(\lambda_0) ,2) \right| \lambda_0   \right) e^{ - 2n\epsilon_n^2 (\lambda_0) M_2^2 }\\ 
&\geq e^{-(\tilde{c}_0+2M_2^2)M_1n\eps_{n,0}^2}.
\end{split}
\end{equation}

We now prove $\eqref{eq: UB_like}$.  Split $\Lambda_n\setminus\Lambda_0$ into balls of size $u_n/2$ and  choose for each  ball a point in $\Lambda_n\setminus\Lambda_0$. We denote by $(\lambda_i)_{i=1}^{N_n(\Lambda_n\setminus\Lambda_0)}$ these points.     Consider the set $\Theta_n(\lambda_i)$ defined in $\eqref{cond: complement}$ and divide it into sieves
$$S_{n,j}^{(i)} = \{ \theta \in \Theta_n(\lambda_i) ; j\epsilon_n(\lambda_i) c(\lambda_i) \leq d( \theta, \theta_0) \leq (j+1)    \epsilon_n(\lambda_i) c(\lambda_i)  \}.$$
We have following from assumption \eqref{cond: entropy}  that for all $j$
\begin{equation}\label{Nj}
\log N(\zeta  j\epsilon_n(\lambda_i) c(\lambda_i) {,} S_{n,j}^{(i)}, d(\cdot, \cdot) ) \leq c_1 n  j^2\epsilon_n(\lambda_i)^2 c(\lambda_i)^2 /2
\end{equation}
 and constructing a net of $S_{n,j}^{(i)}$ with radius $ \zeta j\epsilon_n(\lambda_i) c(\lambda_i)$ we have following from assumption $\eqref{cond: test}$ that there exist tests $\phi_{n,j}^{(i)}$ satisfying 
 \begin{equation}\label{test}
 \begin{split}
 E_{\theta_0}^n\left( \phi_{n,j}^{(i)} \right) &\leq e^{ -c_1 nj^2\epsilon_n(\lambda_i)^2 c(\lambda_i)^2  } , \\
 \int_{S_{n,j}^{(i)}}  Q^{\theta}_{\lambda_i,n}(1 - \phi_{n,j}^{(i)} ) d\Pi(\theta|\lambda_i) &\leq e^{-c_1 nj^2\epsilon_n(\lambda_i)^2 c(\lambda_i)^2 }\Pi(S_{n,j}^{(i)}|\lambda_i).
 \end{split}
 \end{equation}
Let us take the test $\phi_{n,i} = \max_j \phi_{n,j}^{(i)}$ and for convenience introduce the notation $B_n(\lambda)=\Theta_n(\lambda) \cap \{\theta:\, \|\theta-\theta_0\|\leq K\eps_n(\lambda)\}$. Then using the chaining argument, Markov's inequality, Fubini's theorem and $\eqref{cond: test}$ we get that 
\begin{equation}\label{eq: Split1}
\begin{split}
P_{\theta_0}^n&\left( \sup_{\lambda\in\Lambda_n\setminus\Lambda_0} m(\mathbf x_n|\lambda)> e^{- n w_n' \epsilon_{n,0}^2} \right)\\
& \leq 
\sum_{i=1}^{N_n(\Lambda_n\setminus\Lambda_0)} P_{\theta_0}^n\left( \sup_{\rho(\lambda_i,\lambda) \leq u_n} m(\mathbf x_n| \lambda)  >e^{-n w_n' \epsilon_{n,0}^2} \right) \\
&\leq \sum_{i=1}^{N_n(\Lambda_n\setminus\Lambda_0)} E_{\theta_0}^n [ \phi_{n,i} ] + e^{n w_n' \epsilon_{n,0}^2}\Big\{ \\
&\quad\sum_{i=1}^{N_n(\Lambda_n\setminus\Lambda_0)}E_{\theta_0}^n\Big(\sup_{\rho(\lambda_i,\lambda) \leq u_n}\int_{\psi_{\lambda_i,\lambda}^{-1}\{B_n(\lambda_i)\}}e^{\ell_n(\theta)-\ell_n(\theta_0)}d\Pi(\theta|\lambda)\Big) \\
& \quad +   \sum_{i=1}^{N_n(\Lambda_n\setminus\Lambda_0)} E_{\theta_0}^n\Big( \sup_{\rho(\lambda_i,\lambda) \leq u_n}\int_{\psi_{\lambda_i,\lambda}^{-1}\{\Theta_n(\lambda_i) \cap B_n(\lambda_i)^c\}} e^{\ell_n(\theta)-\ell_n(\theta_0)} ( 1 -\phi_{n,i} )d\Pi( \theta|\lambda) \Big) \\
&\quad+ \sum_{i=1}^{N_n(\Lambda_n\setminus\Lambda_0)} E_{\theta_0}^n\Big(  \sup_{\rho(\lambda_i,\lambda) \leq u_n}  \int_{\psi_{\lambda_i,\lambda}^{-1}\{\Theta_n(\lambda_i)^c\}}e^{\ell_n(\theta)-\ell_n(\theta_0)}d\Pi( \theta|\lambda) \Big)\Big\} \\
&  \leq  N_n(\Lambda_n\setminus\Lambda_0) 2e^{ -c_1 n\inf_{i}\epsilon_n(\lambda_i)^2 c(\lambda_i)^2}
+ e^{n w_n' \epsilon_{n,0}^2}\Big\{\\
&\quad\sum_{i=1}^{N_n(\Lambda_n\setminus\Lambda_0)}  \int_{B_n(\lambda_i) } Q_{\lambda_i,n}^\theta(\mathcal X_n) d\Pi( \theta|\lambda_i)\\
&\quad+\sum_{i=1}^{N_n(\Lambda_n\setminus\Lambda_0)}\int_{\Theta_n(\lambda_i) \cap B_n(\lambda_i)^c} Q^\theta_{\lambda_i,n}( 1 -\phi_{n,i} )d\Pi( \theta|\lambda_i)\\
&\quad+{\sum_{i=1}^{N_n(\Lambda_n\setminus\Lambda_0)} \int_{\Theta_n(\lambda_i)^c}Q^{\theta}_{\lambda_i,n}(\mathcal{X}_n)d\Pi( \theta|\lambda_i)}\Big\}.
\end{split}
\end{equation}

Next we deal with each term on the right hand side of $\eqref{eq: Split1}$ separately and show that all of them tend to zero. One can easily see that since $\lambda_i\in\Lambda_n\setminus\Lambda_0$ and following the definition of $c(\lambda_i)$ given below $\eqref{change:distance}$, we have that 
\begin{align*}
N_n(\Lambda_n\setminus\Lambda_0) e^{ -(c_1/2) n\inf_{i}\epsilon_n(\lambda_i)^2 c(\lambda_i)^2}\leq N_n(\Lambda_n\setminus\Lambda_0) e^{ - (c_1/2)w_n^2 n\epsilon_{n,0}^2   } = o(1).
\end{align*}

For the second term we have following from assumption $\eqref{cond: Q}$, the definitions of $\eps_n(\lambda)$ and the set $\Lambda_0$ given in $\eqref{def:eps:2}$ and $\eqref{def: Lambda0}$, respectively, that
\begin{align*}
e^{  n w_n' \epsilon_{n,0}^2 }\sum_{i=1}^{N_n(\Lambda_n\setminus\Lambda_0)} & \int_{B_n(\lambda_i) } Q_{\lambda_i,n}^\theta(\mathcal X_n) d\Pi(\theta| \lambda_i )\\
 &\leq
 \sum_{i=1}^{N_n(\Lambda_n\setminus\Lambda_0)} e^{ n w_n' \epsilon_{n,0}^2 } e^{o(1)n\eps_n^2(\lambda_i)}\Pi(B_n(\lambda_i) |\lambda_i)\\
&\leq e^{- n M_n^2 \epsilon_{n,0}^2(\tilde{c}_0^{-1}+o(1))}=o(1).
\end{align*}

Next following from $\eqref{test}$ we have that 
 \begin{align*}
 e^{  n w_n' \epsilon_{n,0}^2 }\sum_{i=1}^{N_n(\Lambda_n\setminus\Lambda_0)} \int_{\Theta_n(\lambda_i) \cap B_n(\lambda_i)^c}& Q^\theta_{\lambda_i,n}( 1 -\phi_{n} )d\Pi( \theta | \lambda_i)\\
 & \leq e^{  n w_n' \epsilon_{n,0}^2 }\sum_{i=1}^{N_n(\Lambda_n\setminus\Lambda_0)} e^{-c_1n \epsilon_n(\lambda_i)^2 c(\lambda_i)^2} \\
& \leq  e^{-c_1nw_n^2\epsilon_{n,0}^2 (1+ o(1))  } = o(1).
\end{align*}
Finally we have following assumption $\eqref{cond: complement}$ that the fourth term on the right hand side of $\eqref{eq: Split1}$ can be bounded from above by
\begin{align*}
  e^{n w_n' \epsilon_{n,0}^2}{\sum_{i=1}^{N_n(\Lambda_n\setminus\Lambda_0)} \int_{\Theta_n(\lambda_i)^c}Q^{\theta}_{\lambda_i,n}(\mathcal{X}^{(n)})d\Pi(\theta | \lambda_{i})}&\leq
N_n(\Lambda_n\setminus\Lambda_0) e^{-  (w_n^2-w_n')n\epsilon_{n,0}^2} \\
&\leq e^{- n w_n^2\epsilon_{n,0}^2( 1 + o(1))} = o(1).
\end{align*}

\subsection{Proof of Corollary \ref{thm: contraction}} \label{pr:thm-contraction}

The proof of Corollary \ref{thm: contraction}, follows the same lines of reasoning as Theorem 1 in \cite{DRRS:arxiv}, with the adding remark that 
 $$m(\mathbf x_n|\hat \lambda_n) \geq m(\mathbf x_n|\lambda) , \quad \forall \lambda \in \Lambda_n,$$
 so that no uniform lower bound in the form  $\inf_{\lambda\in\Lambda_0} m(\mathbf x_n|\lambda)$ is required. We have 
\begin{align*}
 E_{\theta_0}^n &\Pi\left( d(\theta, \theta_0) > M M_n\epsilon_{n,0} | \mathbf x_n ; \hat \lambda_n \right)\\
   &\quad=   E_{\theta_0}^n\left( \frac{ \int_{d(\theta, \theta_0) > M M_n\epsilon_{n,0}} e^{\ell_n(\theta) - \ell_n(\theta_0) }  d\Pi( \theta | \hat \lambda_n )}{\int_{\Theta} e^{\ell_n(\theta) - \ell_n(\theta_0) }  d\Pi( \theta | \hat \lambda_n ) } \right)
\equiv E_{\theta_0}^n\left(  \frac{ H_n(\hat \lambda_n) }{m(\mathbf x_n|\hat \lambda_n) } \right).
 \end{align*}
We  construct $\phi_n =  \max_{\lambda_i}  \max_j \max_l \phi_n^{(i)}(\theta_{j,l})$, with $(\lambda_i)_{i\leq N_{n}(\Lambda_0)}$ a net of $\Lambda_0$ with radius $u_n$, and for all $j \geq M M_n$,  $(\theta_{j,l})_{l\leq N_{n,j}}$ a $\zeta j \epsilon_n(\lambda_i)$ net of $\bar S_{n,j} = \{\theta, j \epsilon_{n,0} \leq d(\theta, \theta_0)\leq (j+1) \epsilon_{n,0}\} \cap \Theta_n(\lambda_i)$. By assumption (C2), $\log N_{n,j} \leq  c_1 n j^2 \epsilon_n^2 /2 $ and $\log N_{n}(\Lambda_0) \leq c_3 n \epsilon_{n,0}^2$ (for some $c_3>0$). Then we have for any $c_2>0$
\begin{equation} \label{decomp:rate}
\begin{split}
E_{\theta_0}^n\left(  \frac{ H_n(\hat \lambda_n) }{m(\mathbf x_n|\hat \lambda_n) } \right)&\leq P_{\theta_0}^n(\hat\lambda_{n}\notin \Lambda_0) +E_{\theta_0}^n\left( \phi_n \right)+ P_{\theta_0}^n[ m(\mathbf x_n|\hat \lambda_n) < e^{- c_2 n\epsilon_{n,0}^2 } ]  \\
&  \qquad+ e^{c_2 n\epsilon_{n,0}^2 } E_{\theta_0}^n\left[ (1 - \phi_n)\sup_{\lambda\in \Lambda_0} H_n(\lambda )\right].
\end{split}
\end{equation}
We assumed that the first term tends to zero (see Theorem \ref{thm:main} for verification of this condition in case of MMLE). Furthermore by construction  
$$E_{\theta_0}^n\left( \phi_n \right)\leq N_{n}(\Lambda_0)\sup_{i} \sum_{j \geq M M_n} e^{c_1 n j^2 \epsilon_n^2(\lambda_i) /2}e^{- c_1 n j^2 \epsilon_n^2(\lambda_i) } \lesssim e^{- n c_1 M_n^2 \epsilon_{n,0}^2 /4} .$$
Also
$$ P_{\theta_0}^n[ m(\mathbf x_n|\hat \lambda_n) < e^{- c_2 n\epsilon_{n,0}^2 } ] \leq  P_{\theta_0}^n[ m(\mathbf x_n|\lambda_0) < e^{- c_2 n\epsilon_{n,0}^2 } ] = o(1) $$
following from \eqref{eq: LB_like}  with $c_2\geq c_3+M_1(\tilde{c}_0+2M_2^2+2)$. The control of the  last term of \eqref{decomp:rate} follows from the proof of Theorem 1 of \cite{DRRS:arxiv}.

\subsection{Proof of Theorem \ref{thm: hierarchical}}\label{sec: HBproof}

As a first step for notational convenience let us denote by $B_n^c$ the sets $\{\theta:\, d(\theta,\theta_0)\geq MM_n \eps_{n,0}\}$ or $\{\theta:\, d(\theta,\theta_0)\leq \delta_n \eps_{n,0}\}$
\begin{align}
\Pi( B_n^c |\mathbf x_n)&=\int_{\Lambda_0(M_n)}\Pi( B_n^c |\mathbf x_n,\lambda)\tilde\pi(\lambda|\mathbf x_n)d\lambda
+\int_{\Lambda_0(M_n)^c}\Pi( B_n^c |\mathbf x_n,\lambda)\tilde\pi(\lambda|\mathbf x_n)d\lambda\nonumber\\
&\leq \sup_{\lambda\in\Lambda_0(M_n)}\Pi( B_n^c |\mathbf x_n,\lambda)+ \int_{\Lambda_0(M_n)^c}\tilde\pi(\lambda|\mathbf x_n)d\lambda.\label{eq: help001} 
\end{align}
Then from the proofs of Theorem 1 of \cite{DRRS:arxiv} and Theorem \ref{thm: LB} follows that the expected value of the first term on the right hand side of the preceding display tends to zero. We note that assumption (H2) is needed to deal with the denominator in the posterior, unlike in Corollary \ref{thm: contraction}, where weaker assumptions were sufficient following from the definition of the maximum marginal likelihood estimator $\hat\lambda_n$.

Hence it remained to deal with the second term on the right hand side of $\eqref{eq: help001}$. The hyper-posterior takes the form
$$\pi(\lambda|\mathbf x_n)\propto m(\mathbf x_n |\lambda)\tilde\pi(\lambda) $$
and from the proof of Theorem 1 of \cite{DRRS:arxiv} (page 10-11) and $\eqref{eq: Split1}$ in the proof of Theorem \ref{thm:main} we have with $P_{\theta_0}^n$-probability tending to one that
\begin{align*}
m(\mathbf x_n |\lambda)\geq e^{-(\tilde{c}_0+2M_2^2)\tilde{w}_n^2 n\eps_{n,0}^2}\quad\text{for $\lambda\in\tilde\Lambda_0(\tilde{w}_n)$,}\quad\text{and}\\ 
m(\mathbf x_n |\lambda)\leq e^{-w_n' n\eps_{n,0}^2}\quad\text{for $\lambda\in\Lambda_n\setminus\Lambda_0(M_n)$,}
\end{align*}
for any $w_n'=o(M_n^2\wedge w_n^2)$, hence there exists $w_n'$, which also satisfies $\tilde{w}_n=o(w_n')$.
Therefore with $P_{\theta_0}^n$-probability tending to one we also have that 
\begin{align*}
\int_{\Lambda_n\setminus\Lambda_0(M_n)}\pi(\lambda|\mathbf x_n)d\lambda
&\leq \frac{e^{-w_n'^2 n\eps_{n,0}^2}}{e^{-(\tilde{c}_0+2M_2^2)\tilde{w}_n^2 n\eps_{n,0}^2}\int_{\tilde\Lambda_0(\tilde{w}_n)}\tilde\pi(\lambda) d\lambda }=o(1).
\end{align*}
Finally similarly to the preceding display we have that
\begin{align*}
E_{\theta_0}^n\int_{\Lambda\setminus\Lambda_n}\pi(\lambda|\mathbf x_n)d\lambda&\leq \frac{\int_{\Lambda\setminus\Lambda_n}E_{\theta_0}^n m(\mathbf x_n |\lambda) \tilde\pi(\lambda) d\lambda}{e^{-(\tilde{c}_0+2M_2^2)\tilde{w}_n^2 n\eps_{n,0}^2}\int_{\tilde\Lambda_0(\tilde{w}_n)}\tilde\pi(\lambda) d\lambda}+o(1)\\
&\lesssim e^{(\tilde{c}_0+2M_2^2+1)\tilde{w}_n^2 n\eps_{n,0}^2} \int_{\Lambda\setminus\Lambda_n} \tilde\pi(\lambda) d\lambda+o(1)= o(1),
\end{align*}
finishing the proof.


\begin{appendix}

\section{Proof of the Lemmas about the rate $\eps_n(\lambda)$}

\subsection{Proof of Lemma  \ref{lem:eps:T1}} \label{app:T1}

We have $\|\theta -\theta_0\|_2^2 = \sum_{j=1}^k (\theta_j - \theta_{0,j})^2 + \sum_{j=k+1}^\infty \theta_{0,j}^2 $ so that $\|\theta -\theta_0\|_2^2 \leq K^2\epsilon^2$ if and only if $  \sum_{j=1}^k (\theta_j - \theta_{0,j})^2 \equiv \|\theta - \theta_{0,[k]}\|_2^2 \leq  \delta^2$, with $\delta^2 = K^2 \epsilon^2 -  \sum_{j=k+1}^\infty \theta_{0,j}^2 $, and $\theta_{0,[k]} = ( \theta_{0,j}, j\leq k)$. Then 
\begin{equation*}
\begin{split}
\int_{\theta \in \RR^k }g(\theta)  \1\{ \|\theta-\theta_{0,[k]}\|_2\leq \delta \} d\theta &\leq  \|g\|_\infty^k \frac{ \pi^{k/2} \delta^k }{ \Gamma( k/2+1)} \\
 & \geq \underline g^k \frac{ \pi^{k/2} \delta^k }{ \Gamma( k/2+1)} 
\end{split}
\end{equation*}
with $\underline g = \inf_{B_k(\delta)}g(x) $ where $B_k(\delta) = \{ x; \min_{i\leq k} |x-\theta_{0,i}| \leq \delta\}$. The Sterling formula implies that both the lower and upper bounds have the form 
$ \exp\{ k \log ( C \delta /\sqrt{k} )\}$ and since $\delta = o(1)$ this is equivalent to\\
 $\exp\{ k \log (\delta /\sqrt{k} ) ( 1 + o(1) )\}$. We thus have 
\begin{equation*}
\epsilon_n(k) > \left( \sum_{i>k} \theta_{0,i}^2\right)^{1/2}/K \quad \mbox{and } \quad n \epsilon_n^2 (k) = k \log ( \sqrt{k}/ s_n) ( 1+ o(1) ), 
\end{equation*} 
with  $ s_n^2 = K^2 \epsilon_n^2(k) -  \sum_{j=k+1}^\infty \theta_{0,j}^2 $. In other words $ s_n >0$ and 
\begin{equation}\label{deltan}  s_n^2 + \sum_{j=k+1}^\infty \theta_{0,j}^2 = \frac{ K^2 k}{n} \log \left( \frac{ \sqrt{k}}{ s_n} \right)(1 + o(1)). \end{equation}
 Also if $  \sum_{j=k+1}^\infty \theta_{0,j}^2 = o( k\log n/n) $, then \eqref{deltan} implies that 
$$ s_n^2  = \frac{ K^2 k}{n} \log \left( \frac{ \sqrt{k}}{ s_n} \right)( 1 + o(1) )  \quad \Rightarrow\quad  s_n^2 = \frac{ K^2 k}{ 2 n }\log ( 2n/K^2)(1+ o(1) ).$$
Now take $\theta_0 \in \mathcal H_{\infty}( \beta, L) \cup\mathcal S_\beta(L)$, since $\sum_{i>k}\theta_{0,i}^2 \lesssim k^{-2\beta}$, 
choosing $k = \lfloor (n/\log n)^{1/(2 \beta+1)}\rfloor $ leads to 
 $ \epsilon_{n,0} \lesssim (n/\log n)^{-\beta/(2\beta+1)}$ . Finally considering $\theta_{0,i}^2 = (1+i)^{-2\beta -1} $ for $\mathcal H_{\infty}( \beta, L)$ implies that this is also a lower bound in this case.
Furthermore for all $\delta_n = o(1/M_n) $ and for all $k $ such that 
 $$k^{-2\beta} +\frac{ k\log n }{ n} \leq M_n^2 (n/\log n)^{-2\beta/(2\beta+1)}\quad \Rightarrow\quad k \lesssim M_n^2 (n/\log n)^{1/(2\beta+1)}$$
 and $ \delta_n^2 \left( k^{-2\beta} + k\log n/n \right) = o (k^{-2\beta}) = o(\sum_{i>k}\theta_{0,i}^2)$
 so that 
$$ \Pi( \|\theta  - \theta_0\|\leq \delta_n \epsilon_n(k) |k) = 0$$
  and condition \eqref{cond: Q2} is verified.


\subsection{Proof of Lemma \ref{lem:eps:T2T3}} \label{app:T2T3}
We need to study 
$$\inf_{h\in\mathbb{H}^{\alpha,\tau}:\, \|h-\theta_0\|_2\leq\epsilon_n} \|h\|_{\mathbb{H}^{\alpha,\tau}}^2.$$
Let us distinguish three cases $\beta>\a+1/2$, $\beta<\a+1/2$ and $\beta=\a+1/2$, and note that the following computations hold both for the truncated and non-truncated versions of the priors (T2) and (T3). 

In the case $\beta > \alpha +1/2$ and if  $\theta_{0,i}^2 \leq L i^{-2\beta -1} $ for all $i$, then 
\begin{equation*}
\begin{split}
\inf_{h\in\mathbb{H}^{\alpha,\tau}:\, \|h-\theta_0\|_2\leq\epsilon} \|h\|_{\mathbb{H}^{\alpha,\tau}}^2 & \leq \tau^{-2}L \sum_{i=1}^\infty i^{2\alpha-2\beta } \lesssim \frac{ L \tau^{-2}}{  \beta - \alpha - 1/2}
\end{split}
\end{equation*}
while when $\theta_0 \in \mathcal S_\beta(L)$ 
$\inf_{h\in\mathbb{H}^{\alpha,\tau}:\, \|h-\theta_0\|_2\leq\epsilon} \|h\|_{\mathbb{H}^{\alpha,\tau}}^2  \leq \tau^{-2}L.$
Also
\begin{equation*}
  n^{-\frac{\alpha}{2\alpha +1}}\tau^{\frac{1}{2\alpha+ 1}}\lesssim \epsilon_n(\alpha,\tau) \lesssim   n^{-\frac{\alpha}{2\alpha +1}}\tau^{\frac{1}{2\alpha+ 1}}+ \left(\frac{1}{n\tau^2(\beta - \alpha -1/2 )} \right)^{1/2} 
  \end{equation*}
if $\theta_0 \in \mathcal H_\infty ( \beta, L)$, while 
\begin{equation*}
n^{-\frac{\alpha}{2\alpha +1}}\tau^{\frac{1}{2\alpha+ 1}}\lesssim \epsilon_n(\alpha,\tau) \lesssim n^{-\frac{\alpha}{2\alpha +1}}\tau^{\frac{1}{2\alpha+ 1}}+ \left(\frac{1}{n\tau^2} \right)^{1/2} 
  \end{equation*}
if $\theta_0 \in \mathcal S_\beta(L)$.  Now, if $0 < \beta < \alpha +1/2$, with $\theta_0 \in \mathcal H_\infty ( \beta, L)$ 
\begin{equation*}
\begin{split}
\inf_{h\in\mathbb{H}^{\a,\tau}:\, \|h-\theta_0\|_2\leq\epsilon} \|h\|_{\mathbb{H}^{\alpha,\tau}}^2 & \leq \tau^{-2}L\sum_{i=1}^{(\frac{L}{2\beta})^{\frac{1}{2\beta}} \epsilon_n^{-\frac{1}{\beta}}} i^{2\alpha-2\beta } \lesssim L^{\frac{2\a+1}{2\beta}}\frac{\tau^{-2} \epsilon^{- \frac{2\alpha - 2 \beta +1}{\beta} }}{ 2 \alpha +1 -2 \beta}  
\end{split}
\end{equation*}
and when $\theta_0 \in \mathcal S_\beta ( L)  $
\begin{equation*}
\begin{split}
\inf_{h\in\mathbb{H}^{\a,\tau}:\, \|h-\theta_0\|_2\leq\epsilon} \|h\|_{\mathbb{H}^{\alpha,\tau}}^2 & \leq \tau^{-2}L^{\frac{2\a+1}{2\beta}} \epsilon^{- (2\alpha - 2 \beta +1)/\beta }.
\end{split}
\end{equation*}

If $\beta = \alpha + 1/2$, the same result holds for $\theta_0 \in \mathcal S_\beta ( L)$, but it becomes 
\begin{equation*}
\begin{split}
\inf_{h\in\mathbb{H}^{\a,\tau}:\, \|h-\theta_0\|_2\leq\epsilon} \|h\|_{\mathbb{H}^{\alpha,\tau}}^2 & \leq \frac{\tau^{-2}L}{\beta} |\log ( \epsilon)|(1+o(1)).
\end{split}
\end{equation*}
when  $\theta_0 \in \mathcal H_\infty ( \beta, L)$ . 
These lead to the upper bound in \eqref{eps:lambda:beta+-} and \eqref{eps:lambda:beta=}.

Furthermore for every $\theta_0\in \mathcal S_{\beta}(L) \cup \mathcal H_{\infty}( \beta, L )$ satisfying $\|\theta_0\|_2>2 \eps$, when $\|h-\theta_0\|_2\leq \eps$ then $\|h\|_2>\|\theta_0\|_2/2$, hence
\begin{align*}
\inf_{h\in\mathbb{H}^{\a,\tau}:\, \|h-\theta_0\|_2\leq\epsilon} \|h\|_{\mathbb{H}^{\alpha,\tau}}^2 \geq  \tau^{-2} \inf_{h\in\mathbb{H}^{\a,\tau}:\, \|h-\theta_0\|_2\leq\epsilon} \|h\|_2^2\gtrsim\|\theta_0\|_2^2 \tau^{-2}.
\end{align*}
Hence if $\|\theta_0\|_2>2\epsilon_{n}(\alpha,\tau)$ for $a(\alpha,\beta)$ defined in Lemma \ref{lem:eps:T2T3}, 
$$\epsilon_n( \lambda) \gtrsim \frac{\|\theta_0\|_2}{\sqrt{n\tau^2} }+ n^{-\alpha/(2\alpha +1)}\tau^{1/(2\alpha+ 1)}$$
and for all $\tau^2 n  $ lower bounded by a positive constant the above inequality remains valid when $\|\theta_0\|_2 \leq 2\epsilon_{n}(\lambda)$, providing us the lower bound in \eqref{eps:lambda:beta+-} and \eqref{eps:lambda:beta=}.


\subsection{Proof of Lemma \ref{lem:eps:2:T2T3}} \label{app:2:T2T3}
The proof is based on minimizing the upper bounds obtained in Lemmas \ref{lem:eps:T1} and   \ref{lem:eps:T2T3}. 

$\bullet$ First consider  $\lambda= \tau$. When $\beta > \alpha +1/2$, note that for all $\tau \geq n^{-1/(4\alpha+4)}$
$$ \left(\frac{1}{n\tau^2} \right)^{1/2} \lesssim n^{-\alpha/(2\alpha +1)}\tau^{1/(2\alpha+ 1)}$$
so that 
$\epsilon_n(\tau) \asymp n^{-\alpha/(2\alpha +1)}\tau^{1/(2\alpha+ 1)}$ which is minimized at $\tau \asymp n^{-1/(4\alpha+4)}$ so that 
\eqref{eps0:tau:1} is verified. Following from $\eqref{eps:lambda:beta+-}$ the lower bound is obtained with every $\|\theta_0\|_2\geq c>0$, for any arbitrary positive constant $c$. Indeed in this case, we have
$\epsilon_n(\tau)\gtrsim (n\tau^2)^{-1/2}$
  which implies that the lower bound  is the same as the upper bound \eqref{eps0:tau:1}. Furthermore we note that the lower bound 
	\begin{align}
	\eps_{n,0}\gtrsim n^{-(2\a+1)/(4\a+4)}\label{eq: LBgen_alpha}
 \end{align}
holds for every $\theta_0\neq 0$ (and large enough $n$). Therefore we also have for every $\t_0$ satisfying $\eps_{n}(\t_0)\lesssim \eps_{n,0}$ that $\t_0\gtrsim n^{-1/(4\a+4)}$.

 When $\beta < \alpha + 1/2$ we have for all $\tau \geq n^{-(\beta - \alpha)/( 2\beta +1)}$ that 
$\epsilon_n(\tau) \asymp n^{-\frac{\alpha}{2\alpha +1}}\tau^{\frac{1}{2\alpha+ 1}}$, which is minimized at $\tau \asymp n^{-(\beta - \alpha)/( 2\beta +1)}$,
 leading to the upper bound \eqref{eps0:tau:2}. The upper bound is obtained choosing for instance 
  $\theta_{0,i} = \sqrt{L}i^{-\beta -1/2}$ for all $i\leq K_n$, for some sequence $K_n$ going to infinity, so that
\begin{equation*}
\begin{split}
 \inf_{\|h-\theta\|_2 \leq \epsilon_n(\tau)} \|h\|_{\mathbb H^{\alpha,\tau} }^2 
 &\geq \tau^{-2}  \sum_{i=1}^{K_n} i^{2\alpha+1}[\theta_{0,i}^2 - 2 \theta_{0,i}( \theta_{0,i}-h_i )] \\
 &\gtrsim \tau^{-2}\left(  L K_n^{2\alpha - 2\beta+1} - 2 \sqrt{L} \epsilon_n(\tau) K_n^{2\alpha -\beta +1} \right)\\
 &\gtrsim \tau^{-2} K_n^{2\alpha - 2\beta+1}
 \end{split}
 \end{equation*}
and $K_n \leq k_0 \epsilon_n(\tau)^{-1/\beta} $. This leads to $\epsilon_n(\tau ) \geq (n\tau^2)^{-\beta/(2\alpha+1)} $, with an extra $\log n$ term in the case $\alpha + 1/2 = \beta$ and $\theta_0\in \mathcal{H}_{\infty}(\beta,L)$ so that the lower bound is of the same order as the upper bound \eqref{eps:lambda:beta=} which in terms implies that the lower bound  is the same as the upper bound 
\eqref{eps0:tau:2}.
 
We now consider the case $\lambda = \alpha$, then we have a generic upper bound for 
  $\epsilon_n( \alpha) $ in the form $n^{-(\alpha \wedge \beta)/(2\alpha+1)}$ following from $\eqref{eps:lambda:beta+-}$ and   $\theta_0 \in \mathcal H_\infty ( \beta, L)\cup \mathcal S_\beta(L)$, while the lower bound is a multiple of $n^{-\alpha/(2\alpha+1)}$. We thus have 
  $\epsilon_{n,0}  \lesssim n^{-\beta/(2 \beta +1)} $ for all $\theta_0 \in \mathcal H_\infty ( \beta, L)\cup S_\beta(L)$ and the constant depends only on $\beta $ and $L$.

\subsection{Proof of Equation \eqref{epsilonHist}  in Proposition \ref{prop:hist}}\label{sec:pr:hist}
We prove the first part of proposition, namely the bounds on $\epsilon_n(k)$. Denote by $g_0$ the function 
\begin{equation*}
g_0 (x)  = k\sum_{j=1}^k \tilde \eta_j \1_{I_j}(x),
\end{equation*}
then $g_0$ is the projection of $\sqrt{f_0}$ on the set of piecewise constant functions on a $k$ regular grid and for any $\theta \in \mathcal S_k$ the $k$-dimensional simplex, 
$$h^2(f_0, f_\theta)  = h^2(f_0,g_0^2) + \sum_{j=1}^k (\sqrt{\theta_j} - \tilde \eta_j \sqrt{k} )^2  \geq  h^2(f_0,g_0^2)  = b(k)^2.$$
Define $\bar \theta_{j,k} = (\tilde \eta_j\sqrt{ k})^2/ \sum_l \tilde\eta_l^2 k $ and for some  $v_n= o(1) $ consider $\theta  = (\theta_1,.., \theta_k)\in \mathcal S_k$ satisfying
$ |\theta_j -\bar \theta_{j,k}|\leq \bar \theta_{j,k} v_n $ for $j \leq k-1$. Then 
$ |\theta_k -\bar \theta_{k,k}| \leq \sum_{j=1}^{k-1}  \bar\theta_{j,k} v_n \leq v_n.$
Note that  $b(k)^2 = 1 - \sum_{j=1}^k \tilde \eta_j^2 k$, so that
\begin{equation*}
\begin{split}
\sum_{j=1}^k (\sqrt{\theta_j} - \tilde \eta_j \sqrt{k} )^2& = \sum_{j=1}^k \Big(\sqrt{\theta_j} -\sqrt{ \bar \theta_{j,k} } \sqrt{ \sum_l \tilde\eta_l^2 k } \Big)^2\\
& \leq 2 \sum_{j=1}^k (\sqrt{\theta_j} - \sqrt{\bar \theta_{j,k}} )^2 + 2 \sum_{j=1}^k \bar \theta_{j,k}(  \sqrt{ \sum_l \tilde\eta_l^2 k } - 1 )^2 \\
& \leq 2v_n^2 + 2 b(k)^2,
\end{split}
\end{equation*}
which implies that for such $\theta$, 
$h^2(f_0, f_\theta)  \leq 3 b(k)^2 + 2 v_n^2.$
Since $c_0 \leq f_0  \leq C_0$, $c_0/k \leq \bar \theta_{j,k}\leq C_0/k$ and  we also have, as in the proof of Lemma 6.1 of \cite{ghosal:ghosh:vdv:00}, that if $v_n \leq c_0/(2k) $,  then 
$v_n \leq \bar \theta_{k,k}/2$ and
\begin{equation*}
\begin{split}
\pi\left(  |\theta_j -\bar \theta_{j,k}|\leq \bar \theta_{j,k} v_n,\, \forall j \leq k-1\right) &\gtrsim  \frac{ \Gamma(k \alpha )}{ \Gamma( \alpha)^k }\bar \theta_{k,k}^{\alpha-1}  \prod_{j\leq k-1} \int_{\bar \theta_{j,k}(1- v_n)}^{\bar \theta_{j,k} (1+ v_n) }x^{\alpha-1} dx \\
&\gtrsim  \frac{  ( C_1v_n)^k \Gamma(k \alpha )}{ (\alpha\Gamma( \alpha))^{k-1}\Gamma(\alpha) }\prod_{j\leq k-1} \bar \theta_{j,k}^{\alpha}\\
& \gtrsim \frac{ { (C_2 v_n)^k \Gamma(k\alpha )k^{-k \alpha} }}{ (\alpha\Gamma( \alpha))^{k-1}\Gamma(\alpha) },
\end{split}
\end{equation*}
for some constant $C_1,C_2>0$. Since $\alpha \leq A  $,  if $v_n = n^{-h}$ for some $h>0$, 
\begin{equation*}
\pi\left(  |\theta_j -\bar \theta_{j,k}|\leq \bar \theta_{j,k} v_n , \, \forall j \leq k-1\right) \gtrsim e^{ - c k  \log n },
\end{equation*}
which implies that for all $k$ such  that $b(k)^2 \lesssim k\log n/ n $ we have $\epsilon_{n}(k)^2 \lesssim b(k)^2 + k \log n/n$. 
We now bound from below $\epsilon_n(k)$. Since $h^2(f_0, f_\theta)  = b(k)^2 + \sum_{j=1}^k \left( \sqrt{{\theta_j}} - \sqrt{\bar \theta_{j,k}}  \sqrt{1 - b(k)^2 }\right)^2 $, on the set 
$h^2(f_0, f_\theta)\leq \epsilon_n^2$, $b(k)^2 \leq \epsilon_n^2$ and $\sum_{j=1}^k \left( \sqrt{{\theta_j}} - \sqrt{\bar \theta_{j,k}}  \sqrt{1 - b(k)^2 }\right)^2 \leq \epsilon_n^2$. Using elementary algebra and Cauchy-Schwarz inequality we have if $\epsilon_n$ is small, $b(k)$ is small and 
\begin{equation*}
\begin{split}
\sum_{j=1}^k& \left( \sqrt{{\theta_j}} - \sqrt{\bar \theta_{j,k}}  \sqrt{1 - b(k)^2 }\right)^2\\ 
& \geq  \sum_{j=1}^k \left( \sqrt{\theta_j} - \sqrt{\bar \theta_{j,k}} \right)^2+ \frac{ b^4(k) }{ 4 } - 2 b^2(k) \sqrt{\sum_{j=1}^k \left( \sqrt{\theta_j} - \sqrt{\bar \theta_{j,k}} \right)^2} \\
&= \left(\sqrt{ \sum_{j=1}^k \left( \sqrt{{\theta_j}} - \sqrt{\bar \theta_{j,k}} \right)^2}  - \frac{ b(k)^2}{ 2 } \right)^2 .
\end{split}
\end{equation*}
Over the set $ \sum_{j=1}^k \left( \sqrt{{\theta_j}} - \sqrt{\bar \theta_{j,k}} \right)^2 \geq \epsilon_n^2/2$, then 
$$\sqrt{ \sum_{j=1}^k \left( \sqrt{{\theta_j}} - \sqrt{\bar \theta_{j,k}} \right)^2} \geq  b(k)^2 $$
and 
$$\sum_{j=1}^k \left( \sqrt{{\theta_j}} - \sqrt{\bar \theta_{j,k}}  \sqrt{1 - b(k)^2 }\right)^2  \geq \frac{ 1 }{ 4 } \sum_{j=1}^k \left( \sqrt{{\theta_j}} - \sqrt{\bar \theta_{j,k}} \right)^2,$$
so that if $h(f_0, f_\theta) \leq \epsilon_n$ small enough, then 
\begin{equation*}
h^2(f_0, f_\theta)  \geq b(k)^2 +\frac{ 1 }{ 4 } \sum_{j=1}^k \left( \sqrt{{\theta_j}} - \sqrt{\bar \theta_{j,k}} \right)^2  .
\end{equation*}
Hence 
$$\Pi\{h^2(f_0, f_\theta) \leq K \epsilon_{n}(k)^2 \} \leq \Pi\Big( \sum_{j=1}^k \left( \sqrt{\theta_j} - \sqrt{\bar \theta_{j,k}} \right)^2 \leq K \epsilon_{n}(k)^2 - b(k)^2 \Big),$$
 with $b(k)^2 {<}K \epsilon_{n}(k)^2$. Set $ s_n^2 = K \epsilon_{n}(k)^2 - b(k)^2 $. 
 On the set $$ \sum_{j=1}^k \left( \sqrt{\theta_j} - \sqrt{\bar \theta_{j,k}} \right)^2 \leq  s_n^2, $$ 
we split $\{1, \cdots, k-1\}$ into $| \sqrt{\theta_j} - \sqrt{\bar \theta_{j,k}} | \leq 1/\sqrt{k}$ and $| \sqrt{\theta_j} - \sqrt{\bar \theta_{j,k}} | > 1/\sqrt{k}$. The cardinality of the latter is bounded from above by $ s_n^2k$. Moreover if 
 $| \sqrt{\theta_j} - \sqrt{\bar \theta_{j,k}} | \leq 1/\sqrt{k}$ then {by triangle inequality} $\sqrt{\theta_j} \lesssim {1/\sqrt{k}}$ else $\sqrt{\theta_j} \lesssim  s_n$. We have
 \begin{equation*}
\begin{split}
& 
\Pi\Big( \sum_{j=1}^k ( \theta_j^{1/2} - \bar \theta_{j,k}^{1/2} )^2 \leq  s_n^2 \Big)  \leq 
\frac{ \pi^{k/2} \Gamma(\alpha k )  s_n^k }{ \Gamma(\alpha)^k \Gamma(k/2+1)} \sum_{l=0}^{\lfloor  s_n^2 k\rfloor} \binom{k}{l} s_n^{(2\alpha-1)l }k^{-(k-l)(\alpha-1/2)}\\
&\leq \frac{ \pi^{\frac{k}{2}} \Gamma(\alpha k )  s_n^k }{ \Gamma(\alpha)^k \Gamma(k/2+1)}\Big( k^{-k(\alpha-1/2)}\\
&\qquad + \sum_{l\leq  s_n^2 k} C e^{ l \log (k)  + 2l - (k-l)(\alpha-1/2)\log (k) + 2l(\alpha-1/2)\log  (s_n)} \Big)\\
&\lesssim \exp\left\{ \alpha k \log (k) -k \log \Gamma(\alpha) - \frac{k}{2}\log (k) +k\log ( s_n)  - k (\alpha-\frac{1}{2})\log k + O(k) \right\} \\
&\lesssim \exp\left( k\log ( s_n) + O(k) \right) 
 \end{split}
\end{equation*}
if $\alpha \geq 1/2$. If  $\alpha < 1/2$, 
for each $\theta$ split $\{1,\cdots, k-1\}$ into the set $S$ of indices where $\theta_i \geq \rho_n/k $ and its complement, with $\rho_n = o(1)$. The number of indices such that $\theta_i < \rho_n/k $  is bounded by $O( s_n^2 k) $ on the set $\sum_{j=1}^k \left( \sqrt{\theta_j} - \sqrt{\bar \theta_{j,k}} \right)^2 \leq  s_n^2$, so that 
\begin{equation*}
\begin{split}
& 
\Pi\left( \sum_{j=1}^k \left( \sqrt{\theta} - \sqrt{\bar \theta_{j,k}} \right)^2 \leq  s_n^2  \right) \\
& \leq   \frac{ \Gamma ( k\alpha )}{  \Gamma(\alpha)^{{k}} } \sum_{S\subset \{ 1,.., k \}} \int_{\sum_{i\in S} ( \theta_i^{1/2} - \bar \theta_{i,k}^{1/2} )^2  \leq  s_n^2  }\1_{\substack{\forall i \in S\\ \theta_i \geq \frac{\rho_n}{k}}}\prod_{i\in {S}}\theta_i^{\alpha-1} d\theta_i  \int \1_{\substack{\forall i \in S^c\\ \theta_i < \frac{\rho_n}{k}}}\prod_{i\in S^c} \theta_i^{\alpha-1} d\theta_i \\
&\leq    \frac{ \Gamma ( k\alpha )}{  \Gamma(\alpha)^k } \sum_{S\subset \{ 1,.., k \}}  \left( \int_{\sum_{i\in S} ( u_i - \bar \theta_{i,k}^{1/2} )^2  \leq  s_n^2  }\1_{\substack{\forall i \in S \\ u_i \geq (\frac{\rho_n}{k})^{\frac{1}{2}}}}\prod_{i\in S}u_i^{2\alpha-1} du_i \right)\left( \frac{\rho_n }{k } \right)^{|S^c|\alpha } \alpha^{-|S^c|}\\
&\leq \frac{ \Gamma ( k\alpha )}{  \Gamma(\alpha)^k }\sum_{l \geq k( 1 - s_n^2) }   \left( \frac{\rho_n }{k } \right)^{(k-l)\alpha } \alpha^{-(k-l)}\left( \frac{\rho_n }{k } \right)^{l(\alpha-1/2) }\frac{ \sqrt{\pi}^l  s_n^l }{ \Gamma(l/2+1) } \binom{k}{l}\\
&\leq \frac{ \Gamma ( k\alpha )}{  (\alpha\Gamma(\alpha))^k }(\rho_n/k)^{k \alpha} \sum_{l \geq k (1 -  s_n^2) }^k \alpha^l e^{ l \log \left( \frac{ k^{1/2} C  s_n }{ \sqrt{ l \rho_n} } \right) + k\log k - l\log l - (k-l)\log (k-l) + O(k)} \\
&\leq \exp\left\{ k \alpha \log (\rho_n) + k \log ( s_n/\sqrt{\rho_n}) + O(k) \right\} \leq e^{ k \log s_n - k (1/2-\alpha) \log \rho_n+ O(k)  } . 
\end{split}
\end{equation*}
Hence, choosing $|\log\rho_n| = o(|\log  s_n|) $ leads to 
\begin{equation*}
\Pi\left( \sum_{j=1}^k \left( \sqrt{\theta} - \sqrt{\bar \theta_{j,k}} \right)^2 \leq  s_n^2  \right)\leq e^{ k ( 1 + o(1) ) \log  s_n},
\end{equation*}
so that $ s_n^2 |\log  s_n| \geq k/n$ and $ s_n^2 \gtrsim k/n\log (n/k) $.

\end{appendix}
\section*{Acknowledgements}
The authors would like to thank the associate editor and the referees for their useful comments which lead to an improved version of the manuscript. 


\bibliographystyle{apalike}
\bibliography{biblio}

\begin{supplement}[id=suppA]
  \stitle{Asymptotic behaviour of the empirical Bayes posteriors associated to maximum marginal likelihood estimator: supplementary material}
 \sdescription{This is the supplementary material associated to the paper \citet{rousseau:szabo:2015}. We provide here the proofs of Propositions 3.1-3.6, together with some technical Lemmas used in the context of priors (T2) and (T3) and some technical Lemmas used in the study of the hierarchical Bayes posteriors. Finally some Lemmas used in the regression and density estimation problems are given. 
}
\end{supplement}

\section{Proof of the Propositions}

\subsection{Proof of Proposition 3.1}
It is sufficient to prove that all conditions of
 Theorems 2.1, 2.2, 2.3, 
  and Corollary 2.1 
  hold, since then the Proposition follows from the combination of them with
     Lemmas 3.3 and 3.5. 

As a first step we note that since there are only finite many truncation parameters ($|\Lambda_n|=o(n)$)  there is no need to introduce a change of measures $\psi_{k,k'}$, one can simply take $q_{k,n}^{\theta}=p_\theta^n$. Furthermore, we also have from $N_n(\Lambda_n) = o(n) $ and $n\epsilon_{n,0}^2 \geq m_n \log n$ that 
$\log N_n(\Lambda_n) \lesssim\log n=o(n \epsilon_{n,0}^2)$.

Next we define for all $k \leq \epsilon n/\log n$,  with $\epsilon >0$ fixed but arbitrarily small, the set 
 $ \Theta_n( k) = \{ \theta \in \RR^k; \max_j |\theta_j| \leq (M_n^2n \epsilon_{n,0}^2)^{1/p^*}\}$, so that the exponential moment condition on $g$ implies that 
  $$ \Pi(\Theta_n(k)^c|k) \lesssim k e^{- w_n^2n \epsilon_{n,0}^2}, \quad \mbox{ if } w_n^2 \leq s_0M_n^2,$$
and condition (2.6) 
 holds. Furthermore, following from Lemma 3.1 
 and
\begin{align*}
\log N(\zeta\eps_n(k),\Theta_n(k),\|\cdot\|_2)\lesssim k\log n,
\end{align*}
for every $\zeta\in(0,1)$, there exists a large enough constant $c(k)=K$ such that the entropy is bounded from above by $c(k)^2n\eps_n(k)^2/4$. We note that by slicing up the set $\Theta_n(k)$, see for instance the proof of Proposition 3.3, the upper bound on the entropy would hold for any $c(k)=K>0$. 

From  \cite{arbel} we have that 
$$2K(\theta_0, \theta) = V_2(\theta_0, \theta) = n \|f_{\theta_0}-f_\theta\|_2^2 = n\|\theta-\theta_0 \|_2^2$$
so that (B1) holds with $M_2=1$ and $\tilde \Lambda_0 = \{k_n\}$ where $k_n \in\{ \epsilon_n(k) \leq M_1 \epsilon_{n,0}\}$.
Then conditions (A2), $(C1)-(C3)$  follow from \cite{ghosal:vdv:07} with $d_n(f_\theta,f_{\theta_0})=\|f_\theta-f_{\theta_0}\|_n $ the empirical $L_2$-distance, which is also equal to the $\ell_2$ norm $\|\theta-\theta_0 \|_2=\|f_\theta-f_{\theta_0}\|_2$ (from Parseval inequality).
Finally condition (2.12) is proved in Lemma \ref{Lem: Cond_LB} and (H2) in Lemma \ref{lem:loglike}.

\subsection{Proof of Proposition 3.2} 
Similarly to Proposition  3.1 
it is sufficient to verify that all the conditions of Theorems 2.1, 2.2, 2.3,
and Corollary 2.1  hold.

Take $u_n\lesssim n^{-3}/\log n$ for $\lambda=\a$ and $u_n\lesssim n^{-(5/2+2\a)}$ for $\lambda=\tau$. Since $ n\epsilon_{n,0}^2 \geq m_n\log n$ and $N_n(\Lambda_n) \leq n^H$ for some $H>0$, $\log N_n(\Lambda_n)  =o(n \epsilon_{n,0}^2)$. Furthermore condition (B1) follows from Proposition 1 of \cite{arbel} with $M_2=1$. 

 The proof of conditions (A1) and (A2) are given in Lemma \ref{Lem: Entropy}, Lemma \ref{Lem: Test_reg}, and Lemma \ref{Lem: LikeRatioUB_reg} with $c_1=1/2,\zeta=1/18,c(\lambda)^2=K^2\geq 10\mu/c_1$ (where $\mu$ is defined in Lemma \ref{Lem: LikeRatioUB_reg}), and $d(\theta_1,\theta_2)=\|\theta_1-\theta_2\|_2$. Condition (H2) holds following from Lemma \ref{lem:loglike} with $c_3=2+3\sigma^{-2}K^2/2$. Finally for Corollary 2.1 conditions (C1)-(C2) follow again from the preceding lemmas with $M>10^{3/2}\sqrt{\mu}$, $c_2=\mu$, since $ w_n\eps_{n,0}=o\big(\eps_n(\lambda)\big)$ for all $\lambda\in\Lambda_n\setminus\Lambda_0$. Note also that from the proof of Lemma \ref{Lem: LikeRatioUB_reg} we also have for $u_n\lesssim n^{-2}$ that $\|\theta-\psi_{\lambda,\lambda'}\|_2=o(n^{-1})=o(\eps_{n,0})$, for every $\|\theta-\theta_0\|=O(1)$.

The lower bound in the case $\a+1/2\leq\beta$ follows from Theorem 2.2 and Lemma 3.4, since condition (2.12) is proved in Lemmas \ref{Lem: Cond_LB} and \ref{Lem: LikeRatioUB_reg}. 

Finally we note that the same results hold for the Gaussian white noise model as well. The proof can be easily derived from the proof on the regression model, by substituting $e_{j}(t_i)$ by $\delta_0(i-j)$ (where $\delta_0$ is the Dirac-delta measure) in Lemmas \ref{Lem: Test_reg} and \ref{Lem: LikeRatioUB_reg} and taking $\sigma^2=1/n$ (in this case $c_3=2+3K^2/2$). Furthermore one can choose $\zeta=c_1=1/2$ in the testing assumption (A2) by using the likelihood ratio test in the Gaussian white noise model, see for instance Lemma 5 of \cite{ghosal:vdv:07}. 

 
\subsection{Proof of Proposition 3.3} \label{pr:propdensT1} %

The proof
 consists in showing that assumptions (A1), (A2bis), (B1) and (C1)-(C3) are verified.

In the case of prior (T1), there is no need to consider a change of measure since $\Lambda$ is finite, so that $N_n(\Lambda_n)=o(n)$. Then similarly to the proof of Proposition 3.1 
we have that $\log N_n(\Lambda_n) =o(n  \epsilon_{n,0}^2)$.

 We first prove (B1), or more precisely the variation of (B1) given in Remark 2.2. Choose $k_0 \in \Lambda_0$ which verifies $\epsilon_{n,0} \leq \epsilon_n(k_0)  \leq M_1\epsilon_{n,0}$ for some $M_1 \geq 1$. 
 We have for all $k$ and all $\theta \in R^k$ that $\|\theta\|_1 \leq \sqrt{k}\|\theta - \theta_0\|_2  +\| \theta_{0}\|_1$.
Now let $\theta_0 \in \mathcal H_\infty (\beta, L) \cup \mathcal S_\beta(L) $ with $\beta>1/2$, then $\|\theta_0\|_1 < +\infty$, and 
if $k_0 \in \Lambda_0$ satisfies $\epsilon_{n,0} \leq \epsilon_n(k_0) \leq M_1 \epsilon_{n,0}$ for some $M_1\geq1$, then
$$\epsilon_n(k_0) \lesssim\frac{ \sqrt{ k_0\log n}}{\sqrt{n}} \vee k_0^{-\beta}, \quad \sqrt{k_0} \epsilon_n(k_0) = o(1), $$
so  that $$ \left\{\|\theta -\theta_0\|_2 \leq K\epsilon_n(k_0)\right\} \subset  \{\|\theta\|_1\leq M\},$$
if $M$ is large enough. 
Moreover, using Lemma \ref{lem:dist}, for all $M>0$, 
 $$\left\{\|\theta -\theta_0\|_2 \leq K\epsilon_n(k_0)\right\} \cap \{\|\theta\|_1\leq M\} \subset B(\theta_0, M_2 \epsilon_n(k_0) , 2),$$
and (B1) is verified.


We now verify assumption (A1). We have  $q_{k,n}^\theta = f_{\theta}^n$ for all $\theta \in \RR^k$, thus (2.5) 
 is obvious and (2.6) 
 follows from \cite{rivoirard:rousseau:12}, (verification of  condition \textbf{A}), with 
$$\Theta_n(k) = \{ \theta \in \RR^k ; \|\theta\|_2 \leq R_n(k)\}, \quad R_n(k) = R_0 (n\epsilon_n(k)^2)^{1/p^*},$$
for some $R_0>0$ large enough. Similarly the tests in (A2) are the Hellinger tests as in \cite{ghosal:ghosh:vdv:00} so that (2.7) 
is satisfied.

We now study the change of distance condition of the version (A2bis) of condition (A2). Define 
$B_{n,j}(k) = \{ \theta \in \Theta_n(k); \|\theta- \theta_0\|_2 \in ( j \epsilon_n(k), (j+1)\epsilon_n(k))\}$ for $j \geq K$ and let $\theta \in B_{n,j}(k)$. 
Since $\|\theta_0\|_2<+\infty$, $B_{n,j}(k) \neq \emptyset$ only if $j \leq 2 R_n(k) /\epsilon_n(k)$. 
Note  also that $\sqrt{k}\epsilon_n(k) \lesssim k^{-\beta+1/2} \vee k \sqrt{\log n/n}\leq 1$.  For all $j \leq j_0 ( \sqrt{k}\epsilon_n(k) )^{-1}$ with  $j_0 >0$ we have 
$\|\theta -\theta_0\|_1 \leq \sqrt{k}\epsilon_n(k) (j+1) \leq j_0+1$. Using Lemma \ref{lem:dist} in the Appendix, we obtain that 
\begin{equation*}
d(f_0, f_\theta) \geq e^{-c_1 (j_0+ 1)}\|\theta-\theta_0\|_2 \geq e^{-c_1 (j_0+ 1)} j \epsilon_n(k).
\end{equation*}
So that $c(k,j) = e^{-c_1 (j_0+1)} j$. Moreover  using  \cite{rivoirard:rousseau:12}, p. 8
$$d(f_\theta, f_{\theta'}) \leq e^{c_1 \|\theta - \theta'\|_1} \|\theta - \theta'\|_2 \leq e^{c_1\sqrt{k}\|\theta - \theta'\|_2 } \|\theta-\theta'\|_2,$$
so that if $\|\theta - \theta'\|_2 \leq\zeta e^{-2c_1 (j_0+1)} j\epsilon_n(k)$, $d(f_\theta, f_{\theta'}) \leq \zeta j\epsilon_n(k)e^{- c_1 (j_0+1)} $ as soon as $k$ or $j_0$ is large enough. Thus
\begin{equation*}
\begin{split}
\log N( \zeta c(k,j) \epsilon_n(k), B_{n,j}(k) , d(\cdot,\cdot) ) &\leq\log N( \zeta e^{-2c_1 (j_0+1)} j\epsilon_n(k), B_{n,j}(k), \| \cdot \|_2)\\
& \lesssim k = o(n\epsilon_n^2(k)).
\end{split}\end{equation*}
Hence, for $n$ large enough we have for $k\in\Lambda_n\setminus\Lambda_0$ 
\begin{equation} \label{part1}
\sum_{K \leq j \leq j_0/(\sqrt{k}\epsilon_n(k))} e^{-c_1 n c(k,j)^2 \epsilon_n(k)^2 /2 } \leq e^{-  c_1 e^{-c_1 (j_0+1)} n \epsilon_n(k)^2/4}  = o(e^{-nw_n^2\epsilon_{n,0}^2}),
\end{equation}
as soon as $w_n= o(M_n)$.
 Now consider $j > j_0 ( \sqrt{k}\epsilon_n(k) )^{-1}$ and let $\theta \in B_{n,j}(k)$, from equation (16) in the proof of  Lemma 3.1 of \cite{rivoirard:rousseau:12},  
$$d(f_0, f_\theta) \gtrsim \|\theta-\theta_0\|_2 \left( \sqrt{k}\epsilon_n(k) j + |\log(j \epsilon_n(k)) |  \right)^{-1}.$$ 
 For all $j \gtrsim \log (k)/ (\sqrt{k}\epsilon_n(k))$ we have $ \sqrt{k}\epsilon_n(k) j \gtrsim  |\log(j \epsilon_n(k)) |$ and 
 $$d(f_0, f_\theta) \gtrsim \frac{ 1}{ \sqrt{k} },$$
when $n$ is large enough and  we can choose $c(k,j) =ck^{-1/2}\eps_n(k)^{-1}$.  
For all $\theta, \theta'\in B_{n,j}(k)$, using equation (8)  of  \cite{rivoirard:rousseau:12} 
  \begin{equation}\label{upboundHell}
  d(f_\theta, f_{\theta'} ) \lesssim \sqrt{k}\|\theta - \theta'\|_2,
  \end{equation}
  so that there exists $c >0$
\begin{align*}
 \log N( \zeta \frac{ \epsilon_n(k) }{ \sqrt{k} } , B_{n,j}(k) , d(\cdot,\cdot) )
&\leq\log N( \zeta \frac{ \epsilon_n(k) c }{ k } , B_{n,j}(k) , \|\cdot\|_2 )\\
 &\lesssim  k \log ( j k) = o(n/k)
\end{align*}
for all $j \leq R_n(k) /\epsilon_n(k) $ and  for all $C_1, C_2>0$
\begin{equation}\label{ineq:B1bis}
\sum_{j =\lceil C_1 \log k/ (\sqrt{k}\epsilon_n(k)) \rceil }^{ \lfloor  R_n(k) /\epsilon_n(k) \rfloor }e^{- C_2 n/k} \leq e^{-C_2n/k }\frac{R_n(k)}{ \epsilon_n(k) } \leq
 e^{-\frac{C_2n}{ 2 k } } , \quad n \mbox{ large enough} . 
 \end{equation}
Combining \eqref{ineq:B1bis} with
 $$ n^{1/(2\beta + 1)}(\log n)^{2\beta / (2\beta+1)}\gtrsim n\epsilon_{n,0}^2 , \quad \text{and} \quad \frac{ n }{ k} \geq \sqrt{n}, \quad \forall k\lesssim \sqrt{n},$$
implies that 
\begin{equation}\label{part2}
\sum_{j =\lceil C_1 \log k/ (\sqrt{k}\epsilon_n(k)) \rceil }^{ \lfloor  R_n(k) /\epsilon_n(k) \rfloor }e^{- C_2 n/k}  = o(e^{- n w_n^2 \epsilon_{n,0}^2} ), 
\end{equation}
when $w_n^2 = o(n^{(\beta-1/2-\delta)/(2\beta+1)})$ with $0 < \delta < \beta -1/2$. 

We now consider $j_0 /\{\sqrt{k} \epsilon_n(k) \} \leq j \leq \delta \log (k)/ \{\sqrt{k}\epsilon_n(k)\} $ with $\delta$ arbitrarily small. Then 
$$d(f_0, f_\theta) \gtrsim \|\theta-\theta_0\|_2 \left(  |\log(j \epsilon_n(k)) |  \right)^{-1} \gtrsim \frac{  j \epsilon_n(k)  }{ \log n} $$ 
so that $c(k,j) \gtrsim j /\log n$. Note also that, similarly to before,  this implies that $d(f_0, f_\theta) > \delta_n \epsilon_{n,0}$ as soon as $k \leq k_0 (n/\log n)^{1/(2\beta+1)}$ for all $k_0>0$ and $n$ large enough.  Using \eqref{upboundHell},
$ \log N( \zeta \frac{ \eps_n(k)}{ \sqrt{k} } , B_{n,j}(k) , d(\cdot,\cdot) )\lesssim k \log (k) $. Moreover 
$$n c(k,j)^2 \epsilon_n(k)^2 \gtrsim \frac{ n \epsilon_n(k)^2 j^2 }{(\log n)^2} \gtrsim \frac{ nj_0^2 }{(\log n)^2 k } $$
for all $j \geq j_0 /(\sqrt{k}\epsilon_n(k))$. By choosing $j_0$ large enough, we thus have that for $k_0$ fixed and all $k\leq k_0 \sqrt{n} (\log n)^{-3}$, 
$$\log  N( \zeta \frac{ \eps_n(k)}{ \sqrt{k} } , B_{n,j}(k) , d(\cdot,\cdot) )\leq c_1 n c(k,j)^2 \epsilon_n(k)^2/2.$$
We also have that 
\begin{equation}\label{part3}
\sum_{j = \lceil j_0(\sqrt{k}\epsilon_n(k))^{-1}\rceil}^{ \lceil C_1 \log k/ (\sqrt{k}\epsilon_n(k)) \rceil +1 } e^{-\frac{C_2n}{ 2 k } } \leq o(e^{- n w_n^2 \epsilon_{n,0}^2} ).
\end{equation}
Combining \eqref{part1}, \eqref{part2} and \eqref{part3}, we finally prove (A2bis).
 
We now verify conditions (C1)-(C3) to obtain the posterior concentration rate. We already know from Lemma 3.1 that $\epsilon_{n,0} \lesssim (n/\log n)^{-\beta/(2\beta+1)}$ where the constant depends only on $L, \beta_1$, and $\beta_2$ if $\theta_0 \in \mathcal S_\beta (L )$ .
Since we do not need the change of measures $\psi_{\lambda, \lambda'}$,  (C1) and (C2) are proved in \cite{rivoirard:rousseau:12}. 

Finally for the lower bound on the contraction rate condition (2.12), take $\theta_{0i}^2 = i^{-2\beta-1} \in \mathcal H(\beta, L)$, so that $\epsilon_{n,0} \asymp (n/\log n)^{-\beta /(2\beta+1)} $ and if $\|\theta - \theta_0\|_2 \leq M \delta \epsilon_{n,0}$ with $\theta \in \mathbb R^k$ and $M>0$ then $k \gtrsim \delta_n^{-1/\beta} ( n/\log n)^{1/(2\beta+1)}$. The above computations imply also that if there exists $k \lesssim \delta_n^{-1/\beta} ( n/\log n)^{1/(2\beta+1)}$ then $d(f_0, f_\theta) \gtrsim \|\theta - \theta_0\|_2  $ on the set $\{d(f_0, f_\theta)\leq \delta_n \epsilon_{n,0}\}$  so that Lemma 3.1 implies condition (2.12). 

For the hierarchical Bayes result assumption (H2) is verified in Lemma \ref{lem:KL:dens}.

\subsection{Proof of Proposition 3.4}\label{sec:prT2dens} 

To prove Proposition 3.4 
we need to verify that (A1)-(A2) and (B1) are satisfied, together with (C1)-(C3), (2.12) and (H2). 
Let $\tau_0 \in \Lambda_0$ satisfying $M_1 \epsilon_{n,0} \geq\epsilon_n(\tau_0) $. 
Equation \eqref{L1:Kn} in Lemma \ref{L1norm}, with $K_n  = \lfloor ( \tau_0^2 n \epsilon_{n,0}^2)^{1/(2\alpha)}\rfloor $ implies that for $M\geq\|\theta_0\|_1$,
$$\Pi\left( \|\theta-\theta_0\|_2\leq K \epsilon_n(\tau_0) ; \|\theta-\theta_0\|_1 \leq K\sqrt{K_n}\epsilon_{n}(\tau_0) + 2M | \alpha, \tau_0\right)\gtrsim e^{-nM^2 \epsilon_{n,0}^2/2}.$$
Moreover $\sqrt{K_n}\epsilon_{n}(\tau_0) \lesssim ( \tau_0^2 n \epsilon_{n,0}^2)^{1/(4\alpha)} \epsilon_{n}(\tau_0 )$ and using Lemmas 3.2 
and 3.3 
if $\beta \leq \alpha+1/2$,
\begin{equation*}
\begin{split}
\sqrt{K_n}\epsilon_{n}(\tau_0)& \lesssim n^{1/[(2\beta+1)4\alpha] -\beta/(2\beta+1)} \tau_0^{1/(2\alpha)}, \quad \tau_0 \lesssim n^{-\frac{\beta(2\alpha+1)}{2\beta+1}+ \alpha}\\
\sqrt{K_n}\epsilon_{n}(\tau_0)& \lesssim n^{1/[(2\beta+1)4\alpha] -\beta/(2\beta+1)+ 1/2- \beta(2\alpha+1)/[2\alpha(2\beta+1)]} \\
&= n^{-\frac{(2\beta-1)(2\alpha+1)}{4\alpha(2\beta+1)}} = o(1).
\end{split}
\end{equation*}
Similarly if $\beta > \alpha+1/2$,
\begin{equation*}
\begin{split}
\sqrt{K_n}\epsilon_{n}(\tau_0)& \lesssim n^{1/[(\alpha+1)8\alpha] -(2\alpha+1)/(4\alpha+4)} \tau_0^{1/(2\alpha)}, \quad \tau_0 \lesssim n^{-\frac{(2\alpha+1)^2}{4\alpha+4}+ \alpha},\\
\sqrt{K_n}\epsilon_{n}(\tau_0)& \lesssim n^{- \frac{ 2 \alpha+1}{4(\alpha+1)}}=o(1).
\end{split}
\end{equation*}
So that for $n$ large enough,
 $$\Pi\left( \|\theta-\theta_0\|_2\leq K \epsilon_n(\tau_0) ; \|\theta-\theta_0\|_1 \leq  3M | \alpha, \tau_0\right)\gtrsim e^{-nM^2 \epsilon_{n,0}^2/2}$$
and using the same computations as in the verification of (B1) in Section \ref{pr:propdensT1} we obtain 
$$\left\{\|\theta -\theta_0\|_2 \leq K\epsilon_n(\tau_0)\right\} \cap \{\|\theta\|_1\leq M\} \subset B(\theta_0, M_2 \epsilon_n(\tau_0) , 2).$$
We now verify (A1), (A2), (C1)-(C3). Consider the transformation defined in (3.11). 
Then 
  \begin{equation} \label{like:up}
 \begin{split}
 \log f_{\psi_{\tau, \tau'}(\theta)} &= \frac{ \tau' }{ \tau } \left( \sum_{i} \theta_i \phi_i\right) - \log \left( \int_0^1 e^{\tau'/\tau  \sum_i\theta_i\phi_i (x)}dx \right)\\
 &\leq 
  \frac{ \tau' }{ \tau } \left( \sum_{i} \theta_i \phi_i - c(\theta) \right),
 \end{split}
 \end{equation}
 if $\tau'\geq \tau$. 
 $$\Theta_n(\tau) = \{ \theta =  \epsilon_n (\tau ) \theta_1 +R_n(\tau)  \theta_2, \quad \theta_1 \in  \mathbb B_1, \, \theta_2 \in  \mathbb H_1^\tau \} \cap \{ \|\theta_1\|_1 \leq \sqrt{n} \},$$
 with $R_n(\tau), \mathbb B_1, \mathbb H_1^\tau$ defined in Lemma \ref{Lem: Entropy}.  Lemma \ref{Lem: Entropy} implies that 
\begin{equation} \label{tildeThetan}
\Pi( \Theta_n(\tau)^c | \alpha, \tau ) \leq e^{- c n\epsilon_n^2(\tau) }. 
\end{equation}
 For all $\theta \in \Theta_n(\tau)$, 
$$\|\theta\|_1 \leq \epsilon_n(\tau) \|\theta_1\|_1 + R_n(\tau) \|\theta_2\|_1 \leq \sqrt{n}  \epsilon_n(\tau) + R_n(\tau) {\tau(\|\theta_2\|_{\mathbb H^\tau}+\alpha^{-1})} \leq C(\tau)\sqrt{ n} \epsilon_n(\tau), $$
so that if 
 $\t\leq\tau' \leq \tau (1 + u_n)$ with $u_n = o( n^{-3/2}\epsilon_n(\tau)^{-1} )$  
 \begin{equation}\label{upbound:qtau}
 q^\theta_{\tau,n} (\mathbf x_n ) \leq f_\theta^n(\mathbf x_n) e^{2 nu_n \|\sum_i  \theta_i\phi\|_\infty} \leq (1 + o(1) ) f_\theta^n(\mathbf x_n)
 \end{equation}
and 
$Q^\theta_{n,\tau}(\mathcal X^n)  \leq 2  $
for $n$ large enough, 
and condition (2.5) 
 in  (A1) is satisfied with $\tau_i=\underline\tau_n(1+u_n)^{i}$ the smallest point in the $(i+1)$th bin $[\underline\tau_n(1+u_n)^{i}, \underline\tau_n(1+u_n)^{i+1}]$ on $\Lambda_n$. Using \eqref{upbound:qtau}, we also have that 
\begin{equation*}
\begin{split}
\int_{ \Theta_n(\tau)^c}&q^\theta_{\tau,n}(\mathcal X^n) d\Pi(\theta|\tau) \leq \int_{{\Theta_n(\tau)^c}}e^{2 nu_n \|\sum_i  \theta_i \phi\|_\infty}d\Pi(\theta|\tau) \\
& \leq \Pi( \Theta_n(\tau)^c | \alpha, \tau )^{1/2} \left(  \int e^{2 nu_n \|\sum_i  \theta_i \phi\|_\infty}d\Pi(\theta|\tau) \right)^{1/2} \\
&\leq {e^{- c n\epsilon_n^2(\tau)/2 }\prod_{i}   2e^{n^2u_n^2\|\phi\|_\infty^2 \tau^2 i^{-2\alpha-1}}}\\
&\leq e^{2n^2u_n^2\|\phi\|_\infty^2 \tau^2\sum_i i^{-2\alpha-1} }e^{- c n\epsilon_n^2(\tau) /2 } \leq 2e^{- c n\epsilon_n^2(\tau) /2} ,
\end{split}
\end{equation*}
if $u_n = o( n^{-1}\tau^{-1})$,  and condition (2.6) 
is verified.

   Similarly to the case of prior (T1) the tests in condition (A2) are the Hellinger tests as in \cite{ghosal:ghosh:vdv:00} so that (2.7) 
   is satisfied using \eqref{upbound:qtau}. We now verify (2.8). 
    Recall that for all $\theta_0 \in \mathcal S_\beta(L) \cup \mathcal H_{\infty}(\beta, L)$ with $L>0$ and $\beta >1/2$, 
  $\|\theta_0\|_1 < +\infty$.     From Lemma \ref{lem:dist},
    $$ d(f_0, f_\theta) \gtrsim \|\theta - \theta_0\|_2 e^{- C \|\theta-\theta_0\|_1}.$$
    Define $\tilde \Theta_n(\tau) = \Theta_n(\tau) \cap \{\|\theta - \theta_0\|_2 > K \epsilon_n(\tau) ; \|\theta - \theta_0\|_1 \leq \sqrt{K_n} \|\theta -\theta_0\|_2 + M\} \cap \bar \Theta_n(\tau) $ with $K_n^{2\alpha} = n\tau^2 w_n^2\epsilon_{n,0}^2$, $M\geq 2 \|\theta_0\|_1$ and 
    $$ \bar \Theta_n( \tau ) = u_n \mathbb B_1^{L_1} + \bar R_n \mathbb H_1$$
    with $u_n = \tau C_\alpha^{-(\a -1/2)}(n\epsilon_{n,0}^2w_n)^{-(\a - 1/2)}\wedge M$ and   $\mathbb B_1^{L_1} $ is the $L_1$ unit ball and  $\bar R_n \asymp n\epsilon_{n,0}^2w_n^{1/2}$ if $\tau C_\alpha^{-(\a -1/2)}(n\epsilon_{n,0}^2w_n)^{-(\a - 1/2)} \leq M$ else $\bar R_n \asymp C_\alpha^{1/2}(\frac{M}{\tau})^{-\frac{1}{2(\alpha - 1/2)}}$. 
         From \eqref{tildeThetan}, \eqref{priormassL1} and Lemma \ref{L1norm}
    \begin{equation}\label{tildeThetan2}
    \Pi( \tilde \Theta_n^c |\tau ) \leq e^{-cn\epsilon_n(\tau)^2 } + e^{ - A n w_n \epsilon_{n,0}^2 } 
    \end{equation}
    where $ A$ can be chosen as large as need be by choosing $C_\alpha $ large enough.
    If  $\theta \in\tilde \Theta_n(\tau)$ with $\|\theta-\theta_0\|_2 \leq MK_n^{-1/2} $ then from Lemma \ref{lem:dist} $ d(f_0, f_\theta) \gtrsim \|\theta - \theta_0\|_2 e^{-2CM}$; so that $c(\tau) \geq e^{-2CM}$. 
 Now let  $\|\theta -\theta_0\|_2 > M K_n^{-1/2}$. Note that, from Lemma 3.2, 
 $$\epsilon_n(\tau) \leq n^{-\alpha/(2\alpha+1)}\tau^{1/(2\alpha+1)} +  \left( \frac{ C }{ n\tau^2 } \right)^{ \frac{\beta }{2\alpha+1} \wedge  \frac{1}{ 2 } } $$
 and $\epsilon_{n,0} \leq n^{-(2\alpha+1)/(4\alpha+4)}$ if $\beta > \alpha +1/2$ and else  $\epsilon_{n,0}\leq n^{-\beta/(2\beta+1)}$. This implies that   for all {$\tau \leq\bar\t_n= n^{\a/2-1/4}$}
 \begin{equation}\label{born:tau}
\epsilon_n(\tau) ( n \tau^2 w_n^2 \epsilon_{n,0}^2 )^{1/(4\alpha)} < M (\log n)^{-2},
\end{equation}
which combined with $\|\theta -\theta_0\|_2 > M K_n^{-1/2}$, leads to $\|\theta -\theta_0\|_2 > (\log n)^2 \epsilon_n(\tau)$.   Theorem 5.1 of \cite{wong:shen:1995} implies that either $d(f_0, f_\theta) \geq 1- 1/e$ or 
 $$V_{2}(f_0, f_\theta) \leq  Cd^2(f_0, f_\theta) \left( 1 + (\log n )^2  + \|\theta\|_1^2\right).$$
 Moreover, since $f_0 \geq c_0$, 
 \begin{equation*}
 \begin{split}
 V_{2}(f_0, f_\theta) &\geq c_0 \int_0^1 \left( \sum_{j\geq 2} (\theta_j - \theta_{0,j} ) (\phi_j - c_j(f_0)) \right)^2 dx \\
 &= c_0 \left( \|\theta - \theta_0\|_2 + \left( \sum_j (\theta_j - \theta_{0,j} )c_j(f_0) \right)^2 \right) \geq c_0  \|\theta - \theta_0\|_2^2 
 \end{split}
 \end{equation*}
 and 
 $$d(f_0, f_\theta) \gtrsim \frac{ \|\theta-\theta_0\|_2 }{ 1 + \log n + \|\theta -\theta_0\|_2 \sqrt{K_n}} \gtrsim K_n^{-1/2}/\log n $$
so that  $d(f_0, f_\theta) \gtrsim  \epsilon_n(\tau) \log n$ and (2.8) 
 is verified with $c(\tau ) >0 $. To verify condition (2.9), we need to control the Hellinger entropy. Over the subset of $\tilde \Theta_n(\alpha,\tau)$ defined by $\|\theta - \theta_0\|_2\leq M/\sqrt{K_n} $, $\| \theta - \theta_0 \|_1 \leq M$ and lemmas \ref{lem:dist} and \ref{Lem: Entropy} imply that the Hellinger entropy of this set is bounded by the $L_2$ entropy which is bounded by $ C  n\eps_n^2(\alpha, \tau)$. If $\| \theta - \theta_0\|_2 > M/\sqrt{K_n} $, the above computations imply that $d(f_0, f_\theta)\gtrsim MK_n^{-1/2}(\log n)^{-1}$. Moreover, if $\theta \in \tilde \Theta_n(\alpha, \tau  ) $ and $\| \theta - \theta'\|_2 \leq \epsilon_n(\alpha , \tau ) $, then for all $J>0$ 
 \begin{equation}\label{upb1}
 \sum_{j\leq J}|\theta_j - \theta_j' | \leq \sqrt{J} \|\theta - \theta'\|_2 \leq \sqrt{J}\epsilon_n(\a, \t). 
 \end{equation}
 Choose $J \asymp \epsilon_n(\a , \t)^{-2} $. Then, since $\theta  = \theta_1+ \theta_2$ and $\theta' = \theta_1'+\theta_2'$ with
  $\theta_1, \theta_1' \in u_n \mathbb B_1^{L_1}$ and $\theta_2, \theta_2' \in \bar R_n \mathbb H_1$, 
\begin{equation*}
\sum_{j \geq J+1} |\theta_j - \theta_j'| \leq 2 u_n + 2 \alpha^{-1/2}\bar R_n J^{-\alpha}\lesssim u_n+ \sqrt{\epsilon_n(\a , \t)^{4\a}n\epsilon_{n,0}^2 w_n}, 
\end{equation*}
if $\tau C_\alpha^{-(\a -1/2)}(n\epsilon_{n,0}^2w_n)^{-(\a - 1/2)} \leq M$ or 
\begin{equation*}
\sum_{j \geq J+1} |\theta_j - \theta_j'| \lesssim u_n+ \sqrt{\epsilon_n(\a , \t)^{4\a} \tau^{1/(\alpha-1/2)}},
\end{equation*}
 if $\tau C_\alpha^{-(\a -1/2)}(n\epsilon_{n,0}^2w_n)^{-(\a - 1/2)} > M$. So that $\|\theta - \theta'\|_1 =O(1)$ as soon as $\epsilon_n(\a , \t)^{4\a}n\epsilon_{n,0}^2 w_n = O(1)$. In the case $\beta \leq \alpha +1/2$ and $\tau = o( n^{(\a- 1/2)/(2\beta+1)})$,
\begin{equation} \label{cond:tau}
 n^{-4 \a^2/(2\a +1)} \tau^{4 \a /(2\a +1)} = o(n^{-1/(2\beta + 1)}), \quad (n\tau^2)^{-4\a\beta/(2\a +1)}=o(n^{-1/(2\beta + 1)}), 
\end{equation}
the former relation is satisfied when $\tau \lesssim n^{\alpha/2- 1/4}$ while the latter requires $\tau\gg n^{-1/4 + 1/(8\alpha)}$. In the case  $\tau \gtrsim  n^{(\a- 1/2)/(2\beta+1)}$, \eqref{cond:tau} is replaced with $ n^{-4 \a^2/(2\a +1)} \tau^{4 \a /(2\a +1)} \tau^{1/(\alpha-1/2)} = o(1)$, which is satisfied as soon as $\tau \leq n^{\alpha/2- 1/4}$. In the case $\beta > \alpha +1/2$ the same results hold.

Conditions (C1)-(C3) are direct consequences of  the transformation 
(3.11) which  in turns implies \eqref{upbound:qtau},  combined with the definition of $ \Theta_n(\tau)$ so that (C1) and (C2) hold.

Finally $N_n(\Lambda_n) $ is at most polynomial in $n$ so that $\log N_n(\Lambda_n) = o(n \epsilon_{n,0}^2)$. This terminates the proof of the upper bound on the contraction rate of the MMLE empirical Bayes posterior.
Then the lower bound in case $\beta>\a+1/2$ and $\|\theta \|_2 >c>0$ follows from the combination of Theorem 2.2 
 and Lemmas \ref{Lem: Cond_LB} and 3.3 together with the fact that when $\theta \in \tilde \Theta_n(\tau)  $, either $d(f_0, f_\theta) \gtrsim \|\theta - \theta_0\|_2 $ or $d(f_0, f_\theta) \gtrsim K_n^{-1/2} \asymp n^{-1/(4\alpha( \a+1))}\tau^{-1/\a} \gtrsim \epsilon_{n,0}$.

Finally for the hierarchical Bayes result we note that the transformation 
(3.11) implies, as in \eqref{upbound:qtau} that 
\begin{equation*}
\ell_n(\psi_{\tau,\tau'}(\theta)) \geq -2 nu_n\| \phi\|_\infty \sum_i  |\theta_i |   + \ell_n(\theta), \quad \mbox{ if } \tau' \leq \tau, 
\end{equation*}
which combined with Lemma \ref{lem:KL:dens} and the definition of $\epsilon_{n,0}$ implies (H2) as soon as $M_2n \epsilon_{n,0}^2 \geq M_0 \log n$ with $M_0$ large enough and $u_n \lesssim 1/n$. Then the statement is a direct consequence of Theorem 2.3.

\subsection{Proof of Proposition 3.5}\label{sec:prT3dens} 

As in the proof of Proposition 3.4, 
Let $\alpha_0$ replacing $\tau_0$ in the verification of (B1) in Section \ref{sec:prT2dens}.
Equation \eqref{L1:Kn} in Lemma \ref{L1norm}, with $K_n  = \lfloor (  n \epsilon_{n,0}^2)^{1/(2\alpha_0)}\rfloor $  implies that for $M>\|\theta_0\|_1$, 
$$\Pi\left( \|\theta-\theta_0\|_{2}\leq K \epsilon_n(\alpha_0) ; \|\theta-\theta_0\|_1 \leq K\sqrt{K_n}\epsilon_{n}(\alpha_0) + 2M \Big| \alpha_0\right) \gtrsim e^{-\frac{M^2   n \epsilon_{n,0}^2}{2}}$$
Moreover $\sqrt{K_n}\epsilon_{n}(\alpha_0) \lesssim ( n \epsilon_{n,0}^2)^{1/(4\alpha_0)} \epsilon_{n}(\alpha_0 )$ and by using Lemmas 3.2 and 3.3. 
(i.e. $n^{-\alpha_0/(2\alpha_0+1)} \lesssim \epsilon_{n}(\alpha_0) \lesssim (n/\log n)^{-\beta/(2\beta+1)}$, $\alpha_0 \geq \beta $) we have that
\begin{align*}
\sqrt{K_n}\epsilon_{n}(\tau_0) \lesssim (n/\log n)^{1/[(2\beta+1)4\alpha_0] -\beta/(2\beta+1)} =o(1)\quad\text{and}\\
\Pi\left( \|\theta-\theta_0\|_{2}\leq K \epsilon_n(\alpha_0) ; \|\theta-\theta_0\|_1 \leq3M | \alpha_0\right)\gtrsim e^{-M^2   n \epsilon_{n,0}^2/2}.
\end{align*}
As in the case of type (T2) prior, 
$$ \{\|\theta-\theta_0\|_{2}\leq K \epsilon_n(\alpha_0) ; \|\theta-\theta_0\|_1 \leq3M\} \subset B( \theta_0, M_2 \epsilon_n(\alpha_0), 2), $$
for some constant $M_2>0$. 

To study conditions (A1), (A2) and (C1)-(C3), recall that the change of variable $\psi_{\alpha, \alpha'}(\theta)$ is defined by (3.12) 
so that when $\alpha'\geq \alpha$
\begin{equation*}
 \begin{split}
 \log f_{\psi_{\alpha, \alpha'}(\theta)}(x) -  \log f_{\theta}(x) &= \sum_i( i^{\alpha - \alpha'}-1) \theta_i \phi_i \\
  & \quad - \log \Big\{ \int_0^1f_\theta(x) \exp\Big( \sum_i( i^{\alpha - \alpha'}-1) \theta_i \phi_i (x) \Big) dx\Big\}  \\
 & \leq 2|\alpha - \alpha'| \|\theta\|_1\|\phi\|_\infty .
 \end{split}
 \end{equation*}
Let $\alpha \in \Lambda_n\setminus\Lambda_0$ and define 
$$\Theta_n(\alpha) = \{ \theta = A \epsilon_n (\alpha ) \theta_1 +R_n(\alpha)  \theta_2, \quad \theta_1 \in  \mathbb B_1, \, \theta_2 \in  \mathbb H_1^\alpha \} \cap \{ \|\theta_1\|_1 \leq \sqrt{n} \}.$$
  For all $\theta \in \Theta_n(\alpha)$, 
 $$\|\theta\|_1 \leq \epsilon_n(\alpha) \|\theta_1\|_1 + R_n(\alpha) \|\theta_2\|_1 \leq \sqrt{n} \epsilon_n(\alpha) + R_n(\alpha) \|\theta_2\|_{\mathbb H^\alpha}\alpha^{-1} \leq C\sqrt{ n} \epsilon_n(\alpha), $$
 for some constant $C$ independent of $\alpha$.
Let $\theta \in \{\|\theta - \theta_0\|_2 \leq K \epsilon_n(\alpha)\} \cap \Theta_n(\alpha) $.
Then for all $\alpha \leq \alpha' \leq \alpha + u_n $
$$q^\theta_{\alpha,n}(\mathbf x_n) \leq e^{2Cn^{3/2}\epsilon_n(\alpha)\|\phi\|_\infty u_n } f_{\theta}^n(\mathbf x_n) $$
so that (2.5) 
 is verified on $\{\|\theta - \theta_0\|_2 \leq K \epsilon_n(\alpha)\} \cap \Theta_n(\alpha)$ as soon as $u_n \leq a n^{-3/2} \epsilon_n(\alpha)^{-1}$, $ \forall a >0$ when $n$ is large enough. 
To prove (2.6), 
we decompose $\Theta_n(\alpha)^c$ into 
 $\Theta_{n,j} = \Theta_n(\alpha)^c \cap \{ \|\theta\|_1  \in ( j \sqrt{n} \epsilon_n(\alpha), (j+1)\sqrt{n} \epsilon_n(\alpha ) )\}$, $j \geq 0$. We use Lemma \ref{L1norm} with  
$\alpha \geq 1/2 + {n^{-1/6}}$, so that 
 \begin{equation*}
E( \|\theta\|_1 |\alpha) \leq \sqrt{2/\pi} \frac{ 2 \alpha }{ 2 \alpha - 1} 
 \end{equation*}
and from Lemma 3.2, 
$E ( \|\theta\|_1|\alpha ) \lesssim \sqrt{n}\epsilon_n(\alpha) $. Also, for all $j \geq J_1$ with $J_1$ fixed and large enough following from Lemma \ref{L1norm} we have
 $$ 
  \Pi \left(  \|\theta\|_1 > j \sqrt{n} \epsilon_n(\alpha)|\alpha  \right)  \leq e^{ - c_0 j^2 n \epsilon_n^2(\alpha)  },$$
  for some $c_0>0$ independent of $\alpha$.
On $\Theta_{n,j}$ define $u_{n,j} = u_n/(j\log j)$ and construct a covering of $[\alpha , \alpha + u_n]$ with balls of radius $u_{n,j}$, the number of such balls is of order $N_j = O(j \log j)$.  Then since 
 $$ \sup_{|\alpha - \alpha'| \leq u_n} p_{\psi_{\alpha, \alpha'}(\theta) }^n(\mathbf x_n ) \leq \max_{i\leq N_j} \sup_{|\alpha' - \alpha_i|\leq u_{n,j}} p_{\psi_{\alpha_i, \alpha'}(\theta) }^n(\mathbf x_n )$$
 we have that for all $\theta \in B_{n,j}$ (where $B_{n,j}$ was defined in (A2 bis))
 $$Q^\theta_{\alpha,n} (\mathcal X^n ) \leq N_j e^{2a\|\phi\|_\infty/( \log j)} \leq 2 N_j,$$
 if $a$ is chosen small enough in the definition of $u_n$ and 
 $$\int_{\Theta_{n,j}}Q^\theta_{\alpha,n} (\mathcal X^n )  d\Pi( \theta| \alpha ) \lesssim N_j \Pi(\Theta_{n,j}|\alpha) \lesssim j \log j  e^{-c_0 j^2 n \epsilon_n^2 ( \alpha )}.$$

 Let $j \leq J_1$, then $\|\theta\|_1 \leq J_1 \sqrt{n}\epsilon_n(\alpha)$ and 
 $$Q^\theta_{\alpha,n} (\mathcal X^n ) \leq e^{c_0 J_1^2 n \epsilon_n^2 ( \alpha )/2}$$
and, since by choosing $A$ (in the definition of $\Theta_n(\a)$) large enough $\Pi\left( \Theta_n^c(\alpha) |\alpha\right) \leq e^{-c_0 J_1^2 n \epsilon_n^2 ( \alpha )}$, 
$$\int_{\cup_{j\leq J_1}\Theta_{n,j}}Q^{\theta}_{\alpha,n} (\mathcal X^n )  d\Pi( \theta| \alpha ) \leq e^{c_0 J_1^2 n \epsilon_n^2 ( \alpha )/2}
\Pi\left( \Theta_n^c(\alpha) |\alpha\right) \leq e^{-c_0 J_1^2 n \epsilon_n^2 ( \alpha )/2},$$
which implies  (2.6). 
 
Similarly to the case of prior (T2), we verify (A2). The tests are the same as in Section  \ref{sec:prT2dens}, since $q^\theta_{\alpha,n} \lesssim f_\theta^n $ if $u_n \leq a n^{-3/2} \epsilon_n(\alpha)^{-1}$ and the argument follows the same line, with $\epsilon_n(\alpha)$ replacing $\epsilon_n(\tau)$ and $\tilde \Theta_n(\alpha)$ replacing $\tilde \Theta_n(\tau)$ although the definitions remain the same. Note that in this case we do not have to split into $\tau $ large or small in the definition of $\bar \Theta$.  Equation \eqref{born:tau} is satisfied for all $\alpha \in ( 1/2, \log n/(16 \log \log n)]$ so that condition (A2) is verified. 

The verification of (C1)-(C3) follows the same lines as in the case of prior (T2) using the fact that if $\|\theta\|_1\leq M$, and if $u_n\leq n^{-3/2}\epsilon_{n,0}M_n/(M\|\phi\|_\infty)$, for all $\theta\in \Theta_n(\alpha)$ and $\alpha \in \Lambda_0$,
$$\inf_{\alpha \leq \alpha+u_n} \ell_n(\psi_{\alpha,\alpha'}(\theta))-\ell_n(\theta) \geq - 1 $$
and 
$$\sup_{\alpha \leq \alpha+u_n} \ell_n(\psi_{\alpha,\alpha'}(\theta))-\ell_n(\theta) \leq  1 $$
The control over $\Theta_n(\alpha)^c$ is done as before by splitting it into the subsets $\Theta_{n,j}$. Finally similarly to the preceding sections $\log N_n(\Lambda_n)= o(n\eps_{n,0}^2)$ for arbitrarily small $c_2>0$.

\subsection{Proof of Proposition 3.6}
From \cite{castillo:rousseau:suppl}, together with the fact that 
$q_{k,n}^\theta = f_{\theta}^{\otimes n} $ for all $\theta \in \mathcal S_k$ (where $\mathcal S_k$ denotes the $k$ dimensional simplex) and that in $\mathcal S_k$ the set 
 $$ \{u \leq  h(f_0, f_\theta)\leq 2 u \} \subset \Big\{ \sum_{j=1}^k\left( \sqrt{\theta_j}- \sqrt{\bar\theta_{j,k}}\right)^2 \leq 8 u^2 \Big\}$$
 and the covering number of this set with balls of radius $\zeta u$ in Hellinger distance is bounded from above by 
  $(Ck /u\zeta)^k $ so that for all $u \geq  A \sqrt{k/ n} $, condition (2.9)   is verified. Finally since $f_0$ is bounded from above and from below 
  the Kullback-Leiber divergence is bounded by a constant times the square of the Hellinger distance and condition (B1) is verified. Conditions (C1)-(C3) follow from the above arguments and the remark that when $f_0\in \mathcal H_{\infty}(\beta,L)$ then $\Lambda_0 \subset \{k \leq k_1(n/\log n)^{1/(2\beta+1)}\}$ for some $k_1$ large enough, as in the case of prior (T1).

\subsection{Proof of Theorem 2.2}

Similarly to the proof of Corollary 2.1 we can write 
\begin{align*}
E_{\theta_0}^n \Pi(\theta:\, d(\theta,\theta_0)\leq \delta_n \eps_{n,0}|\mathbf x_n, \hat\lambda_n)&=E_{\theta_0}^n\Big(\frac{G_n(\hat\lambda_n)}{m(\mathbf x_n|\hat\lambda_n)} \Big)\\
&\leq e^{c_2n \eps_{n,0}^2} E_{\theta_0}^n  \sup_{\lambda\in \Lambda_0}G_n(\lambda)+o(1),
\end{align*}
where $G_n(\lambda)=\int_{\theta:\, d(\theta,\theta_0)\leq \delta_n\eps_{n,0}}e^{\ell_n(\theta)-\ell_n(\theta_0)}d\Pi(\theta| \lambda)$.

Then similarly to the proof of Theorem 2.1 we take a  $u_n$ covering of the set $\Lambda_0$ with center points $\lambda_1,\lambda_2,...,\lambda_{N(\Lambda_0)}$ and we get (from conditions (A1) and (2.12)) that 
\begin{align*}
E_{\theta_0}^n  \sup_{\lambda\in \Lambda_0}G_n(\lambda)
&=E_{\theta_0}^n  \sup_{\lambda_i}\sup_{\rho(\lambda,\lambda_i)\leq u_n}\int_{d(\psi_{\lambda_i,\lambda}(\theta),\theta_0)\leq \delta_n\eps_{n,0}}e^{\ell_n(\psi_{\lambda_i,\lambda}(\theta))-\ell_n(\theta_0)}d\Pi(\theta|\lambda_i)\\
&\leq\sum_{i=1}^{N(\Lambda_0)}\int_{\{d(\theta,\theta_0)\leq 2\delta_n\eps_{n,0}\}\cup \Theta_n^c}Q_{\lambda,n}^{\theta}(\mathcal{X}^n)d\Pi(\theta|\lambda_i)\\
&\leq N(\Lambda_0) \big(e^{-\tilde{w}_n^2 n\eps_{n,0}^2+e^{-w_n^2  n\eps_{n,0}^2}} \big), 
\end{align*}
for some $\tilde{w}_n\rightarrow\infty$, where in the first inequality we applied that following the (adjusted) condition (C3) and triangle inequality $\{\theta:\, d(\psi_{\lambda_i,\lambda}(\theta),\theta_0)\leq \delta_n\eps_{n,0}\}\subset\{d(\theta,\theta_0)\leq 2\delta_n\eps_{n,0}\}\cup \Theta_n^c$. We conclude the proof by combining the two displays.

\section{Some technical Lemmas for priors (T2) and (T3) } \label{app:lem:GP}

\begin{Lemma}\label{Lem: Entropy}
For every $\a,\tau >0$, and $\zeta\in(0,1)$, take $\eta\geq\tilde{c}_1^2 (3\zeta^{-1}K/c)^{1/\a}$ (with $c=c(\a,\tau)$ and $\tilde{c}_1$ given in (3.4))
 and define the sets
\begin{equation}\label{def: Theta_n}
\Theta_n(\alpha, \tau) = (\zeta c/3)\epsilon_n \mathbb{B}_1+R_n \mathbb{H}^{\a, \tau}_1, \quad  \eps_n = \eps_n(\alpha, \tau),\quad R_n=R_n(\alpha, \tau),
\end{equation}
with 
 \begin{align*}
R_n=-2\Phi^{-1}(e^{-\eta n\eps_n^2}),
\end{align*}
and where  $\mathbb{B}_1\subset \mathbb{R}^n$, respectively $\mathbb{H}_1^{\a,\t}$, denotes the unit ball on the Hilbert space $(\mathbb{R}^n,\|\cdot\|_2)$, respectively the reproducing kernel Hilbert space corresponding to the priors (T2) and (T3). Then 
\begin{equation*}
\begin{split}
\log N(c\zeta\eps_n,\Theta_n(\alpha, \tau),\|\cdot\|_2)\leq 5\eta n\eps_n^2, \\
\Pi(\Theta_n^c(\alpha, \tau)|\alpha, \tau )\leq e^{-\eta n\eps_n^2},\text{and}\\
\|\Theta_n(\a,\t)\|_2^2\leq 2^4\eta\t^2 n\eps_n^2\vee 1.
\end{split}
\end{equation*}
                      
Moreover, if $\a > 1/2$, then for all $u_n/\tau  <1$, 
\begin{equation}\label{priormassL1}
 \log \Pi \left( \|\theta \|_1 \leq u_n| \alpha, \tau \right)  \geq - C_\alpha \left( \frac{ u_n}{\tau} \right)^{-1/(\alpha - 1/2)}
 \end{equation}
$$C_\alpha  \lesssim ((\alpha-1/2)/8\varphi(0))^{-1/(\alpha-1/2)} .$$
Let $\mathbb{B}_1^{L_1}$ denote the  ball of radius 1 centered at 0 for the norm $L_1$ and if 
\begin{equation}\label{def:barTheta_n}
\bar \Theta_n(\alpha, \tau) = u_n \mathbb{B}_1^{L_1}+\bar R_n \mathbb{H}^{\a, \tau}_1, \quad  \bar R_n = -2\Phi^{-1}\left( e^{- C_\alpha \left( \frac{ u_n}{\tau} \right)^{-1/(\alpha - 1/2)}}\right) 
\end{equation}
then 
$$\Pi\left( \bar \Theta_n^c|\alpha , \tau \right) \leq  e^{- C C_\alpha \left( \frac{ u_n}{\tau} \right)^{-1/(\alpha - 1/2)}}$$
for some $C>0$ independent of $\a $ and $\tau$.

\end{Lemma}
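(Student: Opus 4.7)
The plan is as follows. All three bounds on $\Theta_n(\alpha,\tau)$ and the bound on $\Pi(\bar \Theta_n^c\mid \alpha,\tau)$ rest on Borell's concentration inequality for centered Gaussian measures: for any norm $\|\cdot\|$ on the ambient space and any $\varepsilon,R>0$,
\[
\Phi^{-1}\bigl(\Pi(\{\|\theta\|\le \varepsilon\}+R\,\mathbb{H}_1^{\alpha,\tau}\mid \alpha,\tau)\bigr)\ge \Phi^{-1}\bigl(\Pi(\|\theta\|\le \varepsilon\mid\alpha,\tau)\bigr) + R.
\]
I apply this with $\|\cdot\|=\|\cdot\|_2$ for $\Theta_n$ and with $\|\cdot\|=\|\cdot\|_1$ for $\bar\Theta_n$; the only nontrivial inputs are a lower bound on the small ball probability in each norm and, for the entropy claim, a standard estimate of the metric entropy of the RKHS unit ball.

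For $\Theta_n(\alpha,\tau)$, the $L^2$ small ball input comes from combining the upper and lower bounds in (3.4) with the definition of $\eps_n$ via (3.2)--(3.3). The concentration function decomposition (3.2) together with (3.3) gives $-\log\Pi(\|\theta\|_2\le K\eps_n\mid\alpha,\tau)\le n\eps_n^2$; using the lower bound in (3.4) this yields $(\eps_n/\tau)^{-1/\alpha}\le \tilde c_1 K^{1/\alpha} n\eps_n^2$. Plugging this into the upper bound in (3.4) applied at $\epsilon=(\zeta c/3)\eps_n$ gives
\[
-\log \Pi\bigl(\|\theta\|_2\le (\zeta c/3)\eps_n\mid\alpha,\tau\bigr)\le \tilde c_1^2 (3K/(\zeta c))^{1/\alpha}\, n\eps_n^2 \le \eta n\eps_n^2,
\]
under the hypothesis on $\eta$. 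With $R_n=-2\Phi^{-1}(e^{-\eta n \eps_n^2})$, Borell's inequality then yields $\Pi(\Theta_n^c\mid\alpha,\tau)\le \Phi(-R_n/2)=e^{-\eta n\eps_n^2}$. For the entropy claim, any $c\zeta\eps_n$-net of $\Theta_n$ is extracted from a $(2c\zeta/3)\eps_n$-net of $R_n\mathbb{H}_1^{\alpha,\tau}$ by the triangle inequality, and Kuelbs--Li duality (equivalently, a direct covering estimate for the axis-aligned ellipsoid with semi-axes $\tau j^{-\alpha-1/2}$) gives $\log N(\delta,\mathbb{H}_1^{\alpha,\tau},\|\cdot\|_2)\lesssim(\tau/\delta)^{1/(\alpha+1/2)}$; inserting $\delta\asymp \eps_n/R_n$ together with the bound on $\tau$ derived above yields $\log N(c\zeta\eps_n,\Theta_n,\|\cdot\|_2)\le 5\eta n\eps_n^2$. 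The norm bound is immediate from $\|h\|_2\le \tau\|h\|_{\mathbb{H}^{\alpha,\tau}}$ for $h\in\mathbb{H}^{\alpha,\tau}$ (valid because $j^{2\alpha+1}\ge 1$) combined with $R_n^2\asymp 8\eta n\eps_n^2$.

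For the $L^1$ small ball lower bound, the key is an optimal truncation. Write $\theta_i=\tau i^{-\alpha-1/2}\xi_i$ with i.i.d.\ $\xi_i\sim\mathcal N(0,1)$ and split $\{\|\theta\|_1\le u_n\}$ into bulk $\{\sum_{i\le K}|\theta_i|\le u_n/2\}$ and tail $\{\sum_{i>K}|\theta_i|\le u_n/2\}$. Markov's inequality applied to $\mathbb{E}\sum_{i>K}|\theta_i|=\sqrt{2/\pi}\,\tau \sum_{i>K}i^{-\alpha-1/2}\lesssim \tau K^{-(\alpha-1/2)}/(\alpha-1/2)$ makes the tail event have probability at least $1/2$ once $K\ge (8\varphi(0)/(\alpha-1/2))^{1/(\alpha-1/2)}(\tau/u_n)^{1/(\alpha-1/2)}$, which I take as the definition of $K$. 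For the bulk I impose the \emph{uniform} constraint $|\theta_i|\le u_n/(2K)$ for $i\le K$ (which lies below $\tau i^{-\alpha-1/2}$ on this range by the choice of $K$), so that by independence the bulk probability is at least $\prod_{i\le K}[2\varphi(1)u_n i^{\alpha+1/2}/(2K\tau)]=(\varphi(1)u_n/(K\tau))^K(K!)^{\alpha+1/2}$. Applying Stirling's formula, the resulting log-probability equals $K[\log\varphi(1)-(\alpha+1/2)]+(\alpha-1/2)K\log K+K\log(u_n/\tau)+O(\log K)$, and the specific $K$ chosen above is precisely the value making $(\alpha-1/2)\log K+\log(u_n/\tau)$ a constant, so the exponent collapses to $-K[(\alpha+1/2)-\log\varphi(1)]$, i.e., $-C_\alpha(u_n/\tau)^{-1/(\alpha-1/2)}$ with the claimed $C_\alpha$.

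Finally, the bound on $\Pi(\bar\Theta_n^c\mid\alpha,\tau)$ is obtained by repeating the Borell argument of the second paragraph in the $L^1$ geometry: the symmetric convex set $u_n\mathbb{B}_1^{L_1}+\bar R_n\mathbb{H}_1^{\alpha,\tau}$ replaces $(\zeta c/3)\eps_n\mathbb B_1+R_n\mathbb H_1^{\alpha,\tau}$, the small ball input is the $L^1$ estimate just established, and $\bar R_n=-2\Phi^{-1}(\exp\{-C_\alpha(u_n/\tau)^{-1/(\alpha-1/2)}\})$ is chosen so that $\Phi^{-1}(\Pi(\|\theta\|_1\le u_n\mid\alpha,\tau))\ge -\bar R_n/2$, whence Borell gives $\Pi(\bar\Theta_n^c\mid\alpha,\tau)\le \Phi(-\bar R_n/2)\le e^{-CC_\alpha(u_n/\tau)^{-1/(\alpha-1/2)}}$. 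The main technical subtlety is the calibration in paragraph three: both the uniform choice of coordinate constraints and the precise value of $K$ are essential to avoid a spurious logarithmic factor in the $L^1$ small ball exponent.
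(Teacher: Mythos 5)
Your argument follows the paper's own proof in all essentials: the centered small ball at radius $\zeta c\eps_n/3$ is calibrated against the defining ball of radius $K\eps_n$ through \eqref{Equiv: ConcFunc and SmallBall}--\eqref{centeredGP} (which is exactly where the condition on $\eta$ comes from), the bounds on $\Pi(\Theta_n^c|\a,\t)$ and $\Pi(\bar\Theta_n^c|\a,\t)$ are Borell's inequality with $R_n=-2\Phi^{-1}(e^{-\eta n\eps_n^2})$ and its $L_1$ analogue, the diameter bound uses $\|h\|_2\le\t\|h\|_{\mathbb H^{\a,\t}}$ and $R_n^2\le 8\eta n\eps_n^2$, and the $L_1$ small-ball bound rests on the same truncation at $J_1\asymp(\t/u_n)^{1/(\a-1/2)}$ with a Chernoff bound for the tail and coordinatewise bounds for the bulk. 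The one genuinely different sub-step is the entropy bound: the paper uses the Gaussian packing argument of \cite{vvvz08} (a $2\zeta c\eps_n/3$-separated set in $R_n\mathbb H_1^{\a,\t}$ gives disjoint $(\zeta c/3)\eps_n$-balls, each of mass at least $e^{-R_n^2/2-\eta n\eps_n^2}$, hence $\log N\le R_n^2/2+\eta n\eps_n^2\le 5\eta n\eps_n^2$), whereas you cover $R_n\mathbb H_1^{\a,\t}$ directly via the ellipsoid entropy $\log N(\delta,\mathbb H_1^{\a,\t},\|\cdot\|_2)\lesssim(\t/\delta)^{1/(\a+1/2)}$ combined with the bound on $\t$ that the small-ball calibration implies. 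This route is valid and recovers the bound up to constants, but it needs the extra input on $\t$ and more bookkeeping to land on the literal constant $5\eta$, while the paper's packing argument uses nothing beyond the small-ball exponent already established.

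There is one slip to repair in the bulk estimate for \eqref{priormassL1}. You bound each factor $P\big(|\theta_i|\le u_n/(2K)\big)=2\Phi\big(u_n i^{\a+1/2}/(2K\t)\big)-1$ from below by $2\varphi(1)u_n i^{\a+1/2}/(2K\t)$, asserting that the argument stays in the linear range ``by the choice of $K$''. At $i=K$ the argument equals $u_nK^{\a-1/2}/(2\t)\asymp 4\varphi(0)/(\a-1/2)$, which exceeds $1$ whenever $\a<1/2+4\varphi(0)\approx 2.1$, i.e.\ for most values of $\a$ used in the applications; for arguments above roughly $2$ the linear expression even exceeds $2\Phi(x)-1$, so the displayed product is not a valid lower bound as written. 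The fix is the two-regime split the paper uses: for coordinates whose argument exceeds $1$, bound the factor below by the constant $2\Phi(1)-1$ and keep the linear bound only for the remaining coordinates. This changes only the absolute constant multiplying $J_1$ in the exponent and leaves the stated form of $C_\a$ (and hence \eqref{def:barTheta_n} and the final Borell step) intact.
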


\begin{proof}
We follow the lines of the proof of Theorem 2.1 of \cite{vvvz08}.
Define $\gamma_n$  such that
\begin{align*}
\Phi(\gamma_n)&\equiv \Pi(\theta:\,\|\theta\|_2\leq\zeta c\eps_n/3|\a,\t)
\geq e^{ \eta\log \Pi(\theta:\,\|\theta-\theta_0\|_2\leq K\eps_n |\a,\t)}
= e^{-\eta n\eps_n^2},
\end{align*}
where $\Phi(x)$ denotes the distribution function of the standard normal random variable.
Then we can see that $\gamma_n\geq-R_n/2$ and therefore by Borell's inequality we have that 
\begin{align}
\Pi(\Theta_n^c|\a,\tau)\leq 1-\Phi(\gamma_n+R_n)\leq 1-\Phi(R_n/2)=e^{-\eta n\eps_n^2}\label{eq: help_2}.
\end{align}

Then take a $(2\zeta c/3)\eps_n$-separated $h_1,h_2,...,h_N$ points contained in $R_n\mathbb{H}_1$ for the $\|\cdot\|_2$ norm, so the $h_i+(\zeta c/3)\eps_n$-balls are separated. 
Furthermore note that following from the tail bound on the Gaussian distribution function $\Phi(-x)\leq e^{-x^2/2}/(\sqrt{2\pi}x)\leq e^{-x^2/2}$ we have that
\begin{align}
R_n=-2\Phi^{-1}(e^{-\eta n\eps_n^2})\leq \sqrt{8\eta n\eps_n^2}.\label{UBforMn}
\end{align}
Then similarly to \cite{vvvz08} (with $C=\eta$ and $\varphi_{0,\a,\tau}(\zeta c\eps_n/3)\leq \eta n\eps_n^2$) we get that
\begin{align*}
1\geq N e^{-R_n^2/2}e^{-\varphi_{0,\a,\tau}(c\eps_n/6)}\geq Ne^{-5\eta n\eps_n^2}.
\end{align*}
This leads to the inequality
\begin{align}
\log N(\zeta c\eps_n,\Theta_n(\alpha, \tau),\|\cdot\|_2)&\leq \log N(2\zeta c\eps_n/3,R_n\mathbb{H}_1,\|\cdot\|_2)\nonumber\\
&\leq \log N
\leq 5\eta n\eps_n^2\label{eq: help_1}.
\end{align}

Finally we note that
\begin{align}
\|\Theta_n\|_2^2\leq (\t R_n+(K/6)\eps_n)^2\leq 2\t^2 R_n^2\vee 1\leq 2^4 \eta \t^2 n\eps_n^2\vee 1\label{eq: DiamThetan}
\end{align}

Let $\a >1/2$, then the above argument implies that  with $\bar R_n$ defined as in \eqref{def:barTheta_n}, 
$$\Pi(\Theta_n^c|\a,\tau)\leq 1 - \Phi(\bar R_n/2)   \leq e^{-C_\alpha ( u_n/\tau)^{-1/(\alpha-1/2)} }.$$
Also
\begin{equation*}
\begin{split}
 & \Pi(\theta:\,\|\theta\|_1\leq u_n |\a,\tau)\\
  & \quad =  \Pi(\theta:\,\|\theta\|_1\leq u_n/\tau |\a,1)\\
 &\quad \geq \left( 1 - P\left( \sum_{j=J_1+1}^\infty j^{-\alpha-1/2}|Z_j|> u_n/(2\tau)\right)\right) \prod_{j\leq J_1}P\left( j^{-\alpha-1/2}|Z_j|\leq \frac{u_n}{2\tau J_1}\right) 
 \end{split}
\end{equation*}
we choose $J_1$ such that the first factor is bounded from below by $1/2$. To do so we bound from above 
\begin{equation*}
\begin{split}
P\Big( \sum_{j=J_1+1}^\infty j^{-\alpha-1/2}|Z_j|&> u_n/(2\tau)\Big)  \leq e^{-s u_n/(2\tau)} \prod_{j=J_1+1}^\infty E\left( e^{sj^{-\alpha-1/2}|Z_j|}\right)  \\
& = 
e^{-s u_n/(2\tau)}  e^{\frac{s^2 }{ 2} \sum_{j\geq J_1+1} j^{-2\alpha-1} } \prod_{j\geq J_1+1} (2\Phi( sj^{-\alpha-1/2} ))\\
& \leq e^{-s u_n/(2\tau)} e^{ \frac{ s^2J_1^{-2\alpha }}{ 4 \alpha}} e^{  2s\varphi(0)\sum_{j \geq J_1+1} j^{-\alpha-1/2} }  \\
& \leq \exp\left( - s \left( \frac{ u_n}{ 2 \tau} - \frac{ 2 \varphi(0)J_1^{-\alpha+1/2} }{ \alpha-1/2} \right) + \frac{ s^2J_1^{-2\alpha }}{ 4 \alpha}\right)
\end{split}
\end{equation*}
We choose 
$ J_1^{-\alpha+1/2}  \leq (\alpha-1/2)\frac{ u_n}{ 8\varphi(0) \tau}$ so that for all $s>0$
\begin{equation*}
\begin{split}
P\left( \sum_{j=J_1+1}^\infty j^{-\alpha-1/2}|Z_j|> u_n/(2\tau)\right) &\leq \exp\left( - s  \frac{ u_n}{ 4 \tau} + \frac{ s^2J_1^{-2\alpha }}{ 4 \alpha}\right) \\
&\leq \exp\left( -   \frac{ \alpha J_1^{2\alpha } u_n^2}{ 8 \tau^2}\right), 
\end{split}
\end{equation*}
where the last inequality comes from choosing $s = (u_n/\tau )\alpha J_1^{2\alpha}$. This probability is smaller than $1/2$ as soon as 
$ \alpha J_1^{2\alpha } u_n^2/(8 \tau^2) \geq \log 2$, i.e. as soon as $J_1 \geq (8\log 2/\alpha)^{1/(2\alpha)} (u_n/\tau)^{-1/\alpha}$. 
Since $\alpha>1/2$, there exists a constant $J_0$ such that both constraints are satisfied as soon as $J_1 \geq J_0((\alpha-1/2)/8\varphi(0))^{-1/(\alpha-1/2)}(u_n/\tau)^{-1/(\alpha-1/2)}$. 
We can then bound from below 
\begin{equation*}
 \prod_{j\leq J_1}P\left( j^{-\alpha-1/2}|Z_j|\leq \frac{u_n}{2\tau J_1}\right)  =  \prod_{j\leq J_1}\left( 2 \Phi\left(\frac{j^{\alpha+1/2} u_n}{2\tau J_1}\right)-1\right).
\end{equation*}
For all $j^{\alpha+1/2} u_n\leq 2\tau J_1$ 
$$\left( 2 \Phi\left(\frac{j^{\alpha+1/2} u_n}{2\tau J_1}\right)-1\right) \geq 2 \frac{j^{\alpha+1/2} u_n \varphi(1) }{2\tau J_1}$$
for all $j^{\alpha+1/2} u_n>  2\tau J_1$ 
$$\left( 2 \Phi\left(\frac{j^{\alpha+1/2} u_n}{2\tau J_1}\right)-1\right) \geq 2 \Phi(1) - 1 = C$$
which leads to 
\begin{equation*}
 \prod_{j\leq J_1}P\left( j^{-\alpha-1/2}|Z_j|\leq \frac{u_n}{2\tau J_1}\right)  \geq \exp\left( - J_1 c\right)  ,
\end{equation*}
for some $c$ independent of $\alpha $ and $\tau$. 

\end{proof}

Next we show that for the scaling prior (T2) in the case $\a+1/2\leq \beta$ the second part of condition (2.12) 
holds.

\begin{Lemma}\label{Lem: Cond_LB}
For the prior (T2), when  $\a+1/2\leq \beta$, then $\eps_n(\tau)\geq n^{-\frac{2\a+1}{4\a+4}}$ for all $\tau\in\Lambda_n$ and for all $\theta_0\in \mathcal{H}_{\infty}(\beta,L)\cup \mathcal{S}_{\beta}(L)$, $\theta_0\neq0$ and $\delta_n=o(M_n^{-2})$ we have
\begin{align*}
\sup_{\tau\in \Lambda_0} \frac{ n\epsilon_{n}(\tau)^2} { -\log \Pi(\{\|\theta - \theta_0\|_2\leq \delta_n \epsilon_n(\tau)\}|\tau,\a) }=o(1).
\end{align*}
\end{Lemma}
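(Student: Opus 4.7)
The first assertion, that $\eps_n(\t) \gtrsim n^{-(2\alpha+1)/(4\alpha+4)}$ uniformly in $\t \in \Lambda_n$, follows directly from the two-term lower bound of Lemma 3.2,
\[
\eps_n(\t) \gtrsim \|\theta_0\|_2\,(n\t^2)^{-1/2} + n^{-\alpha/(2\alpha+1)}\t^{1/(2\alpha+1)},
\]
valid when $n\t^2 > 1$ (automatic on $\Lambda_n$ whenever $\alpha > 1/2$). An AM-GM or direct minimisation in $\t$ shows the sum is minimised at $\t \asymp n^{-1/(4\alpha+4)}$, where both terms are of order $n^{-(2\alpha+1)/(4\alpha+4)}$.

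For the main claim the natural route is through the concentration-function inequality (3.3) together with the centered small-ball lower bound (3.4):
\[
-\log\Pi(\|\theta-\theta_0\|_2 \leq \delta_n\eps_n(\t)\,|\,\alpha,\t) \geq \phi_{\theta_0}(\delta_n\eps_n(\t);\alpha,\t) \geq \tilde{c}_1^{-1}\,\delta_n^{-1/\alpha}\bigl(\eps_n(\t)/\t\bigr)^{-1/\alpha}.
\]
Using the upper bound in Lemma 3.2, a short algebraic check shows that whenever $\t \gtrsim n^{-1/(4\alpha+4)}$, and hence $\eps_n(\t) \asymp n^{-\alpha/(2\alpha+1)}\t^{1/(2\alpha+1)}$, the product $n\eps_n(\t)^{(2\alpha+1)/\alpha}\t^{-1/\alpha}$ is of order $1$, so the ratio in the statement is of order $\delta_n^{1/\alpha} = o(1)$ over this range.

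The delicate regime is $\t < n^{-1/(4\alpha+4)}$, in which $\eps_n(\t) \asymp \|\theta_0\|_2(n\t^2)^{-1/2}$ and the centered small-ball term alone gives only a constant contribution to the ratio. Here the plan is to also exploit the decentering component of $\phi_{\theta_0}$, using that $\|h\|_{\HHH^{\alpha,\t}}^2 \geq \t^{-2}\|h\|_2^2 \geq \|\theta_0\|_2^2/(4\t^2)$ on $\{\|h-\theta_0\|_2 \leq \delta_n \eps_n(\t)\}$ for $n$ large, and then to use the constraint $\t \in \Lambda_0$ combined with the first term of the Lemma 3.2 lower bound to obtain the window $\|\theta_0\|_2 M_n^{-1} n^{-1/(4\alpha+4)} \lesssim \t \lesssim M_n^{2\alpha+1} n^{-1/(4\alpha+4)}$. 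Plugging these bounds into the small-ball term shows the ratio is at most of order $(\delta_n M_n^{2\alpha+2})^{1/\alpha}$ up to constants depending on $\theta_0$, which is $o(1)$ under $\delta_n = o(M_n^{-2})$ provided $M_n$ is taken to grow slowly enough relative to $\delta_n^{-1/(2\alpha+2)}$. The main obstacle is precisely this second regime: the decentering term does not by itself outgrow $n\eps_n(\t)^2$, so the smallness of the ratio has to be extracted from the centered small-ball term, which forces careful tracking of the $M_n$-dependent boundaries of $\Lambda_0$.
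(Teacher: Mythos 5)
Your route is essentially the paper's. The core of both arguments is the centered small-ball estimate (3.4) applied at radius $\delta_n\eps_n(\tau)$ — the paper reaches it through the one-line bound $\Pi(\|\theta-\theta_0\|_2\leq r\,|\,\tau)\leq\Pi(\|\theta\|_2\leq r\,|\,\tau)$ rather than through the full concentration function, but that is the same mechanism — combined with the observation that $\tau\in\Lambda_0$, the first term $\|\theta_0\|_2(n\tau^2)^{-1/2}$ of the Lemma 3.2 lower bound, and $\eps_{n,0}\asymp n^{-(2\alpha+1)/(4\alpha+4)}$ (Lemma 3.3; this is where $\theta_0\neq 0$ and $\beta\geq\alpha+1/2$ enter) force $\tau\gtrsim M_n^{-1}n^{-1/(4\alpha+4)}$. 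Two remarks. First, your split into two regimes of $\tau$ and the attempted use of the RKHS decentering part of $\phi_{\theta_0}$ are unnecessary: the paper runs a single chain, $-\log\Pi(\|\theta-\theta_0\|_2\leq\delta_n\eps_n(\tau)\,|\,\tau)\gtrsim \delta_n^{-1/\alpha}(\eps_n(\tau)/\tau)^{-1/\alpha}\gtrsim (M_n^2\delta_n)^{-1/\alpha}\eps_{n,0}^{-1/\alpha}n^{-1/(\alpha(4\alpha+4))}\asymp (M_n^2\delta_n)^{-1/\alpha}\,n\eps_{n,0}^2$, uniformly over $\Lambda_0$, and — as you yourself concede — the decentering term contributes only an $O(M_n^2)$ factor, so nothing is lost by discarding it. Second, your closing caveat is not a defect of your argument relative to the paper's: with numerator $n\eps_n(\tau)^2\leq M_n^2 n\eps_{n,0}^2$ the paper's own chain gives exactly your bound $(\delta_nM_n^{2\alpha+2})^{1/\alpha}$ for the ratio; the paper finishes by comparing its lower bound with $n\eps_{n,0}^2$ rather than $n\eps_n(\tau)^2$, i.e.\ it effectively verifies the statement in the form actually used downstream (condition (2.12), with $n\eps_{n,0}^2$ in the numerator), for which $\delta_n=o(M_n^{-2})$ indeed suffices. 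So your extra requirement that $M_n$ grow slowly relative to $\delta_n^{-1/(2\alpha+2)}$ faithfully records the mismatch between the lemma's wording and its proof, rather than a gap you introduced; the substance of your argument matches the paper's.
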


\begin{proof}
First of all note that $\eps_n(\tau)\geq n^{-(2\a+1)/(4\a+4)}$ follows automatically from (A.2). Then for every $\tau\in\Lambda_0$
\begin{align*}
-\log \Pi(\|\theta-\theta_0\|_2\leq \delta_n \eps_n(\tau)|\tau,\a)
&\geq -\log \Pi(\|\theta\|_2\leq \delta_n \eps_n(\tau)|\tau,\a)\\
&\gtrsim \delta_n^{-1/\a}(\eps_n(\tau)/\t)^{-1/\a}\\
&\geq (M_n^2\delta_n)^{-1/\a} \eps_{n,0}^{-1/\a}n^{-\frac{1}{\a(4+4\a)}},
\end{align*}
hence for $\delta_n=o(M_n^{-2})$ following from Lemma 3.3 the right hand side of the preceding display is of higher order than $n\eps_{n,0}^2$.

\end{proof}


\section{Some technical Lemmas for the hyper-prior distributions}\label{sec: hyper}
In this section we collect the proofs of the technical lemmas on the hyper-prior distribution.

\subsection{Proof of Lemma  3.4} 

Take any $k_0$ satisfying $\eps_n(k_0)\leq 2\eps_{n,0}$.  Since from the proof of Lemma 3.1 
 $n\eps_n(k)^2>k\log\sqrt{k}(1+o(1))$ holds we get 
$$\tilde\pi(k_0)\gtrsim k_0^{-c_2k_0}\geq e^{-2c_0n\eps_n^2(k_0)}\geq e^{-\tilde{w}_n^2n\eps_{n,0}^2}.$$

For the upper bound we note that following from Lemma 3.1

\begin{align*}
\sum_{k=\eps(n/\log n)}^{\infty}\tilde\pi(k)&\lesssim \sum_{k=\eps(n/\log n)}^{\infty} e^{-c_1 k^{\frac{1}{1+2\beta_0}}}
\lesssim e^{-c_3 (n/\log n)^{\frac{1}{1+2\beta_0}}}\\
&\lesssim e^{-\tilde{w}_n^2n^{\frac{1}{1+2\beta_1}}} \leq e^{-\tilde{w}_n^2n\eps_{n,0}^2}.
\end{align*}

\subsection{Proof of Lemma 3.5} 

As a first step choose an arbitrary $\t_0\in \Lambda_0(\tilde{w}_n)$  with $\eps_n(\t_0)\leq 2\eps_{n,0}$. Then any $\t$ satisfying $\eps_n(\t)\leq 2\eps_n(\t_0)$ belongs to the set $\Lambda_0(\tilde{w}_n)$. Next consider $\t$ satisfying $\eps_n(\t)\geq 2\eps_n(\t_0)$.  Furthermore, note that for any $\t_1,\t_2>0$ the RKHSs corresponding to the priors $\Pi(\cdot|\t_1),\Pi(\cdot|\t_2)$ are the same, i.e. $\mathbb{H}^{\t_1}=\mathbb{H}^{\t_2}$.  Following from the definition of the concentration inequality (3.2) 
\begin{align*}
-\log &\Pi(\|\theta-\theta_0\|_2\leq K\eps_n(\t)|\t)\\
&\leq \inf_{h\in\mathbb{H}^{\t}:\, \|h-\theta_0\|_2\leq K\eps_n(\t)/2}\|h\|_{\mathbb{H}^\tau}^2-\log \Pi(\|\theta\|_2\leq (K/2)\eps_n(\t)|\t)\\
&\leq \inf_{h\in\mathbb{H}^{\t_0}:\, \|h-\theta_0\|_2\leq K\eps_n(\t_0)} (\t_0/\t)^2\|h\|^2_{\mathbb{H}^{\tau_0}}-\log \Pi(\|\theta\|_2\leq (\t_0/\t)K\eps_n(\t_0)|\t_0)\\
&\lesssim \max\Big\{\big(\frac{\t_0}{\t})^2,\big(\frac{\t_0}{\t}\big)^{-\a}\Big\} \Big(\inf_{\substack{h\in\mathbb{H}^{\t_0}\\ \|h-\theta_0\|_2\leq K\eps_n(\t_0)}} \|h\|^2_{\mathbb{H}^{\tau_0}}-\log \Pi(\|\theta\|_2\leq K\eps_n(\t_0)|\t_0)\Big)\\
&\leq -\max\{(\t_0/\t)^2,(\t_0/\t)^{-\a}\}\log \Pi(\|\theta-\theta_0\|_2\leq K\eps_n(\t_0)|\t_0)\\
&\leq \max\{(\t_0/\t)^2,(\t_0/\t)^{-\a}\}n\eps_n(\t_0)^2,
\end{align*}
Hence $\eps_n(\t)^2\leq \max\{(\t_0/\t)^{2},(\t_0/\t)^{-\a}\} \eps_n(\t_0)^2$, so one can conclude that $[(2/\tilde{w}_n)\t_0,(\tilde{w}_n/2)^{2/\a}\t_0]\cap \Lambda_n \subset \Lambda_0(\tilde{w}_n)$. Therefore following from the proof of Lemma 3.2 we have $n\eps_n(\t)^2\geq\tau^{2/(1+2\a)}\vee\t^{-2}$ and that $[\t_0/2,2\t_0]\in\Lambda_n$, hence
$$\int_{\t_0/2}^{2\t_0}\tilde\pi(\t)d\t\gtrsim e^{-c_1\t_0^{2/(1+2\a)}}\wedge e^{-c_3\t_0^{-2}} \geq e^{-\tilde{w}_n^2n\eps_{n,0}^2}.$$ 
concluding the first part of (H1).

The second assumption in (H1) holds trivially for $\tilde\pi$ satisfying the upper bounds in the lemma:
\begin{align*}
\int_{0}^{e^{-c_0\bar{c}_0\tilde{w}_n^2n\eps_{n,0}^2}}\tilde\pi(\t)d\t\lesssim e^{-\bar{c}_0\tilde{w}_n^2n\eps_{n,0}^2},\quad
\int_{e^{c_0\bar{c}_0\tilde{w}_n^2n\eps_{n,0}^2}}^{\infty}\tilde\pi(\t)d\t\lesssim e^{-\bar{c}_0\tilde{w}_n^2n\eps_{n,0}^2}.
\end{align*}

\subsection{Proof of Remark 3.3}
One can easily see that 
\begin{align*}
N_n(\Lambda_n)\leq 2 e^{c_0\bar{c}_0\tilde{w}_n^2n\eps_{n,0}^2}/e^{-2c_0\bar{c}_0\tilde{w}_n^2n\eps_{n,0}^2}
\lesssim  e^{3c_0\bar{c}_0\tilde{w}_n^2n\eps_{n,0}^2}.
\end{align*}
Then by noting that $\tilde{w}_n=o(w_n)$ we get our statement. The upper bound on the hyper-entropy of the set $\Lambda_0$ follows immediately from the proof of Proposition 3.2.


\subsection{Proof of Lemma 3.6} 

Similarly to the proof of Lemma 3.6 
  we choose an arbitrary $\a_0\in \Lambda_0(\tilde{w}_n)$  with $\eps_n(\a_0)\leq C\eps_{n,0}$. From Lemma 3.2 
  we have that $\a_0\geq \beta+o(1)$ in case $\theta_0\in \mathcal{S}^{\beta}(L)\cup \mathcal{H}(\beta,L)$, since $n^{-\beta/(1+2\beta)}\gtrsim\eps_n(\a_0)\gtrsim n^{-\a_0/(1+2\a_0)}$.

First assume that $\a_0\leq\log n$, then for any $\a\in[\a_0/(1+2\log\tilde{w}_n/\log n), \a_0]$ we have that
\begin{align}
-\log &\Pi(\|\theta-\theta_0\|_2\leq K\eps_n(\a)|\a)\nonumber\\
&\lesssim \inf_{h\in\mathbb{H}^{\a_0}:\, \|h-\theta_0\|\leq K\eps_n(\a_0)} \|h\|^2_{\mathbb{H}^{\a_0}}+ \{K\eps_n(\a_0)\}^{-1/\a}\nonumber\\
&\lesssim \Big(\inf_{h\in\mathbb{H}^{\a_0}:\, \|h-\theta_0\|\leq K\eps_n(\a_0)} \|h\|^2_{\mathbb{H}^{\a_0}}-\log \Pi(\|\theta\|_2\leq K\eps_n(\a_0)|\a_0)\Big)^{\a_0/\a}\nonumber\\
&\leq \{n\eps_n(\a_0)^2\}^{\a_0/\a}\leq \tilde{w}_n^2 n\eps_n(\a_0)^2,\label{eq: help_hyp_alpha}
\end{align}
hence $[\a_0/(1+2\log\tilde{w}_n/\log n), \a_0]\subset\Lambda_0$. Then the first part of condition (H1) holds for $\tilde\pi$ satisfying the lower bound in the statement, since
\begin{align}
\int_{\a_0/(1+2\log\tilde{w}_n/\log n)}^{\a_0}\tilde\pi(\a)d\a\gtrsim \frac{\a_0\log\tilde{w}_n}{(\log n)e^{c_2\a_0}}\gtrsim e^{-2c_2\log n}\geq e^{-\tilde{w}_n^2 n\eps_{n,0}^2},\label{eq: help_hyp_alpha2}
\end{align}
where in the last inequality we used that by definition $\eps_{n,0}^2\geq n^{-1}\log n$.
In case $(\log n)/2\leq\a\leq\log n\leq \a_0$ we have that  $\eps_n(\a_0)^{-1/\a}\lesssim n^{1/(2\a)}\lesssim 1$ hence similarly to $\eqref{eq: help_hyp_alpha}$ we have that $\a\in\Lambda_0$ so the statement follows from $\eqref{eq: help_hyp_alpha2}$ with $\a_0$ replaced by $\log n$.

Finally we show that the second assumption in (H1) also holds if the upper bound on $\tilde\pi$ is satisfied
\begin{align*}
\int_{c_0 n^{c_1}}^{\infty}\tilde\pi(\a)d\a\lesssim e^{-c_0n}\lesssim e^{-\tilde{w}_n^2n\eps_{n,0}^2},
\end{align*}
if $\tilde{w}_n\leq c_0^{1/2}/\eps_{n,0}$.


\section{Some technical Lemmas for the nonparametric regression model}\label{sec: Lemmas_regression}

\begin{Lemma}\label{Lem: LikeRatioUB_reg}
For $\theta_0\in\ell_2(M)$ (or equivalently $f_0\in L_2(M)$) we have for $u_n\leq n^{-3/2}/\log (n)$
\begin{align}
\sup_{\a,\tau}\sup_{\theta\in \mathbb{R}^n, \|\theta-\theta_0\|_2\leq \eps_n(\a,\tau)}Q_{\alpha,\tau,n}^{\theta}(\mathcal{X}_n)=O(1).
\end{align}
\end{Lemma}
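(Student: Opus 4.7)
The approach is to exploit the explicit Gaussian likelihood. Fix $\theta$ with $\|\theta-\theta_0\|_2 \leq \eps_n(\alpha,\tau)$ and, for any $\lambda'$ with $\rho(\lambda,\lambda')\leq u_n$, write $\eta = \psi_{\lambda,\lambda'}(\theta)$. Decomposing $x_i - f_\eta(t_i) = (x_i - f_\theta(t_i)) - (f_\eta(t_i) - f_\theta(t_i))$ and applying the elementary inequality $(a-b)^2 \geq (1-\varepsilon)a^2 - (\varepsilon^{-1}-1)b^2$ (valid for any $\varepsilon\in(0,1)$) pointwise in $i$, one obtains a lower bound on $\sum_i(x_i-f_\eta(t_i))^2$ which, after exponentiation, yields an upper bound on $p_\eta^n(\mathbf{x}_n)$ that factorises into a $\lambda'$-free Gaussian factor in $\mathbf{x}_n$ times a $\mathbf{x}_n$-free residual depending on $\eta-\theta$. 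Taking the supremum over $\lambda'$ inside the exponential (the sup only acts on the residual term) and integrating the resulting bound over $\mathcal X_n$ gives
$$Q_{\alpha,\tau,n}^\theta(\mathcal X_n) \leq (1-\varepsilon)^{-n/2}\,\exp\!\Bigl(\tfrac{1-\varepsilon}{2\sigma^2\varepsilon}\,D_n(\theta)\Bigr),\qquad D_n(\theta) := \sup_{\rho(\lambda,\lambda')\leq u_n}\sum_{i=1}^n\bigl(f_{\psi_{\lambda,\lambda'}(\theta)}(t_i)-f_\theta(t_i)\bigr)^2.$$

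The next step is to bound $D_n(\theta)$. I will use the discrete orthonormality of the trigonometric basis at $t_i = i/n$ (which is exactly the identity $\|g\|_n = \|g\|_2$ recalled from Lemma~1.7 of Tsybakov in Section~3.5), giving $\sum_{i=1}^n(f_\eta(t_i)-f_\theta(t_i))^2 = n\|\eta-\theta\|_2^2$. The explicit form (3.11) of the change of variable for prior (T2) yields $\|\eta-\theta\|_2 = |\tau'/\tau - 1|\,\|\theta\|_2 \leq (u_n/\tau)\|\theta\|_2$, while for prior (T3) the form (3.12) combined with $|j^{\alpha-\alpha'}-1| \leq 2|\alpha-\alpha'|\log j \leq 2u_n\log n$ for $j\leq n$ (valid whenever $u_n\log n$ is small) gives $\|\eta-\theta\|_2^2 \leq 4u_n^2(\log n)^2\|\theta\|_2^2$. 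Since $\|\theta\|_2 \leq M + \eps_n(\alpha,\tau) = O(1)$ on the prescribed range, one gets $D_n(\theta) \lesssim n u_n^2(\log n)^2$ in the (T3) case and $D_n(\theta) \lesssim n u_n^2/\tau^2$ in the (T2) case.

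To conclude, choose $\varepsilon = 1/n$, so that $(1-\varepsilon)^{-n/2} \to e^{1/2}$ and $(1-\varepsilon)/(2\sigma^2\varepsilon) \lesssim n$, turning the exponent into $O(nD_n(\theta))$. Under the hypothesis $u_n\leq n^{-3/2}/\log n$, the (T3) case gives $nD_n(\theta) \lesssim n^2u_n^2(\log n)^2 \leq 1$, and the whole bound is $O(1)$ uniformly in $\alpha$ and in $\theta$. For the (T2) case the same calibration applies once $\tau$ is kept bounded below by the smallest element of $\Lambda_n$; this is where the tighter choice $u_n\lesssim n^{-(5/2+2\alpha)}$ announced in the proof of Proposition~3.2 enters, guaranteeing $n u_n^2/\tau^2 \to 0$ over $\Lambda_n$. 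In both cases the conclusion of the lemma follows immediately.

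The main obstacle is not the algebra but the uniformity: $\eps_n(\alpha,\tau)$ and the admissible range of the hyperparameters both vary, and one must track how the $\|\theta\|_2$ bound and the $u_n/\tau$ factor interact when $\tau$ is small. This is handled by the careful choice of $\Lambda_n$ in Proposition~3.2 (and by the slightly tighter $u_n$ used there for the scaling prior), so that the universal hypothesis $u_n \leq n^{-3/2}/\log n$ suffices for the statement as written. The only delicate computational point is the calibration $\varepsilon = 1/n$, which is needed both to keep the prefactor $(1-\varepsilon)^{-n/2}$ bounded and to absorb the supremum through the residual term; any $\varepsilon$ of a different order would cost extra factors of $n$ somewhere and break the estimate.
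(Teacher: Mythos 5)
Your argument is correct in its core mechanics but proceeds quite differently from the paper. The paper works coordinatewise: for each $x_i$ it bounds $\int_{\RR}\sup_{\rho(\lambda,\lambda')\leq u_n}(2\pi\sigma^2)^{-1/2}e^{-(x_i-\sum_j\psi_{\lambda,\lambda'}(\theta_j)e_j(t_i))^2/(2\sigma^2)}dx_i$ by splitting $\RR$ into the interval of perturbed means $[\underline\psi_{\lambda,i}^{\theta},\overline\psi_{\lambda,i}^{\theta}]$ (where the sup of the densities is capped by $(2\pi\sigma^2)^{-1/2}$) and its complement (where it is dominated by a single Gaussian), getting $1+O(\|\theta\|_2 u_n n^{1/2+u_n}\log n)$ per coordinate and then taking the $n$-fold product. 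You instead decouple globally via $(a-b)^2\geq(1-\varepsilon)a^2-(\varepsilon^{-1}-1)b^2$, pull the $\lambda'$-supremum onto the deterministic residual $D_n(\theta)$, integrate the remaining Gaussian exactly to get $(1-\varepsilon)^{-n/2}$, and calibrate $\varepsilon=1/n$; the empirical-norm identity $\sum_i(f_\eta(t_i)-f_\theta(t_i))^2=n\|\eta-\theta\|_2^2$ (or even the cruder Cauchy--Schwarz bound with $|e_j(t_i)|\leq M$, which also suffices under $u_n\leq n^{-3/2}/\log n$) then closes the estimate. Your route is arguably cleaner, at the cost of the explicit $\varepsilon$-calibration; the paper's route avoids any decoupling and yields the same order of bound, $\exp\{O(u_n n^{3/2+u_n}\log n)\}=O(1)$.

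One correction is needed for the scale prior (T2). You take $\|\psi_{\tau,\tau'}(\theta)-\theta\|_2\leq(u_n/\tau)\|\theta\|_2$, i.e.\ you implicitly use $\rho(\tau,\tau')=|\tau-\tau'|$, and this forces you to restrict $\tau$ away from $0$ and to appeal to the tighter $u_n\lesssim n^{-(5/2+2\alpha)}$ from Proposition 3.2 --- which changes the hypotheses and does not prove the lemma as stated (the supremum is over all $\alpha,\tau$ with only $u_n\leq n^{-3/2}/\log n$). In the paper the loss on the scale parameter is $\rho(\tau,\tau')=|\log\tau'-\log\tau|$, so $|\tau'/\tau-1|\leq e^{u_n}-1\lesssim u_n$ with no $1/\tau$ factor, and your bound $D_n(\theta)\lesssim n u_n^2$ (resp.\ $nu_n^2(\log n)^2$ for (T3)) holds uniformly over all $\tau$; with this fix your proof covers the full statement under the stated $u_n$. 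A further small caveat, shared with the paper's own proof, is that $\|\theta\|_2=O(1)$ on the prescribed set requires $\eps_n(\alpha,\tau)$ to be bounded, which is implicit in both arguments. Your closing claim that no other order of $\varepsilon$ would work is also overstated (any $\varepsilon\lesssim 1/n$ with $\varepsilon^{-1}D_n(\theta)=O(1)$ does), but this is a side remark and not a gap.
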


\begin{proof}
Let us denote by $\lambda$ the hyper-parameters $\a$ or $\tau$, and by $\rho(\lambda,\lambda')$ the losses $|\a-\a'|$ or $|\log\t'-\log\t|$. Furthermore, we introduce the notations
\begin{align*}
\underline\psi_{\lambda,i}^{\theta}= \inf_{\rho(\lambda,\lambda')\leq u_n}\sum_{j=1}^{n}\psi_{\lambda,\lambda'}(\theta_j)e_j(t_i);\quad
\overline\psi_{\lambda,i}^{\theta}= \sup_{\rho(\lambda,\lambda')\leq u_n}\sum_{j=1}^{n}\psi_{\lambda,\lambda'}(\theta_j)e_j(t_i).
\end{align*}
For instance in case of $\lambda=\a$ this is
\begin{align*}
\underline\psi_{\a,i}^{\theta}= \sum_{j=1}^{n}j^{-\sign(\theta_{j}e_j(t_i)) u_n}\theta_{j}e_j(t_i);\quad
\overline\psi_{\a,i}^{\theta}= \sum_{j=1}^{n}j^{\sign(\theta_{j}e_j(t_i)) u_n}\theta_{j}e_j(t_i),
\end{align*}
while for $\lambda=\tau$
\begin{align*}
\underline\psi_{\tau,i}^{\theta}= e^{-\sign(\sum_{j=1}^{n} \theta_{j}e_j(t_i)) u_n}\sum_{j=1}^{n}\theta_{j}e_j(t_i);\,
\overline\psi_{\tau,i}^{\theta}= e^{\sign(\sum_{j=1}^{n} \theta_{j}e_j(t_i)) u_n}\sum_{j=1}^{n}\theta_{j}e_j(t_i).
\end{align*}
Then one can easily obtain (using Cauchy-Schwarz inequality) that both in the case of $\lambda=\a$ and $\lambda=\tau$ we have
\begin{align}
|\underline\psi_{\lambda,i}^{\theta}-\overline\psi_{\lambda,i}^{\theta}|&\leq (n^{u_n}-n^{-u_n})\sum_{j=1}^{n}\theta_{j}e_j(t_i)
\leq 2u_nn^{u_n}\log n \sum_{j=1}^{n}|\theta_{j}e_j(t_i)| \nonumber\\
&\leq 2M\|\theta\|_2 u_nn^{u_n+1/2}\log n.\label{eq: RegHelp01}
\end{align}

Writing out the definition of $Q^{\theta}_{\lambda,n}$:
\begin{align}
Q^{\theta}_{\lambda,n}=\int_{\mathbb{R}^n} \sup_{\rho(\lambda,\lambda')\leq u_n}\prod_{i=1}^{n}\frac{1}{\sqrt{2\pi\sigma^2}}e^{-\frac{(x_i-\sum_{j=1}^{n}\psi_{\lambda,\lambda'}(\theta_j) e_j(t_i))^2}{2\sigma^2}}d \mathbf x_n.\label{eq: help4_1}
\end{align}
We deal with the one dimensional integrals separately
\begin{align*}
&\int_{\mathbb{R}}\sup_{\rho(\lambda,\lambda')\leq u_n}\frac{1}{\sqrt{2\pi\sigma^2}}e^{-\frac{\{x_i-\sum_{j=1}^{n}\psi_{\lambda,\lambda'}(\theta_j)e_j(t_i)\}^2}{2\sigma^2}}d x_i\\
&\quad \leq \int_{x_i<\underline\psi_{\lambda,i}^{\theta}\cup x_i> \overline\psi_{\lambda,i}^{\theta} }\frac{1}{\sqrt{2\pi\sigma^2}}e^{-\frac{(x_i-\underline\psi_{\lambda,i}^{\theta})^2}{2\sigma^2}}d x_i
+ \int_{\underline\psi_{\lambda,i}^{\theta}}^{\overline\psi_{\lambda,i}^{\theta}} \frac{1}{\sqrt{2\pi\sigma^2}}d x_i\\
&\quad \leq \int_{\mathbb{R}}\frac{1}{\sqrt{2\pi\sigma^2}} e^{-z^2/(2\sigma^2)}d z
+ \frac{1}{\sqrt{2\pi\sigma^2}} |\overline\psi_{\lambda,i}^{\theta}- \underline\psi_{\lambda,i}^{\theta}| \\
&\quad \leq 1+2M\|\theta\|_2 u_n\sigma^{-1}n^{u_n+1/2}\log n,  
\end{align*}
where the last inequality follows from $\eqref{eq: RegHelp01}$. Note that for $\theta_0\in \ell_2(M)$ and $\theta\in\mathbb{R}^n$  satisfying $\|\theta-\theta_0\|_2\leq \eps_n$ we have 
\begin{align*}
\|\theta\|_2\leq \|\theta_0\|_2+\eps_n\leq 2M.
\end{align*}

Therefore the right hand side of $\eqref{eq: help4_1}$ is bounded from above by 
\begin{align}
\Big(1+2M\|\theta\|_2 u_n\sigma^{-1}n^{u_n+1/2}\log n\Big)^n\leq e^{2M\|\theta\|_2 u_n\sigma^{-1}n^{u_n+3/2}\log n }=O(1). \label{eq: help03_reg}
\end{align}
\end{proof}

\begin{Lemma}\label{Lem: ComplementQ}
Consider priors (T2) and (T3). For $u_n\lesssim n^{-3}/\log n$  if $\lambda=\a$, and $u_n\lesssim \tau_n^{-2}\wedge n^{-5/2} $ if $\lambda=\tau<\tau_n$ we have that
\begin{align*}
\int_{\Theta_n^c}Q_{\tau,\a,n}^{\theta}(\mathcal{X}_n)\Pi(d\theta|\tau,\a)\leq 3e^{- (\eta/2) n\eps_n^2},
\end{align*}
where $\eps_n=\eps(\tau)$ or $\eps_n(\alpha)$ and $\eta\geq\tilde{c}_2 (12K/c)^{1/\a}$.
\end{Lemma}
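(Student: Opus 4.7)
My plan is to mirror the Cauchy--Schwarz splitting strategy already used in Section B.4 when verifying condition (A1) for prior (T2). The starting point is a pointwise bound on $Q^\theta_{\lambda,n}(\mathcal X_n)$ that is already hidden inside the proof of Lemma C.1: the one--dimensional calculation in equation (C.2) was written under the hypothesis $\|\theta-\theta_0\|_2\leq\eps_n$ only in order to bound $\|\theta\|_2\leq 2M$ at the very last step. Stopping one line earlier, the same computation yields, for \emph{every} $\theta\in\mathbb R^n$,
\begin{equation*}
Q^\theta_{\lambda,n}(\mathcal X_n)\;\leq\;\bigl(1+a_n\,\|\theta\|_2\bigr)^n \;\leq\; e^{a_n n\,\|\theta\|_2},
\end{equation*}
where $a_n\asymp u_n\,n^{u_n+1/2}\log n$ when $\lambda=\alpha$ (the factor $n^{u_n}$ records the worst--case growth of $j^{\alpha-\alpha'}$ over $j\leq n$), and $a_n\asymp u_n\,n^{1/2}$ when $\lambda=\tau$ (since $e^{\pm u_n}-1=O(u_n)$).

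The second step is to apply Cauchy--Schwarz in order to decouple the growing factor $e^{a_nn\|\theta\|_2}$ from the Gaussian prior mass:
\begin{equation*}
\int_{\Theta_n^c}Q^\theta_{\lambda,n}(\mathcal X_n)\,d\Pi(\theta\mid\alpha,\tau) \;\leq\; \Pi\!\bigl(\Theta_n^c\mid\alpha,\tau\bigr)^{1/2}\,\Bigl(\int Q^\theta_{\lambda,n}(\mathcal X_n)^{2}\,d\Pi(\theta\mid\alpha,\tau)\Bigr)^{1/2}.
\end{equation*}
Lemma C.1 supplies $\Pi(\Theta_n^c\mid\alpha,\tau)\leq e^{-\eta n\eps_n^2}$, so only the second factor remains. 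Substituting $Q^{\theta,\,2}\leq e^{2a_nn\,\|\theta\|_2}$, and invoking Cirelson's concentration inequality (the map $\theta\mapsto\|\theta\|_2$ is $1$-Lipschitz with respect to $\|\cdot\|_2$, and under $\Pi(\cdot\mid\alpha,\tau)$ the spectral radius of the covariance is $\tau^2$), one has $\|\theta\|_2-E\|\theta\|_2$ sub--Gaussian with parameter $\tau$ and $E\|\theta\|_2\leq\tau\bigl(\sum_{j\geq 1}j^{-2\alpha-1}\bigr)^{1/2}\lesssim \tau$. Therefore
\begin{equation*}
\int e^{2a_n n\,\|\theta\|_2}\,d\Pi(\theta\mid\alpha,\tau)\;\leq\;\exp\!\bigl(C a_n n\,\tau+2a_n^2 n^2\tau^2\bigr)\;=\;O(1),
\end{equation*}
provided $a_n n\tau\to 0$ uniformly on $\Lambda_n$. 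Combining the two factors gives $\int_{\Theta_n^c}Q^\theta\,d\Pi\leq \sqrt 2\,e^{-\eta n\eps_n^2/2}\leq 3e^{-\eta n\eps_n^2/2}$, which is the claim.

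The only genuinely delicate step is the uniform verification of $a_nn\tau=o(1)$ on $\Lambda_n$, and this is exactly what the seemingly ad hoc assumption $u_n\lesssim \tau_n^{-2}\wedge n^{-5/2}$ is designed to ensure: the $n^{-5/2}$ factor handles the range $\tau\leq 1$ (where $a_nn\tau\leq u_n n^{3/2}$ is already small), while the extra factor $\tau_n^{-2}$ is what is needed to absorb the additional powers of $\tau$ when $\tau\in(1,\tau_n)$. In the regularity case $\lambda=\alpha$ the fixed $\tau$ makes the $\tau$-dependence trivial, but the stronger condition $u_n\lesssim n^{-3}/\log n$ is then required in order to kill the $n^{u_n}\log n$ factor produced by the change of measure $\psi_{\alpha,\alpha'}$. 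Bookkeeping these two constraints is the only real obstacle; everything else is an application of the Gaussian concentration and the bounds already established in Lemma C.1.
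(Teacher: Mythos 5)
Your skeleton is the same as the paper's: a pointwise bound on $Q^{\theta}_{\lambda,n}(\mathcal X_n)$ extracted from the computation in Lemma \ref{Lem: LikeRatioUB_reg}, then Cauchy--Schwarz against $\Pi(\Theta_n^c\mid\alpha,\tau)^{1/2}$ (supplied by Lemma \ref{Lem: Entropy}), then an exponential-moment bound under the Gaussian prior. The gap is in how you measure the perturbation. By bounding $\sum_j|\theta_j e_j(t_i)|\leq M\sqrt n\,\|\theta\|_2$ you arrive at $\log Q^{\theta}_{\lambda,n}\lesssim a_n n\|\theta\|_2$ with $a_n n\asymp u_n n^{3/2}$ for $\lambda=\tau$, and after Borell the exponent is of order $u_n n^{3/2}\tau+u_n^2n^3\tau^2$. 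The paper instead keeps $\|\theta\|_1$, for which under $\Pi(\cdot\mid\alpha,\tau)$ one has $E\|\theta\|_1\lesssim\tau$ and fluctuation parameter $\bigl(\sum_i\tau^2i^{-2\alpha-1}\bigr)^{1/2}\lesssim\tau$, so its exponent is only of order $u_n n\tau+u_n^2n^2\tau^2$ (times $n^{u_n}\log n$ factors in the $\lambda=\alpha$ case). The hypothesis $u_n\lesssim\tau_n^{-2}\wedge n^{-5/2}$ is calibrated to the latter, not to your inflated exponent: e.g.\ take $\tau_n\asymp n^{5/4}$ (allowed, e.g.\ prior (T2) with $\alpha=5/2$ and $\Lambda_n=[n^{-1/(4\alpha)},n^{\alpha/2}]$), $\tau\asymp\tau_n$ and $u_n\asymp n^{-5/2}$; then $a_n n\tau\asymp n^{1/4}$ and $a_n^2n^2\tau^2\asymp n^{1/2}$, so your claim that the second Cauchy--Schwarz factor is $O(1)$ is false, the constant-$3$ bound is lost, and since $n\eps_n^2(\tau)$ need not dominate $n^{1/2}$ (it is only $\gtrsim n^{(1+\alpha)/(2\alpha+1)}$ at $\tau=\tau_n$), the defect cannot in general be absorbed into the exponential either. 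So the "only genuinely delicate step" you flag is precisely where the argument breaks for $\lambda=\tau$; the $\lambda=\alpha$ case is fine because $u_n\lesssim n^{-3}/\log n$ has enough slack to pay the extra $\sqrt n$.

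The repair is to avoid the lossy Cauchy--Schwarz over the basis: keep the bound $\log Q^{\theta}_{\lambda,n}\lesssim u_n n^{1+u_n}(\log n)\|\theta\|_1$ coming from $|\overline\psi^{\theta}_{\lambda,i}-\underline\psi^{\theta}_{\lambda,i}|\lesssim u_n n^{u_n}(\log n)\|\theta\|_1$, and then control $\int e^{s\|\theta\|_1}d\Pi(\theta\mid\alpha,\tau)$ with $s\asymp u_n n^{1+u_n}\log n$, either coordinatewise (as the paper does, since $\|\theta\|_1=\sum_i\tau i^{-\alpha-1/2}|Z_i|$, giving exponent $\lesssim s\tau+s^2\tau^2$ because $\sum_i i^{-\alpha-1/2}$ and $\sum_i i^{-2\alpha-1}$ are finite for $\alpha>1/2$), or, if you prefer your concentration route, by applying Borell's inequality to the functional $\theta\mapsto\|\theta\|_1$, whose sub-Gaussian parameter under the prior is again $\lesssim\tau$ and whose mean is $\lesssim\tau$. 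Under the stated hypotheses this exponent is indeed uniformly negligible, e.g.\ $u_n^2n^2\tau^2\leq\tau_n^{-2}n^{-5/2}n^2\tau_n^2=n^{-1/2}$ and $u_n n\tau\leq n^{-1/4}$, which recovers the claimed bound $3e^{-(\eta/2)n\eps_n^2}$.
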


\begin{proof}
Following the proof of Lemma \ref{Lem: LikeRatioUB_reg} and using Cauchy-Schwarz inequality we get that

\begin{equation*}
\begin{split}
\int_{\Theta_n^c}Q_{\tau,\a,n}^{\theta}(\mathcal{X}_n)&\Pi(d\theta|\tau,\a)
\leq \int_{\Theta_n^c} e^{2Mn^{1+u_n}(\log n)u_n\sum_{i=1}^n|\theta_i|} d\Pi(\theta|\tau,\a) \\
&\leq \Pi(\Theta_n^c|\tau,\a)^{1/2}\sqrt{\int_{\Theta} e^{2M\sum_{i=1}^n|\theta_i|n^{1+u_n}(\log n)u_n} d\Pi(\theta|\tau,\a)}\\
&\leq  e^{-(\eta/2) n\eps_n^2} 2 e^{M^2 n^{2+2u_n}\t^{2}(\log n)^2u_n^2\sum_{i=1}^{n}i^{-1-2\a}}\leq 3e^{-(\eta/2) n\eps_n^2}.
\end{split}
\end{equation*}
\end{proof}

\begin{Lemma}\label{Lem: Test_reg}
In the nonparametric regression model for $\t\leq\t_n$ (for some $\t_n\rightarrow\infty$), $u_n\lesssim \t_n^{-2} n^{-5/2}/\log n$, and $\theta\in \Theta_n(\a,\tau)$ (defined in $\eqref{def: Theta_n}$), there exist tests $\phi_n(\theta)$ such that
\begin{align}
E_{\theta_0}^n\phi_n(\theta)\leq e^{-\frac{1}{2}n\|\theta-\theta_0\|_2^2},\nonumber\\ 
\sup_{\substack{\a>0\\ \tau\leq\t_n}}\sup_{\substack{\theta'\in\Theta_n(\a,\tau)\\ \|\theta-\theta'\|_2<\|\theta-\theta_0\|_2/18}} \int_{\mathcal{X}_n}(1-\phi_n(\theta))dQ_{\a,\tau,n}^{\theta'}(x^{(n)})\leq e^{-\frac{1}{2} n\|\theta-\theta_0\|_2^2}.\label{eq: test_reg}
\end{align}
\end{Lemma}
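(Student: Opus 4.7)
The plan is to construct $\phi_n(\theta)$ as the Neyman--Pearson linear test between $P_{\theta_0}^n$ and $P_\theta^n$, which in this Gaussian regression model has the explicit form
\[
\phi_n(\theta) = \mathbf{1}\!\left\{\langle \mathbf x_n, g_\theta\rangle_n - \tfrac{1}{2}\langle f_\theta+f_{\theta_0},\, g_\theta\rangle_n > 0\right\},\quad g_\theta := f_\theta - f_{\theta_0},
\]
where $\langle\cdot,\cdot\rangle_n$ is the empirical inner product. The Parseval identity recorded at the start of Section 3.5 gives $\|g_\theta\|_n=\|\theta-\theta_0\|_2$, so the test is calibrated directly in $\ell_2$.

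For the type I error, under $P_{\theta_0}^n$ the statistic inside the indicator equals $\langle Z,g_\theta\rangle_n - \tfrac{1}{2}\|g_\theta\|_n^2$, which is Gaussian with negative mean and variance $\sigma^2\|\theta-\theta_0\|_2^2/n$, and the standard Gaussian tail bound immediately yields $E_{\theta_0}^n\phi_n(\theta)\leq \exp(-c n\|\theta-\theta_0\|_2^2)$ for an explicit constant $c$ depending only on $\sigma$, which gives the stated bound after adjusting constants. For the type II error (still working under the unperturbed law) fix $\theta'\in\Theta_n(\alpha,\tau)$ with $\|\theta-\theta'\|_2<\|\theta-\theta_0\|_2/18$ and $\lambda'$ with $\rho(\lambda,\lambda')\leq u_n$. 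Under $P_{\psi_{\lambda,\lambda'}(\theta')}^n$ the statistic has Gaussian law with mean $\langle f_{\psi_{\lambda,\lambda'}(\theta')}-(f_\theta+f_{\theta_0})/2,\,g_\theta\rangle_n$, which by Cauchy--Schwarz is bounded below by
\[
\tfrac{1}{2}\|\theta-\theta_0\|_2^2 - \|\theta-\theta_0\|_2\bigl(\|\theta'-\theta\|_2+\|\psi_{\lambda,\lambda'}(\theta')-\theta'\|_2\bigr).
\]
Using the explicit forms (3.11)--(3.12) together with the diameter bound $\|\Theta_n(\alpha,\tau)\|_2\lesssim \tau_n\sqrt{n}\,\epsilon_n$ from Lemma D.2 and the assumption $u_n\lesssim \tau_n^{-2}n^{-5/2}/\log n$, one verifies $\|\psi_{\lambda,\lambda'}(\theta')-\theta'\|_2=o(\|\theta-\theta_0\|_2)$, so the mean exceeds $(\tfrac12-\tfrac1{18}-o(1))\|\theta-\theta_0\|_2^2$, and a Gaussian tail bound yields $E^n_{\psi_{\lambda,\lambda'}(\theta')}(1-\phi_n(\theta))\leq \exp(-c'n\|\theta-\theta_0\|_2^2)$.

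The main obstacle is not the single-measure bounds but rather the supremum inside $Q^{\theta'}$: one must control $\int(1-\phi_n)\sup_{\rho(\lambda,\lambda')\leq u_n} p^n_{\psi_{\lambda,\lambda'}(\theta')}\,d\mathbf x_n$. I would handle this exactly as the likelihood-ratio bound in the proof of Lemma D.1, by a fine covering argument. Cover $\{\lambda':\rho(\lambda,\lambda')\leq u_n\}$ by a $u_n'$-net $\{\lambda_k\}_{k\leq N}$ with $u_n'$ small enough that the computation in (D.3) shows the log-likelihood oscillates by at most a constant across each cell, while $N$ remains polynomial in $n$. This yields
\[
\int (1-\phi_n)\sup_{\lambda'}p_{\psi_{\lambda,\lambda'}(\theta')}^n\,d\mathbf x_n \;\leq\; e^{O(1)}\sum_{k=1}^{N} E^n_{\psi_{\lambda,\lambda_k}(\theta')}(1-\phi_n(\theta)),
\]
to which the single-measure bound of the previous paragraph applies. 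The polynomial factor $N$ and the constant $e^{O(1)}$ are absorbed into the exponent since $n\|\theta-\theta_0\|_2^2\geq n\epsilon_n^2(\alpha,\tau)\gg \log n$ in the regime of interest, and a final adjustment of constants produces the claimed factor $\tfrac12$ in the exponent.
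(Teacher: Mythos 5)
Your choice of test is essentially the paper's: the Neyman--Pearson/likelihood-ratio test between $P_{\theta_0}^n$ and $P_\theta^n$, whose type I and type II errors against a fixed (possibly slightly perturbed) alternative are controlled by Gaussian tail bounds in the empirical norm, using $\|f_\theta-f_{\theta_0}\|_n=\|\theta-\theta_0\|_2$. Those single-measure bounds, including the mean-shift computation with $\|\theta-\theta'\|_2<\|\theta-\theta_0\|_2/18$ and $\|\psi_{\lambda,\lambda'}(\theta')-\theta'\|_2$ small, are fine (modulo constants, as you acknowledge, and modulo the fact that both your argument and the paper's implicitly work in the regime where $n\|\theta-\theta_0\|_2^2$ is large).

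The genuine gap is in your treatment of the supremum inside $Q^{\theta'}_{\alpha,\tau,n}$. You claim that after covering $\{\lambda':\rho(\lambda,\lambda')\leq u_n\}$ by a sufficiently fine net, ``the computation in (D.3) shows the log-likelihood oscillates by at most a constant across each cell,'' so that $\int(1-\phi_n)\sup_{\lambda'}p^n_{\psi_{\lambda,\lambda'}(\theta')}\,d\mathbf x_n\leq e^{O(1)}\sum_k E^n_{\psi_{\lambda,\lambda_k}(\theta')}(1-\phi_n)$. But (D.3) only bounds the perturbation of the regression means, $|\overline\psi_{\lambda,i}^{\theta'}-\underline\psi_{\lambda,i}^{\theta'}|$; the oscillation of the log-likelihood over a cell is of the form $\tfrac{1}{2\sigma^2}\sum_i|m_i-m_i'|\,|2x_i-m_i-m_i'|$, which grows with $\|\mathbf x_n\|$ and is therefore \emph{not} bounded by a constant uniformly over $\mathbf x_n\in\mathbb R^n$, no matter how fine the ($\mathbf x_n$-independent) net is. So the union-bound step as written is not justified, and this is exactly the difficulty the lemma has to address. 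The paper's proof resolves it by splitting the integral over the data: on $\{\|\mathbf x_n\|_2\leq \tau_n n\}$ the condition $u_n\lesssim \tau_n^{-2}n^{-5/2}/\log n$ makes the sup-perturbed density at most a constant multiple of the unperturbed density $p^n_{\theta'}$ directly at scale $u_n$ (no net is needed), so the standard type II bound for $\theta'$ applies; on the complement the perturbed densities are integrated by a crude Gaussian tail bound, giving $2^ne^{-n^2\tau_n^2/(2\sigma^2)}$, which is negligible because $\|\theta-\theta_0\|_2=o(\tau_n\sqrt n)$ on $\Theta_n(\alpha,\tau)$. Your argument can be repaired by inserting this same truncation of $\mathbf x_n$ (after which your net becomes superfluous), but without it the key step fails.
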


\begin{proof}
First of all note that the likelihood ratio test (using the sequence notation)
\begin{align}
\phi_n(\theta)= \1\Big[ \sum_{i=1}^{n} \big\{x_i-\sum_{j=1}^{n}\theta_je_j(t_i)\big\}^2-
\sum_{i=1}^{n} \big\{x_i-\sum_{j=1}^{n}\theta_{0,j}e_j(t_i)\big\}^2 \Big].
\end{align}
satisfies for all $\theta\in\mathbb{R}^{n}$
\begin{align*}
\sup_{\theta'\in \mathbb{R}^{n}:\, \|\theta-\theta'\|_2\leq \|\theta-\theta_0\|_2/18} E_{\theta'}^n\Big(1-\phi_n(\theta)\Big)\leq \exp\{-\frac{n}{2}\|\theta-\theta_0\|_2^2\}
\end{align*}
and $E_{\theta_0}^n\phi_n(\theta)\leq \exp\{-\frac{n}{2}\|\theta-\theta_0\|_2^2\}$, see for instance Section 7.7 of \cite{ghosal:vdv:07}.

For notational convenience let us again denote by $\lambda$ both of the hyper-parameters $\a$ and $\t$, then we have that
\begin{align}\label{eq: helpTest_reg}
&\int_{\mathbb{R}^{n}}\Big(1-\phi_n(\theta) \Big)\sup_{\rho(\lambda,\lambda')\leq u_n} \frac{1}{(2\pi\sigma^2)^{n/2}}e^{-\frac{\sum_{i=1}^{n}\{\sum_{j=1}^{n}\psi_{\lambda,\lambda'}(\theta_j')e_j(t_i)-x_i)\}^2}{2\sigma^2}} d\mathbf x_n\\
&\leq \int_{\|\mathbf x_n\|_2\leq\t_n n}\Big(1-\phi_n(\theta)\Big) \frac{1}{(2\pi\sigma^2)^{n/2}}e^{-\frac{\sum_{i=1}^{n}\{\sum_{j=1}^{n}\theta_j' e_j(t_i)-x_i)\}^2}{2\sigma^2}}\nonumber\\
&\quad\times\sup_{\rho(\lambda,\lambda')\leq u_n} \Big(e^{\frac{1}{2\sigma^2}\sum_{i=1}^{n}\big[\{\sum_{j=1}^{n}\theta_j'e_j(t_i)-x_i\}^2-\{\sum_{j=1}^{n}\psi_{\lambda,\lambda'}(\theta_j')e_j(t_i)-x_i\}^2\big]} \Big)d\mathbf x_n\nonumber\\
&\quad+ \int_{\|\mathbf x_n\|_2>\t_n n}\sup_{\rho(\lambda,\lambda')\leq u_n} \frac{1}{(2\pi\sigma^2)^{n/2}}e^{-\frac{1}{2\sigma^2}\sum_{i=1}^{n}\{\sum_{j=1}^{n}\psi_{\lambda,\lambda'}(\theta_j')e_j(t_i)-x_i\}^2} d\mathbf x_n.\nonumber
\end{align}
We deal with the two terms on the right hand side separately.

First we examine the first term, where it is enough to show that the multiplicative term (with the $\sup$) is bounded from above by a constant. Using Cauchy-Schwarz and triangle inequalities and the assumption $|e_j(t_i)|\leq M$ we get that

\begin{align*}
&\sup_{\rho(\lambda,\lambda')\leq u_n}\Big|\sum_{i=1}^{n}\Big[\big\{\sum_{j=1}^{n}\theta_j'e_j(t_i)-x_i\big\}^2-\big\{\sum_{j=1}^{n}\psi_{\lambda,\lambda'}(\theta_j') e_j(t_i)-x_i\big\}^2\Big] \Big|\nonumber\\
&\leq  M\sqrt{\sum_{i=1}^{n}(\overline\psi_{\lambda,i}^{\theta'}-\underline\psi_{\lambda,i}^{\theta'})^2}
\Big(2\|\mathbf x_n\|_2+M\sqrt{n}\|\theta'\|_2+M\sqrt{n}\sup_{\rho(\lambda,\lambda')\leq u_n}\|\psi_{\lambda,\lambda'}(\theta')\|_2\Big) \nonumber
\end{align*}
The right hand side of the preceding display following from $\eqref{eq: RegHelp01}$ and $\|\mathbf x_n\|_2\leq n$ is bounded from above by
\begin{align}
2M^2\|\theta'\|_2 u_nn^{u_n+1}(\log n) (2n\t_n+C\sqrt{n}\|\theta'\|_2)\lesssim u_n \t_n^2 n^{5/2}\log n=O(1).\label{{eq: regTesthelp1}}
\end{align}

Therefore it remained to deal with the second term on the right hand side of $\eqref{eq: helpTest_reg}$. Since $\|\theta'\|_2\lesssim \sqrt{n}\t_n\eps_n$ (following from $\theta'\in\Theta_n(\lambda)$) we have that $\sup_{\rho(\lambda,\lambda')\leq u_n}\|\psi_{\lambda,\lambda'}(\theta')\|\leq n^{u_n}\|\theta'\|_2\lesssim n^{u_n+1/2}\t_n\eps_n=o(n\t_n)$. Therefore 
\begin{align}
\int_{\|\mathbf x_n\|_2\geq \t_n n}&\frac{1}{(2\pi\sigma^2)^{n/2}}e^{-\frac{\sum_{i=1}^{n}\big(\sum_{j=1}^{n}\psi_{\lambda,\lambda'}(\theta_j' )e_j(t_i)-x_i\big)^2}{2\sigma^2}}d\mathbf x_n\nonumber\\
&\leq \int_{\|\mathbf x_n\|_2\geq\t_n n}\frac{1}{(2\pi\sigma^2)^{n/2}}e^{-\frac{\|\mathbf x_n\|^2/2}{2\sigma^2}}d\mathbf x_n\leq 2^n e^{-n^2\t_n^2/(2\sigma^2)},
\end{align}
where the right hand side is of smaller order than $\exp\{-(1/2) n\|\theta-\theta_0\|_2^2\}$, since $\|\theta-\theta_0\|_2\leq\|\theta_0\|_2+\|\theta\|_2\lesssim 1+ \t_n\sqrt{n}\eps_n(\lambda)=o(\t_n\sqrt{n}).$
\end{proof}

\begin{Lemma}\label{lem:loglike}
Consider the nonparametric regression model and priors of type (T1)-(T3). Take any $\t_n\rightarrow\infty$ and $0<\t\leq\t_n$. Then for $u_n\lesssim n^{-2}\t_n^{-1}/\log n$ 
$$\sup_{\|\theta-\theta_0\|\leq K\eps_n(\lambda)}P_{\theta_0}^n
\Big\{\inf_{\rho(\lambda,\lambda')\leq u_n}\ell_n\big(\psi_{\lambda,\lambda'}(\theta)\big)-\ell_n(\theta_0)\leq -c_3n\eps_n(\lambda)^2 \Big\}=e^{-n\eps_n(\lambda)^2}, $$
for $c_3\geq 2+3\sigma^{-2}K^2/2$.
\end{Lemma}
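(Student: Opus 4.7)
Recall that in the regression model
$$\ell_n(\theta)-\ell_n(\theta_0)=-\frac{n}{2\sigma^2}\|\theta-\theta_0\|_2^2+\frac{1}{\sigma^2}W(\theta),\qquad W(\theta)=\sum_{i=1}^n Z_i\bigl(f_\theta(t_i)-f_{\theta_0}(t_i)\bigr),$$
using Parseval's identity $\|f_\theta-f_{\theta_0}\|_n=\|\theta-\theta_0\|_2$. Writing $\theta^\star=\psi_{\lambda,\lambda'}(\theta)$, the plan is to control, uniformly in $\lambda'$ with $\rho(\lambda,\lambda')\le u_n$, both the deterministic quadratic term and the linear Gaussian term $W(\theta^\star)/\sigma^2$.

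First I would show that $\sup_{\rho(\lambda,\lambda')\le u_n}\|\theta^\star-\theta\|_2=o(\eps_n(\lambda))$. For prior (T1) the hyper-parameter is discrete so $\psi$ is the identity. For (T2), $\|\theta^\star-\theta\|_2=|\tau'/\tau-1|\|\theta\|_2\le 2u_n\|\theta\|_2$, and for (T3), using $i^{\alpha-\alpha'}-1\le 2u_n\log n$ uniformly for $i\le n$, $\|\theta^\star-\theta\|_2\le 2u_n(\log n)\|\theta\|_2$. Since $\|\theta\|_2\le\|\theta_0\|_2+K\eps_n(\lambda)=O(1)$ and $u_n\lesssim n^{-2}\tau_n^{-1}/\log n$, both bounds are $o(n^{-1})=o(\eps_n(\lambda))$. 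Consequently
$$\sup_{\rho(\lambda,\lambda')\le u_n}\|\theta^\star-\theta_0\|_2^2\le K^2\eps_n(\lambda)^2(1+o(1)),$$
so the quadratic term gives
$$\sup_{\rho(\lambda,\lambda')\le u_n}\frac{n}{2\sigma^2}\|\theta^\star-\theta_0\|_2^2\le\frac{K^2}{2\sigma^2}n\eps_n(\lambda)^2(1+o(1)).$$

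For the Gaussian part I would split $W(\theta^\star)=W(\theta)+\bigl(W(\theta^\star)-W(\theta)\bigr)$. The second piece depends on $\lambda'$ but, by Cauchy--Schwarz,
$$\sup_{\rho(\lambda,\lambda')\le u_n}|W(\theta^\star)-W(\theta)|\le \|Z\|_2\,\sqrt{n}\,\sup_{\lambda'}\|\theta^\star-\theta\|_2.$$
On the event $\{\|Z\|_2\le 2\sigma\sqrt n\}$, which has $P_{\theta_0}^n$-probability at least $1-e^{-cn}$ by standard $\chi^2$ concentration, this supremum is $o(n\eps_n(\lambda)^2)$ given the choice of $u_n$. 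The term $W(\theta)$ is $\mathcal N(0,\sigma^2 n\|\theta-\theta_0\|_2^2)$ with variance bounded by $\sigma^2 nK^2\eps_n(\lambda)^2$, so Gaussian tails yield
$$P_{\theta_0}^n\bigl(|W(\theta)|/\sigma^2>\sqrt 2\,Kn\eps_n(\lambda)^2/\sigma\bigr)\le 2e^{-n\eps_n(\lambda)^2}.$$

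Assembling all three pieces on the intersection of the two good events (of joint probability $1-O(e^{-n\eps_n(\lambda)^2})$) gives
$$\inf_{\rho(\lambda,\lambda')\le u_n}\bigl[\ell_n(\theta^\star)-\ell_n(\theta_0)\bigr]\ge -\Bigl(\frac{K^2}{2\sigma^2}+\frac{\sqrt 2K}{\sigma}+o(1)\Bigr)n\eps_n(\lambda)^2.$$
Applying $2ab\le a^2+b^2$ with $a=\sqrt 2K/\sigma$, $b=1$ yields $\sqrt 2K/\sigma\le K^2/\sigma^2+1/2$, so the coefficient is at most $3K^2/(2\sigma^2)+1/2+o(1)\le 3K^2/(2\sigma^2)+2$ for $n$ large enough, giving the claim. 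The only mildly delicate point is handling the supremum in $\lambda'$ uniformly; this is taken care of by the very small choice of $u_n$, which makes the $\lambda'$-dependent perturbation of the likelihood negligible at scale $n\eps_n(\lambda)^2$.
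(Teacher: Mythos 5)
Your proof is correct and rests on the same basic strategy as the paper's: separate the perturbation caused by the change of measure $\psi_{\lambda,\lambda'}$ over $\rho(\lambda,\lambda')\leq u_n$ from a Gaussian deviation bound for the likelihood ratio at the fixed $\theta$. The execution, however, is genuinely different. The paper splits by the triangle inequality $|\ell_n(\psi_{\lambda,\lambda'}(\theta))-\ell_n(\theta_0)|\leq|\ell_n(\psi_{\lambda,\lambda'}(\theta))-\ell_n(\theta)|+|\ell_n(\theta)-\ell_n(\theta_0)|$, bounds the first term by a constant on the event $\{\|\mathbf x_n\|_2\leq n\tau_n\}$ by recycling the quadratic-form computation from its testing lemma (this is where the factor $\tau_n^{-1}$ in the admissible $u_n$ enters), controls $P_{\theta_0}^n(\|\mathbf x_n\|_2> n\tau_n)$ by $e^{-cn^2\tau_n^2}$, and then applies a Chernoff/moment-generating-function bound to $\ell_n(\theta)-\ell_n(\theta_0)$, the remaining $n\eps_n(\lambda)^2$ of the budget $c_3\geq 2+3\sigma^{-2}K^2/2$ absorbing the constant perturbation. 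You instead decompose the log-likelihood ratio explicitly into a quadratic and a linear Gaussian part, transfer the perturbation to parameter space via $\sup_{\rho(\lambda,\lambda')\leq u_n}\|\psi_{\lambda,\lambda'}(\theta)-\theta\|_2=O(u_n\log n)$, condition on $\{\|Z\|_2\leq 2\sigma\sqrt n\}$ rather than on the data norm, and finish with a Gaussian tail bound (the same exponential-moment estimate as the paper's Chernoff step) together with AM--GM to recover the same constant $c_3$; this buys a slightly cleaner treatment of the noise term, since the $\lambda'$-perturbation there only involves the truncated coefficients. Both arguments lean on the same implicit facts, namely the discrete Parseval identity $\|f_\theta-f_{\theta_0}\|_n=\|\theta-\theta_0\|_2$ and the fact that $n\eps_n(\lambda)^2$ stays bounded away from zero and $\eps_n(\lambda)\gtrsim n^{-1}$ in the relevant range of $\lambda$ (you need this for the $o(\eps_n(\lambda))$ and $e^{-cn}$ terms to be negligible at scale $n\eps_n(\lambda)^2$, the paper to absorb its constant perturbation), so your proposal is on the same footing as the paper's proof.
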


\begin{proof}
By triangle inequality we have that
\begin{align}
|\ell_n\big(\psi_{\lambda,\lambda'}(\theta)\big)-\ell_n(\theta_0)|\leq |\ell_n\big(\psi_{\lambda,\lambda'}(\theta)\big)-\ell_n(\theta)|+|\ell_n(\theta)\big)-\ell_n(\theta_0)|.\label{eq: help101}
\end{align}
We deal with the two terms on the right hand side separately.

First consider the first term on the right hand side of $\eqref{eq: help101}$ and note that in case of prior (T1) it is zero. For priors (T2)-(T3) following from Lemma \ref{Lem: Test_reg} we have that for $\|\mathbf x_n\|_2\leq n\t_n$ it is bounded above by a constant, while $P_{\theta_0}^n (\|\mathbf x_n\|_2\geq n\t_n)\leq e^{-cn^2\t_n^2}$.

For the second term on the right hand side of $\eqref{eq: help101}$ we apply Chernoff's inequality and $K(\theta,\theta_0)=\sigma^{-2}\|\theta-\theta_0\|_2^2$
\begin{align*}
&\sup_{\theta\in \|\theta-\theta_0\|_2\leq K\eps_n}P_{\theta_0}^n\big\{\ell_n(\theta)-\ell_n(\theta_0)\leq -(3\sigma^{-2}K^2/2+1)n\eps_n^2\big\}\\
&\,\leq\sup_{\theta\in \|\theta-\theta_0\|_2\leq K\eps_n}P_{\theta_0}^n\Big[\ell_n(\theta)-\ell_n(\theta_0)- E_{\theta_0}^n\big\{\ell_n(\theta)-\ell_n(\theta_0)\big\}\leq -\frac{K^2+2\sigma^{2}}{2\sigma^{2}} n\eps_n^2\Big]\\
&\,\leq \sup_{\theta\in \|\theta-\theta_0\|_2\leq K\eps_n} e^{-(\sigma^{-2}K^2/2+1)n\eps_n^2} E(e^{\sigma^{-2}\langle \sum_{j=1}^{n}(\theta_{0,j}-\theta_j)e_j,Z \rangle_n})\\
&\,\leq \sup_{\theta\in \|\theta-\theta_0\|_2\leq K\eps_n} e^{-(\sigma^{-2}K^2/2+1)n\eps_n^2}e^{\sigma^{-2}(n/2)\|\theta_0-\theta\|_2^2 }\leq e^{-n\eps_n^2},
\end{align*}
where $Z$ denotes an $n$ dimensional vector of iid standard normal random variables.
\end{proof}


\section{ Some Technical Lemmas in the density case} \label{app:dens}
\begin{Lemma}\label{lem:dist}
Let $f_0 = f_{\theta_0}$, $f_\theta$ with $\| \theta - \theta_0\|_1 < +\infty $, and $\theta, \theta_0 \in \ell_2$, then 
\begin{equation}
\begin{split}
K(f_0, f_\theta) &\leq  \|f_0\|_\infty  e^{\|\theta- \theta_0\|_1 \|\phi\|_\infty } \|\theta - \theta_0\|_2^2,\\
V_2(f_0, f_\theta) &\leq \|f_0\|_\infty \|\theta - \theta_0\|_2^2,\\
d^2(f_0, f_\theta) & \gtrsim \exp( - c_1 \|\theta  -\theta_0\|_1 )\|\theta -\theta_0\|_2^2.
\end{split}
\end{equation}
\end{Lemma}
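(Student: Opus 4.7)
The plan is to introduce the shorthand $h(x)=\sum_{j\geq 1}(\theta_j-\theta_{0,j})\phi_j(x)$ and $c^\ast=c(\theta)-c(\theta_0)=\log E_{f_0}[e^h]$, so that $\log(f_\theta/f_0)=h-c^\ast$. Two properties of $h$ will be used repeatedly: first, by Parseval (the $\phi_j$ for $j\geq 1$ being orthonormal), $\|h\|_2^2=\|\theta-\theta_0\|_2^2$; second, by orthogonality of $\phi_j$ to $\phi_0=1$, $\int_0^1 h\,dx=0$. Also $\|h\|_\infty\leq\|\theta-\theta_0\|_1\|\phi\|_\infty$ and hence $|c^\ast|\leq\|h\|_\infty$ (since $e^{-\|h\|_\infty}\leq E_{f_0}[e^h]\leq e^{\|h\|_\infty}$).

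For the $V_2$ bound, I would observe that $V_2(f_0,f_\theta)=\mathrm{Var}_{f_0}(\log(f_\theta/f_0))=\mathrm{Var}_{f_0}(h)\leq E_{f_0}[h^2]\leq\|f_0\|_\infty\int h^2\,dx=\|f_0\|_\infty\|\theta-\theta_0\|_2^2$. For the KL bound I would use $K(f_0,f_\theta)=-E_{f_0}[h]+c^\ast=\log E_{f_0}[e^{h_0}]$ where $h_0=h-E_{f_0}[h]$, and the elementary inequality $e^u\leq 1+u+\tfrac{u^2}{2}e^{|u|}$: combined with $E_{f_0}[h_0]=0$ and $\|h_0\|_\infty\leq 2\|h\|_\infty$, this yields $E_{f_0}[e^{h_0}]\leq 1+\tfrac{1}{2}e^{2\|h\|_\infty}E_{f_0}[h^2]$, so $\log(1+\cdot)\leq\cdot$ and $E_{f_0}[h^2]\leq\|f_0\|_\infty\|\theta-\theta_0\|_2^2$ give the claimed inequality (with an absorbable constant in the exponent).

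For the Hellinger lower bound I would start from the identity $d^2(f_0,f_\theta)=\int f_0(e^{(h-c^\ast)/2}-1)^2\,dx$ and the pointwise lower bound $|e^{u/2}-1|\geq\tfrac{|u|}{2}e^{-|u|/2}$ (a direct consequence of the mean value theorem applied to $e^{\cdot/2}$), giving
\begin{equation*}
d^2(f_0,f_\theta)\;\geq\;\tfrac{1}{4}\int f_0\,(h-c^\ast)^2 e^{-|h-c^\ast|}\,dx\;\geq\;\tfrac{1}{4}e^{-2\|h\|_\infty}\,E_{f_0}\bigl[(h-c^\ast)^2\bigr].
\end{equation*}
Since $E_{f_0}[(h-c^\ast)^2]\geq\inf_c E_{f_0}[(h-c)^2]=\mathrm{Var}_{f_0}(h)$, it remains to lower bound $\mathrm{Var}_{f_0}(h)$ by $\|\theta-\theta_0\|_2^2$; this is where the assumption $\|\log f_0\|_\infty<\infty$ (implicit in the context where the lemma is invoked) enters, via $\underline{f}_0:=\inf f_0>0$: for every $c\in\mathbb{R}$,
\begin{equation*}
E_{f_0}\bigl[(h-c)^2\bigr]\;\geq\;\underline{f}_0\int(h-c)^2\,dx\;=\;\underline{f}_0\bigl(\|h\|_2^2+c^2\bigr)\;\geq\;\underline{f}_0\|h\|_2^2,
\end{equation*}
where the equality uses $\int h\,dx=0$. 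Combining gives $d^2(f_0,f_\theta)\gtrsim e^{-2\|\theta-\theta_0\|_1\|\phi\|_\infty}\|\theta-\theta_0\|_2^2$, i.e.\ the claim with $c_1=2\|\phi\|_\infty$ and an implicit constant involving $\underline{f}_0/4$.

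The only non-routine step is the Hellinger lower bound: the KL and $V_2$ estimates are straightforward Taylor/Jensen manipulations, but the Hellinger bound requires the combination of two observations that are not automatic from the statement — the vanishing mean $\int h=0$ (from the basis being orthogonal to constants) together with $f_0$ bounded below — to convert $\mathrm{Var}_{f_0}(h)$ into $\|\theta-\theta_0\|_2^2$. If either fails, $\mathrm{Var}_{f_0}(h)$ can collapse relative to $\|h\|_2^2$, so these two ingredients are the real content of the lower bound.
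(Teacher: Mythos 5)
Your proof is correct and follows essentially the same route as the paper's: the exact expression $K(f_0,f_\theta)=\log E_{f_0}[e^{h}]-E_{f_0}[h]$ with a second-order remainder bound, the variance identity for $V_2$, and for the Hellinger lower bound a pointwise inequality of the type $|e^{u/2}-1|\geq \tfrac{|u|}{2}e^{-|u|/2}$ combined with $f_0$ bounded away from zero (from the standing assumption $\|\log f_0\|_\infty<\infty$, which the paper also invokes) and orthogonality of $h$ to constants so that $\int_0^1(h-c)^2dx\geq\|h\|_2^2=\|\theta-\theta_0\|_2^2$. The only (cosmetic, and self-acknowledged) deviation is in the KL step: centering $h$ doubles the constant in the exponent, giving $\tfrac12\|f_0\|_\infty e^{2\|\theta-\theta_0\|_1\|\phi\|_\infty}$ rather than the stated $\|f_0\|_\infty e^{\|\theta-\theta_0\|_1\|\phi\|_\infty}$; to get the stated constant, apply $\log x\leq x-1$ and $e^{u}\leq 1+u+\tfrac{u^2}{2}e^{|u|}$ directly to $E_{f_0}[e^{h}]$ so that the linear term cancels exactly against $-E_{f_0}[h]$ — in any case the precise constant is immaterial wherever the lemma is used, since there $\|\theta-\theta_0\|_1$ is bounded.
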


\begin{proof}

We have following \cite{rivoirard:rousseau:12}, see also  the supplement, Section 3.2, Proof of Proposition 5 of  \cite{castillo:rousseau:suppl} if $ \| \log f_\theta \|_\infty \leq M$
 \begin{equation}
 \begin{split}
 K(f_0, f_\theta) &= \langle \theta_0 - \theta, \phi(f_0) \rangle_2 - c(\theta_0 ) + c(\theta),
 \end{split}
 \end{equation}
with 
 \begin{equation*}
 \begin{split}
c(\theta) - c(\theta_0) &= \log\Big(\int f_0(x) e^{\sum_j (\theta_j - \theta_{0,j})\phi_j(x) }dx\Big) \\
& \leq  1 - \langle \theta_0 - \theta, \phi(f_0) \rangle +\frac{ \|f_0\|_\infty}{2} e^{\|\phi\|_\infty \|\theta-\theta_0\|_1  }\| \theta-\theta_0\|_2^2.
 \end{split}
\end{equation*}
Since $\langle \theta_0 - \theta, \phi(f_0) \rangle^2_2 \leq \|f_0\|_\infty^2 \|\theta-\theta_0\|_2^2$, this leads to 
 \begin{equation*}
 K(f_0, f_\theta) \leq  \|f_0\|_\infty e^{\|\phi\|_\infty \|\theta-\theta_0\|_1  }\| \theta-\theta_0\|_2^2.
 \end{equation*}
Similarly
 \begin{equation*}
 \begin{split}
 V_{2} (f_0, f_\theta) &\leq 2 \left( E_{\theta_0} \left[ \left( \sum_j (\theta_j  - \theta_{0,j} )\phi_j \right)^{2}  \right] + \langle\theta - \theta_0, \phi(f_0)\rangle^{2} \right)\\
 & \leq  2c_2  \|f_0\|_\infty^2 \|\theta-\theta_0\|_2^2.
\end{split}
 \end{equation*}

Finally using the inequality $|e^{v}-e^w|= e^v|1-e^{w-v}|\geq e^v e^{-|w-v|}|w-v|$ and that $\log f_0>-\infty$ we have that $d(f_0,f_{\theta})$ is bounded from below by
\begin{align*}
\int_0^1 f_0 e^{-| \sum_j (\theta_{0,j}-\theta_j)\phi_j(x)+c(\theta)-c(\theta_0)| }&\big(\sum_{j}(\theta_j-\theta_{0,j})\phi_j(x)+c(\theta)-c(\theta_0) \big)^2 dx\\
&\gtrsim e^{-2\|\phi\|_{\infty} \|\theta-\theta_0\|_1}\|\theta-\theta_0\|_2^2,
\end{align*}
where in the last inequality we applied the orthonormality of the basis $\phi_j$ and the inequality $e^{|c(\theta)-c(\theta_0)|}\leq e^{\|\phi\|_{\infty}\|\theta-\theta_0\|_1}$.
\end{proof}

\begin{Lemma}\label{L1norm}
Let $\Pi( \cdot | \alpha, \tau)$ be the Gaussian prior with $\alpha >1/2$ and {$\tau \in (n^{-a}, n^{b})$} then 
\begin{align} \label{borell}
E \left( \|\theta\|_1| \alpha , \tau\right) = \tau E\left( \|\theta\|_1|\alpha , \tau=1\right) \equiv\tau A_\alpha< \infty , \\
  \Pi \left( \|\theta\|_1 > t +\tau A_\alpha | \alpha,\tau \right) \leq e^{- \frac{ t^2 }{2\sigma_{\alpha, \tau}^2} }\nonumber
\end{align}
with 
$$\sigma_{\alpha, \tau} \leq \sigma_{1/2, 1}\tau, \quad \forall \alpha \geq 1/2, \tau >0.$$
Moreover, for all $K_n > 0$ going to infinity and all $M>0$
\begin{equation} \label{L1:Kn}
\Pi  \left( \|\theta-\theta_0\|_1 > M+ \|\theta_0\|_1+ \sqrt{K_n} \|\theta-\theta_0\|_2 | \alpha,\tau \right) \leq  e^{-\frac{M^2 K_n^{2\alpha} }{2 \tau^2 } }. \end{equation} 
\end{Lemma}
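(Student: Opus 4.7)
My plan is to view $\|\theta\|_1$ as a Lipschitz functional of a sequence of i.i.d.\ standard Gaussians and apply Gaussian concentration (Borell--TIS), then derive the third bound by a splitting argument at level $K_n$.

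First, I would write $\theta_i = \tau\, i^{-\alpha-1/2} Z_i$ with $Z_i$ i.i.d.\ $\mathcal{N}(0,1)$. The mean computation is then direct: $E(\|\theta\|_1\mid\alpha,\tau)=\tau\sqrt{2/\pi}\sum_{i\ge 1} i^{-\alpha-1/2}$, which is finite precisely because $\alpha>1/2$; this gives $\tau A_\alpha$ and the scaling identity. The truncation to $i\le n$ in the non-truncated versus truncated priors only reduces the sum, so the bound is uniform.

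Second, for the concentration statement, I would apply the standard Gaussian concentration inequality for Lipschitz functions. The map $(z_i)_i \mapsto \tau\sum_i i^{-\alpha-1/2}|z_i|$ is Lipschitz from $\ell^2$ to $\mathbb{R}$ with constant
\[
\sigma_{\alpha,\tau}=\tau\Bigl(\sum_{i\ge 1} i^{-2\alpha-1}\Bigr)^{1/2}.
\]
For $\alpha\ge 1/2$ the sum is bounded by $\sum_i i^{-2} = \pi^2/6$, hence $\sigma_{\alpha,\tau}\le \sigma_{1/2,1}\tau$. Borell--TIS then yields $\Pi(\|\theta\|_1>\tau A_\alpha+t\mid\alpha,\tau)\le e^{-t^2/(2\sigma_{\alpha,\tau}^2)}$.

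Third, for the $K_n$ bound, I would split at level $K_n$ and use
\[
\|\theta-\theta_0\|_1
\le \sqrt{K_n}\,\|\theta-\theta_0\|_2 + \sum_{i>K_n}|\theta_i| + \|\theta_0\|_1,
\]
where the first term follows from Cauchy--Schwarz on the low-frequency coordinates and the last two come from the triangle inequality on the tail. Hence the event in question forces $\sum_{i>K_n}|\theta_i|>M$. I would then apply Borell--TIS again to the Lipschitz functional $\tau\sum_{i>K_n}i^{-\alpha-1/2}|Z_i|$, whose Lipschitz constant is bounded by $\tau(2\alpha)^{-1/2}K_n^{-\alpha}$ (via $\sum_{i>K_n}i^{-2\alpha-1}\le K_n^{-2\alpha}/(2\alpha)$) and whose mean is at most $\tau\sqrt{2/\pi}K_n^{-\alpha+1/2}/(\alpha-1/2)$, which is $o(M)$ since $K_n\to\infty$. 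Absorbing the mean into $M$ for $n$ large and using $\alpha\ge 1/2$ gives the desired sub-Gaussian tail $e^{-M^2 K_n^{2\alpha}/(2\tau^2)}$.

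The only delicate point is tracking the precise constants in the exponent of the last bound: one must verify that the contribution of the (vanishing) mean of the Gaussian functional and the factor $\alpha$ in the Lipschitz norm combine to give $1/2$ in the exponent. This is not a real obstacle, only a bookkeeping step, and can be handled by taking $K_n$ large enough (which is harmless since we always use the bound for $K_n\to\infty$). Everything else reduces to the Borell--TIS inequality and elementary tail-sum estimates.
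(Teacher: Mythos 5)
Your proposal is correct and follows essentially the same route as the paper: the paper also deduces the first two claims from Borell's (Gaussian concentration) inequality for the $\ell_1$-norm functional, and proves \eqref{L1:Kn} with exactly your splitting $\|\theta-\theta_0\|_1\le \sqrt{K_n}\|\theta-\theta_0\|_2+\sum_{j>K_n}|\theta_{0,j}|+\sum_{j>K_n}|\theta_j|$ followed by a Gaussian tail bound on $\sum_{j>K_n}j^{-\alpha-1/2}|Z_j|$. The mean-absorption bookkeeping you flag as the only delicate point is handled just as tersely (in fact, implicitly) in the paper's own proof, so your slightly more explicit treatment of the Lipschitz constants and tail mean is, if anything, a refinement rather than a deviation.
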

\begin{proof}
The first part of Lemma \ref{L1norm} is Borell's inequality associated to the Banach space $\ell_1 = \{ \theta; \sum_i |\theta_i|<+\infty \}$ since 
$$\Pi\left( \|\theta \|_1 <+\infty | \alpha, \tau\right) = 1, \quad \forall \alpha >1/2, \tau >0.$$
To prove \eqref{L1:Kn}, let $K_n>0$ then 
$$\|\theta- \theta_0\|_1 \leq \sqrt{K_n } \|\theta - \theta_0\|_2 + \sum_{j >K_n} |\theta_{0j}|+  \sum_{j>K_n} |\theta_j|$$
and
\begin{equation*}
\Pi( \sum_{j>K_n}|\theta_j| >M |\alpha,\tau ) = P\left( \sum_{j>K_n} j^{-\alpha -1/2} |Z_j| > M/\tau \right)
 \leq e^{-\frac{M^2 K_n^{  2\alpha } }{2 \tau^{2}  } },
\end{equation*}
where $Z_j \stackrel{iid}{\sim} \mathcal N(0,1)$.
\end{proof}
 \begin{Lemma}\label{lem:KL:dens}
In the density estimation problem with a prior on $f_\theta$ defined by (3.15), 
  if there exists $M>0$ and $\epsilon_n$ such that $n \epsilon_n^2 \rightarrow +\infty $, 
and
$$\{\| \theta - \theta_0\|_2 \leq \epsilon_n \} \cap \{ \|\theta \|_1 \leq M\} \subset B_n( \theta_0, M_2\epsilon_n,2) ,$$
then there exists $a_0>0$ such that for all  $\{\| \theta - \theta_0\|_2 \leq \epsilon_n \} \cap \{ \|\theta \|_1 \leq M\} $,
\begin{equation*}
P_{\theta_0}^n\left( \ell_n(\theta) -\ell_n(\theta_0) \leq -2M_2 n \epsilon_n^2 \right)  \leq  e^{- a_0 n \epsilon_n^2 } 
\end{equation*}
\end{Lemma}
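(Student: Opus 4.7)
The plan is to write the log-likelihood ratio as a sum of i.i.d.\ bounded variables and conclude by Bernstein's inequality, exploiting the fact that the extra constraint $\|\theta\|_1 \leq M$ forces the per-observation log-ratio to be uniformly bounded, while the Kullback-Leibler inclusion controls its mean and variance.

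First, I would set $Z_i = \log\bigl(f_{\theta_0}(X_i)/f_\theta(X_i)\bigr)$, so that $\ell_n(\theta_0) - \ell_n(\theta) = \sum_{i=1}^n Z_i$ and the event of interest becomes $\{\sum_{i=1}^n Z_i \geq 2 M_2 n\epsilon_n^2\}$. The hypothesis $\{\|\theta-\theta_0\|_2\leq \epsilon_n\}\cap\{\|\theta\|_1 \leq M\} \subset B_n(\theta_0, M_2\epsilon_n, 2)$ directly yields
\[
E_{\theta_0}\Bigl[\sum_i Z_i\Bigr] = K(\theta_0,\theta) \leq M_2^2 n\epsilon_n^2, \qquad \mathrm{Var}_{\theta_0}\Bigl(\sum_i Z_i\Bigr) = V_2(\theta_0,\theta)\leq M_2^2 n\epsilon_n^2.
\]

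The key observation is that, under the restriction $\|\theta\|_1\leq M$ and the parameterization (3.15),
 $\log f_\theta(x) = \sum_j \theta_j\phi_j(x) - c(\theta)$ is uniformly bounded. Indeed $\bigl|\sum_j \theta_j\phi_j(x)\bigr|\leq \|\phi\|_\infty\|\theta\|_1\leq M\|\phi\|_\infty$, and applying Jensen's inequality to $e^{c(\theta)}=\int_0^1 e^{\sum_j\theta_j\phi_j(x)}\,dx$ gives $|c(\theta)|\leq M\|\phi\|_\infty$, so $\|\log f_\theta\|_\infty\leq 2M\|\phi\|_\infty$. Combined with the standing assumption $\|\log f_0\|_\infty<+\infty$, this provides a deterministic bound $|Z_i|\leq B$ with $B$ depending only on $M$, $\|\phi\|_\infty$ and $f_0$.

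With centered, bounded summands of known variance, Bernstein's inequality gives
\[
P_{\theta_0}^n\!\Bigl(\sum_i Z_i - E_{\theta_0}[\sum_i Z_i] \geq t\Bigr) \leq \exp\!\Bigl(-\tfrac{t^2/2}{V_2(\theta_0,\theta)+Bt/3}\Bigr).
\]
Choosing $t = (2M_2 - M_2^2)n\epsilon_n^2$ (interpreting the constant $2M_2$ in the statement as a sufficiently large multiple of $M_2^2$ so that $t \gtrsim n\epsilon_n^2$) and plugging in the variance bound $V_2\leq M_2^2 n\epsilon_n^2$ makes the denominator linear in $n\epsilon_n^2$, so the exponent is of order $-n\epsilon_n^2$, yielding the claimed bound $e^{-a_0 n\epsilon_n^2}$.

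The only real difficulty is establishing the uniform bound on $|Z_i|$: this is where the extra $L_1$ constraint $\|\theta\|_1\leq M$ does its work, turning what would otherwise be a polynomial Chebyshev bound (as in Lemma~10 of \cite{ghosal:vdv:07}) into an exponential Bernstein bound. Everything else is a routine calculation.
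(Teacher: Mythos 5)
Your proof is correct and matches the paper's argument in substance: both exploit the uniform bound on the per-observation log-likelihood ratio coming from $\|\theta\|_1\le M$ (and $\|\log f_0\|_\infty<\infty$) together with the Kullback--Leibler/variance control supplied by the inclusion into $B(\theta_0,M_2\epsilon_n,2)$ to obtain an exponential Chernoff-type bound; the paper simply carries out the Chernoff bound by hand, bounding $E_{f_0}\left[(f_0/f_\theta)^s\right]$ by $1+sK+s^2V_2e^{2s\|\theta-\theta_0\|_1\|\phi\|_\infty}$, rather than invoking Bernstein's inequality. The constant mismatch you flag (threshold $2M_2 n\epsilon_n^2$ versus the mean bound $M_2^2 n\epsilon_n^2$) is equally present in the paper's loose statement and is immaterial for its application to condition (H2), so your reading of $2M_2$ as a sufficiently large multiple is exactly what is intended.
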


\begin{proof}
Let $\theta \in  \{\| \theta - \theta_0\|_2 \leq \epsilon_n \} \cap \{ \|\theta \|_1 \leq M\} $, 
\begin{align*}
P_{\theta_0}^n\{ \ell_n(\theta)& -\ell_n(\theta_0) \leq -2M_2 n \epsilon_n^2 \}\\
  &\leq e^{ -2s M_2 n \epsilon_n^2 } \left( 1 + K(f_0, f_\theta) + V_2(f_0, f_\theta)e^{2s \|\theta-\theta_0\|_1\|\phi\|_\infty} \right)^n\\
&\leq   e^{- a_0 n \epsilon_n^2 }  
\end{align*}
for some $a_0>0$ proportional to $M_2$.
\end{proof}

\end{document}